\DeclareSymbolFont{bbold}{U}{bbold}{m}{n}
\DeclareSymbolFontAlphabet{\mathbbold}{bbold}
\def\qmod#1#2{{\hbox{}^{\displaystyle{#1}}}\!\big/\!\hbox{}_{
\displaystyle{#2}}}
\def\resto#1#2{{
#1\hskip 0.4ex\vline_{\hskip 0.2ex\raisebox{-0,2ex}
{{${\scriptstyle #2}$}}}}}
 \def\psp#1#2%
 \def\psb#1#2%
 \def\pscr#1#2#3%
\def\EXUX{\hskip3mm\psp{\scriptscriptstyle X}{\hskip-4.2mm\psb{\scriptscriptstyle U\hskip -0.5mm X}{\mathscr{E}}}}
\def\EUUX{\hskip3mm\psp{\scriptscriptstyle U}{\hskip-4.2mm\psb{\scriptscriptstyle U\hskip -0.5mm X}{\mathscr{E}}}} 
\def\sUX{\scriptstyle U\hskip -0.6mm X}
\def\ssUU{\scriptscriptstyle U\hskip -0.5mm U}
\def\ssUX{\scriptscriptstyle U\hskip -0.5mm X}
\def\C{{\mathbb C}}
\def\H{{\mathbb H}}
\def\N{{\mathbb N}}
\def\P{{\mathbb P}}
\def\R{{\mathbb R}}
\def\Z{{\mathbb Z}}
\def\cringle{\mathaccent23}
\def\union{\mathop{\bigcup}}
\def\textmap#1{\mathop{\vbox{\ialign{
                                  ##\crcr
      ${\scriptstyle\hfil\;\;#1\;\;\hfil}$\crcr
      \noalign{\kern 1pt\nointerlineskip}
      \rightarrowfill\crcr}}\;}}
\def\bigtextmap#1{\mathop{\vbox{\ialign{
                                  ##\crcr
      ${\hfil\;\;#1\;\;\hfil}$\crcr
      \noalign{\kern 1pt\nointerlineskip}
      \rightarrowfill\crcr}}\;}}
\newcommand{\cal}{\mathcal}
\def\textlmap#1{\mathop{\vbox{\ialign{
                                  ##\crcr
      ${\scriptstyle\hfil\;\;#1\;\;\hfil}$\crcr
      \noalign{\kern-1pt\nointerlineskip}
      \leftarrowfill\crcr}}\;}}
\def\ag{{\mathfrak a}}
\def\bg{{\mathfrak b}}
\def\cg{{\mathfrak c}}
\def\eg{{\mathfrak e}}
\def\g{{\mathfrak g}}
\def\jg{{\mathfrak j}}
\def\ng{{\mathfrak n}}
\def\sg{{\mathfrak s}}
\def\tg{{\mathfrak t}}
\def\vg{{\mathfrak v}}
\def\zg{{\mathfrak z}}
\def\Ag{{\mathfrak A}}
\def\Bg{{\mathfrak B}}
\def\Cg{{\mathfrak C}}
\def\Mg{{\mathfrak M}}
\def\Pg{{\mathfrak P}}
\def\Rg{{\mathfrak R}}
\def\Sg{{\mathfrak S}}
\def\Tg{{\mathfrak T}}
\def\Ug{{\mathfrak U}}
\def\Vg{{\mathfrak V}}
\theoremstyle{remark}
\newtheorem{qu}{Question}
\newtheorem{conj}{Conjecture}
\newtheorem{sz}{Satz}[section]
\theoremstyle{remark}
\newtheorem{re}[sz]{Remark} 
\theoremstyle{plain}
\newtheorem{thry}[sz]{Theorem}
\newtheorem{pr}[sz]{Proposition}
\newtheorem{co}[sz]{Corollary}
\newtheorem{dt}[sz]{Definition}
\newtheorem{lm}[sz]{Lemma}
\def\End{\mathrm {End}}
\def\Aut{\mathrm {Aut}}
\def\Spin{\mathrm {Spin}}
\def\U{\mathrm{U}}
\def\SU{\mathrm {SU}}
\def\SO{\mathrm {SO}}
\def\PU{\mathrm {PU}}
\def\GL{\mathrm {GL}}
\def\SL{\mathrm {SL}}
\def\su{\mathrm {su}}
\def\Pic{\mathrm {Pic}}
\def\deg{\mathrm {deg}}
\def\Hom{\mathrm{Hom}}
\def\Tors{\mathrm{Tors}}
\def\T{\mathrm{T}}
\def\vol{\mathrm{vol}}
\def\id{ \mathrm{id}}
\def\im{\mathrm{im}}
\def\rk{\mathrm {rk}}
\def\si{\mathrm {si}}
\def\st{\mathrm {st}}
\def\pst{\mathrm{pst}}
\def\S{\mathrm{S}}
\def\ASD{\mathrm{ASD}}
\def\HE{\mathrm{HE}}
\def\reg{\mathrm{reg}}
\def\Ext{\mathrm{Ext}}
\newcommand\smvee{{\hskip -0.25ex \raise 0.2ex\hbox{$\scriptscriptstyle\vee$}}}
\def\we{{\smvee \hskip -0.2ex \smvee}}
\newcommand{\extpw}{\mathchoice{{\textstyle\bigwedge}}%
    {{\bigwedge}}%
    {{\textstyle\wedge}}%
    {{\scriptstyle\wedge}}}
\def\extp{{\extpw}\hspace{-2pt}}
\begin{document}

\title[On the Donaldson-Uhlenbeck compactification]{On the Donaldson-Uhlenbeck compactification of  instanton moduli spaces  on  class VII surfaces}
\author{Nicholas Buchdahl \and Andrei Teleman  \and Matei Toma}
\address{Nicholas Buchdahl: 
Department of Mathematics, University of Adelaide, Adelaide, 5005 Australia, email: nicholas.buchdahl@adelaide.edu.au }
\address{Andrei Teleman: Aix Marseille Université, CNRS, Centrale Marseille, I2M, UMR 7373, 13453 Marseille, France, email: andrei.teleman@univ-amu.fr }
\address{Matei Toma:  Institut de Mathématiques Elie Cartan, Université de Lorraine, B. P. 70239, 54506 Vandoeuvre-lès-Nancy Cedex, France
email: matei.toma@univ-lorraine.fr}
\begin{abstract} 
We study the following question: Let $(X,g)$ be a compact Gauduchon surface, $E$ be a differentiable rank $r$ vector bundle on $X$ and ${\cal D}$ be a fixed holomorphic structure on $D:=\det(E)$. Does the complex space structure on ${\cal M}_a^\ASD(E)^*$ induced by the Kobayashi-Hitchin correspondence   extend to a complex space structure on the Donaldson compactification $\overline{{\cal M}}_a^\ASD(E)$? Our results   answer this question in detail for the moduli spaces of $\SU(2)$-instantons with $c_2=1$ on   general (possibly unknown) class VII surfaces.
	
\end{abstract}

\maketitle

\tableofcontents
\section{Introduction}\label{intro}

\subsection{The moduli problem for vector bundles on compact complex manifolds}\label{KHsection}

An old, classical problem in complex geometry concerns the classification of the holomorphic structures on a fixed differentiable vector bundle on a complex manifold, up to isomorphism. The moduli problem for holomorphic vector bundles  is devoted to studying the corresponding set of isomorphism classes and the geometric structures (topologies, complex space structures, Hermitian metrics, etc)  one can naturally put on this set.  

More precisely, let $X$ be a compact complex manifold of dimension $n$ and $E$ be a differentiable vector bundle of rank $r$ on $X$. A semi-connection on $E$ is a first order operator $\delta:A^0(E)\to A^{0,1}(E)$ satisfying the $\bar\partial$-Leibniz rule: $\delta( f s)=\bar\partial f s+ f \delta(s)$ for any $f\in {\cal C}^\infty(X,\C)$ and $s\in A^0(E)$. The natural de Rham-type extension $A^{0,p}(E)\to A^{0,p+1}(E)$ will be denoted by the same symbol.   A semi-connection $\delta$ is called integrable if $F_\delta=0$, where $F_\delta\in A^{0,2}(\End(E))$ is the endomorphism-valued $(0,2)$-form defined by the composition $\delta\circ\delta: A^0(E)\to A^{0,2}(E)$.

Denote by ${\cal H}ol(E)$ the set of holomorphic structures on $E$, and  by ${\cal A}^{0,1}(E)$ (${\cal A}^{0,1}(E)^{\rm int}$) the space of (integrable) semi-connections on $E$. The Newlander-Nirenberg theorem gives a bijection ${\cal A}^{0,1}(E)^{\rm int}\to {\cal H}ol(E)$.  We will denote by ${\cal E}_\delta$ the holomorphic structure on $E$ associated with  an integrable semi-connection $\delta$. For an open set $U\subset X$ the space ${\cal E}_\delta(U)$ of holomorphic sections of $\resto{{\cal E}_\delta}{U}$ coincides with the kernel of the first order operator $\delta_U:A^0(U,E)\to A^{0,1}(U,E)$. Using this bijection and the natural ${\cal C}^\infty$-topology on $A^{0,1}(E)^{\rm int}$  we obtain a natural topology on the space ${\cal H}ol(E)$ of holomorphic structures on $E$. The moduli space of holomorphic structures on $E$ is the  topological space obtained as the quotient   %
$$\qmod{{\cal H}ol(E)}{\Aut(E)}=\qmod{{\cal A}^{0,1}(E)^{\rm int}}{\Aut(E)}$$
of ${\cal H}ol(E)$ by the group $\Aut(E):=\Gamma(X,GL(E))$ of differentiable automorphisms of $E$, which acts naturally on this space.  

Since $\GL(r)$ (the structure group of a vector bundle) is not a simple group, one considers a natural refinement of our moduli problem, which has a simple ``symmetry group".  Let ${\cal D}$ be a holomorphic structure on the determinant line bundle $D:=\det(E)$, and $\lambda\in {\cal A}^{0,1}(D)^{\rm int}$ be the corresponding integrable semi-connection. A ${\cal D}$-{\it oriented} holomorphic structure on $E$ is a holomorphic structure ${\cal E}$ on $E$ with $\det({\cal E})={\cal D}$.  

We recall that an $\SL(r,\C)$-vector bundle on $X$  is a rank $r$ vector bundle $E$ endowed with a trivialization of its determinant line bundle.  Therefore,  in this case, $D$ comes with a tautological trivial holomorphic structure $\Theta$; an $\SL(r,\C)$-{\it holomorphic structure} on $E$ is just a $\Theta$-oriented holomorphic structure in $E$ in our sense. This shows that classifying ${\cal D}$-oriented holomorphic structures on a fixed differentiable vector bundle gives the natural generalization of the classification   problem for $\SL(r,\C)$-holomorphic structures on an $\SL(r,\C)$-vector bundle.

The set ${\cal H}ol_{\cal D}(E)$ of ${\cal D}$-oriented holomorphic structures on $E$ can be identified with the subspace ${\cal A}_\lambda^{0,1}(E)^{\rm int}\subset{\cal A}^{0,1}(E)^{\rm int}$ defined by the condition $\det(\delta)=\lambda$.  The gauge group ${\cal G}^\C_E:=\Gamma(X,\SL(E))$ acts naturally on this subspace. We will focus on the moduli space of ${\cal D}$-oriented holomorphic structures on $E$, which, by definition, is the quotient space 
$${\cal M}_{\cal D}(E):=\qmod{{\cal H}ol_{\cal D}(E)}{{\cal G}^\C_E}=\qmod{{\cal A}_\lambda^{0,1}(E)^{\rm int}}{{\cal G}^\C_E}.
$$
 In general this quotient is highly  non-Hausdorff and cannot be endowed with a natural complex space structure.  It can be identified with the topological space associated with a holomorphic stack, but up till now it is not clear if, in  our   non-algebraic complex geometric framework, this approach leads to effective new results. On the other hand the classical point of view (studying moduli spaces whose points correspond to equivalence classes of holomorphic structures) has been  used with effective results, for instance in making progress towards the classification of class VII surfaces \cite{Te2}, \cite{Te4}.

We recall that a holomorphic structure ${\cal E}$ on $E$ is called {\it simple} if $H^0({\cal E}nd({\cal E}))=\C\id_E$. In contrast with ${\cal M}_{\cal D}(E)$, the moduli space ${\cal M}^\si_{\cal D}(E)$ of simple ${\cal D}$-oriented holomorphic structures has a natural, in general non-Hausdorff, complex space structure. This structure can be obtained in two different ways, which, a posteriori turn out to be equivalent. The first approach \cite{FK}  uses  classical deformation theory. The second \cite{LO} uses complex gauge theory. The equivalence of the two points of view has been established by Miyajima \cite{Miy}. 

As in algebraic geometry, in order to define Hausdorff moduli spaces of holomorphic structures, one needs a stability condition, which   depends on the choice of an additional structure on $X$. Whereas in algebraic geometry this additional structure is a polarisation on $X$ (i.e. the choice of   an ample line bundle on $X$), in our general complex geometric framework we need a Gauduchon metric on $X$, i.e. a Hermitian metric $g$ on $X$ whose associated  $(1,1)$-form $\omega_g$ satisfies the Gauduchon condition $dd^c(\omega_g^{n-1})=0$ \cite{Gau}. Such a metric defines a Lie group morphism
$$\deg_g:\Pic(X)\to\R,\ \deg_g({\cal L})=\int_X c_1({\cal L},h)\wedge \omega_g^{n-1},
$$  
where $c_1({\cal L},h)$ denotes the Chern form of the Chern connection associated with a Hermitian metric $h$ on  ${\cal L}$.  Using this degree map one can introduce a slope stability condition in the same way as in the algebraic framework:

  For a coherent sheaf ${\cal F}$ on $X$ we put $\deg_g({\cal F}):=\deg_g(\det({\cal F}))$. A non-zero torsion-free sheaf ${\cal F}$  on $X$   is called  (semi-)stable (with respect to $g$) if for every subsheaf ${\cal H}\subset {\cal F}$ with $0<\rk({\cal H}) < \rk({\cal F})$ one has
$$\frac{\deg_g({\cal H})}{\rk({\cal H})} < \frac{\deg_g({\cal F})}{\rk({\cal F})} \ \hbox{ respectively }\frac{\deg_g({\cal H})}{\rk({\cal H})} \leq \frac{\deg_g({\cal F})}{\rk({\cal F})} .
$$ 
 ${\cal F}$ is called polystable (with respect to $g$) if it splits as a direct sum ${\cal F}=\oplus_{i=1}^k {\cal F}_i$ of non-zero stable subsheaves ${\cal F}_i$ such that $\frac{\deg_g({\cal F}_i)}{\rk({\cal F}_i)}=\frac{\deg_g({\cal F})}{\rk({\cal F})}$ for $1\leq i\leq k$.

We denote by ${\cal M}^\si_{\cal D}(E)$,  ${\cal M}^\st_{\cal D}(E)$, ${\cal M}^\pst_{\cal D}$  the moduli  space of simple, stable, respectively polystable ${\cal D}$-oriented holomorphic structures on  $E$. Any stable bundle is simple and polystable, so ${\cal M}^\st_{\cal D}(E)\subset {\cal M}^\si_{\cal D}(E)$, ${\cal M}^\st_{\cal D}(E)\subset {\cal M}^\pst_{\cal D}(E)$.  Stability is an open condition {\it with respect to the classical topology} \cite{LT}, hence ${\cal M}^\st_{\cal D}(E)$ is open in both ${\cal M}^\si_{\cal D}(E)$, and  ${\cal M}^\pst_{\cal D}(E)$. In particular  ${\cal M}^\st_{\cal D}(E)$ comes with a natural complex space space structure induced from ${\cal M}^\si_{\cal D}(E)$. 

Note that in general, in the non-Kählerian framework, semi-stability is not an open condition (even with respect to the classical topology).\\

By choosing a Hermitian metric $h$ on $E$, ${\cal M}^\pst_{\cal D}(E)$ can be identified with the  moduli space of projectively Hermitian-Einstein connections on $E$, defined as follows. Denote  by ${\cal A}(E)$ the space of Hermitian connections on $(E,h)$. For a fixed Hermitian connection $a$ on the Hermitian line bundle $(D,\det(h))$ we put
$${\cal A}_a(E):=\{A\in {\cal A}(E)|\ \det(A)=a\}.$$
An element of  ${\cal A}_a(E)$ will be called an $a$-{\it oriented connection} on $(E,h)$. We define the space and the {\it moduli space of projectively Hermitian-Einstein $a$-oriented connections on} $(E,h)$  by 
$${\cal A}^\HE_a(E):=\{A\in {\cal A}_a(E)|\ \Lambda_g F_A^0=0,\ (F_A^0)^{0,2}=0\}\ ,\ {\cal M}^{\HE}_a(E):=\qmod{{\cal A}^\HE_a(E)}{{\cal G}_E},
$$
where $F_A^0$ stands for the trace-free part of the curvature $F_A$ of $A$, and  ${\cal G}_E:=\Gamma(X,\SU(E))$ is the $\SU(r)$-gauge group of $(E,h)$. If $n=2$,  the conditions $\Lambda_g F_A^0=0$, $(F_A^0)^{0,2}=0$ are equivalent to the anti-selfduality condition $(F_A^0)^+=0$, so ${\cal M}^{\HE}_a(E)$ is just the moduli space ${\cal M}^\ASD_a(E)$ of projectively ASD, $a$-oriented  connections on $(E,h)$.

The Kobayashi-Hitchin correspondence \cite{Bu1}, \cite{LT}, \cite{LY} generalising Donaldson's fundamental work \cite{D1} states that 
\newtheorem*{KH*}{Theorem}
\begin{KH*}[The Kobayashi-Hitchin correspondence]\label{KH} Let ${\cal D}$ be a fixed holomorphic structure on $D=\det(E)$ and let $a$  be the Chern connection of the pair $({\cal D},\det(h))$. 	The map $A\mapsto {\cal E}_{\bar\partial_A}$ induces a homeomorphism 
$${\cal M}^\HE_a(E)\textmap{\simeq} {\cal M}^\pst_{\cal D}(E)\,,$$
 which restricts to a homeomorphism ${\cal M}^\HE_a(E)^*\textmap{\simeq} {\cal M}^\st_{\cal D}(E)$ between the moduli space of irreducible projectively Hermitian-Einstein, $a$-oriented connections on $E$,  and the moduli space of stable ${\cal D}$-oriented holomorphic structures on $E$. 	
\end{KH*}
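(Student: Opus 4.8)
The plan is to build the map at the level of the two spaces, verify gauge equivariance, and then treat the two directions (polystability from Hermitian-Einstein, and existence of a Hermitian-Einstein metric from polystability) separately before upgrading the resulting bijection to a homeomorphism. I would start with the construction: given $A\in{\cal A}^\HE_a(E)$, the $(0,1)$-part $\bar\partial_A:=A^{0,1}$ is a semi-connection, and the two defining conditions enter complementarily through the splitting $F_A=F_A^0+\frac1r(\tr F_A)\id_E$. The hypothesis $(F_A^0)^{0,2}=0$ kills the trace-free part of $F_A^{0,2}$, while $\det(A)=a$ forces $\tr F_A=F_a$ to be the curvature of the Chern connection of $({\cal D},\det(h))$, which is of type $(1,1)$, so $(\tr F_A)^{0,2}=0$. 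Hence $F_{\bar\partial_A}=(F_A)^{0,2}=0$, so $\bar\partial_A$ is integrable and defines a holomorphic structure ${\cal E}_{\bar\partial_A}$; the same determinant computation gives $\det({\cal E}_{\bar\partial_A})={\cal D}$, so the image lies in ${\cal H}ol_{\cal D}(E)$. Since the inclusion ${\cal G}_E\hookrightarrow{\cal G}^\C_E$ intertwines the two actions, the map descends to the quotients.

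The \emph{easy} direction is to show $A\in{\cal A}^\HE_a(E)$ produces a polystable bundle, with irreducible $A$ corresponding to stable ${\cal E}_{\bar\partial_A}$. Here I would use the Hermitian-Einstein identity $\Lambda_g F_A^0=0$ together with a Chern-Weil expression for $\deg_g$ via curvature, which is meaningful precisely because the Gauduchon condition $dd^c(\omega_g^{n-1})=0$ makes $\deg_g$ a well-defined homomorphism on $\Pic(X)$. For any coherent subsheaf ${\cal F}\subset{\cal E}_{\bar\partial_A}$ one estimates $\deg_g({\cal F})$ by integrating the Hermitian-Einstein equation against the orthogonal projection onto ${\cal F}$, picking up a non-negative second-fundamental-form term; the resulting inequality gives $\frac{\deg_g({\cal F})}{\rk({\cal F})}\leq\frac{\deg_g({\cal E})}{\rk({\cal E})}$, with equality forcing an $h$-orthogonal holomorphic splitting on which $A$ remains Hermitian-Einstein. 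Iterating yields a polystable decomposition, and the absence of any such destabilizing or splitting subsheaf is equivalent to $H^0({\cal E}nd({\cal E}_{\bar\partial_A}))=\C\,\id_E$, i.e. to simplicity of ${\cal E}_{\bar\partial_A}$; within the polystable class this is exactly stability, and it matches irreducibility of $A$.

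The \emph{hard} direction is surjectivity, namely the existence statement that every polystable ${\cal D}$-oriented structure carries a Hermitian-Einstein metric and therefore lies in the image. For stable ${\cal E}$ this is the analytic core: one solves the nonlinear elliptic equation $\Lambda_g F_{h'}^0=0$ for a metric $h'$ on $E$ in the normalized class fixed by $\det(h)$, which on a general Gauduchon surface is precisely the content of \cite{LY}, \cite{LT}, generalizing Donaldson's heat-flow and the Uhlenbeck-Yau continuity method. For polystable ${\cal E}=\oplus_i{\cal E}_i$ one solves on each stable summand and takes the orthogonal direct sum. Together with uniqueness of the Hermitian-Einstein metric up to a constant, rigid once the determinant is pinned down by $a$, this produces a well-defined inverse and gives injectivity of the map, since two Hermitian-Einstein connections inducing isomorphic bundles must be ${\cal G}_E$-equivalent.

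Finally I would promote the bijection to a homeomorphism. Continuity of $A\mapsto{\cal E}_{\bar\partial_A}$ is immediate from continuity of $A\mapsto A^{0,1}$ and the quotient topologies. For the inverse I would argue through the gauge-theoretic slice: a convergent family of stable bundles is represented by $C^\infty$-convergent semi-connections, and elliptic regularity combined with continuous dependence of the solution of the Hermitian-Einstein equation on the holomorphic data (a priori estimates plus uniqueness) yields Hermitian-Einstein connections converging modulo ${\cal G}_E$. I expect the existence theorem to be the principal obstacle, being the only genuinely non-perturbative, nonlinear input; the non-Kählerian setting adds the subtlety that $\deg_g$ is tied to the fixed Gauduchon metric throughout, so one must ensure that stability is tested by the very same degree that appears in the Hermitian-Einstein equation.
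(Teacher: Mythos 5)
Your proposal is correct in outline, but note that the paper itself gives no proof of this statement: it is quoted as a known background theorem, with the entire content deferred to \cite{Bu1}, \cite{LT}, \cite{LY}, generalising \cite{D1}. Your sketch --- integrability and determinant bookkeeping to define the map, the Chern--Weil/second-fundamental-form estimate for the ``easy'' direction, the continuity-method existence theorem for the ``hard'' direction (which you rightly identify as the analytic core and defer to the same references), and uniqueness plus elliptic estimates to upgrade the bijection to a homeomorphism --- is precisely the standard argument carried out in those cited works, so it matches the proof the paper implicitly relies on.
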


\begin{re} In the special case when $E$ is an $\SL(r,\C)$-bundle we can choose $h$ such that the distinguished trivialisation of $\det(E)$ is unitary. With this choice $(E,h)$ becomes an $\SU(r)$-vector bundle, $a$  coincides with the trivial connection associated with this trivialisation,   ${\cal A}_a(E)$ is  the space of $\SU(r)$-connections on $(E,h)$, and ${\cal M}^\HE_a(E)$ is  the moduli space of Hermitian-Einstein $\SU(r)$-connections on $(E,h)$.   In the case of an $\SL(r,\C)$-bundle $E$ (or of  an $\SU(r)$-bundle $(E,h)$) we will omit the subscripts ${\cal D}$, $a$ in our notation, hence we will write ${\cal M}^\pst(E)$, ${\cal M}^\st(E)$  for the moduli spaces of polystable (stable) $\SL(r,\C)$-structures on $E$, and ${\cal M}^\HE(E)$, ${\cal M}^\HE(E)^*$ for the moduli spaces of (irreducible) Hermitian-Einstein $\SU(r)$-connections on $(E,h)$.
	\end{re}

The Kobayashi-Hitchin correspondence has important consequences: using standard gauge-theoretical techniques one can prove that the quotient topology of  ${\cal M}^\HE_a(E)$ is Hausdorff. The point is that, for this moduli space, the gauge group ${\cal G}_E$ is the group of sections of a locally trivial bundle with compact standard fibre. Therefore  ${\cal M}^\pst_{\cal D}(E)$, ${\cal M}^\st_{\cal D}(E)$ are Hausdorff spaces, and in particular ${\cal M}^\st_{\cal D}(E)$ is a Hausdorff complex subspace of the (possibly non-Hausdorff) moduli space ${\cal M}^\si_{\cal D}(E)$.  

\subsection{Extending the complex  structure to a compactification of ${\cal M}^\st_{\cal D}(E)$. The fundamental questions} \label{FundamQ}

The first natural question related to the Kobayashi-Hitchin correspondence is: {\it Does ${\cal M}^\pst_{\cal D}(E)$  have a natural complex space structure extending the canonical complex space structure of ${\cal M}^\st_{\cal D}(E)$?} The explicit examples described in \cite{Te2}, \cite{Te4} and the general results proved in \cite{Te5} show that in general the answer is negative. For instance, for a class VII surface $X$ with $b_2(X)=1$, a bundle $E$ with $c_2(E)=0$, $c_1(E)=c_1({\cal K}_X)$ and a suitable Gauduchon metric $g$ on $X$, the moduli space ${\cal M}^\pst_{\cal D}(E)$ can be identified with a compact disk, whose interior corresponds to  ${\cal M}^\st_{\cal D}(E)$. Moreover, on a class VII surface $X$  with $b_2(X)=2$ one obtains in a similar way  a moduli space ${\cal M}^\pst_{\cal D}(E)$ which can be identified with $S^4$, and in this case  ${\cal M}^\st_{\cal D}(E)$ corresponds to the complement of the union of two circles in $S^4$. These examples show that, in the general (possibly non-Kählerian) framework the complex space structure on ${\cal M}^\st_{\cal D}(E)$ does not extend to a complex space structure on ${\cal M}^\pst_{\cal D}(E)$. On the other hand, in general ${\cal M}^\pst_{\cal D}(E)$ is not compact, and when this is the case, even if the complex structure of ${\cal M}^\st_{\cal D}(E)$ does extend to ${\cal M}^\pst_{\cal D}(E)$, the result is not satisfactory. This motivates the following:
\begin{qu} \label{QQ1} Let $(X,g)$ be a compact Gauduchon manifold, $E$ be a differentiable rank $r$ vector bundle on $X$ and ${\cal D}$ be a fixed holomorphic structure on $D:=\det(E)$. Does the complex space structure of ${\cal M}^\st_{\cal D}(E)$ extend to a complex space structure on a natural compactification of it, which contains the space ${\cal M}^\pst_{\cal D}(E)$?	
\end{qu}

Note that, in the case $n=2$, one has a good candidate for ``a natural compactification of ${\cal M}^\st_{\cal D}(E)$ which contains ${\cal M}^\pst_{\cal D}(E)$": we identify ${\cal M}^\st_{\cal D}(E)$ with ${\cal M}_a^\ASD(E)^*$ via the Kobayashi-Hitchin isomorphism, and we embed the latter space in the Donaldson compactification of ${\cal M}_a^\ASD(E)$. Therefore, in the case $n=2$, one can ask a more precise version of Question 1: 
\begin{qu}   \label{QQ2}  Let $(X,g)$ be a compact Gauduchon surface, $E$ be a differentiable rank $r$ vector bundle on $X$ and ${\cal D}$ be a fixed holomorphic structure on $D:=\det(E)$. Does the complex space structure on ${\cal M}_a^\ASD(E)^*$ induced by the Kobayashi-Hitchin correspondence   extend to a complex space structure on the Donaldson compactification $\overline{{\cal M}}_a^\ASD(E)$?
\end{qu}

As the examples above show, in our general (possibly non-Kählerian) framework both questions have negative answer. Indeed, in our examples the space ${\cal M}_a^\ASD(E)$ is already compact, and admits no complex space structure at all.

On the other hand we believe that both questions have   positive answer when $(X,g)$ is Kähler. In other words
\begin{conj}\label{Conj1} Let $(X,g)$ be a compact Kähler manifold, $E$ be a differentiable rank $r$ vector bundle on $X$ and ${\cal D}$ be a fixed holomorphic structure on $D:=\det(E)$. Then the natural complex space structure of ${\cal M}^\st_{\cal D}(E)$ (induced from  ${\cal M}^\si_{\cal D}(E)$) extends to a natural compactification of it which contains ${\cal M}^\pst_{\cal D}(E)$. For $n=2$ the natural complex space structure of ${\cal M}^\st_{\cal D}(E)$ extends to the Donaldson compactification $\overline{{\cal M}}_a^\ASD(E)$ of ${\cal M}_a^\ASD(E)$. 
	\end{conj}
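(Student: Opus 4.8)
The plan is to establish the projective case by algebro-geometric means and then indicate how each algebraic ingredient would be replaced by a transcendental substitute; I expect the projective case to be essentially known, and the genuinely non-projective Kähler case, together with the gluing of the boundary strata, to be the real difficulty. So first I would assume $(X,g)$ projective with $\omega_g$ the Kähler form of an ample polarisation (such a $g$ is automatically Gauduchon). The candidate compactification is then the Gieseker--Maruyama moduli scheme $\overline{\cal M}^{\rm Gie}_{\cal D}(E)$ of $g$-semistable torsion-free sheaves ${\cal F}$ with $\det({\cal F})={\cal D}$ and $c_2({\cal F})=c_2(E)$, taken up to $S$-equivalence. This is a projective scheme whose open locus of stable locally free sheaves is exactly ${\cal M}^\st_{\cal D}(E)$, with complex structure there agreeing with the one induced from ${\cal M}^\si_{\cal D}(E)$ by the deformation-theoretic/gauge-theoretic comparison recalled above; since the polystable bundles embed as the locally free points of the strictly semistable locus, ${\cal M}^\pst_{\cal D}(E)\subset\overline{\cal M}^{\rm Gie}_{\cal D}(E)$, and this already settles the first assertion of the conjecture (in every dimension) in the projective case.

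For the $n=2$ statement I would pass from the Gieseker space to the Donaldson--Uhlenbeck compactification via the morphism constructed by Jun Li, building on Morgan's set-theoretic comparison. On a surface the reflexive hull ${\cal F}^{\vee\vee}$ is locally free and one has an exact sequence $0\to{\cal F}\to{\cal F}^{\vee\vee}\to Q\to 0$ with $Q$ of finite length; sending ${\cal F}$ to $\big([{\cal F}^{\vee\vee}],\ \sum_x\mathrm{len}_x(Q)\,x\big)$ is the underlying map of a projective morphism
\[
\Phi:\overline{\cal M}^{\rm Gie}_{\cal D}(E)\longrightarrow\overline{{\cal M}}_a^\ASD(E)
\]
once the target carries Jun Li's natural scheme structure. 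I would then \emph{define} the complex space structure on the Donaldson compactification to be this one; by construction $\Phi$ is an isomorphism over ${\cal M}^\st_{\cal D}(E)$, so the stable-locus structure extends. The points to check are that $\Phi$ is compatible with the topological identification underlying the analytic Donaldson compactification, and that the scheme structure on the target is independent of the auxiliary choices in its construction.

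In the transcendental Kähler setting no GIT quotient is available, and both ingredients must be rebuilt analytically. For the first assertion one would construct a complex-analytic moduli space of $g$-semistable sheaves, described near a strictly semistable point by a Kuranishi-type family combining deformations of the reflexive hull ${\cal F}^{\vee\vee}$ with a point of a Douady--Quot space recording the quotient $Q$. For the surface statement one then needs an analytic analogue of $\Phi$: a Hilbert--Chow-type map that forgets the sheaf structure of $Q$ and retains only the associated $0$-cycle in the symmetric product $S^{c_2(E)-k}(X)$, which is a complex space because $X$ is. Stratum by stratum the compactification is $\bigsqcup_k {\cal M}^\HE_{a,k}\times S^{c_2(E)-k}(X)$, where ${\cal M}^\HE_{a,k}$ denotes the moduli of charge-$k$ projectively Hermitian--Einstein connections, each factor carrying an evident complex structure; the content of the conjecture is that these assemble into a \emph{single} complex space structure extending that of the top stratum.

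The hard part is exactly this gluing in the non-algebraic case: producing holomorphic charts straddling the strata and proving their compatibility, i.e. showing that the gauge-theoretic Uhlenbeck degeneration (weak convergence of curvature densities to a measure) is matched, holomorphically in families, by the sheaf-theoretic one (passage to the double dual together with a length-$(c_2(E)-k)$ $0$-cycle). In the projective case this matching is precisely the content of $\Phi$ and is forced by the universal property of the GIT quotient; in the Kähler category it must be produced by hand, and I expect the most delicate point to be the behaviour at the deepest strata, where several instantons bubble at one point and both the Douady--Quot side and the symmetric-product side are most singular, so that holomorphicity of the comparison is least transparent.
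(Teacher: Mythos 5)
The statement you are addressing is stated in the paper as a \emph{conjecture}: the paper offers no proof of it, only evidence --- Li's identification of $\overline{{\cal M}}_a^\ASD(E)$ with the image of a regular map defined on a Gieseker moduli space (projective case, $\SL(2,\C)$-bundles), the results of Greb--Toma in the higher-dimensional projective case, and the fact that compact Kähler surfaces admit arbitrarily small projective deformations. Your proposal has essentially the same status: everything you actually carry out is the (known) projective case, and for the genuinely non-projective Kähler case --- which is the entire content of the conjecture beyond the existing literature --- you only describe a programme and explicitly leave the decisive step open (the ``gluing'', i.e.\ matching the Uhlenbeck degeneration with the sheaf-theoretic one holomorphically in families, ``must be produced by hand''). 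Identifying where the difficulty lies is valuable, but it is not a proof: as written, your text does not extend the complex structure across a single boundary stratum for any non-projective Kähler surface, so the conjecture remains exactly as open after your argument as before it.

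There is also a concrete error in the part you do argue. You claim that slope-polystable bundles ``embed as the locally free points of the strictly semistable locus'' of the Gieseker--Maruyama space, so that ${\cal M}^\pst_{\cal D}(E)\subset\overline{\cal M}^{\mathrm{Gie}}_{\cal D}(E)$, and that this ``already settles the first assertion'' in the projective case. This is false on two counts: slope polystability does not imply Gieseker semistability (for ${\cal F}={\cal L}\oplus{\cal L}^\smvee$ with $\deg_g({\cal L})=0$, Riemann--Roch shows the sign of $c_1({\cal L})\cdot c_1({\cal K}_X)$ can make one summand Gieseker-destabilizing, so such an ${\cal F}$ need not even define a point of the Gieseker space), and the Gieseker space parametrises S-equivalence classes, so it does not separate points the way ${\cal M}^\pst_{\cal D}(E)$ does. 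The compactification that actually contains ${\cal M}^\pst_{\cal D}(E)$ is the Donaldson--Uhlenbeck/slope-semistable one --- this is precisely why Greb--Toma construct moduli of slope-semistable rather than Gieseker-semistable sheaves --- and it is obtained from the Gieseker space only \emph{after} contracting by Li's morphism $\Phi$. So even in the projective case the first assertion of the conjecture requires $\Phi$ (or a substitute), not merely the existence of $\overline{\cal M}^{\mathrm{Gie}}_{\cal D}(E)$, and your first paragraph cannot stand as written.
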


This conjecture is known to hold in the projective algebraic framework (when $\omega_g$ is the Chern form of an ample line bundle ${\cal H}$ on $X$) for $\SL(2,\C)$-bundles. In this case, for $n=2$ one proves \cite{Li} that $\overline{{\cal M}}_a^\ASD(E)$  can be identified with the image in a projective space of a regular map defined on a Zariski closed subset of the Gieseker moduli space associated with the data $c_1=0$, $c_2=c_2(E)$, $r=2$. This Gieseker moduli space is a projective variety. Taking into account Li's result, and the fact that any compact Kähler surface admits arbitrary small deformations which are projective (\cite{Kod1}, \cite{Bu3}, \cite{Bu4}), Conjecture \ref{Conj1} becomes very natural. The recent results of \cite{GT} concern the higher dimensional projective case, and give further evidence for this conjecture.  \\

In this article we will study in detail Question 2  in an interesting special case: the moduli space of $\SL(2,\C)$-structures on an $\SL(2,\C)$-bundle $E$ with  $c_2(E)=1$ on a class VII surface $X$.  One has 
\begin{equation}\label{FirstDec}
\begin{split}
\overline{{\cal M}}^\ASD(E)&= {\cal M}^\ASD(E)\cup \big({\cal M}_0\times X\big)\\
&= {\cal M}^\ASD(E)^*\cup {\cal R} \cup \big({\cal M}_0^*\times X\big) \cup \big ({\cal R}_0\times X \big),
\end{split}	
\end{equation}
where ${\cal R}\subset {\cal M}^\ASD(E)$ is the subspace of reducible $\SU(2)$-instantons with $c_2=1$, and ${\cal M}_0$ (${\cal M}_0^*$, ${\cal R}_0$) stands for the moduli space of flat, respectively flat irreducible,   flat reducible, $\SU(2)$-instantons. The last three terms  in the decomposition (\ref{FirstDec}) are compact.  Therefore in our case,   Question 2   reduces to a set of three more specific questions: 
\vspace{2mm}\\
{\it Does the complex space structure of  ${\cal M}^\ASD(E)^*={\cal M}^\st(E)$ extend across   the  compact strata (a)    ${\cal R}_0\times X$, (b) ${\cal M}_0^*\times X$, (c) ${\cal R}$ ?}
\vspace{2mm}

The factor ${\cal R}_0$ of the third summand in \ref{FirstDec} can be further decomposed as a disjoint union as follows. Denote by $\mathrm{C}(X)$ the group  of characters $\Hom(H_1(X,\Z),S^1)$. One has a natural identification
$$\Cg(X):=\qmod{\mathrm{C}(X)}{\langle\jg\rangle} \textmap{\simeq}  {\cal R}_0,
$$
 where $\jg$ is the involution induced by the conjugation $S^1\to S^1$. The group    $\mathrm{C}(X)$ fits into the  following commutative diagram with exact rows:
\begin{diagram}[h=7mm]
0&\rTo &\mathrm{C}^0(X)\simeq S^1 &\rTo & \mathrm{C}(X) &\rTo&\Tors (H^2(X,\Z))&\rTo & 0\phantom{\,, }\\
 &&\dInto  && \dInto && \parallel &\\
0&\rTo &\Pic^0(X)\simeq \C^* &\rTo & \Pic^\T(X) &\rTo^{c_1}&\Tors (H^2(X,\Z))&\rTo &0\,,	
\end{diagram}
where
\begin{align*}
\mathrm{C}^0(X)&:=\Hom(H_1(X,\Z)/\Tors,S^1)\simeq S^1,\\
\Pic^\T(X)&:=\{[{\cal L}]\in\Pic(X)|\ c_1({\cal L})\in \Tors (H^2(X,\Z))\}.	
\end{align*}
The central vertical monomorphism  maps  a character $\chi\in \mathrm{C}(X)$ to the isomorphy class of the associated holomorphic Hermitian line bundle ${\cal L}_\chi$, and  identifies $\mathrm{C}(X)$ with   $\ker(\resto{\deg_g}{\Pic^\T(X)})$, which  is independent of the Gauduchon metric $g$. More precisely the congruence class $\mathrm{C}^c(X)\in \mathrm{C}(X)/\mathrm{C}^0(X)$ associated  with a torsion class $c\in \Tors (H^2(X,\Z))$ is identified with  the vanishing circle of $\deg_g$ on the congruence class $\Pic^c(X)\in\Pic(X)/\Pic^0(X)$.

The involution $\jg:\mathrm{C}(X)\to \mathrm{C}(X)$ maps $\mathrm{C}^c(X)$ diffeomorphically onto $\mathrm{C}^{-c}(X)$, in particular leaves invariant any circle $\mathrm{C}^c(X)$ associated with a class $c$ belonging to the $\Z_2$-vector space 
$$\Tors_2(H^2(X,\Z)):=\ker \big(H^2(X,\Z)\textmap{2\cdot} H^2(X,\Z)\big).$$
Let $\mu_2$ be the multiplicative group $\{\pm1\}$. For  $c\in \Tors_2(H^2(X,\Z))$ the set $\rho^c(X)$ of fixed points of the induced involution  $\mathrm{C}^c(X)\textmap{\jg^c} \mathrm{C}^c(X)$ is a $\mu_2$-torsor (in particular has two points), and the quotient 
$$\Cg^c(X):=\qmod{\mathrm{C}^c(X)}{\langle \jg_c\rangle}$$
is a segment. More precisely, the choice of a fixed point $l\in \rho^c(X)$ gives a homeomorphism 
$$\Cg^c(X)\textmap{h_l \simeq}\qmod{S^1}{z\mapsto \bar z}\simeq[-1,1].
$$
Let $\Tg(X)$ be the quotient of $\Tors(H^2(X,\Z))$ by the involution $c\mapsto -c$, and denote by $\Tg_0(X)$, $\Tg_1(X)$ the subsets of $\Tg(X)$ which correspond respectively to $\Tors_2(H^2(X,\Z))$ and  $\Tors(H^2(X,\Z))\setminus \Tors_2(H^2(X,\Z))$. 

For a class $\cg=[c]\in\Tg(X)$ denote by $\Cg^\cg(X)$ the quotient of $\mathrm{C}^c(X)\cup \mathrm{C}^{-c}(X)$ by the involution induced by $\jg$ on this union. Note that for  $\cg=\{c\}\in \Tg_0(X)$ one has  $\Cg^\cg(X)=\Cg^c(X)$ whereas,  for $\cg\in \Tg_1(X)$, the choice of a representative $c\in\cg$ gives an identification $\Cg^\cg(X)\simeq \mathrm{C}^c(X)$. Therefore the space ${\cal R}_0$ of reducible flat instantons on $X$  decomposes as a  finite disjoint union of segments and  circles:
\begin{equation}\label{SegmCirc}
{\cal R}_0\simeq\Cg(X)=\union_{\cg\in \Tg(X)} \Cg^\cg	(X)= \bigg( \union_{\cg\in \Tg_0(X)} \Cg^\cg(X)\bigg)\union  \bigg(\union_{\cg\in \Tg_1(X)} \Cg^\cg	(X)\bigg).
\end{equation}
The set of all vertices appearing in the first term of  (\ref{SegmCirc}) can be identified with the set of fixed points of $\jg$, which coincides with 
$$\rho(X):=H^1(X,\mu_2)=\Hom(H_1(X,\Z),\mu_2)=\union_{c\in \Tors_2(H^2(X,\Z))}\rho^c(X)\,,$$
 and has an important gauge theoretical interpretation: it corresponds  to the moduli space   of reducible flat $\SU(2)$-instantons with {\it non-abelian stabiliser} $\SU(2)$.  All other points of ${\cal R}_0$ correspond to reducible flat $\SU(2)$-instantons with abelian stabiliser $S^1$. Therefore, from a gauge theoretical point of view, the first term in the decomposition (\ref{SegmCirc}) (which is always non-empty) is  substantially more complex than the second, because a segment $\Cg^c(X)$  contains both abelian and non-abelian reductions. Our main result states
\begin{thry}\label{Th1}
Let $X$ be a class VII surface, and let $E$ be  an $\SL(2,\C)$-bundle on $X$ with $c_2(E)=1$. The complex space structure of ${\cal M}^\ASD(E)^*={\cal M}^\st(E)$ extends across ${\cal R}_0\times X$, and $\overline{{\cal M}}^\ASD(E)$ is a smooth complex 4-fold at any reducible virtual point $([A],x)\in {\cal R}_0\times X$. 	
\end{thry}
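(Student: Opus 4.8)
The plan is to reduce the assertion to a local statement near a single virtual point $([A],x)\in{\cal R}_0\times X$ and to build there a holomorphic chart compatible with the Kobayashi--Hitchin complex structure on the smooth stratum ${\cal M}^\ASD(E)^*={\cal M}^\st(E)$. Following the description after \eqref{SegmCirc}, I would split the analysis according to the stabiliser of the flat reducible connection $A$: at the interior points of the segments $\Cg^\cg(X)$ and at every point of the circles the stabiliser is abelian, $S^1$, and the associated flat bundle is ${\cal E}_0={\cal L}\oplus{\cal L}^{-1}$ with ${\cal L}\neq{\cal L}^{-1}$; at the vertices $l\in\rho(X)=H^1(X,\mu_2)$ the stabiliser is the non-abelian $\SU(2)$ and ${\cal E}_0={\cal L}^{\oplus2}$ with ${\cal L}^2={\cal O}$. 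The abelian case is to be treated first, the non-abelian one being the genuinely difficult one.

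Two descriptions of a neighbourhood of $([A],x)$ must be produced and matched. On the gauge-theoretic side, Taubes--Donaldson--Kronheimer gluing presents such a neighbourhood as a quotient of a family of approximate instantons, grafted from the basic charge-$1$ instanton, with gluing data consisting of the bubble point $y\in X$, a scale $\lambda\geq0$, and a framing, all modulo the stabiliser $\Gamma_A$; the virtual point itself is the locus $\lambda=0$. On the complex-geometric side, I would realise the same neighbourhood through a holomorphic family of coherent sheaves: elementary modifications of the flat double-dual ${\cal E}_0$ along a length-one subscheme $\{y\}$, together with their smoothings, conveniently organised by the extensions $0\to{\cal L}'\to{\cal E}\to({\cal L}')^{-1}\otimes{\cal I}_y\to0$ with ${\cal L}'$ ranging in the $\C^*$-torsor $\Pic^{c_1({\cal L})}(X)$. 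The assignment of a sheaf to the pair (polystable double-dual, support of the quotient) then furnishes the comparison with $\overline{{\cal M}}^\ASD(E)$, and on the locally free locus it is a biholomorphism onto ${\cal M}^\st(E)$ by the Kobayashi--Hitchin correspondence.

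Smoothness then follows from two ingredients. First, unobstructedness on the open stratum: using $\chi({\cal O}_X)=0$ for class VII surfaces together with a slope estimate ruling out non-zero maps ${\cal E}\to{\cal E}\otimes K_X$, one obtains $H^2(\End_0({\cal E}))=0$, hence $\dim H^1(\End_0({\cal E}))=4$ and a smooth complex $4$-fold away from the boundary. Second, one must understand the local model transverse to the $X$-factor. The key point is that the Donaldson--Uhlenbeck compactification, in contrast to a Gieseker-type one, collapses the whole $\P^1$ of modification directions lying over a fixed reducible double-dual to a single virtual point. In the abelian case this collapse is the contraction of the zero section of an ${\cal O}_{\P^1}(-1)$-type family, whose total space is smooth and whose contraction is the smooth blow-down to $\C^2$; equivalently, the gauge-theoretic gluing cone $C(\SU(2)/S^1)=C(S^2)\cong\R^3$ is already smooth, and together with the one real direction along ${\cal R}_0$ assembles into $\C^2$. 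Combined with the two coordinates from $y\in X$ this exhibits a smooth complex $4$-fold carrying the extension of the Kobayashi--Hitchin structure.

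The hard part will be the non-abelian vertices. There ${\cal E}_0={\cal L}^{\oplus2}$ and the full stabiliser $\SU(2)$ acts transitively on the $\P^1$ of modification directions, so the naive quotient model threatens a $\C^2/\{\pm1\}$ (or worse) singularity rather than a smooth point; the whole force of the theorem is that, with the correct complex coordinates and with the bubble direction taken into account, the contraction is again the smooth blow-down. Establishing this requires a careful $\SU(2)$-equivariant analysis of the gluing and of the induced complex structure, and is where I expect the main technical effort to lie. A final, more routine, step is to verify that the charts built over the abelian interior of a segment $\Cg^\cg(X)$ glue smoothly to those at its two non-abelian endpoints, so that the extended complex structure is well defined on an entire neighbourhood of ${\cal R}_0\times X$; the compatibility of the two gluing regimes along this transition is the last thing to control.
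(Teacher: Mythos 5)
Your proposal correctly isolates where the difficulty sits, but it defers precisely the point that constitutes the theorem: at a vertex $l\in\rho(X)$ you announce that a ``careful $\SU(2)$-equivariant analysis'' will show the contraction is a smooth blow-down, without supplying the mechanism — and the route you set up cannot supply it. At such a vertex the flat double dual is ${\cal E}_0={\cal L}^{\oplus 2}$ with ${\cal L}^{\otimes 2}\simeq{\cal O}_X$; since $\Aut({\cal E}_0)$ acts transitively on $\P({\cal E}_0(y))$, \emph{every} elementary modification $\ker({\cal E}_0\to{\cal O}_{\{y\}})$ is isomorphic to ${\cal L}\oplus({\cal L}\otimes{\cal I}_y)$, whose endomorphism algebra is $3$-dimensional. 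So near the vertices your family contains no simple sheaves at all, it admits no classifying map to a moduli space of simple sheaves, and your extension-theoretic parametrisation is not even a fibration there, because $\dim\Ext^1\big(({\cal L}')^{\smvee}\otimes{\cal I}_y,{\cal L}'\big)$ jumps from $1$ to $2$ exactly when ${\cal L}'^{\otimes2}\simeq{\cal O}_X$. The paper's resolution is not an equivariant or quotient analysis: it discards the split bundle altogether and represents the vertex virtual points by elementary modifications ${\cal F}({\cal E}_p,x,\eta)$ of the \emph{non-split} extension $0\to{\cal L}\to{\cal E}_p\to{\cal L}\to 0$ (extension class the generator of $H^1({\cal O}_X)$), which arises canonically as the fibre over the branch point of the push-forward $(\pi\times\id_X)_*(\mathscr{L})$ of the Poincaré bundle under the double cover $\Pic^\T\to\Pg=\Pic^\T/(l\mapsto l^\smvee)$ (Lemma \ref{family}). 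These modifications \emph{are} simple and regular (Corollary \ref{F-Bundles}, Proposition \ref{embedding}), the parameter space $\Ag_\varepsilon\times X$ is smooth because the coordinate transverse to ${\cal R}_0$ is taken downstairs on the double cover, and the smooth $4$-dimensional chart is an open neighbourhood of $V_\varepsilon(\Ag_\varepsilon\times X)$ in ${\cal M}^\si$, fed into the gluing Lemma \ref{glue}. No $\C^2/\{\pm1\}$ ever threatens, and — contrary to your picture — no $\P^1$ is contracted in the reducible case (for reducible double duals all admissible modification directions give isomorphic sheaves; the Fujiki contraction is the mechanism of Theorem \ref{Th2}, for \emph{irreducible} flat limits, not of Theorem \ref{Th1}).

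Two further gaps. First, your regularity argument is insufficient: ruling out maps ${\cal E}\to{\cal E}\otimes{\cal K}_X$ by a ``slope estimate'' requires $\deg_g({\cal L}^{\pm2}\otimes{\cal K}_X)<0$, i.e.\ essentially $\deg_g({\cal K}_X)<0$, which Theorem \ref{Th1} does not assume and which fails for some Gauduchon metrics (e.g.\ when the minimal model is an Inoue surface); the paper instead proves $H^0({\cal K}_X\otimes{\cal M}^\smvee)=0$ for all flat ${\cal M}$ by a pseudo-effectivity and classification argument on the minimal model (Lemma \ref{H2FlatLB}). Second, the step you describe as ``the assignment of a sheaf to the pair (polystable double-dual, support of the quotient) then furnishes the comparison with $\overline{{\cal M}}^\ASD(E)$'' hides the genuinely non-Kählerian obstructions: Gieseker (and slope) stability is \emph{not open} here, so every neighbourhood in ${\cal M}^\si$ of the boundary sheaves contains non-semistable sheaves, and ${\cal M}^\si$ is non-Hausdorff. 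The paper must therefore define the comparison map $f$ piecewise — the identity on the stable locus, replacement of each destabilised singular sheaf $V_\varepsilon(p,x)$ ($p\notin\Cg$) by the stable Serre extension ${\cal S}(p,x)$ non-separable from it, and the virtual point $(\ag_p,x)$ on ${\cal V}_0$ — then prove injectivity, holomorphy (via elementary transformations of sheaf deformations, Corollary \ref{MainComparisonCo}), and continuity at the virtual points via the continuity theorem \cite{BTT}, before Lemma \ref{glue} applies. Without these ingredients your two local models are never actually matched, even over the abelian part of ${\cal R}_0$.
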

This result is surprising for two reasons: first, ${\cal R}_0$ is a union of segments and circles, which are not complex geometric objects; second, since  the vertices of the segments $\Cg^\cg(X)$ ($\cg\in\Tg_0(X)$) are isolated non-abelian reductions, one expects essential singularities at   these points. 

Our second result concerns the extensibility  of the complex space structure across ${\cal M}_0^*\times X$:  
\begin{thry}\label{Th2}
Let $X$ be a class VII surface, and let $E$ be be an $\SL(2,\C)$-bundle on $X$ with $c_2(E)=1$. Then ${\cal M}_0^*$ consists of finitely many simple points, and the complex space structure of ${\cal M}^\ASD(E)^*={\cal M}^\st(E)$ extends across ${\cal M}_0^*\times X$.  For every $[A]\in {\cal M}_0^*$ the surface $\{[A]\}\times X$ has an open neighbourhood in  $\overline{\cal M}^\ASD(E)$ which is a normal complex space whose singular locus is $\{[A]\}\times X$, and the normal cone of this singular locus can be identified with   the cone bundle of degenerate elements in ${\cal K}_X^\smvee\otimes S^2({\cal E})$, where ${\cal E}$ is the holomorphic bundle associated with $A$.  	
\end{thry}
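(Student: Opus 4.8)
The plan is to combine the gauge-theoretic description of the Donaldson--Uhlenbeck boundary near a once-bubbled ideal instanton with a holomorphic identification of the resulting local model, the latter being governed by the square map ${\cal E}\to S^2({\cal E})$. First I would dispose of the two elementary assertions. Irreducible flat $\SU(2)$-connections $A$ correspond to irreducible unitary representations $\pi_1(X)\to\SU(2)$ and, via Kobayashi--Hitchin with $c_1=c_2=0$, to stable holomorphic bundles ${\cal E}={\cal E}_A$ with vanishing Chern classes carrying a flat structure. Finiteness of ${\cal M}_0^*$ and reducedness of its points both amount to the infinitesimal rigidity $H^1(X,\ad_A)=0$; I would deduce this from the structure of $\pi_1(X)$ for a class VII surface (using $b_1(X)=1$), which forces the irreducible $\SU(2)$-characters to be isolated, together with compactness of the flat moduli space. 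This is what lets me treat ${\cal E}$ as \emph{fixed} in the local analysis, so that the boundary stratum near $[A]$ is exactly the surface $\{[A]\}\times X$.

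Next I would set up the local model of $\overline{\cal M}^\ASD(E)$ near a boundary point $([A],x)\in{\cal M}_0^*\times X$. By the standard grafting theory (Taubes, Donaldson), every ideal or honest ASD connection close to $([A],x)$ is obtained by grafting a single highly concentrated one-instanton, centred at a point $y$ near $x$ and of small scale, onto the fixed flat background $A$. Since $A$ is irreducible its stabiliser is $\{\pm\id\}$, so the gluing datum reduces to the centre $y\in X$ together with a framing direction $s$ in the fibre ${\cal E}_y$ taken modulo $s\sim -s$, the scale being encoded by the norm of $s$. I would then show that the resulting gluing map $(y,s)\mapsto[\,\cdot\,]$, with $s=0$ corresponding to the ideal point $([A],y)$, is a homeomorphism onto a neighbourhood of $\{[A]\}\times X$ and is holomorphic away from $\{s=0\}$.

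The heart of the argument is the analysis of the Kuranishi obstruction map of the ideal instanton, which simultaneously produces the complex structure and identifies the cone. The obstruction space is $H^2(\End_0({\cal E}))$, which Serre duality identifies with $H^0({\cal K}_X\otimes\End_0({\cal E}))^\smvee=H^0({\cal K}_X\otimes S^2({\cal E}))^\smvee$, using the isomorphism $\End_0({\cal E})\cong S^2({\cal E})$ valid for an $\SL(2,\C)$-bundle. The leading term of the Kuranishi map, evaluated on the gluing datum $s\in{\cal E}_y$, is the quadratic map $s\mapsto s\cdot s\in S^2({\cal E}_y)$, twisted by $({\cal K}_X^\smvee)_y$ through the pairing with the obstruction space; its fibrewise zero locus over $X$ is precisely the set of rank-$\le 1$, i.e. degenerate, symmetric tensors. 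Reading $s\cdot s$ as a holomorphic coordinate then realises the fibre of the normal cone over $x$ as the image of ${\cal E}_x\to S^2({\cal E}_x)$, $s\mapsto s\cdot s$, namely the affine quadric cone $\C^2/\{\pm1\}$ of degenerate elements twisted by $({\cal K}_X^\smvee)_x$; assembling over $x\in X$ yields the cone bundle of degenerate elements in ${\cal K}_X^\smvee\otimes S^2({\cal E})$. For the structural claims I would note that each such fibre is the affine cone over the Veronese $\P({\cal E}_x)\hookrightarrow\P(S^2({\cal E}_x))$, hence normal, and that normality persists in the family over $X$, so the neighbourhood is a normal complex $4$-fold; its singular locus is exactly the vertex section $\{s=0\}\cong\{[A]\}\times X$, since the honest instantons ($s\ne0$) are smooth points of the stable locus (where $H^2(\End_0)=0$) while each vertex is a genuine $A_1$-singularity created by the $2:1$ branching $s\mapsto s\cdot s$.

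The hard part will be the holomorphicity in the two preceding steps: the Donaldson--Uhlenbeck space is a priori only a metrizable topological (real-analytic) space, so the real work is to promote the gauge-theoretic gluing parameters into genuine holomorphic coordinates and, in particular, to show that the real scale together with the framing sphere combine precisely into the complex quantity $s\cdot s$. This demands sharp control of the grafting maps in a holomorphic gauge, matching their leading-order complex-analytic behaviour with the square map, and then checking that the Uhlenbeck topology coincides with that of the cone bundle. Establishing \emph{normality} of the limiting model, rather than merely computing a formal tangent cone, is the final delicate point, and is what makes the conclusion — that the singular locus is the entire boundary surface with normal cone as stated — a genuine statement about $\overline{\cal M}^\ASD(E)$ and not just about its deformation theory.
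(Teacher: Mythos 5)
There is a genuine gap, and you have in fact located it yourself: everything in your outline up to the last paragraph reproduces what the paper gets from gauge theory alone (Plantiko's rigidity theorem \cite{Pl} for finiteness and simplicity of ${\cal M}_0^*$, and the local model theorem \cite[Proposition 8.2.4]{DK} for the statement that $\{[A]\}\times X$ has a neighbourhood in $\overline{\cal M}^\ASD(E)$ homeomorphic to the cone bundle $C^\varepsilon_X$, whose fibres are cones over $\SO(3)$, i.e.\ copies of $\C^2/\{\pm 1\}$). But the step you defer to ``sharp control of the grafting maps in a holomorphic gauge'' is not a deferrable technicality — it is the entire content of the theorem, and no amount of estimates on Taubes-type gluing maps is known to produce holomorphic coordinates or, worse, \emph{normality} of the resulting complex structure. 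A secondary but real error: your ``Kuranishi obstruction'' picture is internally inconsistent, since $H^2({\cal E}nd_0({\cal E}))=0$ for these bundles (this is exactly \cite[Theorem 2.1]{Pl}, which the paper needs in order to invoke the local model theorem at all); there is no obstruction map whose structure creates the cone. The cone is the quotient of the gluing data by the stabiliser $\{\pm\id\}$ — equivalently the image of the squaring map ${\cal E}(x)\to S^2({\cal E}(x))$ — not a zero locus.

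The paper closes the gap by never touching the gluing maps analytically. It builds the holomorphic model on the sheaf side: for each $x\in X$ and each line $y\in\P({\cal E}(x))$ one takes the singular sheaf ${\cal F}_y=\ker({\cal E}\to q_y\otimes{\cal O}_{\{x\}})$, and these fit into a flat family over $\P({\cal E})$ giving a holomorphic embedding $\P({\cal E})\hookrightarrow{\cal M}^\si$ whose normal bundle is computed to be $\pi^*({\cal K}_X^\smvee)\otimes{\cal O}_{\cal E}(-2)$. Fujiki's blow-down criterion \cite{Fuj} (plus \cite{Tom}) then contracts the $\P^1$-fibres of $\P({\cal E})\to X$ inside a suitable neighbourhood, producing a \emph{normal} complex space ${\cal V}$ containing $X$ — normality comes for free from the contraction theorem, which is what your approach cannot supply. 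Finally, the bridge between this complex-analytic model and the Uhlenbeck topology is the continuity theorem \cite{BTT}: it yields a continuous map from the sheaf-theoretic neighbourhood to $\overline{\cal M}^\ASD(E)$, which descends to a continuous bijection from the compact set $\overline{\cal V}$ onto a neighbourhood of $\{[A]\}\times X$, hence a homeomorphism. Without an ingredient playing the role of \cite{BTT}, matching complex-geometric degeneration of bundles against Uhlenbeck bubbling, your plan cannot be completed; and the normal cone identification in the paper is then a consequence of the normal bundle formula and \cite[Section B.6]{Ful}, not of a pointwise Veronese computation.
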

Here we denoted by $S^2({\cal E})$ the second symmetric power of ${\cal E}$; an element $\eta\otimes \sigma\in {\cal K}_X(x)^\smvee\otimes S^2({\cal E}(x))$ is degenerate  if the associated linear map ${\cal E}(x)\to {\cal E}(x)^\smvee\otimes {\cal K}_X(x)^\smvee$ has non-trivial kernel. Since $\rk({\cal E})=2$, this   is equivalent to the condition that  $\sigma$ belongs to the image of the squaring map ${\cal E}(x)\to S^2({\cal E}(x))$.

Finally, the extensibility of the    complex space structure across the subspace ${\cal R}\subset {\cal M}^\ASD(E)$ has been studied in detail in a more general framework in \cite{Te5}. In our special case the result is the following

 \begin{thry}\label{Th3} \cite{Te5}
 Let $X$ be a class VII surface endowed with a Gauduchon metric $g$ with $\deg_g({\cal K}_X)<0$, and let $E$ be  an $\SL(2,\C)$-bundle on $X$ with $c_2(E)=1$. Then ${\cal M}^\ASD(E)^*={\cal M}^\st(E)$ is a smooth 4-fold, and the reduction locus ${\cal R}\subset {\cal M}^\ASD(E)$ is	a union of $b_2(X)|\Tors(H^2(X,\Z))|$ circles. Any such circle  has a  neighbourhood which can be identified with a neighbourhood of the singular circle in a  flip passage; in particular the holomorphic structure  of ${\cal M}^\ASD(E)^*$ does not extend across any of these circles.  
 \end{thry}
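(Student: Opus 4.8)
The plan is to transport everything through the Kobayashi--Hitchin correspondence, which identifies ${\cal M}^\ASD(E)^*$ with the moduli space ${\cal M}^\st(E)$ of stable $\SL(2,\C)$-bundles and identifies the reduction locus ${\cal R}$ with the polystable split bundles, and then to extract all three assertions from the deformation theory of $\End_0({\cal E})$. The three claims become (i) unobstructedness of stable bundles, (ii) an enumeration of the admissible splittings, and (iii) a Kuranishi-model computation at the split bundles.

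For the smoothness statement I would first use that a stable bundle ${\cal E}$ is simple, so $H^0(\End_0({\cal E}))=0$, and that by Serre duality $H^2(\End_0({\cal E}))$ is dual to a subspace of $\Hom({\cal E},{\cal E}\otimes{\cal K}_X)$. The key input is the hypothesis $\deg_g({\cal K}_X)<0$: any nonzero $\phi\colon{\cal E}\to{\cal E}\otimes{\cal K}_X$ would have image that is simultaneously a quotient of the slope-$0$ stable sheaf ${\cal E}$ and a subsheaf of the stable sheaf ${\cal E}\otimes{\cal K}_X$ of slope $\deg_g({\cal K}_X)<0$, and a comparison of slopes, together with the fact that $\phi$ cannot be an isomorphism (else $\det$ would force a section of ${\cal K}_X^{2}$), rules this out. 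Hence the obstruction space vanishes, ${\cal M}^\st(E)$ is smooth, and Riemann--Roch on the class VII surface (where $\chi({\cal O}_X)=0$ and $c_2(\End_0({\cal E}))=4c_2(E)=4$) gives $\dim_{\C}{\cal M}^\st(E)=-\chi(\End_0({\cal E}))=4c_2(E)-3\chi({\cal O}_X)=4$.

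For the enumeration, a reducible instanton corresponds to a polystable splitting ${\cal E}=L\oplus L^{\smvee}$ with $\deg_g(L)=0$ and, from $c_2(E)=-c_1(L)^2=1$, with $\ell:=c_1(L)$ satisfying $\ell^2=-1$. Since $b_+(X)=0$, the intersection form on $H^2(X,\Z)/\Tors$ is negative definite, hence by Donaldson's theorem diagonalizable, so its square-$(-1)$ vectors are exactly the $\pm e_i$ ($1\le i\le b_2(X)$); as $\Tors(H^2(X,\Z))$ lies in the radical of the integral form, the admissible classes $\ell$ number $2b_2(X)\,|\Tors(H^2(X,\Z))|$. The reduction depends only on the unordered pair $\{L,L^{\smvee}\}$, i.e.\ on $\pm\ell$, which halves the count to $b_2(X)\,|\Tors(H^2(X,\Z))|$, and for each admissible $\ell$ the degree-$0$ locus inside the torsor $\Pic^{\ell}(X)$ over $\Pic^0(X)\simeq\C^*$ is a single circle. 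This yields the asserted union of $b_2(X)\,|\Tors(H^2(X,\Z))|$ circles.

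The decisive and hardest part is the local analysis at a point of ${\cal R}$. At ${\cal E}_0=L\oplus L^{\smvee}$ one has $\End_0({\cal E}_0)={\cal O}_X\oplus L^{2}\oplus L^{-2}$, so $H^1(\End_0({\cal E}_0))=H^1({\cal O}_X)\oplus H^1(L^{2})\oplus H^1(L^{-2})$ carries the residual $\C^*$-stabiliser acting with weights $0,+2,-2$; the degree hypothesis again forces $H^0$ and $H^2$ of $L^{\pm2}$ to vanish, so the Kuranishi map is zero and the local model is the GIT quotient of this $5$-dimensional space by $\C^*$. Riemann--Roch gives $\dim H^1(L^{2})=2+\ell\cdot{\cal K}_X$ and $\dim H^1(L^{-2})=2-\ell\cdot{\cal K}_X$, summing to $4$; and since on a class VII surface ${\cal K}_X=\sum\pm e_i$ is the unique characteristic vector of square $-b_2(X)$, one has $\ell\cdot{\cal K}_X=\pm1$, so the two weight spaces have dimensions $\{1,3\}$. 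I would then identify a neighbourhood of the circle with the family of these GIT quotients over the circle and recognise it as the \emph{flip passage} of \cite{Te5}: the two chamber-quotients are the smooth $4$-folds obtained by projectivising the $+2$-, resp.\ $-2$-weight space (a point $\mathbb{P}^0$ on one side and a $\mathbb{P}^2$ on the other), glued along the singular cone over the circle. The main obstacle is to promote this formal GIT picture to the genuine, Hausdorff Donaldson moduli space, i.e.\ to match the $S^1$-equivariant gauge-theoretic Kuranishi model and its moment-map reduction with the flip passage, and then to deduce non-extendability: since the projectivised normal cones approached from the two sides have different dimensions ($\mathbb{P}^0$ versus $\mathbb{P}^2$), no complex-space structure on a neighbourhood of the circle can restrict on both sides to the given structure of ${\cal M}^\st(E)$, exactly as a nontrivial flip fails to be biholomorphic across its centre.
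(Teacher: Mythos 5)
The paper itself offers no proof of Theorem~\ref{Th3}: it is imported verbatim from \cite{Te5}, so your proposal has to be judged as an attempt to reprove Teleman's result from scratch. A good deal of what you write is correct. The smoothness argument is complete: stability gives $H^0({\cal E}nd_0({\cal E}))=0$, and $\deg_g({\cal K}_X)<0$ kills $H^2({\cal E}nd_0({\cal E}))$ via Serre duality, the slope comparison for a rank-one image, and the non-existence of sections of ${\cal K}_X^{\otimes 2}$; Riemann--Roch then gives dimension $4$. The enumeration of the circles is also correct ($c_2=1$ forces $c_1(L)^2=-1$, Donaldson diagonalisation plus torsion gives $2b_2|\Tors|$ classes, unordered pairs halve this, and each class contributes the degree-zero circle of $\Pic^{c_1(L)}$). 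So is the local numerical data at a reduction: weights $(0,+2,-2)$ on $H^1({\cal O}_X)\oplus H^1(L^{\otimes 2})\oplus H^1(L^{\smvee\otimes 2})$, vanishing obstruction space, and dimensions $\{1,3\}$ from $h^1(L^{\pm 2})=2\pm c_1(L)\cdot c_1({\cal K}_X)$ with $c_1(L)\cdot c_1({\cal K}_X)=\pm 1$. This is exactly the input data of the flip passage.

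The gaps are in the only hard assertion. First, ``the local model is the GIT quotient of this $5$-dimensional space by $\C^*$'' is false as literally stated: that affine quotient is $\mathrm{Spec}\,\C[z,uv_1,uv_2,uv_3]\simeq\C^4$, which is smooth, whereas the Donaldson moduli space near a reduction is homeomorphic to $\R\times(\C^4/S^1)$, a product of $\R$ with the cone over $\P^3$, which is not even a topological manifold. The correct model is the one-parameter family of moment-map (variation-of-GIT) quotients over the degree parameter, and proving that the genuine Hausdorff gauge-theoretical moduli space --- with its complex structure on the stable locus --- is homeomorphic to this family, globalised along the circle and with the complex weight-zero direction $H^1({\cal O}_X)\simeq\C$ correctly matched to the real pair (circle direction, degree direction), is precisely the content of \cite{Te5}; you explicitly defer this as ``the main obstacle'' and never carry it out. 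Second, your non-extendability deduction is an analogy rather than a proof: the circle has real codimension $7$, so a small neighbourhood minus the circle is \emph{connected} --- there are no local ``two sides'' --- and the asymmetry $\P^0$ versus $\P^2$ of the two exceptional loci does not by itself preclude a complex-space structure on a full neighbourhood. Once the flip-passage identification is in hand, a correct deduction can be purely topological: along the circle the flip passage is not a topological manifold (the link of a point is $\Sigma\P^3$, not a homology $S^7$), while any complex-space structure extending the given smooth structure on the complement would have its singular locus contained in the circle, hence (being an analytic set inside a real circle) discrete, so it would be smooth --- and thus a manifold --- at most points of the circle, a contradiction.
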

 In other words, for class VII surfaces with $b_2>0$, the holomorphic structure does {\it not} extend across the circles of reductions in the moduli space, but (supposing  $\deg_g({\cal K}_X)<0$) the structure of the moduli space around such a circle is perfectly understood. Note that the condition $\deg_g({\cal K}_X)<0$ is not restrictive. Indeed, using the classification of class VII surfaces with $b_2=0$ \cite{Te1} and the results of \cite{Bu2} (see also \cite[Lemma 2.3]{Te4}),  it follows that:
 \begin{re}
 Any class VII surface whose minimal model is not an Inoue surface admits Gauduchon metrics $g$ such that $\deg_g({\cal K}_X)<0$.
 \end{re}
 
 A surprising corollary of our results is:
\begin{co}\label{PrimHopf}  Let $(X,g)$ be a primary Hopf surface endowed with a Gauduchon metric, and $E$ be an $\SL(2,\C)$-bundle on $X$ with $c_2(E)=1$. Then the natural complex structure 	on ${\cal M}^\st(E)$ is smooth and extends to a complex  structure on  $\overline{{\cal M}}^\ASD(E)$, which becomes a 4-dimensional compact complex  manifold. 
\end{co}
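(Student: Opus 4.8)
The plan is to recognise the corollary as the assertion that, for a primary Hopf surface, the two boundary strata in (\ref{FirstDec}) that obstruct a smooth extension --- the reduction locus ${\cal R}$ (Theorem \ref{Th3}) and the product ${\cal M}_0^*\times X$ of irreducible flat instantons (Theorem \ref{Th2}) --- are both \emph{empty}, leaving only the stratum ${\cal R}_0\times X$ treated by Theorem \ref{Th1}. First I would record the relevant topology: a primary Hopf surface $X$ is a class VII surface with $b_2(X)=0$ and $\pi_1(X)\cong\Z$, so that $H_1(X,\Z)\cong\Z$ is torsion free and $H^2(X,\Z)=0$.

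From $H^2(X,\Z)=0$ I would conclude ${\cal R}=\emptyset$: a reducible $\SU(2)$-instanton with $c_2=1$ corresponds to a splitting $E\cong{\cal L}\oplus{\cal L}^\smvee$ with $c_1({\cal L})^2=-c_2(E)=-1$, which is impossible when $H^2(X,\Z)=0$; equivalently the reduction count $b_2(X)\,|\Tors(H^2(X,\Z))|$ of Theorem \ref{Th3} vanishes, so the flip singularities of Theorem \ref{Th3} never occur. From $\pi_1(X)\cong\Z$ abelian I would conclude ${\cal M}_0^*=\emptyset$: every representation of an abelian group into $\SU(2)$ is conjugate into the maximal torus, hence reducible, so there is no irreducible flat instanton and the (generically singular) stratum ${\cal M}_0^*\times X$ of Theorem \ref{Th2} is absent. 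Thus (\ref{FirstDec}) collapses to $\overline{{\cal M}}^\ASD(E)={\cal M}^\st(E)\cup({\cal R}_0\times X)$, with ${\cal R}_0$ the single segment $\qmod{S^1}{z\mapsto\bar z}$.

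To apply Theorem \ref{Th3} to the open stratum I must check that the given metric satisfies $\deg_g({\cal K}_X)<0$, and in fact I would show this for \emph{every} Gauduchon metric. On the universal cover $\C^2\setminus\{0\}$ the canonical form $dz_1\wedge dz_2$ is rescaled by the constant Jacobian of the generating contraction, which has modulus $\neq1$; hence ${\cal K}_X\cong{\cal L}_\chi$ for a non-unitary character $\chi$. Since the excerpt identifies $\ker(\resto{\deg_g}{\Pic^\T(X)})$ with the unitary characters $\mathrm{C}(X)$ \emph{independently of} $g$, it follows that $\deg_g({\cal K}_X)\neq0$ for all $g$; and since the set of Gauduchon $(1,1)$-forms is a convex, hence connected, cone on which $\deg_g({\cal K}_X)$ varies continuously without vanishing, its sign is independent of $g$. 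By the Remark following Theorem \ref{Th3} some Gauduchon metric makes it negative (a Hopf surface is not an Inoue surface), so it is negative for all $g$. Theorem \ref{Th3} then gives that ${\cal M}^\st(E)={\cal M}^\ASD(E)^*$ is a smooth complex $4$-fold (consistently, its real dimension is $8c_2-3(1-b_1+b_+)=8$); alternatively one checks directly that the obstruction $H^2({\cal E}nd^0({\cal E}))\cong H^0({\cal E}nd^0({\cal E})\otimes{\cal K}_X)^\smvee$ vanishes, as ${\cal E}nd^0({\cal E})$ is polystable of degree $0$ and, after tensoring with the negative bundle ${\cal K}_X$, admits no degree-$0$ subsheaf.

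Finally I would assemble the pieces: Theorem \ref{Th1} extends the complex structure across ${\cal R}_0\times X$ and makes every reducible virtual point a smooth point of a complex $4$-fold, so, together with the smoothness of the open stratum, $\overline{{\cal M}}^\ASD(E)$ is smooth at every point; since the Donaldson--Uhlenbeck compactification is compact, it is a $4$-dimensional compact complex manifold. All the substantive input is contained in Theorems \ref{Th1}--\ref{Th3}; on top of them the corollary is the combinatorial observation that Hopf topology annihilates both bad strata, and the only delicate technical point is the metric-independence of $\deg_g({\cal K}_X)<0$. The main conceptual obstacle --- that the non-complex-geometric segment ${\cal R}_0$ nonetheless embeds smoothly into a complex fourfold --- is precisely what Theorem \ref{Th1} already overcomes.
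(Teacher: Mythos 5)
Your proposal is correct, and it follows the route the paper intends: Corollary \ref{PrimHopf} is stated without proof precisely because it is the assembly of Theorems \ref{Th1}--\ref{Th3} once the topology of a primary Hopf surface ($H^2(X,\Z)=0$, $\pi_1(X)\cong\Z$) empties the two problematic strata ${\cal R}$ and ${\cal M}_0^*\times X$, leaving $\overline{{\cal M}}^\ASD(E)={\cal M}^\st(E)\cup({\cal R}_0\times X)$ with ${\cal R}_0$ a single segment. The one place where you genuinely depart from the paper is the verification that \emph{every} Gauduchon metric satisfies $\deg_g({\cal K}_X)<0$ (needed to invoke Theorem \ref{Th3} for an arbitrary $g$): your argument — ${\cal K}_X\cong{\cal L}_\chi$ with $\chi$ non-unitary, hence $\deg_g({\cal K}_X)\neq 0$ for all $g$ by the identification $\mathrm{C}(X)\simeq\ker(\resto{\deg_g}{\Pic^\T(X)})$, plus constancy of the sign on the cone of Gauduchon forms (which for $n=2$ is indeed convex, the condition $dd^c\omega_g=0$ being linear), plus the Remark following Theorem \ref{Th3} — is valid, but it can be short-circuited by a fact the paper itself records in the proof of Lemma \ref{H2FlatLB}, Case 2: by Kodaira, a primary Hopf surface has ${\cal K}_X\simeq{\cal O}_X(-D)$ for a non-empty effective divisor $D$, whence $\deg_g({\cal K}_X)=-\vol_g(D)<0$ directly, for every Gauduchon $g$. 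Both arguments buy the same conclusion; yours stays within the character-theoretic framework of the introduction, while the paper's is one line but leans on Kodaira's classification. Everything else in your write-up — the emptiness of ${\cal R}$ via $c_1({\cal L})^2=-1$ (equivalently the count $b_2(X)|\Tors(H^2(X,\Z))|=0$ in Theorem \ref{Th3}), the emptiness of ${\cal M}_0^*$ via abelianness of $\pi_1$, the smoothness of the open stratum from Theorem \ref{Th3} (or the direct Serre-duality vanishing you sketch), and the gluing across ${\cal R}_0\times X$ from Theorem \ref{Th1} together with compactness of the Donaldson compactification — matches the intended derivation.
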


This study of  moduli spaces  of $\SU(2)$-instantons on class VII surfaces has several motivations. First, in recent articles the second author showed that $\PU(2)$-instanton moduli spaces can be used to make progress on the classification of class VII surfaces, more precisely to prove the existence of curves on such surfaces  \cite{Te2}, \cite{Te4}. A natural question is: can one obtain similar (or even stronger) results using  moduli spaces of $\SU(2)$-instantons? In order to follow this strategy one  needs a thorough understanding of {\it compactified} such moduli spaces. 
 
 A second motivation is related to   Corollary \ref{PrimHopf}:  according to this result, the assignment $(X,g)\mapsto \overline{{\cal M}}^\ASD(E)$ defines a functor from the category of Gauduchon primary Hopf surfaces to the class of 4-dimensional smooth compact complex  manifolds. Moreover, it is known that ${\cal M}^\ASD(E)^*={\cal M}^\st(E)$ is endowed with a canonical Hermitian metric $\g$ which is strongly KT, i.e., satisfies $\partial\bar\partial\omega_\g=0$ \cite{LT}. The class of compact strongly KT Hermitian manifolds has been intensively studied in recent years. This class of manifolds intervenes in modern physical theories (II string theory, 2-dimensional supersymmetric $\sigma$-models) and also in  Hitchin's  theory of generalised Kähler geometry. Therefore, it is natural to ask
\begin{qu}\label{KT}
In the conditions of Corollary \ref{PrimHopf} does the canonical strongly KT metric on ${{\cal M}}^\ASD(E)^*={\cal M}^\st(E)$	 extend to a smooth Hermitian metric on the complex 4-fold $\overline{{\cal M}}^\ASD(E)$?
\end{qu}

If this question has a positive answer, the resulting metric on $\overline{{\cal M}}^\ASD(E)$ will be strongly KT, giving   an interesting functor from the category of Gauduchon primary Hopf surfaces to the category of compact strongly KT 4-dimensional manifolds.  This functor would yield a large class of  examples of 4-dimensional strongly KT  compact  manifolds. We will come back to Question \ref{KT} in a future article.

A third motivation: the novelty of the methods used in the proofs, which emphasise   surprising difficulties which occur in the non-Kählerian framework.  We shall explain the ideas of proofs and these difficulties in the next subsections, and in the course of the subsequent proofs themselves, it will be apparent  that our methods will be applicable in many other situations.

\subsection{The idea of proof of Theorem \ref{Th1}}
\label{IdeaTh1}

In the first part of the proof we study  $\overline{{\cal M}}^\ASD(E)$ from the  topological point of view. We show that any reducible  flat  $\SU(2)$-instanton $A$ on a class VII surface is regular, i.e. one has $\H^2_A=0$. Using the   local model theorem for virtual instantons \cite[Theorem 8.2.4]{DK}, we will prove that   $\overline{{\cal M}}^\ASD(E)$ is a topological 8-manifold around any virtual point $([A],x)\in {\cal R}_0\times X$. Denoting by  ${\cal M}^\ASD(E)^*_{\reg}\subset {\cal M}^\ASD(E)$ the open subspace  of  regular irreducible instantons, we see that the subspace 
$$\Mg:={\cal M}^\ASD(E)^*_{\reg}\cup \big({\cal R}_0\times X\big)\subset \overline{{\cal M}}^\ASD(E)$$
 is a topological 8-manifold.

In a second step we will construct a complex manifold structure on  the topological manifold $\Mg$ using a gluing construction based on the following simple result proved in the appendix:

\newtheorem*{th-glue}{Lemma \ref{glue}} 
\begin{th-glue}
Let ${\cal X}$ be a topological $2n$-dimensional manifold, and ${\cal Y}\subset {\cal X}$ be an open subset endowed with a complex manifold structure. Let ${\cal U}$ be an $n$-dimensional complex manifold, and $f:{\cal U}\to {\cal X}$ be a continuous, injective map with the properties:
\begin{itemize}
\item ${\cal X}\setminus{\cal Y}\subset \im(f)$,
\item The restriction	$\resto{f}{f^{-1}({\cal Y})}:f^{-1}({\cal Y})\to {\cal Y}$ is holomorphic with respect to the holomorphic structure induced by the open embedding  $f^{-1}({\cal Y})\subset{\cal U}$.
\end{itemize}
Then 
\begin{enumerate}
\item $\im(f)$ is an open neighbourhood of ${\cal X}\setminus {\cal Y}$ in ${\cal X}$,
\item $f$ induces a homeomorphism ${\cal U}\to \im(f)$, 
\item $f$ induces a biholomorhism $f^{-1}({\cal Y})\to \im(f)\cap {\cal Y}$ 	 with respect to the holomorphic structures induced by the open embeddings  $f^{-1}({\cal Y})\subset{\cal U}$, $\im(f)\cap {\cal Y}\subset {\cal Y}$,
\item There exists a unique complex manifold structure on ${\cal X}$ which extends the fixed complex structure on ${\cal Y}$, and such that $f$ becomes biholomorphic on its image.

\end{enumerate}
\end{th-glue}

The hard part of the proof of Theorem \ref{Th1} is the construction of a pair $({\cal U},f:{\cal U}\to \Mg)$ such that, taking ${\cal Y}:={\cal M}^\ASD(E)^*_\reg$ with the complex structure induced from ${\cal M}^\st(E)_\reg$, the hypothesis of Lemma \ref{glue} is fulfilled. 

Note first that, surprisingly, the Gieseker (semi)stability condition for   torsion-free sheaves, can be naturally extended to arbitrary Gauduchon surfaces. The point is that, for surfaces, in these conditions only the degree of the sheaf and its  Euler-Poincaré characteristic intervene \cite[p. 97]{Fr}. We agree to call the   torsion-free sheaves on $X$, which are not locally free, {\it singular sheaves}.

In our situation we obtain  a natural homeomorphism $\varphi:{\cal S}\textmap{\simeq}{\cal R}_0\times X$ between the moduli space ${\cal S}$ of singular rank 2 Gieseker stable sheaves ${\cal F}$ on $X$ with the properties
\begin{itemize}
\item  $\det({\cal F})\simeq{\cal O}_X$, $c_2({\cal F})=1$,	
\item the double dual  ${\cal F}^\we$ is properly semi-stable.
\end{itemize}
 Taking into account this identification, a natural choice would be to take for ${\cal U}$ an open neighbourhood of  ${\cal S}$ in the moduli space of  all (locally free and singular) rank 2 Gieseker stable sheaves (with trivial determinant and $c_2=1$). Unfortunately in our non-Kählerian framework an unexpected difficulty arises: {\it Gieseker stability is not an open condition.} More precisely, in our case, any neighbourhood (in the moduli space of simple rank 2 sheaves with trivial determinant) of a point $[{\cal F}]\in {\cal S}$  contains points corresponding  to  sheaves  which are not even slope semi-stable.

Taking into account this difficulty, we will take ${\cal U}$ to be a sufficiently small open neighbourhood of ${\cal S}$ in the moduli space ${\cal M}^\si$ of simple rank 2 sheaves \cite{KO} with trivial determinant.   We will define  a map
$$f:{\cal U}\to \Mg
$$
whose restriction to ${\cal S}$ coincides with $\varphi$ and whose restriction to ${\cal U}\cap{\cal M}^\st(E)_\reg$ coincides with the identity of this set. For a point $[{\cal F}]\in {\cal U}$ with ${\cal F}$ not semi-stable  we put $f([{\cal F}]):=[{\cal E}_{\cal F}]$, where ${\cal E}_{\cal F}$ is a slope stable locally free sheaf which is determined up to isomorphy by the conditions 
$$H^0({\cal H}om({\cal F},{\cal E}_{\cal F}))\ne 0\ ,\ H^0({\cal H}om({\cal E}_{\cal F},{\cal F}))\ne 0.
$$
The proof will be completed by showing that these properties determine a well-defined map $f:{\cal U}\to \Mg$  satisfying the assumptions of Lemma \ref{glue}.

\subsection{The idea of proof of Theorem \ref{Th2}} 
\label{IdeaTh2}
According to the main result of \cite{Pl} the moduli space ${\cal M}_0^*$ consists of finitely many simple points. Let $[A]\in {\cal M}_0^*$ be a flat irreducible $\SU(2)$-instanton,  ${\cal E}$ be the associated stable holomorphic bundle, and $\pi:\P({\cal E})\to X$ its projectivisation. For a point $x\in X$ and a line  $y\in \P({\cal E}(x))$ we denote by $\eta_y:{\cal E}(x)\to q_y:={\cal E}(x)/y$ the corresponding epimorphism, and we put
$${\cal F}_y:=\ker \big({\cal E}\to {\cal E}_{\{x\}}\textmap{\eta_y}q_y\otimes{\cal O}_{\{x\}}\big)\,.$$ 
 We will construct a torsion-free sheaf $\mathscr{F}$ on $\P({\cal E})\times X$, flat over $\P({\cal E})$, such that for any point $y\in\P({\cal E})$ the   restriction $\resto{\mathscr{F}}{\{y\}\times X}$, regarded as a sheaf on $X$, is isomorphic with ${\cal F}_y$. The sheaf $\mathscr{F}$ defines an embedding  $\P({\cal E})\to {\cal M}^\si$ in the moduli space of simple, torsion-free sheaves with trivial determinant and $c_2=1$. The normal line bundle of the image ${\cal P}$ of this embedding can be computed explicitly. We will prove that  ${\cal P}$  has an open neighbourhood ${\cal U}$ such that ${\cal U}\setminus{\cal P}\subset {\cal M}^\st(E)_\reg$. On the other hand, using Fujiki's contractibility criterion \cite{Fuj}, it follows that, there exists  a modification ${\cal U}\to {\cal V}$ on a singular complex space ${\cal V}$, which contracts the projective fibres of ${\cal P}$. Using the continuity theorem \cite{BTT} we obtain a continuous map ${\cal U}\to \overline{{\cal M}}^\ASD(E)$ which induces a homeomorphism between the complex space ${\cal V}$ and an open neighbourhood of $\{[A]\}\times X$ in $\overline{{\cal M}}^\ASD(E)$.\\  
 
The article is organised as follows:  Section \ref{TopStrVirtSect} contains results on the topology $\overline{{\cal M}}^\ASD(E)$ around the virtual locus. These results are obtained using gauge theoretical methods.  Using the constructions given in  Section \ref{FamSection}, we construct in Section \ref{VSection}  a holomorphic embedding  $V_\varepsilon:\Ag_\varepsilon\times X\to {\cal M}^\si$, where $\Ag_\varepsilon$ is an open neighbourhood of $\Cg(X)$ in the quotient of $\Pic^\T(X)$ by the involution $l\mapsto l^\smvee$. This embedding plays a crucial role in  the proof of Theorem \ref{Th1}. The proofs of  Theorem \ref{Th1} and  Theorem \ref{Th2} are given in Section \ref{ExtendingSect}. The appendix groups together general results needed in the proofs; many of these results are new, and are useful in many other situations.

\section{The topological structure of $\overline{{\cal M}}^\ASD(E)$ at the virtual points}

\label{TopStrVirtSect}

\subsection{Local models at the reducible virtual points}
\label{LocModVirtRedSect}

We start with the following regularity result
\begin{pr}\label{RegFlatPolyst}
Let $X$ be a class VII surface endowed with a Gauduchon metric $g$, and ${\cal E}$  be a holomorphic, split polystable $\SL(2,\C)$-bundle on $X$ with $c_2({\cal E})=0$. Then $H^2({\cal E}nd_0({\cal E}))=0$. 	
\end{pr}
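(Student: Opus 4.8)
The plan is to reduce the statement to a cohomological vanishing for twisted line bundles and then to attack that vanishing by comparing $\bar\partial$-cohomology with the cohomology of the underlying flat line bundle. Throughout, $H^q({\cal F})$ will denote sheaf cohomology and $H^q(X;{\cal F})$ the de Rham cohomology of a local system.

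First I would unwind the hypotheses. Being split polystable with $\det({\cal E})={\cal O}_X$, we have ${\cal E}\cong{\cal L}\oplus{\cal L}^{-1}$ with $\deg_g({\cal L})=0$; and $c_2({\cal E})=-c_1({\cal L})^2=0$ forces $c_1({\cal L})\in\Tors(H^2(X,\Z))$, since the intersection form of a class VII surface is negative definite. Hence ${\cal L}={\cal L}_\chi$ is the unitary flat line bundle of a character $\chi$, and ${\cal E}nd_0({\cal E})\cong{\cal O}_X\oplus{\cal L}^2\oplus{\cal L}^{-2}$. Because $p_g=h^2({\cal O}_X)=0$ on a class VII surface, $H^2({\cal O}_X)=0$, and if ${\cal L}^2\cong{\cal O}_X$ all three summands vanish. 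It therefore remains to treat the case in which ${\cal L}^2$ is a \emph{non-trivial} unitary flat line bundle and to prove $H^2({\cal L}^{\pm2})=0$, equivalently, by Serre duality, $H^0({\cal K}_X\otimes{\cal L}^{\pm2})=0$.

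The naive approach is slope comparison. A non-zero element of $H^2({\cal E}nd_0({\cal E}))^{*}\cong H^0({\cal K}_X\otimes{\cal E}nd_0({\cal E}))$ is a non-zero trace-free morphism $\phi:{\cal E}\to{\cal E}\otimes{\cal K}_X$. Since ${\cal E}$ is $g$-polystable of slope $0$ and ${\cal E}\otimes{\cal K}_X$ is polystable of slope $\deg_g({\cal K}_X)$, comparing the slope of $\im(\phi)$ (a quotient of ${\cal E}$ and a subsheaf of ${\cal E}\otimes{\cal K}_X$) shows that $\phi\ne0$ forces $\deg_g({\cal K}_X)\ge0$; thus the proposition is immediate whenever $\deg_g({\cal K}_X)<0$. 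Moreover Cayley--Hamilton gives $\phi^2=-\det(\phi)\,\id$ with $\det(\phi)\in H^0({\cal K}_X^{2})=0$ (as $\kappa(X)=-\infty$), so $\phi$ is nilpotent and at most one of the two components in $H^0({\cal K}_X\otimes{\cal L}^{\pm2})$ can survive; but slope comparison alone cannot kill that surviving section when $\deg_g({\cal K}_X)\ge0$. The device that I expect to work uniformly is an injectivity into twisted cohomology. Writing ${\cal L}^2$ with its flat unitary connection $\nabla=\nabla^{1,0}+\bar\partial$, a holomorphic section $s\in H^0({\cal K}_X\otimes{\cal L}^2)$, regarded as an ${\cal L}^2$-valued $(2,0)$-form, satisfies $d_\nabla s=\nabla^{1,0}s+\bar\partial s=0$, because $\bar\partial s=0$ and $\nabla^{1,0}s\in A^{3,0}({\cal L}^2)=0$ for dimension reasons. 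Thus $s$ represents a class in $H^2(X;{\cal L}^2)$, and Stokes' theorem together with the unitarity of $\nabla$ shows this class is non-zero for $s\ne0$: if $s=d_\nabla\eta$ then $\int_X s\wedge\bar s=\int_X d\langle\eta\wedge\bar s\rangle=0$, contradicting the positivity of the integrand $s\wedge\bar s$. Hence $H^0({\cal K}_X\otimes{\cal L}^2)$ embeds as a positive-definite subspace for the Hermitian form $(\alpha,\beta)\mapsto\int_X\alpha\wedge\bar\beta$ on $H^2(X;{\cal L}^2)$. By the signature theorem for flat unitary coefficients the difference of its positive and negative indices equals $\mathrm{sign}(X)=-b_2(X)$ (recall $b_+(X)=0$); since $H^0(X;{\cal L}^2)=0$ and $\chi(X;{\cal L}^2)=b_2(X)$, Poincaré duality gives $\dim H^2(X;{\cal L}^2)=b_2(X)+2\,h^1(X;{\cal L}^2)$, whence the positive index is $h^1(X;{\cal L}^2)$. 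Therefore $h^0({\cal K}_X\otimes{\cal L}^2)\le h^1(X;{\cal L}^2)$, and symmetrically for ${\cal L}^{-2}$.

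This reduces everything to the topological statement $H^1(X;{\cal L}^{\pm2})=0$ for a non-trivial unitary character, which is \textbf{the main obstacle}. Since the class of surfaces in question is possibly unclassified, I would argue intrinsically from $b_1(X)=1$: up to its finite torsion part the character factors through the infinite cyclic quotient $\pi_1(X)\to\Z$, and a Wang/Milnor exact sequence expresses $H^1(X;{\cal L}^{\pm2})$ through the invariants and coinvariants of the monodromy on the cohomology of the corresponding infinite cyclic cover. The vanishing then reduces to the absence of eigenvalues of modulus one in that monodromy, which reflects the contracting dynamics present on every class VII surface. Securing this spectral gap without appealing to the (conjectural) classification is the delicate point; the slope argument already settles the metrics with $\deg_g({\cal K}_X)<0$, and this dynamical input is exactly what is needed to cover the remaining ones.
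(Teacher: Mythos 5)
Your reduction is the same as the paper's: ${\cal E}\cong{\cal L}\oplus{\cal L}^\smvee$ with $c_1({\cal L})$ torsion (negative definiteness of $q_X$), ${\cal E}nd_0({\cal E})\cong{\cal O}_X\oplus{\cal L}^{\otimes2}\oplus{\cal L}^{\smvee\otimes2}$, $h^2({\cal O}_X)=0$, so everything hinges on $H^2({\cal L}^{\pm2})=0$, i.e.\ by Serre duality on $H^0({\cal K}_X\otimes{\cal L}^{\mp2})=0$. Your intermediate steps are also sound: the slope argument correctly disposes of the case $\deg_g({\cal K}_X)<0$, and the injection of $H^0({\cal K}_X\otimes{\cal L}^{2})$ into a positive-definite subspace of the twisted de Rham group $H^2(X;{\cal L}^{2})$, combined with the flat-coefficient signature theorem and the Euler characteristic count, does give $h^0({\cal K}_X\otimes{\cal L}^{2})\le h^1(X;{\cal L}^{2})$. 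But the proof then stops exactly where the real content lies: the vanishing $H^1(X;{\cal L}^{\pm2})=0$ for a non-trivial unitary character is asserted, not proved, and the route you indicate --- a spectral gap for the monodromy of the infinite cyclic cover coming from ``contracting dynamics present on every class VII surface'' --- is not available in the generality required. For the known surfaces (Hopf, Inoue, Kato) such dynamics exists, but the whole point of the statement is that $X$ may be an \emph{unknown} class VII surface, and the existence of any contracting/dynamical structure on those is precisely what the (open) classification problem would provide. Worse, your reduction goes in the wrong direction in strength: since $c_1({\cal L}^{2})$ is torsion, Riemann--Roch gives $\chi({\cal L}^{2})=0$, and $h^0({\cal L}^{2})=0$ (a section of a non-trivial degree-zero bundle would produce a non-empty divisor of volume zero), so $h^1({\cal L}^{2})=h^2({\cal L}^{2})$; on a compact surface the (unitarily twisted) Hodge--Fr\"olicher theory then ties $h^1(X;{\cal L}^{2})$ to $h^1({\cal L}^{2})+h^0(\Omega^1_X\otimes{\cal L}^{2})$. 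Thus the topological vanishing you need already subsumes the Dolbeault vanishing $h^2({\cal L}^{2})=0$ that is to be proved; you have traded the statement for a strictly stronger one.

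The paper avoids this impasse by a completely different, curve-theoretic argument. Since ${\cal L}^{\pm2}$ comes from a character, its Bott--Chern class vanishes, so by Serre duality $H^2({\cal L}^{\pm2})\ne0$ would force $c_1^{\mathrm{BC}}({\cal K}_X)$ to be represented by an effective divisor; pushing forward to the minimal model, one must rule this out on $X_{\min}$. For $b_2(X_{\min})=0$ the classification \emph{is} a theorem ([Te1]): $X_{\min}$ is Inoue (no curves, yet $c_1^{\mathrm{BC}}({\cal K}_{X_{\min}})$ is non-trivial and pseudo-effective) or Hopf (where $-c_1^{\mathrm{BC}}({\cal K}_{X_{\min}})$ is represented by a non-trivial effective anti-canonical divisor). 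For minimal class VII surfaces with $b_2>0$ --- the possibly unknown ones --- no classification or dynamics is invoked at all: Nakamura's Lemma 1.1.3 shows that even the de Rham class $c_1^{\mathrm{DR}}({\cal K}_{X_{\min}})$ is not represented by an effective divisor. If you want to salvage your approach, you would need to replace the dynamical input by an argument of this kind; as written, the last step is a genuine gap.
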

\begin{proof}
The hypothesis implies ${\cal E}={\cal L}\oplus{\cal L}^\smvee$, where ${\cal L}$ is a holomorphic line bundle on $X$ with $\deg_g({\cal L})=0$ and $c_1({\cal L})^2=0$. Since the intersection form $q_X$ is negative definite the latter condition implies $c_1({\cal L})\in\Tors(H^2(X,\Z))$ so, using the notation introduced in Section \ref{FundamQ}, $[{\cal L}]\in \mathrm{C}^c(X)$ for a class $c\in \Tors(H^2(X,\Z))$. Note that ${\cal E}nd_0({\cal E})\simeq {\cal O}_X\oplus{\cal L}^{\otimes 2}\oplus {\cal L}^{\smvee\otimes 2}$  and, since $X$ is a class VII surface, $h^2({\cal O}_X)=h^0({\cal K}_X)=0$. The result follows from Lemma \ref{H2FlatLB} below. 
\end{proof}
\begin{lm}\label{H2FlatLB} Let $X$ be a class VII surface, and $[{\cal M}]\in \mathrm{C}(X)$. Then $H^2({\cal M})=0$.	
\end{lm}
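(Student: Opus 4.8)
The plan is to pass to Serre duality and reduce the statement to a non‑existence result for sections of a twisted canonical bundle. Since $X$ is a surface, Serre duality gives $H^2({\cal M})\cong H^0({\cal K}_X\otimes {\cal M}^\smvee)^\smvee$, and as $[{\cal M}]\in \mathrm{C}(X)$ forces $[{\cal M}^\smvee]\in \mathrm{C}(X)$ too, it suffices to prove that ${\cal K}_X\otimes {\cal M}^\smvee$ carries no non‑zero holomorphic section for every $[{\cal M}^\smvee]\in \mathrm{C}(X)$. I would stress at the outset that this is a statement about the holomorphic line bundle ${\cal M}$ alone and involves no metric, so in the proof I am free to choose whichever Gauduchon metric is most convenient (and $\mathrm{C}(X)$ itself is metric‑independent).

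Next I would record two elementary facts. A bundle ${\cal N}$ with $[{\cal N}]\in \mathrm{C}(X)$ is unitary flat, hence $\deg_g({\cal N})=0$ for every Gauduchon metric $g$; and any non‑zero effective divisor $D$ satisfies $\deg_g({\cal O}_X(D))=\int_D\omega_g>0$. Therefore, if $s\in H^0({\cal K}_X\otimes {\cal M}^\smvee)$ were non‑zero, its zero divisor $D\geq 0$ would give ${\cal O}_X(D)\cong {\cal K}_X\otimes {\cal M}^\smvee$ and $\deg_g({\cal O}_X(D))=\deg_g({\cal K}_X)$.

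The argument then splits according to the minimal model of $X$. If the minimal model is not an Inoue surface, I would invoke the fact recalled in the Introduction (from \cite{Te1}, \cite{Bu2}) that $X$ carries a Gauduchon metric $g$ with $\deg_g({\cal K}_X)<0$; for such a $g$ one gets $\deg_g({\cal O}_X(D))<0$, contradicting positivity, so $H^0({\cal K}_X\otimes {\cal M}^\smvee)=0$. If the minimal model $X_0$ is an Inoue surface, the degree can no longer be made negative (after blowing up, $\deg_g({\cal K}_X)>0$ for every $g$), and I would instead reduce to $X_0$: writing $\pi\colon X\to X_0$ for the blow‑down and using $\pi_1(X)=\pi_1(X_0)$, the unitary flat bundle is a pull‑back, ${\cal M}=\pi^*{\cal N}$, and the projection formula together with $\pi_*{\cal K}_X={\cal K}_{X_0}$ yields $H^0(X,{\cal K}_X\otimes {\cal M}^\smvee)\cong H^0(X_0,{\cal K}_{X_0}\otimes {\cal N}^\smvee)$. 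Since an Inoue surface contains no curves, every effective divisor on $X_0$ is zero, so a non‑zero section would force ${\cal K}_{X_0}\cong {\cal N}$, i.e. ${\cal K}_{X_0}$ would itself be unitary flat.

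The hard part is exactly this last point, which is where the geometry of class VII surfaces enters: one must rule out that the canonical bundle of an Inoue surface is unitary flat. I would settle this from Inoue's explicit realisation of these surfaces as quotients of ${\mathbb H}\times{\mathbb C}$ (respectively of a solvable group action), by computing the holonomy character of ${\cal K}_{X_0}$ on the trivialising form $dw\wedge dz$ and observing that it has modulus different from $1$; hence $[{\cal K}_{X_0}]\notin \mathrm{C}(X_0)$ and the assumed section cannot exist. The remaining ingredients—Serre duality, the degree positivity of effective divisors, and the birational reduction via the projection formula—are routine, so essentially the whole difficulty is concentrated in the non‑unitarity of the Inoue canonical bundle.
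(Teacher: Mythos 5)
Your proposal is correct in substance and, after the common opening move, follows a genuinely different route from the paper's. Both proofs begin with Serre duality, reducing the statement to $H^0({\cal K}_X\otimes{\cal M}^\smvee)=0$. The paper then argues with Bott--Chern classes: since $c_1^{\mathrm{BC}}({\cal M})=0$, a non-zero section would produce an effective divisor representing $c_1^{\mathrm{BC}}({\cal K}_X)$; pushing that divisor down to the minimal model, it excludes the three possibilities (Inoue, Hopf, minimal with $b_2>0$) by citing respectively the non-triviality and pseudo-effectivity of $c_1^{\mathrm{BC}}({\cal K})$ on Inoue surfaces \cite{Te3}, Kodaira's non-trivial anti-canonical divisor on primary Hopf surfaces \cite{Kod2}, and Nakamura's lemma \cite{Na}. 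You instead dichotomize on whether the minimal model is Inoue: when it is not, you use the metric-independence of $\mathrm{C}(X)$ and the Remark of Section \ref{FundamQ} to choose $g$ with $\deg_g({\cal K}_X)<0$, so that $\deg_g({\cal O}_X(D))=\deg_g({\cal K}_X)<0$ contradicts the effectivity of $D$ --- this single stroke absorbs the paper's Hopf and $b_2>0$ cases, at the cost of importing the classification of $b_2=0$ class VII surfaces and \cite{Bu2} hidden in that Remark; when the minimal model $X_0$ is Inoue, your reduction ${\cal M}=\pi^*{\cal N}$, $\pi_*{\cal K}_X={\cal K}_{X_0}$, projection formula is sound, and the absence of curves plus the explicit holonomy of $dw\wedge dz$ replaces the paper's citation of \cite{Te3}. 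In short, your proof computes where the paper cites and cites where the paper computes; both are legitimate.

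One step must be made explicit in your Inoue endgame. From ``the factor of automorphy of $dw\wedge dz$ is a character of modulus $\ne 1$'' you conclude $[{\cal K}_{X_0}]\notin\mathrm{C}(X_0)$; but a priori a single holomorphic line bundle could be associated to two distinct characters, one unitary and one not, so you need injectivity of the map $\Hom(\pi_1(X_0),{\mathbb C}^*)\to\Pic(X_0)$. This does hold on any class VII surface, and the fix is short: if $f$ is a nowhere-vanishing holomorphic function on the universal cover with $f\circ\gamma=\chi(\gamma)\,f$ for all deck transformations $\gamma$, then $d\log f$ is invariant and descends to an element of $H^0(\Omega^1_{X_0})$, which vanishes on class VII surfaces; hence $f$ is constant and $\chi=1$. (Alternatively, ${\cal K}_{X_0}\cong{\cal N}$ with ${\cal N}$ unitary flat would force $\deg_g({\cal K}_{X_0})=0$ for every Gauduchon $g$, contradicting the strict positivity that follows from the pseudo-effectivity statement of \cite{Te3} --- but that just re-imports the paper's citation.) With this point added, your argument is complete.
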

\begin{proof}
By Serre duality we have $h^2({\cal M})=h^0({\cal K}_X\otimes{\cal M}^\smvee	)$. On the other hand, since ${\cal M}$ is associated with a character $\chi:H_1(X,\Z)\to S^1$, it follows that its Chern class $c_1^{\mathrm{BC}}({\cal M})$ in Bott-Chern cohomology vanishes. Therefore it suffices to prove that, on  a class VII surface $X$, the Bott-Chern class  $c_1^{\mathrm{BC}}({\cal K}_X)\in  H^{1,1}_{\mathrm{BC}}(X,\R)$ is not represented by an effective divisor. Let $\pi:X\to X_{\min}$ be the projection of $X$ on its minimal model. If a Bott-Chern class $\cg\in H^{1,1}_{\mathrm{BC}}(X,\R)$ is represented by an effective divisor $D$, then $\pi_*(\cg)$ will be represented by the effective divisor $\pi_*(D)$, hence it suffices to prove that the class
$$\pi_*\big(c_1^{\mathrm{BC}}({\cal K}_X)\big)=c_1^{\mathrm{BC}}({\cal K}_{X_{\min}})$$
is not represented by an effective divisor on $X_{\min}$.
\vspace{2mm}\\
{\it Case 1.} $X_{\min}$ is an Inoue surface. An Inoue surface has no curve so, if $c_1^{\mathrm{BC}}({\cal K}_{X_{\min}})$ were represented by an effective divisor,  this class would vanish.  But  $c_1^{\mathrm{BC}}({\cal K}_{X_{\min}})$ is non-trivial and pseudo-effective (see \cite[Remark 4.2]{Te3}).
\vspace{2mm}\\
{\it Case 2.} $X_{\min}$ is a Hopf surface. Any primary Hopf surface $H$ contains a non-trivial anti-canonical  effective divisor \cite[p. 696]{Kod2}. In other words one has ${\cal K}_H\simeq {\cal O}_H(-D)$ where $D>0$.  It follows that for any Hopf surface $H$ the class  $-c_1^{\mathrm{BC}}({\cal K}_{X_{\min}})$ is non-trivial and pseudo-effective, so $c_1^{\mathrm{BC}}({\cal K}_{X_{\min}})$ cannot be represented by an effective divisor.
\vspace{2mm}\\
{\it Case 3.} $X_{\min}$ is a minimal class VII surface with $b_2(X_{\min})>0$.  In this case, using \cite[Lemma 1.1.3]{Na} we see that even the de Rham class $c_1^{\mathrm{DR}}({\cal K}_{X_{\min}})$ is not represented by an effective divisor.
\end{proof}

\begin{co} \label{RegFlatRed}
Let $A$ be a reducible flat $\SU(2)$-instanton on a class VII surface. Then $\H^2_A=0$.	
\end{co}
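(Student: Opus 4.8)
The plan is to reduce the vanishing of the gauge-theoretic obstruction space $\H^2_A$ to the sheaf-theoretic vanishing furnished by Proposition \ref{RegFlatPolyst}. Recall that $\H^2_A$ is the top cohomology of the deformation complex
$$A^0(\su(E))\xrightarrow{d_A}A^1(\su(E))\xrightarrow{d_A^+}A^+(\su(E))$$
of the projectively ASD connection $A$; by Hodge theory it is the space of self-dual $\su(E)$-valued $2$-forms $\psi$ with $d_A\psi=0$. Since $A$ is flat we have $F_A=0$, so the integrable semi-connection $\bar\partial_A$ defines a holomorphic structure ${\cal E}={\cal E}_{\bar\partial_A}$ with $c_1({\cal E})=0$, $c_2({\cal E})=0$; reducibility forces a splitting ${\cal E}={\cal L}\oplus{\cal L}^\smvee$ with $[{\cal L}]\in\mathrm{C}(X)$, and by the Kobayashi-Hitchin correspondence ${\cal E}$ is polystable. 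Thus ${\cal E}$ satisfies the hypotheses of Proposition \ref{RegFlatPolyst}, whence $H^2({\cal E}nd_0({\cal E}))=0$.

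To exploit this I would use the block structure coming from the reduction: the connection $A$ preserves the decomposition $\su(E)=\underline{\R}\oplus\ell$ whose complexification is the splitting ${\cal E}nd_0({\cal E})={\cal O}_X\oplus{\cal L}^{\otimes 2}\oplus{\cal L}^{\smvee\otimes 2}$ already used in Proposition \ref{RegFlatPolyst}; here $\underline{\R}$ is the trivial flat real line bundle of trace-free diagonal endomorphisms and $\ell$ is the realification of the flat line bundle $M:={\cal L}^{\otimes 2}$. The deformation complex splits accordingly, so $\H^2_A=\H^2_A(\underline{\R})\oplus\H^2_A(\ell)$. The summand $\H^2_A(\underline{\R})$ is the space of self-dual harmonic real $2$-forms, of dimension $b_2^+(X)$, which is zero because the intersection form of a class VII surface is negative definite. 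Complexifying, $\H^2_A(\ell)\otimes_\R\C$ is the obstruction space of the ASD complex twisted by $M\oplus M^\smvee={\cal L}^{\otimes 2}\oplus{\cal L}^{\smvee\otimes 2}$, and the same Hodge-theoretic analysis identifies its contributions with $H^2({\cal L}^{\otimes 2})$, $H^2({\cal L}^{\smvee\otimes 2})$ (both zero by Lemma \ref{H2FlatLB}) together with possible trace ($\omega_g$-)terms supported on holomorphic sections of $M^{\pm 1}$: when $M$ is non-trivial these sections vanish, and when $M\cong{\cal O}_X$ (the non-abelian reduction) the corresponding terms $\lambda\,\omega_g$ are killed by the identity $d_A(\lambda\,\omega_g)=\lambda\,d\omega_g$ together with $d\omega_g\neq 0$. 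Hence $\H^2_A=0$.

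The step I expect to be the main obstacle is precisely this last identification in the non-Kähler setting, and it is worth stressing that the non-Kähler geometry is what makes the statement true rather than merely making its proof harder. Over a Kähler surface a reducible Hermitian-Einstein connection genuinely carries extra obstructions: a parallel trace-free section $\lambda$ produces a non-zero class $\lambda\,\omega_g\in\H^2_A$ precisely because $d\omega_g=0$. It is the Gauduchon (non-Kähler) geometry that removes these classes, so the clean splitting of $d_A\psi=0$ into a $\bar\partial_A$-harmonic $(0,2)$-part and a decoupled trace part is no longer automatic and the torsion terms $\partial\omega_g$, $\bar\partial\omega_g$ must be controlled. I would settle this either by appealing to the Gauduchon form of the Kobayashi-Hitchin deformation theory (\cite{LT}, \cite{Bu1}), or by a direct computation showing that $dd^c\omega_g=0$ together with $\Lambda_g F_A^0=0$ forces the $\omega_g$-component of any harmonic representative to vanish, so that $\psi$ is determined by a $\bar\partial_A$-harmonic $(0,2)$-form and the conclusion follows from $H^2({\cal E}nd_0({\cal E}))=0$.
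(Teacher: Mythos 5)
Your proposal follows the same basic reduction as the paper: identify the gauge-theoretic obstruction space $\H^2_A$ with the Dolbeault cohomology $H^2({\cal E}nd_0({\cal E}))$ of the associated split polystable bundle and then invoke Proposition \ref{RegFlatPolyst}. Several of your intermediate steps are correct and even match the paper's remarks: the deformation complex does split along the parallel decomposition $\su(E)=\underline{\R}\oplus\ell$, the summand coming from $\underline{\R}$ is the space of closed self-dual $2$-forms and vanishes because $b_+(X)=0$, and you are right that it is the non-Kähler geometry (equivalently $b_1$ odd) which makes the statement true, in contrast with the Kähler case where $\H^0_A\omega_g\subset\H^2_A$. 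The paper, however, disposes of the entire gauge-theory-to-Dolbeault comparison in one stroke by citing \cite[Section 1.4.4, Corollary 1.21]{Te4}: for a projectively ASD connection on a Gauduchon surface with $b_1$ odd, the second cohomology of the ASD deformation complex coincides with $H^2({\cal E}nd_0({\cal E}))$.

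The genuine gap in your argument is exactly the step you flag as ``the main obstacle'', and nothing you write fills it. For the twisted summand, an element of $\H^2_A(\ell)\otimes_\R\C$ is a self-dual $M$-valued form $\psi=\lambda\,\omega_g+\phi^{2,0}+\phi^{0,2}$ with $d_b\psi=0$, and the type decomposition of this equation couples $\lambda$ to $\phi$: its $(1,2)$-part reads $(\bar\partial_b\lambda)\wedge\omega_g+\lambda\,\bar\partial\omega_g+\partial_b\phi^{0,2}=0$. Your identity $d_A(\lambda\,\omega_g)=\lambda\,d\omega_g$ together with $d\omega_g\neq0$ excludes only the pure trace elements (parallel $\lambda$, $\phi=0$); it says nothing about harmonic elements whose $\omega_g$-component is nonzero and compensated by $\partial_b\phi^{0,2}$. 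Proving that the coupled system forces $\lambda=0$ — so that $\psi$ is determined by a $\bar\partial_b$-harmonic $(0,2)$-form — is precisely the Gauduchon integration argument of \cite[Section 1.4.4]{Te4}; it is not a formal consequence of Hodge theory, and it is not available in \cite{LT} or \cite{Bu1} in the form you need, which is why the paper cites \cite{Te4} for it. That the missing step is substantive can be seen topologically: for a flat unitary line bundle ${\cal M}$ on a class VII surface, the index theorem with flat coefficients gives $\dim\{\psi\in A^+({\cal M}):d_b\psi=0\}=\dim H^1(X;{\cal M})$, so the vanishing you need is equivalent to the vanishing of a twisted first Betti number of the (possibly unknown) fundamental group; it cannot be deduced from negative definiteness of $q_X$ together with $H^2({\cal L}^{\otimes 2})=H^2({\cal L}^{\smvee\otimes 2})=0$ by formal arguments — the bridge between these two kinds of vanishing is exactly the comparison result you left unproved. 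Until that decoupling statement is proved or correctly cited, your proof is incomplete.
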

\begin{proof}
In general, if $A$  is a projectively ASD connection on a Gauduchon surface, and ${\cal E}$  is the associated polystable bundle ${\cal E}$  (see section \ref{KHsection}), the cohomology spaces of the deformation elliptic complexes associated with ${\cal E}$ and $A$ can be compared explicitly \cite[Section 1.4.4]{Te4}. By \cite[Corollary 1.21]{Te4}, if $b_1(X)$ is odd, the second cohomology spaces of the two complexes coincide. Note that that on Kähler surfaces the vanishing of $H^2({\cal E}nd_0({\cal E}))$ does not imply the vanishing of $\H^2_A$. Indeed, if $g$ is Kähler,   the harmonic space  $\H^2_A$ contains $\H^0_A\omega_g$, hence it cannot vanish when ${\cal E}$ is a split polystable bundle. The result follows from Proposition \ref{RegFlatPolyst}.
\end{proof}

The following proposition describes   the local structure around a regular, reducible virtual instanton on any Riemannian 4-manifold  with $b_1(X)=1$ and $b_+(X)=0$:
\begin{pr} \label{LocModFlatRed} 
Let $X$ be a connected, oriented, compact Riemannian 4-manifold with $b_1(X)=1$ and $b_+(X)=0$. Let $E$ be an $\SU(2)$-bundle with $c_2(E)=1$ on $X$. Let $A$ be a flat, reducible instanton on $X$ with $\H^2_A=0$. The Donaldson compactification 
$\overline{{\cal M}}^\ASD(E)$ is an 8-dimensional topological manifold at   $([A],x)$.  
\end{pr}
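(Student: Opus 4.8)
The plan is to apply the local model theorem for ideal instantons \cite[Theorem 8.2.4]{DK} and then to analyse the resulting quotient model directly; the decisive point will be a computation of the link of the cone singularity. First I would record the deformation-theoretic input. Since $c_2(E)=1$ and $A$ is flat, the virtual point $([A],x)$ sits in the lowest stratum $\mathcal{M}_0\times X$, obtained by gluing one concentrated instanton at $x$ onto the flat connection $A$. The Atiyah--Hitchin--Singer index of the ASD deformation complex of $A$ equals $-3(1-b_1(X)+b_+(X))=0$, so the hypothesis $\H^2_A=0$ forces $\dim\H^1_A=\dim\H^0_A=\dim\Gamma_A$, where $\Gamma_A$ denotes the isotropy group of $A$. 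As $A$ is reducible, $\Gamma_A$ is either the abelian group $S^1$ or the non-abelian group $\SU(2)$ (the latter occurring exactly at the central-holonomy reductions), and correspondingly $\dim\H^1_A=1$ or $3$.

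Next I would write down the local model. Because $\H^2_A=0$ the background deformations are unobstructed and the gluing is regular; Theorem 8.2.4 of \cite{DK} then yields a homeomorphism between a neighbourhood of $([A],x)$ in $\overline{\mathcal{M}}^\ASD(E)$ and a neighbourhood of the distinguished point in
$$\big(\H^1_A\times T_xX\times C(\SU(2))\big)\big/\Gamma_A,$$
where $T_xX\cong\mathbb{R}^4$ records the centre of the glued instanton, $C(\SU(2))$ is the cone (scale together with framing) on the framing sphere $\SU(2)\cong S^3$, and $\Gamma_A$ acts by the adjoint representation on $\H^1_A$, trivially on $T_xX$, and by left translation on the framing factor (hence on $C(\SU(2))\cong\mathbb{R}^4$). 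The dimension count $\dim\Gamma_A+4+4-\dim\Gamma_A=8$ shows the model is $8$-dimensional irrespective of the type of reduction. Since $\Gamma_A$ acts trivially on $T_xX$ this factor splits off, and it remains to prove that
$$Q:=\big(\H^1_A\times C(\SU(2))\big)\big/\Gamma_A\ \cong\ \mathbb{R}^4.$$

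The heart of the matter is that $Q$ is a genuine topological $4$-manifold even when $\Gamma_A=\SU(2)$, and I would establish this through the link. The $\Gamma_A$-action on $W:=\H^1_A\oplus C(\SU(2))$ is linear and orthogonal, so radial scaling descends and exhibits $Q$ as the open cone $C(L)$ on the link $L:=S(W)/\Gamma_A$, where $S(W)$ is the unit sphere; as the open cone on a space homeomorphic to $S^3$ is homeomorphic to $\mathbb{R}^4$, it suffices to show $L\cong S^3$. In the abelian case $W=\mathbb{R}\oplus\mathbb{R}^4$ with $S^1$ fixing the line and acting freely (Hopf) on the framing sphere $S^3$, so $L=S^4/S^1$ is the suspension of $S^3/S^1=S^2$, that is $L\cong S^3$. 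In the non-abelian case, writing points of $W$ as $(v,q)$ with $v\in\su(2)\cong\mathbb{R}^3$ and $q\in\mathbb{R}^4$, the group $\SU(2)$ acts by the adjoint representation on $v$ and freely (by left translation) on $q$ away from $0$; on the locus $\{q\ne 0\}$ of $S^6=S(W)$ the normalisation $q/|q|=1$ is a global slice with trivial stabiliser, identifying this part of the quotient with the open ball $\{v\in\mathbb{R}^3:|v|<1\}$, while $\{q=0\}\cap S^6$ is a single adjoint orbit and collapses to one point. Hence $L\cong\overline{B^3}/\partial\overline{B^3}\cong S^3$. In both cases $Q=C(S^3)\cong\mathbb{R}^4$, the local model is $\mathbb{R}^4\times\mathbb{R}^4=\mathbb{R}^8$, and $\overline{\mathcal{M}}^\ASD(E)$ is a topological $8$-manifold at $([A],x)$.

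The main obstacle is precisely the non-abelian case: a priori one expects the $\SU(2)$-quotient of the gluing parameters to carry a cone singularity, and the whole content of the argument is that the link is the \emph{genuine} sphere $S^3$ (equivalently, that collapsing the boundary of the $3$-ball of unframed parameters produces $S^3$ rather than a non-simply-connected, hence non-spherical, link). A subsidiary technical point is the correct identification of the gluing model, in particular the verification that $\Gamma_A$ acts on the framing by left translation in the structure group $\SU(2)$ rather than in $\SO(3)$; this normalisation can be checked on $S^4$, where $\H^1=0$, $\Gamma_A=\SU(2)$ and the present scheme reproduces the classical half-space model $\mathbb{R}^4\times[0,\infty)$ of the closed $5$-ball $\overline{\mathcal{M}}^\ASD_1(S^4)$ near its boundary.
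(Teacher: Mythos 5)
Your proof is correct and starts from the same point as the paper (the Donaldson--Kronheimer local model theorem \cite{DK}), but the way you show the quotient model is Euclidean is genuinely different. The paper keeps the gluing data in DK's form, the cone bundle over $\mathrm{Gl}_u=\mathrm{Isom}^+(\Lambda^+_u,\su(E^0_u))\simeq\SO(3)$, and splits into cases: for $\Gamma_A\simeq S^1$ it identifies the quotient outright with the ball bundle $B_\varepsilon\times B_\varepsilon(\Lambda^+_U)$ using the parallel section $\sigma$; for $\Gamma_A\simeq\SU(2)$ it rewrites the fibre of the model as $B^3_\varepsilon\times[0,\varepsilon)$ modulo collapsing each sphere $\|v\|=\mathrm{const}$ in the boundary to a point, and then Lemma \ref{homeo} supplies an explicit proper map (the radial extension of the homothety of ratio $2$ on $S^n$) realising this quotient as $\R^{n+1}$. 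You instead use linearity and orthogonality of the $\Gamma_A$-action to present the model as the open cone on its link, and compute the link in both cases: the suspension of the Hopf quotient in the abelian case, and $\overline{B}^3/\partial\overline{B}^3\cong S^3$ via the slice $q/|q|=1$ in the non-abelian case. The two computations are equivalent: your slice argument is precisely the paper's identification of $(B^3_\varepsilon\times C^\varepsilon)/\SO(3)$ with the collapsed half-space, and your statement ``the link is a genuine $S^3$'' is Lemma \ref{homeo} read on the unit sphere. Your route is more uniform and isolates the conceptual point cleanly; the paper's route produces an explicit homeomorphism and, in the abelian case, the sharper global description of the neighbourhood as a ball bundle over $U$.

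One step in your write-up needs justification rather than assertion. You quote the local model with gluing parameter $C(\SU(2))$ and $\Gamma_A$ acting by left translation, whereas \cite{DK} (and the paper) take the cone over the $\SO(3)$-torsor $\mathrm{Isom}^+(\Lambda^+_x,\su(E^0_x))$, on which $\Gamma_A$ acts through $\mathrm{Ad}:\SU(2)\to\SO(3)$. Your consistency check on $S^4$ cannot distinguish these two presentations, since both give $C(G)/G\cong[0,\varepsilon)$ there; and the distinction is not cosmetic, because $C(\SU(2))$ and $C(\SO(3))$ are not homeomorphic (the latter is not a manifold at the vertex --- this is exactly what happens at the irreducible flat points in Proposition \ref{ModelIrred}, where the $\SO(3)$-cone is what actually appears). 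In your situation the discrepancy is harmless, but for a non-formal reason that should be stated: $C(\SO(3))=C(\SU(2))/\{\pm 1\}$, the central element $-1$ lies in $\Gamma_A$ (it lies in every stabiliser), and $\ker(\mathrm{Ad})=\{\pm 1\}$ acts trivially on $\H^1_A$ and on the position factor, so the extra $\{\pm 1\}$ is absorbed into the $\Gamma_A$-quotient and $\big(\H^1_A\times T_xX\times C(\SU(2))\big)/\Gamma_A$ is canonically homeomorphic to $\big(\H^1_A\times T_xX\times C(\SO(3))\big)/\Gamma_A$. With that one remark added, your argument is complete.
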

\begin{proof}
Let $E^0$ be the trivial $\SU(2)$-bundle on $X$. For a point $u\in X$ denote    by $\mathrm{Gl}_u$ the space of gluing data  at $u$, which is just the space $\mathrm{Isom}^+(\Lambda^+_u,\su(E^0_u))\simeq\SO(3)$ of orientation preserving linear isometries $\Lambda^+_u\to\su(E^0_u)$. Let $C_u^\varepsilon$ be the $\varepsilon$-cone over $\mathrm{Gl}_u$ in the vector space $\Hom(\Lambda^+_u,\su(E^0_u))$. In other words
$$C_u^\varepsilon:=\{t\,\gamma|\ t\in[0,\varepsilon),\ \gamma\in\mathrm{Gl}_u \}\stackrel{\rm homeo}{\simeq}\qmod{[0,\varepsilon)\times \mathrm{Gl}_u}{\{0\}\times \mathrm{Gl}_u}   .
$$
The unions 
$$\mathrm{Gl}_X:=\union_{u\in X} \mathrm{Gl}_u\ ,\ C^\varepsilon_X:=\union_{u\in X} C_u^\varepsilon$$
have   natural structures of a locally trivial fibre bundles over $X$ with standard fibres $\SO(3)$, respectively the $\varepsilon$-cone $C^\varepsilon:=\{t\gamma|\ t\in[0,\varepsilon),\ \gamma\in\SO(3) \}$ over $\SO(3)$ in $M_{3,3}(\R)$.  
For an open set $U\subset X$ we  denote by $\mathrm{Gl}_U$, $C^\varepsilon_U$ the restrictions of these bundles to $U$. The gauge group ${\cal G}_{E^0}:=\Gamma(X,\SU(E^0))$ acts naturally on $\mathrm{Gl}_X$ (hence also on $C^\varepsilon_X$) via the morphisms
$${\cal G}_{E^0}\textmap{\mathrm{ev}_u} \SU(E^0_u)\textmap{\mathrm{Ad}} \SO(\su(E^0_u))\textmap{\rm composition} \Aut\big(\mathrm{Isom}^+(\Lambda^+_u,\su(E^0))\big) ,
$$
where $\mathrm{Ad}$ is just the adjoint representation of $\SU(E^0_u)$ on its Lie algebra $\su(E^0_u)$ and the composition morphism on the right maps $\varphi\in\SO(\su(E^0_u))$ to the automorphism $\gamma\mapsto \varphi\circ\gamma$ of $\mathrm{Gl}_u$. For an open set $U\subset X$ the bundles  $\mathrm{Gl}_U$, $C^\varepsilon_U$ are obviously gauge invariant.\\

Let $A$ be a reducible flat instanton on $X$ with $\H^2_A=0$, and let $x\in X$.   In this case  \cite[Proposition 8.2.4]{DK} gives an open neighbourhood $U$ of $x$ in $X$, a positive number $\varepsilon>0$, an open neighbourhood $W$ of $([A],x)$ in $\overline{{\cal M}}^\ASD(E)$ and a homeomorphism
$$\qmod{B_\varepsilon \times C^\varepsilon_U}{{\cal G}_A}\to W ,$$
where $B_\varepsilon$ is the radius $\varepsilon$ ball of $\H^1_A$, and ${\cal G}_A$ the stabiliser of $A$.
\\ \\
1. Suppose first the ${\cal G}_{A}\simeq S^1$. In this case one has $\H^0_A=\R\sigma$, $\H^1_A= \H^1(X)\sigma$, where $\sigma\in \Gamma(X,\su(E^0))$ is an $A$-parallel unit section. The stabiliser ${\cal G}_A$ acts trivially on $\H^1_A$.   Let $\Lambda^+_U$, $S(\Lambda^+_U)$, $B_\varepsilon(\Lambda^+_U)$ be the restriction of the bundle $\Lambda^+$ to $U$, the corresponding unit sphere bundle, respectively the corresponding radius $\varepsilon$ ball bundle.    The maps $a:\mathrm{Gl}_U\to S(\Lambda^+_U)$, $b:C^\varepsilon_U\to B_\varepsilon(\Lambda^+_U)$ given by
$$a(\gamma)=\gamma^{-1}(\sigma_u)\,,\ b(t\gamma)=t \gamma^{-1}(\sigma_u)\ \forall u\in U\ \forall\gamma\in \mathrm{Gl}_u\ \forall t\in[0,\varepsilon)
$$
are surjective, and their fibres coincide with the ${\cal G}_{A}$-orbits in $\mathrm{Gl}_U$, $C^\varepsilon_U$ respectively. Therefore in this case one has a homeomorphism 
$$\qmod{B_\varepsilon \times C^\varepsilon_U}{{\cal G}_A}\simeq B_\varepsilon\times B_\varepsilon(\Lambda^+_U),
$$
which is obviously an 8-dimensional topological manifold.
\\ \\
2. Suppose now that ${\cal G}_A\simeq\SU(2)$. In this case the bundle $\su(E^0)$ is trivial, and choosing a trivialisation of this bundle, one obtains identifications $\H^0_A=\su(2)$, $\H^1_A= \H^1(X)\otimes\su(2)$. Therefore in this case the quotient ${B_\varepsilon \times C^\varepsilon_U}/{{\cal G}_A}$ is a locally trivial fibre bundle over $U$ with fibre
$$F_\varepsilon:=\qmod{B_\varepsilon(\su(2))\times C^\varepsilon}{\SU(2)}\simeq \qmod{B^3_\varepsilon \times C^\varepsilon}{\SO(3)}
$$
Identifying $C^\varepsilon$ with the topological cone 
$$\qmod{[0,\varepsilon)\times\SO(3)}{\{0\}\times\SO(3)}$$
we obtain a proper surjective map
$$\pi_\varepsilon:\qmod{B^3_\varepsilon\times [0,\varepsilon)\times\SO(3)}{\SO(3)}\to F_\varepsilon .
$$
The quotient on the left can be  identified with $B^3_\varepsilon\times [0,\varepsilon)$, hence $F_\varepsilon$ is just the quotient of $B^3_\varepsilon\times [0,\varepsilon)$ collapsing the fibres of the norm map $B^3_\varepsilon\times \{0\}\to[0,\varepsilon)$. In other words $F_\varepsilon$ is the quotient space of $B^3_\varepsilon\times [0,\varepsilon)$ by the equivalence relation  $\sim_\varepsilon$  generated by  
$$\big\{\big((v,0),(w,0)\big)\in \big (B^3_\varepsilon\times [0,\varepsilon)\big)\times \big (B^3_\varepsilon\times [0,\varepsilon)\big)\vert \ \|v\|=\|w\|\big\}. $$

The claim follows now by Lemma \ref{homeo} below, taking into account that $B^3_\varepsilon\times [0,\varepsilon)$ is a  $\sim$ saturated open neighbourhood of $(0,0)$ in $\R^3\times[0,\infty)$ and that $\sim_\varepsilon$ is just the equivalence relation induced by $\sim$ on this neighbourhood.
\end{proof}
\begin{figure}[h]
\includegraphics[scale=0.37]{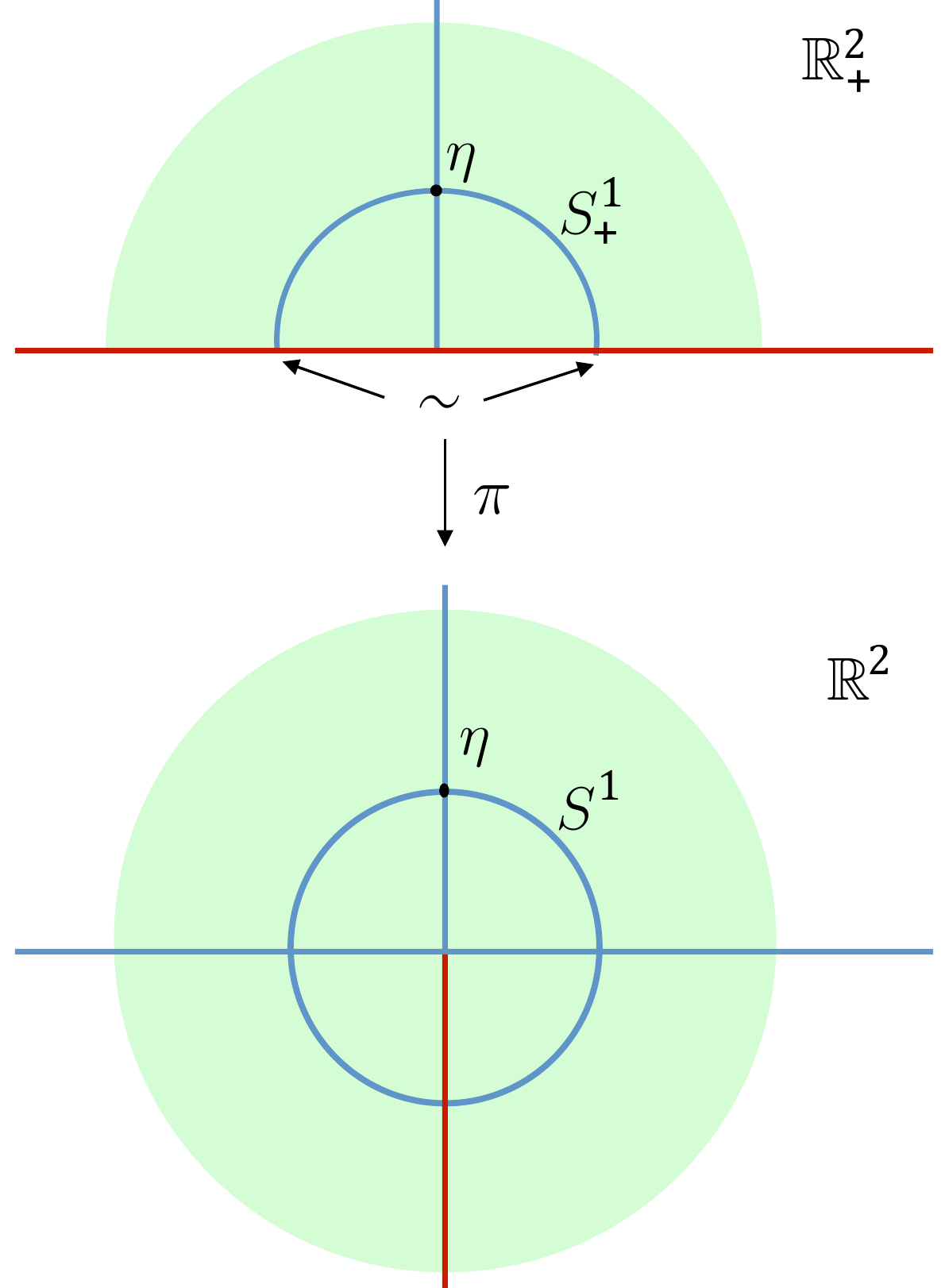}	
\caption{The map $\pi$}
\label{pi}
\end{figure}
\begin{lm}\label{homeo}
Put $\R^{n+1}_+:=\R^n\times [0,\infty)$,  $S^{n}_+:=S^n\cap \R^{n+1}_+$, $\eta:=(0_{\R^n},1)\in S^n_+$, and let $p:S^{n}_+\to S^n$ be the surjective map defined by the conditions 
\begin{enumerate}
\item the points $\eta$, $x$ and $p(x)$ belong to a geodesic of $S^n$,
\item $d(p(x),\eta)=2d(x,\eta)$, where distances are computed using   the standard Riemannian metric on $S^n$.	
\end{enumerate}
  The map $\pi: \R^{n+1}_+\to\R^{n+1}$ defined by  %
$$\pi(\rho w)=\rho p(w)\ \forall w\in S^{n}_+\ \forall \rho\in[0,\infty)$$
 is proper and identifies $\R^{n+1}$ with the topological quotient of $\R^{n+1}_+ $ by the equivalence relation $\sim$ generated by 
$$\big\{\big((v,0),(w,0)\big)\in \R^{n+1}_+\times \R^{n+1}_+\vline \ \|v\|=\|w\|\big\}. $$
\end{lm}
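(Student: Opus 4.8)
The plan is to reduce everything to the single structural fact that $\pi$ preserves norms, together with a precise description of the fibres of $p$. I would interpret conditions (1)--(2) as saying that $p(x)$ is the point lying beyond $x$ on the geodesic ray issuing from $\eta$ through $x$, at twice the distance from $\eta$; equivalently, identifying $T_\eta S^n\cong\R^n$, one has $p(\exp_\eta(v))=\exp_\eta(2v)$ for $v$ in the closed ball $\{|v|\le\pi/2\}$, which $\exp_\eta$ maps homeomorphically onto the closed upper hemisphere $S^n_+$.

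First I would record that $\|\pi(x)\|=\|x\|$ for every $x\in\R^{n+1}_+$: writing $x=\rho w$ with $\rho=\|x\|$ and $w\in S^n_+$, we have $\pi(x)=\rho\,p(w)$ with $\|p(w)\|=1$, so $\|\pi(x)\|=\rho$. This gives continuity at the origin (since $\|\pi(x)-\pi(0)\|=\|x\|\to 0$); away from the origin, $\pi$ is the composite of the continuous polar map $x\mapsto(\|x\|,x/\|x\|)$ with $(\rho,w)\mapsto\rho\,p(w)$, so $\pi$ is continuous everywhere once $p$ is. Norm preservation also yields properness immediately: if $K\subset\R^{n+1}$ is compact and contained in the ball of radius $R$, then $\pi^{-1}(K)$ is closed and contained in $\{x\in\R^{n+1}_+:\|x\|\le R\}$, hence bounded, hence compact.

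Next I would analyse $p$ through $\exp_\eta$. Since $\exp_\eta$ is a diffeomorphism of the open ball $\{|u|<\pi\}$ onto $S^n\setminus\{-\eta\}$ and extends continuously to the closed ball $\{|u|\le\pi\}$ by collapsing the boundary sphere to the south pole $-\eta$, the relation $p(\exp_\eta(v))=\exp_\eta(2v)$ shows that $p$ is continuous, that $p$ maps the open hemisphere $\{|v|<\pi/2\}$ bijectively onto $S^n\setminus\{-\eta\}$, and that $p$ sends the entire equator $\{|v|=\pi/2\}$ to the single point $-\eta$. In particular $p$ (hence $\pi$) is surjective, and $p(w_1)=p(w_2)$ holds precisely when either $w_1=w_2$ or both $w_1,w_2$ lie on the equator. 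I would then match fibres with classes: if $\pi(x)=\pi(y)$ then $\|x\|=\|y\|=:\rho$, the case $\rho=0$ forces $x=y=0$, and for $\rho>0$, writing $x=\rho w_1$, $y=\rho w_2$, the equality $p(w_1)=p(w_2)$ gives either $x=y$ or $x,y\in\R^n\times\{0\}$ with $\|x\|=\|y\|$; conversely any such pair has $\pi(x)=\pi(y)$ because $p$ collapses the equator. Since the equivalence relation generated by the displayed set is exactly ``$x=y$, or $x$ and $y$ lie on the boundary hyperplane with $\|x\|=\|y\|$'', the fibres of $\pi$ are precisely its classes. Finally, a proper continuous map into the locally compact Hausdorff space $\R^{n+1}$ is closed, so $\pi$ is a closed continuous surjection, hence a quotient map; as its fibres are the $\sim$-classes, it induces the asserted homeomorphism between $\R^{n+1}_+/{\sim}$ and $\R^{n+1}$.

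I expect the only genuinely delicate point to be the behaviour of $p$ along the equator, namely verifying that $\exp_\eta$ extends continuously to the closed ball of radius $\pi$ by collapsing its boundary to $-\eta$, so that $p$ is continuous there and the whole equator is crushed to a single point; everything else is formal once norm preservation is in hand.
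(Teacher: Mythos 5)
Your proof is correct and complete. The paper actually states this lemma without giving a proof, remarking only that $p$ is the (maximal extension of the) homothety of coefficient $2$ and centre $\eta$ on $S^n$ (plus a figure); your identity $p(\exp_\eta(v))=\exp_\eta(2v)$ on $\{|v|\le \pi/2\}$ is precisely that description made rigorous, and the remaining steps you supply — norm preservation giving continuity at the origin and properness, the continuous extension of $\exp_\eta$ collapsing $\{|v|=\pi\}$ to $-\eta$, the identification of the fibres of $\pi$ with the $\sim$-classes, and the fact that a proper continuous map into the locally compact Hausdorff space $\R^{n+1}$ is closed and hence a quotient map — are exactly the details the authors left to the reader.
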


The map $p:S^{n}_+\to S^n$ is just the (maximal extension of the) homothety   of  coefficient 2 and center $\eta$ on the sphere $S^n$. Figure \ref{pi}  illustrates  Lemma \ref{homeo} in the case $n=2$.
\vspace{2mm}

Combining  Corollary \ref{RegFlatRed} with Proposition \ref{LocModFlatRed} we obtain
\begin{pr} \label{R0X}
Let $X$ be a class VII surface, and $E$ be an $\SU(2)$-bundle with $c_2(E)=1$ on $X$. Then $\overline{\cal M}^\ASD(E)$ is a topological 8-manifold around ${\cal R}_0\times X$. 	The union $\Mg:={\cal M}^\ASD(E)^*_{\reg}\cup \big({\cal R}_0\times X\big)$ is open in $\overline{\cal M}^\ASD(E)$ and is a topological 8-manifold.
\end{pr}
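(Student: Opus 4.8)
The plan is to reduce the statement to the two results already established and to the known stratified structure of the Donaldson compactification. First I would record the two numerical invariants of $X$ that are needed. By definition a class VII surface has $b_1(X)=1$, and since its intersection form $q_X$ is negative definite one has $b_+(X)=0$; thus $X$ meets the hypotheses of Proposition \ref{LocModFlatRed}. Every point of ${\cal R}_0\times X$ is a pair $([A],x)$ in which $A$ is a reducible flat $\SU(2)$-instanton, and Corollary \ref{RegFlatRed} tells us that such an $A$ is regular, i.e. $\H^2_A=0$. Hence Proposition \ref{LocModFlatRed} applies at $([A],x)$ and yields an open neighbourhood $W$ of $([A],x)$ in $\overline{\cal M}^\ASD(E)$ that is an $8$-dimensional topological manifold. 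As this holds at every point and being locally a topological manifold is a pointwise condition, $\overline{\cal M}^\ASD(E)$ is a topological $8$-manifold on a neighbourhood of ${\cal R}_0\times X$, which is the first assertion.

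For the second assertion I would first treat the smooth stratum. The subspace ${\cal M}^\ASD(E)^*_{\reg}$ of regular irreducible instantons is open in ${\cal M}^\ASD(E)$, which is itself the open top stratum of $\overline{\cal M}^\ASD(E)$; hence ${\cal M}^\ASD(E)^*_{\reg}$ is open in $\overline{\cal M}^\ASD(E)$. At a regular irreducible point the second cohomology of the ASD deformation complex vanishes, so the moduli space is smooth there of dimension equal to the index $8c_2(E)-3\bigl(1-b_1(X)+b_+(X)\bigr)=8-3(1-1+0)=8$. Therefore ${\cal M}^\ASD(E)^*_{\reg}$ is a smooth, in particular topological, $8$-manifold.

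It then remains to glue the two pieces together. To prove $\Mg$ open it is enough to check that each local-model neighbourhood $W$ of a point of ${\cal R}_0\times X$ is contained in $\Mg$. In the model of Proposition \ref{LocModFlatRed} the locus of the cone coordinate $t=0$ is precisely ${\cal R}_0\times U$, while the points with $t>0$ are genuine $c_2=1$ instantons produced by gluing a concentrated basic instanton onto the flat reducible connection $A$. Here I would invoke gluing theory: the concentrated bubble breaks any global reduction, so these connections are irreducible (in particular they avoid the reducible locus ${\cal R}$), and since $\H^2_A=0$ while the basic instanton on $S^4$ is regular, the gluing is unobstructed, so for sufficiently small scale the glued connections are regular. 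Consequently $W\setminus({\cal R}_0\times X)\subset{\cal M}^\ASD(E)^*_{\reg}$, whence $W\subset\Mg$; together with the openness of ${\cal M}^\ASD(E)^*_{\reg}$ this shows $\Mg$ is open. Finally $\Mg$ is covered by the open topological $8$-manifolds ${\cal M}^\ASD(E)^*_{\reg}$ and the neighbourhoods $W$, so it is itself a topological $8$-manifold.

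I expect the main obstacle to be exactly this last step: identifying the $t>0$ part of the local model with regular irreducible points of ${\cal M}^\ASD(E)$. The topological-manifold statement near ${\cal R}_0\times X$ is a direct consequence of Proposition \ref{LocModFlatRed}, but upgrading it to the inclusion $W\subset\Mg$ requires the genuinely gauge-theoretic input that the glued connections are both irreducible and, via the vanishing $\H^2_A=0$, unobstructed and hence regular. Everything else is bookkeeping with the citations above and the standard openness of the smooth stratum.
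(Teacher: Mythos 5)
Your proof is correct and follows essentially the same route as the paper: the paper's own proof of this proposition is exactly the combination of Corollary \ref{RegFlatRed} (regularity of reducible flat instantons, giving $\H^2_A=0$) with the local model of Proposition \ref{LocModFlatRed}, applied pointwise along ${\cal R}_0\times X$. The only difference is that you make explicit the openness of $\Mg$ — identifying the $t>0$ points of the local model as regular irreducible instantons via unobstructed gluing — a step the paper treats as immediate; your justification is sound (and for irreducibility one can argue even more directly that the compact sets ${\cal R}$ and ${\cal M}_0^*\times X$ are disjoint from ${\cal R}_0\times X$, so small neighbourhoods avoid them).
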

\subsection{The structure of $\overline{\cal M}^\ASD(E)$ around the strata of irreducible virtual points} \label{LocModVirtIRRedSect}

Let $X$ be a class VII surface. By \cite[Theorem 2.1]{Pl} it follows that for any stable holomorphic bundle ${\cal E}$ of rank 2  with $\det({\cal E})\simeq {\cal O}_X$ and $c_2({\cal E})=0$ one has $H^2({\cal E}nd_0({\cal E}))=0$. Using the same argument as in the proof of Corollary \ref{RegFlatRed} we obtain
\begin{pr}\label{RegFlatIrred}
Let $A$ be an irreducible  flat $\SU(2)$-instanton on a class VII surface. Then $\H^2_A=0$. 	
\end{pr}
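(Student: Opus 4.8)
The plan is to reproduce, almost verbatim, the argument of the proof of Corollary \ref{RegFlatRed}, simply replacing the polystable input by the stable one. First I would pass from the gauge-theoretic to the holomorphic side via the Kobayashi-Hitchin correspondence: let ${\cal E}$ be the holomorphic bundle associated with the flat instanton $A$. Since $A$ is irreducible, ${\cal E}$ is stable; since $A$ is an $\SU(2)$-instanton, $\det({\cal E})\simeq{\cal O}_X$; and since $A$ is flat, its curvature vanishes and hence $c_2({\cal E})=0$. Thus ${\cal E}$ satisfies exactly the hypotheses of the cited result.

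I would then invoke \cite[Theorem 2.1]{Pl}, which applies directly to such an ${\cal E}$ and yields $H^2({\cal E}nd_0({\cal E}))=0$.

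Finally, I would transfer this vanishing to $\H^2_A$ by means of the explicit comparison of the deformation elliptic complexes of $A$ and ${\cal E}$ carried out in \cite[Section 1.4.4]{Te4}. The decisive point is \cite[Corollary 1.21]{Te4}: when $b_1(X)$ is odd, the two complexes have isomorphic second cohomology spaces. As $X$ is a class VII surface, $b_1(X)=1$ is odd, so $\H^2_A\cong H^2({\cal E}nd_0({\cal E}))=0$, which is the desired conclusion.

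There is no genuine obstacle here, the proof being a direct transcription of the reducible case, and the only input not already developed in the excerpt is the stable vanishing theorem \cite[Theorem 2.1]{Pl}. I would nonetheless stress that the parity hypothesis ``$b_1(X)$ odd'', automatically fulfilled on class VII surfaces, is precisely what turns the comparison of the two second cohomology spaces into an isomorphism; on Kähler surfaces one would have to correct for the additional summand $\H^0_A\,\omega_g$ appearing in $\H^2_A$, although in the present irreducible situation this summand would vanish as well, since $\H^0_A=0$.
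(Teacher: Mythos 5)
Your proposal is correct and is essentially identical to the paper's own proof: the paper likewise passes to the associated stable bundle ${\cal E}$, invokes \cite[Theorem 2.1]{Pl} to get $H^2({\cal E}nd_0({\cal E}))=0$, and then repeats verbatim the comparison argument of Corollary \ref{RegFlatRed}, i.e.\ \cite[Corollary 1.21]{Te4} with $b_1(X)$ odd, to conclude $\H^2_A=0$. Your closing remark that $\H^0_A=0$ in the irreducible case is accurate but not needed for the argument.
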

On the other hand the  method used in the proof of Proposition \ref{LocModFlatRed} gives: 
\begin{pr} \label{ModelIrred} Let $X$ be a connected, oriented, compact Riemannian 4-manifold with $b_1(X)=1$ and $b_+(X)=0$. Let $E$ be an $\SU(2)$-bundle with $c_2(E)=1$ on $X$. Let $A$ be a flat, irreducible instanton on $X$ with $\H^2_A=0$. For a sufficiently small $\varepsilon>0$ the subspace $\{[A]\}\times X$ has an open neighbourhood  in  $\overline{{\cal M}}^\ASD(E)$ which can be identified with the cone bundle  $C^\varepsilon_X$.
 \end{pr}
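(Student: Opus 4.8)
The plan is to imitate the proof of Proposition \ref{LocModFlatRed}, specialised to the case in which the background instanton $A$ is irreducible. In this case the Donaldson--Kronheimer local model degenerates dramatically, because both the deformation ball and the stabiliser action become trivial.

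First I would determine the harmonic spaces of the ASD deformation complex at $A$. Since $A$ is irreducible one has $\H^0_A=0$, and by hypothesis $\H^2_A=0$. A flat connection lives on a bundle with $c_2=0$, so the index formula for the $\SU(2)$ ASD complex gives $\dim\H^1_A-\dim\H^0_A-\dim\H^2_A=8\cdot 0-3(1-b_1+b_+)=0$, using $b_1=1$ and $b_+=0$. Hence $\H^1_A=0$, so the radius-$\varepsilon$ ball $B_\varepsilon\subset\H^1_A$ appearing in the local model collapses to a single point. (This is consistent with ${\cal M}_0^*$ being $0$-dimensional.)

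Next I would record that the stabiliser ${\cal G}_A$ of an irreducible connection is the centre $\{\pm\id_{E^0}\}\simeq\Z_2$, and that it acts trivially on the gluing bundle $C^\varepsilon_U$: the action of ${\cal G}_{E^0}$ on $C^\varepsilon_X$ factors through the adjoint representation $\SU(E^0_u)\to\SO(\su(E^0_u))$, and $\mathrm{Ad}(-\id)=\id$. Feeding $\H^1_A=0$ and this trivial action into \cite[Proposition 8.2.4]{DK}, exactly as in the proof of Proposition \ref{LocModFlatRed}, the local model
$$\qmod{B_\varepsilon \times C^\varepsilon_U}{{\cal G}_A}$$
reduces to $C^\varepsilon_U$. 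This yields, for each $x\in X$, a homeomorphism of an open neighbourhood of $([A],x)$ in $\overline{\cal M}^\ASD(E)$ onto the restriction $C^\varepsilon_U$ of the cone bundle over a neighbourhood $U$ of $x$.

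Finally I would glue these local identifications into a single homeomorphism of an open neighbourhood of $\{[A]\}\times X$ with the full cone bundle $C^\varepsilon_X$. Using compactness of $X$ one can choose a uniform $\varepsilon>0$, and the compatibility of the pieces on overlaps should follow from the naturality of the gluing construction in the gluing point $u$, together with the rigidity of the background (no moduli, since $\H^1_A=0$), so that the local homeomorphisms are all restrictions of one gluing map defined on $C^\varepsilon_X$. I expect this patching step to be the main technical point: one must check that the Donaldson--Kronheimer parametrisations, which are a priori only local in $x$, assemble into a globally defined homeomorphism respecting the fibre-bundle structure of $C^\varepsilon_X$, and that the underlying gluing estimates are uniform over all of $X$.
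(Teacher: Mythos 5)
Your proposal is correct and is essentially the paper's own argument: the paper proves this proposition simply by invoking ``the method used in the proof of Proposition \ref{LocModFlatRed}'', which is precisely your reduction --- $\H^1_A=0$ from the index formula together with irreducibility ($\H^0_A=0$) and the hypothesis $\H^2_A=0$, plus the trivial adjoint action of the central stabiliser ${\cal G}_A=\{\pm\id_{E^0}\}$ on the gluing data, so that the local model $\qmod{B_\varepsilon\times C^\varepsilon_U}{{\cal G}_A}$ collapses to $C^\varepsilon_U$. The patching issue you flag is not a genuine obstacle: the Donaldson--Kronheimer parametrisation arises from a gluing map defined on the whole cone bundle $C^\varepsilon_X$ (with $\varepsilon$ uniform in the gluing point by compactness of $X$), so the local identifications are by construction restrictions of a single globally defined map.
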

 The following remark shows that an $\SL(2,\C)$-holomorphic structure ${\cal E}$ on $E^0$ defines  a normal complex space structure on the cone bundle 	$C^\varepsilon_X$.
 \begin{re} \label{HolStrCone}
 Let $(X,g)$ be Hermitian surface. The cone bundle $C^\varepsilon_X$ can be identified with   a neighbourhood of the zero section in the cone bundle   of degenerate elements in $\Lambda^{02}_X\otimes S^2(E^0)$. Therefore any holomorphic $\SL(2,\C)$-structure ${\cal E}$ on $E^0$ identifies $C^\varepsilon_X$ with  a neighbourhood of the zero section in the cone bundle of degenerate elements in ${\cal K}_X^\smvee\otimes S^2({\cal E})$, which is a normal complex space over $X$.
 \end{re}
\begin{proof}
The standard fibre of $C^\varepsilon_X$ is a cone over $\SO(3)$, hence it can be identified with a neighbourhood of the singular point in the quotient $\R^4/\mu_2$.  If we fix a $\Spin^c(4)$-structure $\sigma$ on $X$, then the $\SO(3)$-bundle $\mathrm{Gl}_X$ can be identified with the quotient $\U(\Sigma^+,E^0)/S^1$, where $\Sigma^+$ is the positive spinor bundle of $\sigma$.
This identification is obtained using the standard isomorphism $\Lambda^+(X)\textmap{\simeq}\su(\Sigma^+)$. If  $(X,g)$ is a Hermitian surface and  $\sigma$ is its canonical $\Spin^c(4)$-structure, then $\Sigma^+=\Lambda^{00}\oplus\Lambda^{02}$. Denoting by $\S(E)$ the sphere bundle of $E^0$  we obtain a bundle isomorphism $\U(\Sigma^+,E^0)\textmap{u\simeq}\S(E^0)\times_X \S( \Lambda^{02})$ given fibrewise by the formula
$$\U(\Sigma^+_x,E^0_x)\ni a\mapsto  \big(a^{-1}(j\cdot a(1_x),a(1_x)\big)\in \S( \Lambda^{02}_x)\times_X \S(E^0_x)\,,$$
where $1_x$ is the standard generator of $\Lambda^{00}_x$, and $j\cdot$ stands for the multiplication with the quaternionic unit $j$  with respect to the quaternionic structure of $E$ given by the standard identification $\SU(2)\simeq\mathrm{Sp}(1)$. The $S^1$-action on $\S( \Lambda^{02}) \times_X  \S(E^0)$ induced by the standard action of $S^1$ on $\U(\Sigma^+,E^0)$  is given by $\zeta\cdot (u,v)=(\zeta^{-2} u, \zeta v)$. This shows that the bundle $\mathrm{Gl}_X=\U(\Sigma^+,E^0)/S^1$ can be identified with the image  $\Sg$  of $\S(\Lambda^{02})\times_X\S(E^0)$ in $\Lambda^{02}\otimes S^2(E^0)$ via the $\S^1$-invariant map $(e,\nu)\mapsto (e\vee e)\otimes \nu$.  It suffices to note that  $\{t\,\sg|\ t\in[0,\varepsilon),\ \sg\in \Sg\}$ is a neighbourhood of  the zero section in the cone bundle   of degenerate elements in $\Lambda^{02}_X\otimes S^2(E^0)$.   \end{proof}

\section{Singular sheaves with trivial determinant and $c_2=1$}
\label{FamSection}

\subsection{A family of $\SL(2,\C)$-bundles with $c_2=0$}\label{c2=0}

Let $X$ be a class VII surface. For such a surface one has  $\chi({\cal O}_X)=0$, $H^1({\cal O}_X)\simeq\C$,  and $\Pic^0(X)\simeq\C^*$. Fix an isomorphism $\vartheta:H^1({\cal O}_X)\to \C$ which maps $H^1(X,\Z)$ onto $\Z$, and let $\varpi$  be the induced isomorphism $\Pic^0(X)\to \C^*$.

Since $X$ is fixed, we will omit it in the notations $\Pic^\T(X)$, $\Pic^c(X)$, $\mathrm{C}(X)$, $\Cg(X)$, $\mathrm{C}^c(X)$, $\Cg^c(X)$, $\Cg^\cg(X)$, $\rho^c(X)$, $\rho(X)$,  $\Tg(X)$, $\Tg_0(X)$, $\Tg_1(X)$ introduced in section \ref{FundamQ}.  

Let $\mathscr{L}$ be a Poincaré line bundle  on $\Pic^\T\times X$ normalised at a point $x_0\in X$.     For $l\in\Pic^\T$ we put  ${\cal L}_l:= {\mathscr{L}}_{\{l\}\times X}$, regarded as a line bundle on $X$. The universal property of the Poincaré line bundle shows that, for any $l\in\Pic^\T$, the isomorphism type of ${\cal L}_l$ is $l$. For a point $l\in\Pic^\T$ and a tangent vector $v\in T_l(\Pic^\T)=H^1(X,{\cal O}_X)$, the infinitesimal deformation $\epsilon_v(\mathscr{L})\in \mathrm{Ext}^1({\cal L}_l,{\cal L}_l)=H^1(X,{\cal O}_X)$ associated with the pair $(\mathscr{L},v)$  is given by
\begin{equation}\label{DefPoin}
\epsilon_v(\mathscr{L})=v\,.	
\end{equation}

The quotient $\Pg$ of $\Pic^\T$ by the involution $l\stackrel{\jg}\mapsto l^{\smvee}$ decomposes as a disjoint union $\Pg=\union_{\cg\in\Tg} \Pg^\cg$ where, for a class $\cg=[c]\in \Tg$, $\Pg^\cg$ stands for the quotient of $\Pic^c\cup \Pic^{-c}$ by $\jg$. $\Pg^\cg$ is biholomorphic to $\C$ when $\cg\in\Tg_0$, and can be identified with $\Pic^c\simeq\C^*$ when $\cg\in\Tg_1$.  The projection   $\pi:\Pic^\T\to \Pg$  is a branched double covering whose ramification locus is the set 
$$\rho=H^1(X,\mu_2)\subset \union_{c\in\Tors_2(H^2(X,\Z))}\Pic^c,$$
 and whose  branch locus  is $\beta:=\pi(\rho)\subset \union_{\cg\in\Tg_0}\Pg^\cg$.  Since $\Pic^0$ is an injective $\Z$-module, the short exact sequence $0\to\Pic^0\to \Pic^\T\to \Tors(H^2(X,\Z))\to 0$ splits. Fix a left splitting $\sigma:\Pic^\T\to\Pic^0$, and put
 $$\xi:=\frac{1}{2}(\varpi\circ \sigma+\varpi\circ \sigma\circ \jg)\,,\ \zeta:=\frac{1}{2}(\varpi\circ \sigma-\varpi\circ \sigma\circ \jg)\in {\cal O}(\Pic^\T)\,.
 $$
The zero locus of $\zeta$ is the ramification locus $\rho$. With these notations one has  
\begin{equation}\label{pi(O)} 
\pi_*({\cal O}_{\Pic^\T})={\cal O}_{\Pg}\xi\oplus {\cal O}_{\Pg}\zeta\,.	
\end{equation}

The push-forward $\mathscr{E}:=(\pi\times\id_X)_*(\mathscr{L})$ is a rank 2 bundle on $\Pg\times X$.  For $p\in \Pg$ we put ${\cal E}_p:= {\mathscr{E}}_{\{p\}\times X}$ regarded as  a bundle on $X$. Taking into account the definition of $(\pi\times\id_X)_*$, one obtains  canonical isomorphisms
\begin{equation}\label{Ep}
{\cal E}_{\pi(l)}=\mathscr{L}_{X_l+X_{l^\smvee}},	
\end{equation}
where $X_l$ stands for the smooth divisor $\{l\}\times X\subset \Pic^\T\times X$, where $X_l+X_{l^\smvee}$ is regarded as an effective divisor in $\Pic^\T\times X$, and where the right hand sheaf is regarded as an ${\cal O}_X$-module via the obvious projection $X_l +X_{l^\smvee}\to X$.  For a ramification point $l\in \rho$, one obtains 
\begin{equation}\label{Ep+-}
{\cal E}_{\pi(l)}=\mathscr{L}_{2X_l}  \hbox{ (viewed as an ${\cal O}_X$-module)},	
\end{equation}
where $2X_l$ is regarded as a non-reduced complex subspace of $\Pic^\T\times X$. Tensoring by $\mathscr{L}$ the  short exact sequence 
\begin{equation}\label{ExSeq2D}
0\to {\cal O}_{X_l}\textmap{\zeta} {\cal O}_{2X_l}\to {\cal O}_{X_l}\to 0	
\end{equation}
and regarding the central term as an ${\cal O}_X$-module, we obtain a canonical exact sequence
\begin{equation}\label{ExSeqEp+-}
0\to {\cal L}_l\to {\cal E}_{\pi(l)}\to {\cal L}_l\to 0\,,	
\end{equation}
whose extension class is precisely $\epsilon_{v_0}(\mathscr{L})$, where $v_0\in T_l(\Pic^\T)$ is defined by the condition $d \zeta(v_0)=1$ (see Proposition \ref{coverings}  in the Appendix). Taking into account formula (\ref{DefPoin}) we get:

\begin{lm} \label{family} In the conditions and with the notation above, let $p=\pi(l)\in \Pg$. Then
\begin{enumerate}
\item If $l\not\in \rho$ one has   canonical identifications 
$${\cal E}_p={\cal L}_l\oplus {\cal L}_{l^{\smvee}}={\cal L}_l\oplus {\cal L}_{l}^{\smvee},$$
\item If $l \in \rho$ one has a canonical short exact sequence
\begin{equation}\label{extEp} 
0\to {\cal L}_l\textmap{j_l} {\cal E}_p\textmap{r_l} {\cal L}_l\to 0,	
\end{equation}
  whose extension class is the generator of $H^1({\cal O}_X)$ induced by the fixed isomorphism $\vartheta:H^1({\cal O}_X)\to\C$.	
\end{enumerate}
\end{lm}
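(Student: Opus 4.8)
The statement of Lemma \ref{family} records structural facts about the restriction of the pushforward bundle $\mathscr{E}=(\pi\times\id_X)_*(\mathscr{L})$ to a slice $\{p\}\times X$, distinguishing the generic fibre ($l\notin\rho$) from the ramification fibre ($l\in\rho$). The plan is to unwind the two isomorphisms \eqref{Ep} and \eqref{Ep+-} explicitly using the decomposition \eqref{pi(O)} of $\pi_*(\mathcal{O}_{\Pic^\T})$, and in the ramified case to identify the extension class of \eqref{extEp} via the normalisation \eqref{DefPoin} of the Poincaré bundle.

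\textbf{Part (1), the generic case.} For $l\notin\rho$ the two points $l$ and $l^\smvee$ are distinct, so the fibre $\pi^{-1}(p)$ consists of two reduced points and the divisor $X_l+X_{l^\smvee}$ is a disjoint union of two copies of $X$. First I would observe that the structure sheaf of this divisor splits as $\mathcal{O}_{X_l}\oplus\mathcal{O}_{X_{l^\smvee}}$ (the two components meet nowhere), so that tensoring with $\mathscr{L}$ and viewing the result as an $\mathcal{O}_X$-module through the projection gives the canonical direct sum decomposition ${\cal E}_p=\mathscr{L}|_{X_l}\oplus\mathscr{L}|_{X_{l^\smvee}}={\cal L}_l\oplus{\cal L}_{l^\smvee}$. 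The final identification ${\cal L}_{l^\smvee}={\cal L}_l^\smvee$ is just the definition of the involution $\jg:l\mapsto l^\smvee$ on $\Pic^\T$, so this part is essentially formal once \eqref{Ep} is in hand.

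\textbf{Part (2), the ramified case.} Here $l=l^\smvee\in\rho$, the divisor degenerates to the non-reduced double structure $2X_l$, and \eqref{Ep+-} identifies ${\cal E}_p$ with $\mathscr{L}|_{2X_l}$ viewed as an $\mathcal{O}_X$-module. I would start from the short exact sequence \eqref{ExSeq2D}, in which the local equation is the section $\zeta$ whose zero locus is exactly $\rho$ (as recorded after the definition of $\xi,\zeta$); tensoring by $\mathscr{L}$ and pushing to $X$ yields the canonical sequence \eqref{ExSeqEp+-}, hence \eqref{extEp} with maps $j_l,r_l$. The one substantive point is the computation of the extension class. The excerpt tells us the class of \eqref{ExSeqEp+-} is $\epsilon_{v_0}(\mathscr{L})\in\Ext^1({\cal L}_l,{\cal L}_l)=H^1(X,\mathcal{O}_X)$, where $v_0\in T_l(\Pic^\T)$ satisfies $d\zeta(v_0)=1$ (via Proposition \ref{coverings}). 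By the normalisation \eqref{DefPoin}, $\epsilon_{v_0}(\mathscr{L})=v_0$, so the extension class equals $v_0$, the tangent vector dual to $d\zeta$. I would then check that under the chosen isomorphism $\vartheta:H^1(\mathcal{O}_X)\to\C$ this $v_0$ is precisely the claimed generator: since $\zeta=\tfrac12(\varpi\circ\sigma-\varpi\circ\sigma\circ\jg)$ and the differential of $\jg$ at a fixed point $l\in\rho$ acts as $-\id$ on $T_l(\Pic^\T)=H^1(\mathcal{O}_X)$, one computes $d\zeta|_l=\varpi\circ\sigma$ restricted to $T_l$, which under $\vartheta$ is the identity, so $v_0=\vartheta^{-1}(1)$ is exactly the generator induced by $\vartheta$.

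\textbf{Main obstacle.} The two formal identifications are routine; the delicate step is verifying that the extension class comes out as the specified \emph{canonical} generator rather than merely a nonzero (hence nonsplit) class. This forces one to track the normalisation of the Poincaré bundle at $x_0$, the splitting $\sigma$, and the precise differential of $\zeta$ at a ramification point simultaneously, and to confirm that $d(\varpi\circ\sigma\circ\jg)|_l=-d(\varpi\circ\sigma)|_l$ so that the factor $\tfrac12$ is absorbed and $d\zeta(v_0)=1$ pins down the generator. This is where Proposition \ref{coverings} does the real work, relating the double-point extension class to $v_0$; the rest is bookkeeping.
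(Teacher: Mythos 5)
Your proof is correct and takes essentially the same route as the paper: part (1) is read off from the identification (\ref{Ep}) and the splitting of the disjoint divisor $X_l+X_{l^\smvee}$, while part (2) tensors (\ref{ExSeq2D}) by $\mathscr{L}$ to get (\ref{ExSeqEp+-}) and then identifies the extension class as $\epsilon_{v_0}(\mathscr{L})=v_0$ via Proposition \ref{coverings} and the normalisation (\ref{DefPoin}). Your explicit verification that $d\jg_l=-\id$ at a ramification point, so that $d\zeta|_l=d(\varpi\circ\sigma)|_l$ and $v_0$ is pinned down as the $\vartheta$-generator, is precisely the bookkeeping the paper leaves implicit in its phrase ``Taking into account formula (\ref{DefPoin}) we get.''
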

%
%
%
%
Using the identification (\ref{Ep}) we see that: 
\begin{re} \label{CanShESRem}
Every representative $l\in p$ yields a	canonical short exact sequence
\begin{equation}\label{CanShES}
0\to {\cal L}_l^{\smvee}\textmap{j_l} {\cal E}_p\textmap{r_l} {\cal L}_l\to 0.
\end{equation}
If $l'\ne l$ is a different representative, then $r_l\circ j_{l'}=\id_{{\cal L}_l}$, $r_{l'}\circ j_{l}=\id_{{\cal L}_{l'}}$. 
\end{re}

\begin{dt} \label{CanLines} Let $p=\pi(l)\in \Pg$ and $x\in X$. If $l\not\in \rho$ the two summands ${\cal L}_l$, ${\cal L}_l^{\smvee}$ of ${\cal E}_p$ will be called {\rm  the canonical line sub-bundles} of ${\cal E}_p$ and ${\cal L}_l(x)$, ${\cal L}_l^{\smvee}(x)$  {\rm  the canonical lines} of ${\cal E}_p(x)$. If  $l\in \rho$ the image of ${\cal L}_l$  in ${\cal E}_p$ (via the monomorphism  $j_l$ given by Lemma \ref{family}) will be called the {\rm canonical line sub-bundle} of ${\cal E}_p$, and ${\cal L}_l(x)$  the {\rm canonical line} of ${\cal E}_p(x)$.
\end{dt}
\begin{pr}\label{endom}
Fix a Gauduchon metric $g$ on $X$,  let $l\in\Pic^\T$  with $\deg_g(l)\geq 0$, and put $p:=\pi(l)$. Then
\begin{enumerate}
\item Suppose that $l\not\in\rho$.  Then 
$$\End({\cal E}_p)=\C\id_{{\cal L}_l}\oplus\C\id_{{\cal L}_l^{\smvee}}\oplus H^0({\cal L}_l^{\otimes 2}),\ \Aut({\cal E}_p)=\C^*\id_{{\cal L}_l}\times\C^*\id_{{\cal L}_l^{\smvee}}\times H^0({\cal L}_l^{\otimes 2}).$$
The multiplication on $\C\id_{{\cal L}_l}\oplus\C\id_{{\cal L}_l^{\smvee}}\oplus H^0({\cal L}_l^{\otimes 2})$ induced by the composition of endomorphisms is given by 
$$(z,\zeta,\lambda)(z',\zeta',\lambda')=(zz',\zeta\zeta', z\lambda'+\zeta'\lambda).
$$

\item 	When $l\in\rho$ then 
$$\End({\cal E}_p)=\C\id_{{\cal E}_p}\oplus\C (j_l\circ r_l),\ \Aut({\cal E}_p)=\C^*\id_{{\cal E}_p}\times\C (j_l\circ r_l).$$
The multiplication on $\C\id_{{\cal E}_p}\oplus\C (j_l\circ r_l)$ induced by the composition of endomorphisms is given by 
$$\big(z,\zeta(j_l\circ r_l)\big)\big(z',\zeta'(j_l\circ r_l)\big))=\big(zz', (z\zeta'+z'\zeta)(j_l\circ r_l)\big)\,.
$$

\end{enumerate}
	\end{pr}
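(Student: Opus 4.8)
The plan is to treat the two cases separately and to reduce each to the computation of the four spaces $\Hom$ among the line bundles ${\cal L}_l$, ${\cal L}_l^\smvee$, reading off the ring and group structures afterwards from the resulting matrix description. The one genuinely geometric input I would isolate first is the vanishing principle on a Gauduchon surface: if $\deg_g({\cal M})\le 0$ and $H^0({\cal M})\ne 0$, then ${\cal M}\simeq{\cal O}_X$ and the section is nowhere vanishing. This follows because a nonzero section yields an effective divisor $D$ with ${\cal M}\simeq{\cal O}_X(D)$ and $\deg_g({\cal O}_X(D))=\int_D\omega_g\ge 0$, with equality forcing $D=0$.

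For the case $l\notin\rho$, Lemma \ref{family}(1) supplies the splitting ${\cal E}_p={\cal L}_l\oplus{\cal L}_l^\smvee$, so I would write an endomorphism as a $2\times2$ matrix with respect to this decomposition. The diagonal blocks are $\Hom({\cal L}_l,{\cal L}_l)=\Hom({\cal L}_l^\smvee,{\cal L}_l^\smvee)=H^0({\cal O}_X)=\C$ since $X$ is compact and connected; the upper-right block is $\Hom({\cal L}_l^\smvee,{\cal L}_l)=H^0({\cal L}_l^{\otimes 2})$; and the lower-left block is $\Hom({\cal L}_l,{\cal L}_l^\smvee)=H^0({\cal L}_l^{\smvee\otimes 2})$. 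The crux is that this last block vanishes: as $\deg_g(l)\ge 0$ one has $\deg_g({\cal L}_l^{\smvee\otimes 2})=-2\deg_g(l)\le 0$, so by the vanishing principle a nonzero section would force $l^{\otimes 2}\simeq{\cal O}_X$; but a $2$-torsion element of $\Pic^\T$ has degree $0$, hence lies in $\mathrm{C}(X)$, and the $2$-torsion characters are exactly $\rho=H^1(X,\mu_2)$, contradicting $l\notin\rho$. Every endomorphism is therefore upper triangular, giving the stated decomposition of $\End({\cal E}_p)$; identifying $(z,\zeta,\lambda)$ with $\left(\begin{smallmatrix} z & \lambda\\ 0 & \zeta\end{smallmatrix}\right)$ produces exactly the rule $(z,\zeta,\lambda)(z',\zeta',\lambda')=(zz',\zeta\zeta',z\lambda'+\zeta'\lambda)$, and invertibility is equivalent to $z,\zeta\ne 0$, yielding $\Aut({\cal E}_p)$.

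For the case $l\in\rho$ one has ${\cal L}_l\simeq{\cal L}_l^\smvee$, and ${\cal E}_p$ is the non-split extension (\ref{extEp}) whose class $e\in\Ext^1({\cal L}_l,{\cal L}_l)=H^1({\cal O}_X)$ is a generator. I would apply $\Hom({\cal L}_l,-)$ and $\Hom(-,{\cal L}_l)$ to (\ref{extEp}): the relevant connecting maps $\C=\Hom({\cal L}_l,{\cal L}_l)\to\Ext^1({\cal L}_l,{\cal L}_l)$ are cup product with $e$, so they send $\id$ to $e\ne 0$ and are injective, whence $\Hom({\cal L}_l,{\cal E}_p)=\C j_l$ and $\Hom({\cal E}_p,{\cal L}_l)=\C r_l$. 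Then for $\phi\in\End({\cal E}_p)$ the composite $r_l\circ\phi$ equals $z\,r_l$ for some $z$, so $r_l\circ(\phi-z\,\id)=0$ and $\phi-z\,\id$ factors through $\ker r_l=j_l({\cal L}_l)$, i.e. equals $j_l\circ g$ with $g\in\Hom({\cal E}_p,{\cal L}_l)=\C r_l$; hence $\phi=z\,\id+\zeta\,(j_l\circ r_l)$. Since $r_l\circ j_l=0$ gives $(j_l\circ r_l)^2=0$, the element $j_l\circ r_l$ is a nonzero nilpotent independent from $\id$, which establishes $\End({\cal E}_p)=\C\id\oplus\C(j_l\circ r_l)$ with the displayed multiplication; invertibility reduces to $z\ne 0$, giving $\Aut({\cal E}_p)$.

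I expect the main obstacle to be the vanishing $H^0({\cal L}_l^{\smvee\otimes 2})=0$ in the boundary situation $\deg_g(l)=0$, $l\notin\rho$: here the degree estimate alone only rules out sections with zeros, and one must additionally use that ${\cal L}_l^{\smvee\otimes 2}$ is a \emph{non-trivial} flat line bundle, equivalently that $l^{\otimes 2}\ne{\cal O}_X$, which is exactly the statement that $l$ is not a $2$-torsion character. Once this vanishing and the non-vanishing of the connecting maps (i.e. $e\ne 0$) are secured, everything else is a formal consequence of the extension structure and the standard long exact sequences.
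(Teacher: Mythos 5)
Your proposal is correct and follows essentially the same route as the paper: part (1) is reduced via Lemma \ref{family} to the vanishing $H^0({\cal L}_l^{\smvee\otimes 2})=0$, obtained exactly as you argue from $\deg_g({\cal L}_l^{\smvee\otimes 2})\leq 0$ together with ${\cal L}_l^{\smvee\otimes 2}\not\simeq{\cal O}_X$ (which is the meaning of $l\not\in\rho$), and part (2) is the same factorisation argument, the paper phrasing your injectivity of the connecting maps equivalently as the observation that a non-zero composition $r_l\circ\varphi\circ j_l$ would split the non-trivial extension (\ref{extEp}).
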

\begin{proof}
 (1) follows from Lemma \ref{family} taking into account that, since $\deg_g({\cal L}_l^{\smvee\otimes 2})\leq 0$ and  ${\cal L}_l^{\smvee\otimes 2}\not\simeq{\cal O}_X$, one has $H^0({\cal L}_l^{\smvee\otimes 2})=0$.  Note that in our non-Kählerian framework $X$ might contain non-empty  effective divisors with torsion fundamental class, so  $H^0({\cal L}_l^{\otimes 2})$ might be non-zero even if $l\not\in\rho$. \vspace{2mm}
\\
For (2), let $\varphi\in \End({\cal E}_p)$. The composition $r_l\circ\varphi\circ j_l$ must vanish because, if not, it would be an isomorphism, hence $\varphi\circ j_l$ would define a splitting of the extension (\ref{extEp}). Therefore $\varphi$  maps $j_l({\cal L}_l)$ into itself, giving an endomorphism $\varphi_0$ of ${\cal L}_l$, which can be written as $\varphi_0=z\,\id_{{\cal L}_l}$. The endomorphism $\psi:=\varphi-z\,\id_{{\cal E}_p}$ will vanish on   $j_l({\cal L}_l)$, so it can be written as $\psi=\chi\circ r_l $, for a morphism $\chi:{\cal L}_l\to {\cal E}_p$. Since, by the same argument as above  one has $r_l\circ \chi=0$, it follows that $\chi=j_l\circ u$ for a morphism $u:{\cal L}_l\to {\cal L}_l$. Writing $u=\zeta\id_{{\cal L}_l}$ we obtain $\varphi=z\,\id_\varepsilon+\zeta (j_l\circ r_l)$, which proves the claim.      		
\end{proof}
\begin{dt}\label{vg} Let ${\cal D}iv(X)^{>0}$ be the set of {\it non-empty, effective}  divisors on $X$. Put
$${\cal D}iv(X)^{>0}_0:=\big\{D\in {\cal D}iv(X)^{>0}|\ c_1({\cal O}(D))\in\Tors(H^2(X,\Z))\big\}\,;$$ 
\begin{equation}\label{MinVol}
\begin{array}{c}
\vg(g)=\left\{\begin{array}{cc}
\inf\big\{\vol_g(D)|\ D\in{\cal D}iv(X)^{>0}\big\} &\hbox{ if ${\cal D}iv(X)^{>0}\ne\emptyset$}\,,\\
\infty  & \hbox{ if ${\cal D}iv(X)^{>0}=\emptyset$\,;}
\end{array}\right.
\\ \\
\vg_0(g)=\left\{\begin{array}{cc}
\inf\big\{\vol_g(D)|\ D\in{\cal D}iv(X)^{>0}_0\big\} &\hbox{ if ${\cal D}iv(X)^{>0}_0\ne\emptyset$}\,,\\
\infty  & \hbox{ if ${\cal D}iv(X)^{>0}_0=\emptyset$}\,.
\end{array}\right.

\end{array}		
\end{equation}
\end{dt}
One has obviously $\vg_0(g)\geq \vg(g)$, and using Bishop's compactness theorem (see for instance \cite{Ch}), it follows that $\vg(g)>0$. 
\begin{re}\label{VanSmallDeg} Let ${\cal L}$ be a non-trivial holomorphic line bundle on $X$. Suppose that  $\deg_g({\cal L})<\vg(g)$ or $c_1^\R({\cal L})=0$ and   $\deg_g({\cal L})<\vg_0(g)$. Then $H^0({\cal L})=0$.\end{re}

With these notations Proposition \ref{endom} gives
\begin{co}\label{CoroEndo}
Let  $p=\pi(l)\in\Pg\setminus\beta$ with $0\leq |\deg_g(l)|<\frac{1}{2}\vg_0(g)$. Then 
$$\End({\cal E}_p)=\C\id_{{\cal L}_l}\oplus\C\id_{{\cal L}_l^{\smvee}},\ \Aut({\cal E}_p)=\C^*\id_{{\cal L}_l}\times\C^*\id_{{\cal L}_l^{\smvee}}.$$
\end{co}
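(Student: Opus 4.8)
The plan is to invoke Proposition \ref{endom}(1), whose formulas reduce the corollary to a single vanishing statement. Under the hypothesis $p=\pi(l)\in\Pg\setminus\beta$ we have $l\not\in\rho$, so Proposition \ref{endom}(1) applies and gives
$$
\End({\cal E}_p)=\C\id_{{\cal L}_l}\oplus\C\id_{{\cal L}_l^{\smvee}}\oplus H^0({\cal L}_l^{\otimes 2}),\quad \Aut({\cal E}_p)=\C^*\id_{{\cal L}_l}\times\C^*\id_{{\cal L}_l^{\smvee}}\times H^0({\cal L}_l^{\otimes 2}).
$$
Therefore the desired conclusion is equivalent to the assertion $H^0({\cal L}_l^{\otimes 2})=0$, and the whole task is to prove that this group vanishes under the stated degree bound.

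To establish $H^0({\cal L}_l^{\otimes 2})=0$ I would apply Remark \ref{VanSmallDeg} to the line bundle ${\cal L}:={\cal L}_l^{\otimes 2}$. First I must check that ${\cal L}_l^{\otimes 2}$ is non-trivial: this follows because $l\not\in\rho$ means $l^{\otimes 2}\ne {\cal O}_X$ in $\Pic^\T$ (the ramification locus $\rho=H^1(X,\mu_2)$ is precisely the set of $2$-torsion points, i.e. those $l$ with $l^{\otimes 2}={\cal O}_X$), so ${\cal L}_l^{\otimes 2}$ is a non-trivial line bundle. Next, since $l\in\Pic^\T$ its Chern class is torsion, hence $c_1^\R({\cal L}_l^{\otimes 2})=2c_1^\R({\cal L}_l)=0$, placing us in the second case of Remark \ref{VanSmallDeg}. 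Finally, the degree hypothesis $|\deg_g(l)|<\tfrac12\vg_0(g)$ gives $\deg_g({\cal L}_l^{\otimes 2})=2\deg_g(l)<\vg_0(g)$ (noting that $\deg_g$ is additive under tensor products, and that the degree of ${\cal L}_l^{\otimes 2}$ equals $2\deg_g(l)$ regardless of sign since the bound controls $|\deg_g(l)|$). Thus all hypotheses of Remark \ref{VanSmallDeg} are met, and we conclude $H^0({\cal L}_l^{\otimes 2})=0$.

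Substituting this vanishing back into the displayed formulas for $\End({\cal E}_p)$ and $\Aut({\cal E}_p)$ deletes the third summand and the third factor, yielding exactly
$$
\End({\cal E}_p)=\C\id_{{\cal L}_l}\oplus\C\id_{{\cal L}_l^{\smvee}},\quad \Aut({\cal E}_p)=\C^*\id_{{\cal L}_l}\times\C^*\id_{{\cal L}_l^{\smvee}},
$$
which is the assertion of the corollary.

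There is no genuine obstacle here, as the corollary is essentially a specialisation of Proposition \ref{endom}; the only point requiring care is verifying the non-triviality of ${\cal L}_l^{\otimes 2}$ and correctly matching the degree bound to the hypothesis of Remark \ref{VanSmallDeg}. The factor of $\tfrac12$ in $\tfrac12\vg_0(g)$ is precisely what is needed so that the doubled degree $2\deg_g(l)$ stays below $\vg_0(g)$, which is the threshold governing vanishing of sections for line bundles with vanishing real Chern class. I expect the step of identifying the correct case of Remark \ref{VanSmallDeg}—namely the $c_1^\R=0$ branch rather than the $\vg(g)$ branch—to be the only place where one could slip, since it is the torsion nature of $c_1({\cal L}_l)$ that permits the weaker bound $\vg_0(g)$ in place of $\vg(g)$.
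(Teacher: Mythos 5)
Your proof is correct and is precisely the argument the paper intends: the paper offers no separate proof of Corollary \ref{CoroEndo}, presenting it as an immediate consequence of Proposition \ref{endom} combined with Remark \ref{VanSmallDeg}, which is exactly your reduction of the statement to the vanishing $H^0({\cal L}_l^{\otimes 2})=0$ via non-triviality ($l\notin\rho$), torsion Chern class, and the doubled degree bound. The only cosmetic point you should add is that Proposition \ref{endom} is stated under the hypothesis $\deg_g(l)\geq 0$, so if $\deg_g(l)<0$ one first replaces $l$ by $l^\smvee$ (the corollary's conclusion is symmetric in $l\leftrightarrow l^\smvee$), after which your vanishing argument applies verbatim to ${\cal L}_{l^\smvee}^{\otimes 2}={\cal L}_l^{\smvee\otimes 2}$, whose degree is again bounded by $2|\deg_g(l)|<\vg_0(g)$.
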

Let $A_\varepsilon$ be the tubular neighbourhood of $\mathrm{C}=\Hom(H_1(X,\Z),S^1)$ in $\Pic^T$ given by
$$A_\varepsilon:=\{l\in\Pic^\T|\ \deg_g(l)\in(-\varepsilon,\varepsilon) \},
$$
and let $\Ag_\varepsilon$ be the quotient $\Ag_\varepsilon:=A_\varepsilon/\langle\jg\rangle$.
$A_\varepsilon$ is a disjoint union of annuli, and $\Ag_\varepsilon$ is  a disjoint union of annuli and disks.
Denoting by $|\deg_g|:\Pg\to [0,\infty)$ the map given by $|\deg_g|(\pi_0(l)):=|\deg_g(l)|$, we see that  $\Cg$ is just the zero set of  $|\deg_g|$, and $\Ag_\varepsilon$ is the open subset of $\Pg$ defined by the inequality $|\deg_g|(p)<\varepsilon$.

\begin{co}\label{versal} Let $(X,g)$ be a class VII surface endowed with a Gauduchon metric. For sufficiently small $\varepsilon>0$ the following holds:
\begin{enumerate}
\item 	$h^0({\cal E}nd_0({\cal E}_p))=1$, $h^2({\cal E}nd({\cal E}_p))=h^2({\cal E}nd_0({\cal E}_p))=0$ for any  $p\in\Ag_\varepsilon$,
\item   the family $\mathscr{E}$ is versal at any point $p\in\Ag_\varepsilon$.
\end{enumerate}

\end{co}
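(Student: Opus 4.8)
The plan is to treat the two assertions separately, deducing the cohomological statement (1) from the endomorphism computations already in hand together with a semicontinuity argument, and then reading off versality (2) from (1) via Riemann--Roch and an injectivity observation. First I would dispose of $h^0$. For $p\in\Ag_\varepsilon$ with $\varepsilon<\frac12\vg_0(g)$ one has $\dim_\C\End({\cal E}_p)=2$: this is Corollary \ref{CoroEndo} when $p\notin\beta$ (so a representative satisfies $l\notin\rho$ and $|\deg_g(l)|<\frac12\vg_0(g)$), and Proposition \ref{endom}(2) when $p\in\beta$. Since ${\cal E}nd({\cal E}_p)={\cal O}_X\,\id_{{\cal E}_p}\oplus{\cal E}nd_0({\cal E}_p)$ and $h^0({\cal O}_X)=1$, this gives $h^0({\cal E}nd_0({\cal E}_p))=2-1=1$. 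The same splitting, together with $h^2({\cal O}_X)=h^0({\cal K}_X)=0$ on a class VII surface, reduces the two vanishings $h^2({\cal E}nd({\cal E}_p))=h^2({\cal E}nd_0({\cal E}_p))=0$ to the single claim $h^2({\cal E}nd_0({\cal E}_p))=0$.

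For this vanishing I would first establish it on the compact locus $\Cg\subset\Pg$ defined by $|\deg_g|=0$, and then propagate. On $\Cg$ a representative $l$ has $\deg_g(l)=0$ and $[{\cal L}_l]\in\mathrm{C}$. If $l\notin\rho$ this is exactly Proposition \ref{RegFlatPolyst} (equivalently ${\cal E}nd_0({\cal E}_p)={\cal O}_X\oplus{\cal L}_l^{\otimes2}\oplus{\cal L}_l^{\smvee\otimes2}$ with all three summands in $\mathrm{C}$, so Lemma \ref{H2FlatLB} applies). If $l\in\rho$ then ${\cal L}_l$ is $2$-torsion and ${\cal E}nd_0({\cal E}_p)\cong S^2({\cal E}_p)$ carries a filtration whose three graded pieces are all isomorphic to ${\cal L}_l^{\otimes2}\cong{\cal O}_X$; the long exact sequences and $h^2({\cal O}_X)=0$ then force the vanishing. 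Since $\mathscr{E}$ is a vector bundle on $\Pg\times X$, hence flat over $\Pg$, the function $p\mapsto h^2({\cal E}nd_0({\cal E}_p))$ is upper semicontinuous, so its zero locus $Z$ is open and contains $\Cg$. As $|\deg_g|$ is proper on each component of $\Pg$, its sublevel sets are compact and shrink to $\Cg$, whence $\Ag_\varepsilon\subset Z$ for all small $\varepsilon$. The point worth flagging is that this indirect route seems essentially forced: off $\Cg$ the summands ${\cal L}_l^{\otimes2}$, ${\cal L}_l^{\smvee\otimes2}$ acquire nonzero degree and a pointwise computation $h^2=h^0({\cal K}_X\otimes{\cal L}_l^{\smvee\otimes2})$ would require controlling the sign of $\deg_g({\cal K}_X)$, which is not available at this generality.

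For (2), Riemann--Roch gives $\chi({\cal E}nd_0({\cal E}_p))=3\chi({\cal O}_X)-c_2({\cal E}nd_0({\cal E}_p))=0$, using $\chi({\cal O}_X)=0$ and $c_2({\cal E}nd_0)=4c_2({\cal E}_p)=0$; hence (1) yields $h^1({\cal E}nd_0({\cal E}_p))=1$, and since $h^2({\cal E}nd_0({\cal E}_p))=0$ the fixed-determinant deformation space of ${\cal E}_p$ is unobstructed, smooth, and $1$-dimensional. Versality at $p$ means exactly that the Kodaira--Spencer map $\kappa_p\colon T_p\Pg\to H^1({\cal E}nd_0({\cal E}_p))$ is surjective; as both spaces are $1$-dimensional this is the nonvanishing of $\kappa_p$. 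I would obtain this uniformly by arguing that the classifying map $g$ of $\mathscr{E}$ into the (smooth, $1$-dimensional) Kuranishi space of ${\cal E}_p$ is locally injective, hence a locally injective holomorphic map between one-dimensional manifolds, hence an immersion, i.e. $\kappa_p=dg_p\ne0$. Local injectivity holds because $p\mapsto[{\cal E}_p]$ is injective near every point: for $l\notin\rho$ the decomposition ${\cal E}_p={\cal L}_l\oplus{\cal L}_l^{\smvee}$ is the unique splitting of ${\cal E}_p$ into line bundles (Corollary \ref{CoroEndo} gives $\End({\cal E}_p)=\C\,\id_{{\cal L}_l}\oplus\C\,\id_{{\cal L}_l^{\smvee}}$, whose only nontrivial idempotents recover these summands), so $\{l,l^{\smvee}\}$, hence $p$, is determined by ${\cal E}_p$; for $l\in\rho$ the bundle ${\cal E}_p$ is indecomposable (Proposition \ref{endom}(2)) and ${\cal L}_l$ is its unique degree-zero sub-line-bundle, again recovering $p$, while indecomposability separates these points from the nearby split bundles.

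The main obstacle is precisely versality at the branch points $p\in\beta$: there $\pi$ is ramified, so the naive route --- transporting the Kodaira--Spencer isomorphism (\ref{DefPoin}) of the Poincaré family through $\pi$ in the spirit of Proposition \ref{coverings} --- is delicate and does not at once deliver $\kappa_p\ne0$. The injectivity argument above bypasses this computation and handles branch and non-branch points on the same footing. For $p\notin\beta$ one may alternatively verify $\kappa_p\ne0$ by hand, transporting (\ref{DefPoin}) through the local biholomorphism $\pi$ and checking that the resulting class lands nontrivially in the $H^1({\cal O}_X)$-summand of $H^1({\cal E}nd_0({\cal E}_p))$; this serves as a useful consistency check but is not needed once local injectivity is in place.
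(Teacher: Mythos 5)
Your proposal is correct, and part (1) is essentially the paper's own argument: the paper likewise obtains $h^0({\cal E}nd_0({\cal E}_p))=1$ from Corollary \ref{CoroEndo} (your explicit appeal to Proposition \ref{endom}(2) at points of $\beta$ makes precise a step the paper leaves implicit), proves the $h^2$-vanishing on the compact locus $\Cg$ --- the paper does this by Serre duality, using Lemma \ref{H2FlatLB} together with the canonical extension of Remark \ref{CanShESRem} to treat split and non-split points uniformly, where you instead split into Proposition \ref{RegFlatPolyst} and an $S^2$-filtration argument at $\rho$; these are interchangeable --- and then propagates to $\Ag_\varepsilon$ by Grauert semicontinuity plus compactness, exactly as you do. The genuine divergence is in part (2). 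The paper proves $\epsilon_v(\mathscr{E})\neq 0$ by direct computation: for $p\notin\beta$ it transports formula (\ref{DefPoin}) for the Poincar\'e bundle through the local biholomorphism $\pi$, and at branch points it invokes Proposition \ref{coverings} (\ref{DefoCOmp}), i.e.\ the identity $\epsilon_v(\mathscr{V})=R\big(\epsilon_\vg(\mathscr{E})\big)J_z$, which forces $\epsilon_\vg(\mathscr{E})\neq 0$; this is precisely the ``delicate'' point you flag, and the appendix Section \ref{BranchedCoverSect} exists largely to carry it out. Your soft alternative is valid: injectivity of $p\mapsto[{\cal E}_p]$ (idempotents of $\End({\cal E}_p)$ recover the splitting off $\beta$; on $\beta$ the unique flat sub-line bundle and indecomposability do the job), hence injectivity of any classifying map $g$ into the fixed-determinant Kuranishi space (if $g(p_1)=g(p_2)$ both bundles are isomorphic to the same Kuranishi fibre), hence $dg_p\neq 0$ because a locally injective holomorphic map between one-dimensional complex manifolds is an immersion, and finally $\kappa_{\mathscr{E},p}=\kappa_{\mathrm{Kur}}\circ dg_p\neq 0$ since $\kappa_{\mathrm{Kur}}$ is bijective. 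What the paper's computation buys is self-containedness --- no appeal to the existence of semi-universal deformations of the \emph{non-simple} bundles ${\cal E}_p$ (which needs \cite{FK}-type results) --- together with an explicit identification of the Kodaira--Spencer class that is reused later (Lemma \ref{PartDer}); what your argument buys is a uniform treatment of branch and non-branch points and complete independence from the push-forward machinery, at the cost of the abstract deformation-theoretic input and of a rigidity fact special to one-dimensional parameter spaces.
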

\begin{proof}
(1) By Corollary \ref{CoroEndo} it follows that $h^0({\cal E}nd_0({\cal E}_p))=1$ if $|\deg|_g(p)$ is sufficiently small. Write $p=\pi(l)$ with $l\in A_\varepsilon$. By Lemma \ref{H2FlatLB} we know that $h^0({\cal K}_X\otimes {\cal L}^{\otimes 2})=h^0({\cal K}_X\otimes {\cal L}^{\smvee \otimes 2})=0$ if $l\in \mathrm{C}$. Using Remark \ref{CanShES} and the known property $h^0({\cal K}_X)=0$, we see that $h^0({\cal K}_X\otimes{\cal E}nd({\cal E}_p))=h^0({\cal K}_X\otimes{\cal E}nd_0({\cal E}_p))=0$  when $p\in \Cg$. By Grauert's semicontinuity theorem it follows  that, for sufficiently small $\varepsilon>0$, one has $h^2({\cal E}nd({\cal E}_p))=h^2({\cal E}nd_0({\cal E}_p))=0$  for any $p\in \Ag_\varepsilon$. 

(2) Using the Riemann-Roch theorem we get $h^1({\cal E}nd_0({\cal E}_p))=1$ for any $p\in\Ag_\varepsilon$. It suffices to note that the infinitesimal deformation $\epsilon_v(\mathscr{E})$ is non-trivial for any non-trivial tangent vector $v$.  For $p\not\in\beta$ the claim follows from the similar property of the Poincaré line bundle $\mathscr{L}$. For $p\in \beta$ this follows from Proposition \ref{coverings} (\ref{DefoCOmp}).
\end{proof}

\subsection{Singular torsion-free sheaves with trivial determinant and $c_2=1$}
\label{SingSh}
We start with the following simple classification result:
\begin{pr}\label{singF}
Let ${\cal F}$ be a singular torsion-free rank 2 sheaf on a non-algebraic complex surface $X$ with  $\det({\cal F})\simeq {\cal O}_X$ and $c_2({\cal F})=1$. Then ${\cal F}$ fits in an exact sequence
\begin{equation}\label{SingF} 
0\to {\cal F}\to {\cal E}\to  {\cal O}_{\{x\}}\to 0,	
\end{equation}
where ${\cal E}={\cal F}^{\we}$ is a rank 2-bundle on  $X$ with $\det({\cal E})\simeq{\cal O}_X$,  $c_2({\cal E})=0$, and ${\cal O}_{\{x\}}:={\cal O}/{\cal I}_x$.
\end{pr}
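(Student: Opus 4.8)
The plan is to realise ${\cal F}$ inside its reflexive hull and read off the claim from a Chern class computation, the single nontrivial input being a positivity forced by non-algebraicity. First I would set ${\cal E}:={\cal F}^{\we}$ and recall that on a smooth surface every reflexive coherent sheaf is locally free, so ${\cal E}$ is a holomorphic rank $2$ vector bundle. Since ${\cal F}$ is torsion-free, the canonical morphism ${\cal F}\to{\cal F}^{\we}={\cal E}$ is injective, and it is an isomorphism over the (finite, codimension $2$) locus where ${\cal F}$ is locally free; hence the cokernel ${\cal Q}:={\cal E}/{\cal F}$ is a coherent sheaf of finite length $\ell:=\mathrm{length}({\cal Q})$, with $\ell\geq 1$ precisely because ${\cal F}$ is singular. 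This gives a short exact sequence $0\to{\cal F}\to{\cal E}\to{\cal Q}\to 0$ with ${\cal Q}$ supported on finitely many points.

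Next I would compute Chern classes from this sequence. As ${\cal Q}$ is $0$-dimensional one has $c_1({\cal Q})=0$, so $c_1({\cal E})=c_1({\cal F})=0$ and therefore $\det({\cal E})\simeq\det({\cal F})\simeq{\cal O}_X$. Using additivity of the Chern character together with $\mathrm{ch}_2({\cal Q})=\ell$ and $\mathrm{ch}_2=-c_2$ for the trivial-determinant sheaves ${\cal E}$ and ${\cal F}$, I obtain the relation $c_2({\cal E})=c_2({\cal F})-\ell=1-\ell$. In particular $c_2({\cal E})\leq 0$, and the whole statement reduces to showing $c_2({\cal E})=0$, equivalently $\ell=1$.

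The decisive step — where non-algebraicity is indispensable — is the inequality $c_2({\cal E})\geq 0$ for the rank $2$ bundle ${\cal E}$ with trivial determinant. This is false on projective surfaces (on $\P^2$ the bundle ${\cal O}(1)\oplus{\cal O}(-1)$ has $c_1=0$, $c_2=-1$), so the argument must genuinely use that $X$ carries no line bundle of positive self-intersection. I would apply Bogomolov's instability theorem, which holds on an arbitrary compact complex surface: were the discriminant $4c_2({\cal E})-c_1({\cal E})^2$ negative, then ${\cal E}$ would contain a saturated rank one subsheaf, i.e. a sub-line-bundle ${\cal L}$, with $(2c_1({\cal L})-c_1({\cal E}))^2>0$; since $c_1({\cal E})=0$ this reads $c_1({\cal L})^2>0$. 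By Kodaira's projectivity criterion for surfaces, a surface carrying a line bundle of positive self-intersection is algebraic, contradicting the hypothesis. Hence $4c_2({\cal E})=4c_2({\cal E})-c_1({\cal E})^2\geq 0$, so $c_2({\cal E})\geq 0$. Combined with $c_2({\cal E})=1-\ell\leq 0$ this forces $\ell=1$ and $c_2({\cal E})=0$; finally, a sheaf of length $1$ on a smooth surface is the structure sheaf ${\cal O}_{\{x\}}$ of a single reduced point $x$, which is exactly the sequence (\ref{SingF}). I expect the positivity $c_2({\cal E})\geq 0$ to be the only real obstacle: the remaining steps are formal, whereas this one is precisely what encodes the non-algebraicity assumption.
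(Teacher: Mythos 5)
Your construction is, step for step, the one in the paper: set ${\cal E}:={\cal F}^{\we}$, which is locally free; embed ${\cal F}\hookrightarrow{\cal E}$ with $0$-dimensional cokernel ${\cal Q}$ of length $\ell\geq 1$; deduce $\det({\cal E})\simeq{\cal O}_X$ and $c_2({\cal E})=c_2({\cal F})-\ell=1-\ell$; and then let the discriminant inequality $4c_2({\cal E})-c_1({\cal E})^2\geq 0$ on the non-algebraic surface $X$ force $\ell=1$, i.e. ${\cal Q}\simeq{\cal O}_{\{x\}}$. The single place where you diverge from the paper is the justification of that inequality, and that is exactly where your argument has a genuine gap.

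The paper obtains $4c_2({\cal E})-c_1({\cal E})^2\geq 0$ by citing \cite[Th\'eor\`eme 0.3]{BLP}, which is precisely this statement for rank-2 sheaves on non-algebraic surfaces. You instead appeal to ``Bogomolov's instability theorem, which holds on an arbitrary compact complex surface''. That is not an established result you can invoke: Bogomolov's theorem is proved for projective surfaces, and all the standard proofs (the GIT section-counting argument, the restriction/Hodge-index arguments, or the route through semistability and Hermitian--Einstein metrics) use a polarization. Worse, on a non-algebraic surface the theorem you postulate cannot serve as a tool at all: by the very Kodaira criterion you use, no line bundle on a non-algebraic surface has positive self-intersection, so the instability conclusion $(2c_1({\cal L})-c_1({\cal E}))^2>0$ is unachievable there; consequently, the claim ``Bogomolov instability holds on every compact complex surface'' is, over non-algebraic surfaces, literally equivalent to the claim that no rank-2 bundle has negative discriminant --- it is a restatement of the inequality you want, not a reduction of it to something known. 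In effect, your decisive step assumes the conclusion. The repair is easy: either quote \cite[Th\'eor\`eme 0.3]{BLP} as the paper does, or argue directly with a Gauduchon metric $g$ on $X$: if ${\cal E}$ is $g$-stable it admits a Hermitian--Einstein metric (Buchdahl, Li--Yau), and the Bogomolov--L\"ubke inequality gives $4c_2({\cal E})-c_1({\cal E})^2\geq 0$; if it is not $g$-stable, a destabilizing rank-1 subsheaf saturates to a sub-line bundle, yielding $0\to{\cal L}\to{\cal E}\to{\cal M}\otimes{\cal I}_Z\to 0$ and hence $4c_2({\cal E})-c_1({\cal E})^2=4\,\mathrm{length}({\cal O}_Z)-\big(c_1({\cal L})-c_1({\cal M})\big)^2\geq 0$, since on a non-algebraic surface every class in the N\'eron--Severi group has non-positive square.
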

\begin{proof} Since ${\cal F}$ is torsion-free on a surface, its singularity set $S\subset X$ is 0-dimensional,  ${\cal F}$  embeds injectively in its bidual ${\cal E}:={\cal F}^{\we}$ (which is locally free), and the support of the quotient sheaf  ${\cal Q}:={\cal E} /{{\cal F}}$ is $S$. Using the exact sequence
$$0\to {\cal F}\to {\cal E}\to {\cal Q}\to 0
$$
 we get $\det({\cal E})\simeq\det({\cal F})\simeq {\cal O}_X$, and $c_2({\cal E})=c_2({\cal F})-h^0({\cal Q})$. On the other hand, since $X$ is non-algebraic, we get \cite[Théorème 0.3]{BLP} $4c_2({\cal E})-c_1({\cal E})^2\geq 0$, hence, in our case, $c_2({\cal E})\geq 0$. This gives $h^0({\cal Q})\leq c_2({\cal F})=1$. Since we suppose that ${\cal F}$ is singular, ${\cal Q}$ cannot vanish, hence $h^0({\cal Q})=1$, which shows that ${\cal Q}$ is just the structure sheaf of a simple point.
\end{proof}
Let ${\cal E}$ be rank 2 locally free sheaf  on a surface $X$, $x\in X$ and $\eta:{\cal E}(x)\to\C$ be a linear epimorphism. We will  denote by ${\cal F}({\cal E},x,\eta)$ the torsion-free sheaf $\ker(\tilde \eta)$, where $\tilde \eta:{\cal E}\to {\cal O}_{\{x\}}$ is the epimorphism induced by $\eta$.
\begin{pr}\label{morphisms}
If ${\cal F}({\cal E},x,\eta)\simeq {\cal F}({\cal E}',x',\eta')$ then $x=x'$ and any isomorphism $f:{\cal F}({\cal E},x,\eta)\to {\cal F}({\cal E}',x,\eta')$ is induced by a bundle isomorphism $\varphi:{\cal E}\to {\cal E}'$ such that $\eta'\circ \varphi(x)\in\C^* \eta$.
\end{pr}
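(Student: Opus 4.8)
The plan is to recover the locally free sheaf ${\cal E}$ functorially from the torsion-free sheaf ${\cal F}:={\cal F}({\cal E},x,\eta)$ as its bidual, and then transport the isomorphism $f$ through this construction. By definition ${\cal F}$ sits in the short exact sequence
$$0\to {\cal F}\to {\cal E}\textmap{\tilde\eta}{\cal O}_{\{x\}}\to 0,$$
and the cokernel ${\cal O}_{\{x\}}$ is supported at $x$, hence in codimension $2$. Therefore dualizing the inclusion induces an isomorphism ${\cal E}^\smvee\textmap{\simeq}{\cal F}^\smvee$ (both $\Hom$ and $\Ext^1$ into ${\cal O}_X$ vanish for a sheaf supported at a point of a smooth surface); dualizing once more, and using that ${\cal E}$ is locally free, hence reflexive, yields a canonical identification ${\cal F}^\we={\cal E}$ under which the reflexivization morphism ${\cal F}\to{\cal F}^\we$ is exactly the inclusion ${\cal F}\hookrightarrow{\cal E}$. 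The same holds for ${\cal E}'$, $x'$, $\eta'$.

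Since biduality is a functor, the given isomorphism $f:{\cal F}\to{\cal F}'$ induces an isomorphism $\varphi:=f^\we:{\cal E}\to{\cal E}'$ fitting into a commutative square with the two canonical inclusions; thus $\varphi$ maps ${\cal F}$ isomorphically onto ${\cal F}'$ and restricts to $f$, so $\varphi$ induces $f$ in the required sense. Passing to the cokernels of the two horizontal inclusions, $\varphi$ and $f$ then induce an isomorphism $\bar\varphi$ of the quotients ${\cal O}_{\{x\}}$ and ${\cal O}_{\{x'\}}$ with $\tilde\eta'\circ\varphi=\bar\varphi\circ\tilde\eta$. Two structure sheaves of reduced points are isomorphic as ${\cal O}_X$-modules only if the points coincide, which forces $x=x'$; and then $\bar\varphi\in\Aut({\cal O}_{\{x\}})=\C^*$ is multiplication by a scalar $c\in\C^*$.

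It remains to read off the fiberwise assertion. Writing $\tilde\eta=\eta\circ\ev_x$ with $\ev_x:{\cal E}\to{\cal E}(x)$ the evaluation, and likewise $\tilde\eta'=\eta'\circ\ev_x'$, the naturality relation $\ev_x'\circ\varphi=\varphi(x)\circ\ev_x$ combined with $\tilde\eta'\circ\varphi=c\,\tilde\eta$ gives
$$\eta'\circ\varphi(x)\circ\ev_x=c\,\eta\circ\ev_x.$$
Since $\ev_x$ is surjective onto ${\cal E}(x)$, this forces $\eta'\circ\varphi(x)=c\,\eta\in\C^*\eta$, completing the argument.

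The step requiring the most care is the first one: establishing that the inclusion ${\cal F}\hookrightarrow{\cal E}$ is canonically the reflexivization map ${\cal F}\to{\cal F}^\we$, so that ${\cal E}$ is an intrinsic invariant of ${\cal F}$ and $f^\we$ is a well-defined isomorphism inducing $f$. This is essentially the bidual computation already carried out in the proof of Proposition \ref{singF}, here stripped of the Chern-class normalisation. Once this identification is in place, the remainder is a routine diagram chase together with the fiberwise surjectivity of $\ev_x$.
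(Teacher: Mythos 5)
Your proof is correct and takes essentially the same route as the paper's: both recover ${\cal E}$ canonically as the bidual ${\cal F}^\we$ (using that the cokernel ${\cal O}_{\{x\}}$ is supported in codimension $2$) and transport $f$ through the biduality functor to obtain $\varphi=f^\we$ restricting to $f$. The only cosmetic difference is that you extract $x=x'$ and the scalar from the induced isomorphism of cokernels $\bar\varphi\in\Aut({\cal O}_{\{x\}})=\C^*$, whereas the paper reads $x=x'$ off the singularity sets and phrases the final step as $\varphi(\ker\tilde\eta)=\ker(\tilde\eta')$; these are equivalent pieces of bookkeeping.
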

\begin{proof} Indeed, if ${\cal F}({\cal E},x,\eta)\simeq {\cal F}({\cal E}',x',\eta')$ then comparing the singularity sets of the two sheaves we get $x=x'$. An isomorphism $f:{\cal F}({\cal E},x,\eta)\to  {\cal F}({\cal E}',x,\eta')$ induces an isomorphism 
$$\varphi: {\cal F}({\cal E},x,\eta)^{\we}={\cal E}\to {\cal E}'={\cal F}({\cal E}',x,\eta')^{\we}
$$
between the respective biduals which makes the diagram
$$
\begin{diagram}[h=7.5mm]
{\cal F}({\cal E},x,\eta)& \rTo^{f}_{\simeq} &{\cal F}({\cal E}',x,\eta')\\
\dInto  & & \dInto \\
{\cal E}&\rTo^{\varphi}_{\simeq} & {\cal E}'
\end{diagram} 
$$
commutative. This shows that $\varphi$ maps ${\cal F}({\cal E},x,\eta)$ onto ${\cal F}({\cal E}',x,\eta')$ and $f$ is just the restriction of $\varphi$ to ${\cal F}({\cal E},x,\eta)$. The former property is equivalent to $\varphi(\ker(\tilde \eta))=\ker(\tilde \eta')$, which, since  ${\cal O}_{\{x\}}$ is a skyscraper sheaf with 1-dimensional stalk, is obviously equivalent to $\tilde \eta'\circ \varphi\in\C^* \tilde \eta$. But this is equivalent to  $  \eta'\circ \varphi(x)\in\C^*   \eta$.   
	\end{proof}
\begin{co} \label{IsoCrit} Let ${\cal E}$ be rank 2 locally free sheaf  on a surface $X$,  $x$, $x'\in X$, and $\eta$, $\eta':{\cal E}(x)\to\C$ be epimorphisms. The following conditions are equivalent:
\begin{enumerate}
 \item ${\cal F}({\cal E},x,\eta)\simeq  {\cal F}({\cal E},x',\eta')$,
 \item $x=x'$ and there exists an automorphism $\varphi\in\Aut({\cal E})$ such that $\eta'\circ \varphi(x)=\eta$.	
\end{enumerate}
\end{co}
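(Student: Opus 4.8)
The plan is to deduce this equivalence directly from Proposition \ref{morphisms}, specialised to the case ${\cal E}'={\cal E}$, together with a short direct verification for the converse implication. Both sheaves ${\cal F}({\cal E},x,\eta)$ and ${\cal F}({\cal E},x',\eta')$ are by definition kernels of the induced epimorphisms $\tilde\eta,\tilde\eta':{\cal E}\to{\cal O}_{\{x\}}$, so the whole argument will reduce to understanding how an automorphism of ${\cal E}$ interacts with these epimorphisms.

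For (1) $\Rightarrow$ (2) I would assume ${\cal F}({\cal E},x,\eta)\simeq{\cal F}({\cal E},x',\eta')$ and simply invoke Proposition \ref{morphisms} with ${\cal E}'={\cal E}$. This immediately yields $x=x'$ and a bundle automorphism $\psi\in\Aut({\cal E})$ inducing the given isomorphism and satisfying $\eta'\circ\psi(x)\in\C^*\eta$, say $\eta'\circ\psi(x)=c\,\eta$ with $c\in\C^*$. The only remaining point is to upgrade this proportionality to an equality: setting $\varphi:=c^{-1}\psi\in\Aut({\cal E})$, the induced fibre map at $x$ scales by $c^{-1}$, so $\eta'\circ\varphi(x)=c^{-1}(c\,\eta)=\eta$, which is exactly condition (2).

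For (2) $\Rightarrow$ (1), given $x=x'$ and $\varphi\in\Aut({\cal E})$ with $\eta'\circ\varphi(x)=\eta$, I would show that $\varphi$ restricts to an isomorphism ${\cal F}({\cal E},x,\eta)\to{\cal F}({\cal E},x,\eta')$. The key observation is that, since ${\cal O}_{\{x\}}$ is a skyscraper sheaf with one-dimensional stalk, the induced epimorphism ${\cal E}\to{\cal O}_{\{x\}}$ depends only on the fibre map ${\cal E}(x)\to\C$; hence the composite $\tilde\eta'\circ\varphi$ is precisely the epimorphism induced by $\eta'\circ\varphi(x)=\eta$, i.e.\ $\tilde\eta'\circ\varphi=\tilde\eta$. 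Consequently $\ker\tilde\eta=\ker(\tilde\eta'\circ\varphi)=\varphi^{-1}(\ker\tilde\eta')$, so $\varphi\big(\ker\tilde\eta\big)=\ker\tilde\eta'$ and $\varphi$ carries ${\cal F}({\cal E},x,\eta)$ isomorphically onto ${\cal F}({\cal E},x,\eta')$.

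Since essentially everything is packaged into Proposition \ref{morphisms}, there is no serious obstacle here; the argument is mostly bookkeeping. The one point deserving care is the interplay between the ``value up to scalar'' statement $\eta'\circ\psi(x)\in\C^*\eta$ produced by the proposition and the ``value on the nose'' statement $\eta'\circ\varphi(x)=\eta$ demanded by condition (2); this is why the rescaling step in (1) $\Rightarrow$ (2) and the skyscraper observation in (2) $\Rightarrow$ (1) are both needed to make the two formulations match exactly.
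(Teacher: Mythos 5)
Your proposal is correct and follows essentially the same route as the paper, which states this corollary as an immediate consequence of Proposition \ref{morphisms} (whose proof already contains the chain of equivalences $\varphi(\ker\tilde\eta)=\ker\tilde\eta' \Leftrightarrow \tilde\eta'\circ\varphi\in\C^*\tilde\eta \Leftrightarrow \eta'\circ\varphi(x)\in\C^*\eta$, giving both directions). Your two added details --- rescaling $\psi$ by $c^{-1}$ to turn the $\C^*$-proportionality into an equality, and the skyscraper observation for the converse --- are exactly the bookkeeping the paper leaves implicit, and both are carried out correctly.
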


We return to   class VII surfaces  and the objects constructed in the previous sections.

\begin{dt} Let $X$ be a class VII surface, let $p\in\Pg$ and $x\in X$. An epimorphism $\eta:{\cal E}_p(x)\to\C$ will be called {\rm admissible} if its kernel does not coincide with a canonical line of ${\cal E}_p(x)$ (see Definition \ref{CanLines}).
\end{dt}

Using Proposition  \ref{endom} and Corollary \ref{CoroEndo} we get
\begin{co}\label{F-Bundles}
Let  $p=\pi(l)\in\Pg$ with $0\leq |\deg_g(l)|<\frac{1}{2}\vg_0(g)$.
\begin{enumerate}
\item For any admissible epimorphism	 $\eta:{\cal E}_p(x)\to\C$, the sheaf ${\cal F}({\cal E}_p,x,\eta)$ is simple,
\item If $\eta$, $\eta':{\cal E}_p(x)\to\C$ are admissible epimorphisms, then ${\cal F}({\cal E}_p,x,\eta)\simeq {\cal F}({\cal E}_p,x,\eta')$.
\end{enumerate}
\end{co}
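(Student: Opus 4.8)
The plan is to reduce both assertions to the explicit descriptions of $\End({\cal E}_p)$ and $\Aut({\cal E}_p)$ obtained in Corollary \ref{CoroEndo} (valid when $l\notin\rho$, i.e. $p\notin\beta$) and Proposition \ref{endom}(2) (valid when $l\in\rho$, i.e. $p\in\beta$), combined with the isomorphism criterion of Corollary \ref{IsoCrit}. Throughout write ${\cal F}:={\cal F}({\cal E}_p,x,\eta)=\ker(\tilde\eta)$, so that by Proposition \ref{singF} one has ${\cal F}^\we={\cal E}_p$ and a tautological inclusion ${\cal F}\hookrightarrow{\cal E}_p$.

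For part (1), the first step is the observation that biduality is functorial, so every endomorphism of ${\cal F}$ extends uniquely to an endomorphism of ${\cal E}_p$; this identifies $\End({\cal F})$ with the subalgebra $\{\varphi\in\End({\cal E}_p)\mid \varphi({\cal F})\subseteq{\cal F}\}$. Since ${\cal O}_{\{x\}}$ is a length-one skyscraper one has $\Hom({\cal E}_p,{\cal O}_{\{x\}})\cong{\cal E}_p(x)^\smvee$, and $\varphi$ preserves ${\cal F}=\ker\tilde\eta$ precisely when $\eta\circ\varphi(x)\in\C\eta$. It then remains to check fibrewise that admissibility forces such a $\varphi$ to be scalar. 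When $l\notin\rho$, $\End({\cal E}_p)=\C\id_{{\cal L}_l}\oplus\C\id_{{\cal L}_l^\smvee}$ acts diagonally on ${\cal E}_p(x)={\cal L}_l(x)\oplus{\cal L}_l^\smvee(x)$; admissibility says the two components $\eta_1,\eta_2$ of $\eta$ along the canonical lines are both nonzero, and then $\eta\circ\mathrm{diag}(a,b)(x)=(a\eta_1,b\eta_2)\in\C\eta$ forces $a=b$. When $l\in\rho$, $\End({\cal E}_p)=\C\id\oplus\C(j_l\circ r_l)$, and the fibre of the nilpotent $N:=j_l\circ r_l$ has both kernel and image equal to the canonical line ${\cal L}_l(x)$; hence $\eta\circ N(x)$ is a nonzero functional with kernel ${\cal L}_l(x)$, which by admissibility differs from $\ker\eta$, so $\eta\circ N(x)\notin\C\eta$ and again only scalars survive. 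In both cases $\End({\cal F})=\C\id$, i.e. ${\cal F}$ is simple.

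For part (2), by Corollary \ref{IsoCrit} it suffices to produce $\varphi\in\Aut({\cal E}_p)$ with $\eta'\circ\varphi(x)=\eta$; since scalars lie in $\Aut({\cal E}_p)$ and rescaling an epimorphism does not change the kernel (hence not the sheaf), this reduces to finding $\varphi$ with $\varphi(x)(\ker\eta)=\ker\eta'$. Thus I must show that $\Aut({\cal E}_p)$ acts transitively, through fibre evaluation $\varphi\mapsto\varphi(x)$, on the set of lines in ${\cal E}_p(x)$ distinct from the canonical line(s) of Definition \ref{CanLines} --- exactly the admissible lines. For $l\notin\rho$ the diagonal torus $\C^*\id_{{\cal L}_l}\times\C^*\id_{{\cal L}_l^\smvee}$ visibly acts transitively on the lines meeting neither of the two canonical lines. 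For $l\in\rho$, choosing a fibre basis adapted to $N(x)$ the automorphisms take the form $\left(\begin{smallmatrix} z & \zeta \\ 0 & z\end{smallmatrix}\right)$ with $z\in\C^*$, $\zeta\in\C$, and the induced action on the affine chart of lines transverse to ${\cal L}_l(x)$ is by the translations $u\mapsto u+\zeta/z$, hence transitive.

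The main obstacle is the ramification case $l\in\rho$ (equivalently $p\in\beta$), where ${\cal E}_p$ is a \emph{non-split} self-extension of ${\cal L}_l$ and its endomorphism algebra is not semisimple but carries the nilpotent $j_l\circ r_l$. The entire argument hinges on the precise geometric fact that the fibre of this nilpotent has kernel and image equal to the single canonical line ${\cal L}_l(x)$: this is exactly what makes the admissibility condition $\ker\eta\ne{\cal L}_l(x)$ simultaneously rule out the extra endomorphism (yielding simplicity in part (1)) and guarantee that the unipotent part of $\Aut({\cal E}_p)$ moves $\ker\eta$ (yielding the isomorphism in part (2)). Once this fibrewise linear algebra is in place, the remaining verifications are routine.
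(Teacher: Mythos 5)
Your proof is correct and follows essentially the same route the paper intends: the paper states this corollary without an explicit proof, deriving it directly from the endomorphism computations of Proposition \ref{endom} and Corollary \ref{CoroEndo} together with the isomorphism criterion of Proposition \ref{morphisms}/Corollary \ref{IsoCrit}, and your argument simply spells out the fibrewise linear algebra (diagonal torus in the split case, unipotent translations in the ramification case $l\in\rho$) that makes this derivation work.
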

	
The sheaves of the form ${\cal F}({\cal E}_p,x,\eta)$ will play a crucial role in the proof of our main result. Using Remark \ref{CanShESRem} we see that
\begin{re} \label{CanShESFRem} Let $\eta:{\cal E}_p(x)\to\C$ be an admissible epimorphism. A representative $l\in p$ gives a short exact sequence
\begin{equation}\label{CanShESF} 0\to {\cal L}_l^{\smvee}\otimes{\cal I}_x\textmap{j_{l,x,\eta}} {\cal F}({\cal E}_p,x,\eta)\textmap{r_{l,x,\eta}} {\cal L}_l\to 0 ,
\end{equation}   
where $j_{l,x,\eta}$, $r_{l,x,\eta}$ are induced by $j_l$, $r_l$. \end{re}

\section{The map $V_\varepsilon$ and its properties}
\label{VSection}

We will need holomorphic families of singular sheaves of the form ${\cal F}({\cal E}_p,x,\eta)$ parameterised by   products $\Pg\times U$, for sufficiently small open subsets $U\subset X$. Therefore we will need families of admissible epimorphisms 
$$(\eta_{p,u}:{\cal E}_p(u)\to\C)_{(p,u)\in\Pg\times U}$$
defined for sufficiently small open subsets $U\subset X$. Let  $U\subset X$ be a contractible, Stein open set. The restriction $\mathscr{L}_U:=\resto{\mathscr{L}}{\Pic^\T\times U}$ is trivial. Choose a trivialisation morphism $\sigma:\mathscr{L}_U\textmap{\simeq} {\cal O}_{\Pic^\T\times U}$. Taking push-forward via the double cover  
$$\pi\times\id_U:\Pic^\T\times U\to \Pg\times U\,,$$
and using (\ref{pi(O)}), we get a bundle isomorphism 
\begin{equation}\label{Dec}
(\pi\times\id_U)(\sigma):\mathscr{E}_U\textmap{\simeq} (\pi\times\id_U)_*({\cal O}_{\Pic^\T\times U})={\cal O}_{\Pg\times U}\,\xi\oplus {\cal O}_{\Pg\times U}\,\zeta,	
\end{equation}
where $\mathscr{E}_U:=\resto{\mathscr{E}}{{\Pg\times U}}$. The projection on the second summand gives an epimorphism $\eta^\sigma:\mathscr{E}_U\to {\cal O}_{\Pg\times U}$.  
\begin{lm}\label{eta-admiss} For   any  $(p,u)\in\Pg\times U$, $\eta^{\sigma}_{p,u}:{\cal E}_p(u)\to\C$ is an admissible epimorphism. \end{lm}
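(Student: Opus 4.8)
The statement is pointwise in $(p,u)$, so the plan is to fix $(p,u)\in\Pg\times U$, write $p=\pi(l)$, and determine the line $\ker(\eta^\sigma_{p,u})\subset{\cal E}_p(u)$ explicitly, comparing it with the canonical line(s) of Definition \ref{CanLines}. The structural key is that the decomposition (\ref{Dec}), being the splitting of $\pi_*({\cal O}_{\Pic^\T})$ in (\ref{pi(O)}) into its $\jg$-even summand ${\cal O}_\Pg\,\xi$ and its $\jg$-odd summand ${\cal O}_\Pg\,\zeta$ (recall $\xi\circ\jg=\xi$ and $\zeta\circ\jg=-\zeta$), exhibits $\eta^\sigma$ as the projection onto the odd summand. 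Consequently $\ker(\eta^\sigma_{p,u})$ is exactly the line of $\jg$-invariant elements of the fibre ${\cal E}_p(u)$, and the whole lemma reduces to checking, in the two cases $l\notin\rho$ and $l\in\rho$ (equivalently $p\notin\beta$ and $p\in\beta$), that this distinguished $\jg$-invariant line is none of the canonical lines.

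In the unramified case $l\notin\rho$, Lemma \ref{family}(1) together with (\ref{Ep}) identifies ${\cal E}_p(u)$, via the chosen trivialisation $\sigma$ of $\mathscr{L}_U$, with the space of functions on the reduced two-point fibre $\{l,l^\smvee\}$, the two canonical lines ${\cal L}_l(u)$ and ${\cal L}_l^\smvee(u)$ being precisely the two coordinate lines $\C\cdot(1,0)$ and $\C\cdot(0,1)$. By the previous paragraph $\ker(\eta^\sigma_{p,u})$ consists of the $\jg$-invariant elements, i.e. of those functions taking equal values at $l$ and at $l^\smvee$; a nonzero such function is supported at \emph{both} points, hence is the diagonal line $\C\cdot(1,1)$, which meets each coordinate line only in $0$. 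Thus $\ker(\eta^\sigma_{p,u})$ is not a canonical line, which disposes of this case. (The characterisation $\rho=\{\zeta=0\}$ guarantees $\zeta(l)\neq0$ here, so the odd summand is genuinely nonzero on the fibre.)

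In the ramified case $l\in\rho$, the fibre ${\cal E}_p(u)$ is, by (\ref{Ep+-}), the space of functions on the length-two non-reduced fibre $2X_l$. The crucial point is to match the sheaf-theoretic description of the canonical sub-bundle with the $\zeta$-direction: by (\ref{ExSeq2D}) and (\ref{extEp}) the monomorphism $j_l$ is induced by multiplication by $\zeta$, so the canonical line $\im(j_l)(u)\subset{\cal E}_p(u)$ is exactly the line $\C\,\bar\zeta$ of functions vanishing on the reduced point. On the other hand $\ker(\eta^\sigma_{p,u})=\C\,\bar\xi$ is the complementary even direction; since $\xi^2-\zeta^2=(\varpi\sigma)(\varpi\sigma\jg)$ never vanishes, $\xi(l)\neq0$ at a branch point, so $\bar\xi\neq0$ and $\{\bar\xi,\bar\zeta\}$ is a basis of ${\cal E}_p(u)$ by (\ref{pi(O)}). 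These two lines are therefore distinct, and again the kernel is not the canonical line.

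I expect the ramified case to be the main obstacle, because it forces one to work with the non-reduced fibre $2X_l$ and to verify carefully that the abstract extension datum $j_l$ produced by (\ref{ExSeq2D}) is realised concretely as the odd ($\zeta$-) summand of the pushforward decomposition; once this identification of $\im(j_l)$ with the $\zeta$-line is in place, transversality to $\ker(\eta^\sigma_{p,u})=$ the $\xi$-line is immediate from the freeness of $\pi_*({\cal O}_{\Pic^\T})$ on $\{\xi,\zeta\}$. The only genuinely needed input beyond this bookkeeping is the equality $\rho=\{\zeta=0\}$, which ensures simultaneously that $\zeta$ restricts to a nilpotent generator at branch points and that $\bar\xi,\bar\zeta$ remain independent away from them.
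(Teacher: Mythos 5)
Your proof is correct and is essentially the paper's own argument: through the trivialisation $\sigma$ the paper likewise reads $\eta^\sigma$ as the anti-invariant projection $\alpha\mapsto\frac{1}{2\zeta}\big(\alpha-\alpha\circ(\jg\times\id_U)\big)$, so that $\ker(\eta^{\sigma}_{p,u})$ is the $\jg$-invariant line, which in the unramified case is the diagonal of ${\cal L}_l(u)\oplus{\cal L}_{l^\smvee}(u)$ and hence not a canonical line. The only difference is in the ramified case: where you compute both lines in the basis $\{\bar\xi,\bar\zeta\}$ (and therefore need the side remark that $\xi(l)\neq 0$), the paper simply notes that $\eta^\sigma\circ j_l$ is the identity, i.e.\ $\eta^\sigma$ splits the restriction of (\ref{ExSeqEp+-}) to $U$, so its kernel is automatically a complement of the canonical line $\im(j_l)(u)$.
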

\begin{proof}
 	  Via the identifications 
$$\mathscr{E}_U(\Pg\times U)=\mathscr{L}_U(\Pic^\T\times U)\textmap{\simeq \sigma}{\cal O}(\Pic^\T\times U),
$$
 	    the morphism $\eta^\sigma$ is given by $\eta^\sigma(\alpha)=\frac{1}{2\zeta}\big(\alpha -\alpha\circ(\jg\times\id_U)\big)$. If $l\ne l^\smvee$ this shows that $\ker(\eta^{\sigma}_{p,u})$ is identified with the diagonal line of the sum ${\cal L}_l(u)\oplus {\cal L}_{l^\smvee}(u)$.  If  $l= l^\smvee$ the same formula shows that $\eta^\sigma$ defines a splitting of the restriction of the exact sequence (\ref{ExSeqEp+-}) to $U$, so $\ker(\eta^{\sigma}_{p,u})$ is a complement of ${\cal L}_l(u)$ in ${\cal E}_p(u)$.	
\end{proof}

\subsection{The embedding  $V_\varepsilon$}\label{embeddV}

Let ${\cal M}^\si$ be the moduli space of simple torsion-free sheaves on $X$ with trivial determinant line bundle and $c_2=1$, and let
$${\cal M}^\si_\reg:=\{[{\cal F}]\in {\cal M}^\si|\ \mathrm{Ext}^2_0({\cal F},{\cal F})=0\}=\{[{\cal F}]\in {\cal M}^\si|\ \mathrm{Ext}^2({\cal F},{\cal F})=0\}
$$
be its regular part.  
\begin{pr}\label{embedding} Let $(X,g)$ be a class VII surface endowed with a Gauduchon metric. For sufficiently small $\varepsilon>0$ the following hold:
\begin{enumerate}
\item 	 $[{\cal F}({\cal E}_p,x,\eta)]\in {\cal M}^\si_\reg$ for any $p\in\Ag_\varepsilon$,  $x\in X$ and admissible epimorphism $\eta:{\cal E}_p(x)\to\C$.
\item \label{HolEmbed} The map $V_\varepsilon:\Ag_\varepsilon\times X\to {\cal M}^\si_\reg$ given by 
$$V(p,x):=[{\cal F}({\cal E}_p,x,\eta)]
$$
(for an admissible epimorphism $\eta:{\cal E}_p(x)\to\C$) is  well-defined, injective, holomorphic,  immersive,  and homeomorphic on its image.
\end{enumerate}
\end{pr}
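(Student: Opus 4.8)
The plan is to verify the five asserted properties in turn, building everything on the explicit family obtained from $\mathscr{E}$ and the admissible epimorphisms $\eta^\sigma$ of Section \ref{VSection}.

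\emph{Well-definedness and statement (1).} For every $(p,x)$ the canonical lines of ${\cal E}_p(x)$ are finitely many lines in the $2$-dimensional fibre, so admissible epimorphisms exist; by Corollary \ref{F-Bundles}(1) the resulting sheaf ${\cal F}={\cal F}({\cal E}_p,x,\eta)$ is simple, and by Corollary \ref{F-Bundles}(2) its isomorphy class is independent of the chosen admissible $\eta$, so $V_\varepsilon$ is a well-defined map into ${\cal M}^\si$. To see that it lands in ${\cal M}^\si_\reg$ I must show $\Ext^2({\cal F},{\cal F})=0$. By Serre duality $\Ext^2({\cal F},{\cal F})\simeq\Hom({\cal F},{\cal F}\otimes{\cal K}_X)^\ve$. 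Tensoring the inclusion ${\cal F}\hookrightarrow{\cal E}_p$ by ${\cal K}_X$ gives $\Hom({\cal F},{\cal F}\otimes{\cal K}_X)\hookrightarrow\Hom({\cal F},{\cal E}_p\otimes{\cal K}_X)$; since ${\cal E}_p\otimes{\cal K}_X$ is locally free and ${\cal F}\hookrightarrow{\cal E}_p={\cal F}^\we$ is an isomorphism off the point $x$, restriction induces $\Hom({\cal F},{\cal E}_p\otimes{\cal K}_X)\simeq\Hom({\cal E}_p,{\cal E}_p\otimes{\cal K}_X)=H^0({\cal K}_X\otimes{\cal E}nd({\cal E}_p))$. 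This group vanishes by Corollary \ref{versal}(1) (equivalently $h^2({\cal E}nd({\cal E}_p))=0$ via Serre duality), whence $\Ext^2({\cal F},{\cal F})=0$, proving (1).

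\emph{Holomorphicity.} Fix a contractible Stein $U\subset X$, a trivialisation $\sigma$ of $\mathscr{L}_U$, and the induced family epimorphism $\eta^\sigma$ of \eqref{Dec}. Writing $\mathrm{pr}$ for the projection $\Ag_\varepsilon\times U\times X\to\Pg\times X$ onto the first and third factors, and $\Delta_U\subset U\times X$ for the graph of $U\hookrightarrow X$, I set $\mathscr{F}:=\ker\big(\mathrm{pr}^*\mathscr{E}\to{\cal O}_{\Ag_\varepsilon\times\Delta_U}\big)$, the surjection being $\eta^\sigma$ restricted along $\Ag_\varepsilon\times\Delta_U$. Its fibre over $(p,u)$ is ${\cal F}({\cal E}_p,u,\eta^\sigma_{p,u})$, which uses an admissible epimorphism by Lemma \ref{eta-admiss}; as the cokernel is flat over the base, $\mathscr{F}$ is a flat family of simple, regular sheaves, so by the universal property of the moduli space of simple sheaves \cite{KO} it induces a holomorphic classifying map $\Ag_\varepsilon\times U\to{\cal M}^\si_\reg$ coinciding with $V_\varepsilon$ there. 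These local maps glue (well-definedness makes them independent of $\sigma$), so $V_\varepsilon$ is holomorphic.

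\emph{Injectivity and immersivity.} If $V_\varepsilon(p,x)=V_\varepsilon(p',x')$ then Proposition \ref{morphisms} gives $x=x'$ and ${\cal E}_p\simeq{\cal E}_{p'}$; since $p\mapsto[{\cal E}_p]$ is injective on $\Pg$---by Krull--Schmidt in the decomposable case $l\notin\rho$ and by uniqueness of the canonical line sub-bundle of the non-split extension \eqref{extEp} when $l\in\rho$ (Lemma \ref{family})---we conclude $p=p'$. For immersivity I read off $dV_\varepsilon$ as the Kodaira--Spencer map of $\mathscr{F}$ and show it is injective. Suppose $dV_\varepsilon(v,w)=0$, i.e.\ the first-order deformation $\mathscr{F}_{(v,w)}$ is trivial. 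Its fibrewise bidual is $\mathrm{pr}^*\mathscr{E}$, whose Kodaira--Spencer class along $(v,w)$ is a class $\kappa(v)\in H^1({\cal E}nd_0({\cal E}_p))$ depending only on $v$; functoriality of bidualisation forces $\kappa(v)=0$, hence $v=0$ by the versality isomorphism of Corollary \ref{versal}(2). Independently, the singular subscheme of $\mathscr{F}$, recovered functorially as the Fitting support of $\mathrm{pr}^*\mathscr{E}/\mathscr{F}={\cal O}_{\Ag_\varepsilon\times\Delta_U}$, varies only in the $X$-direction and moves to first order precisely by $w$; triviality of $\mathscr{F}_{(v,w)}$ then forces $w=0$. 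Thus $dV_\varepsilon$ is injective.

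\emph{Homeomorphism onto the image, and the main difficulty.} Finally $V_\varepsilon$ admits the continuous left inverse $[{\cal F}]\mapsto\big([{\cal F}^\we],\mathrm{Sing}({\cal F})\big)$, using continuity of the bidual and of the singular support in flat families (through Fitting ideals and the continuity theorem \cite{BTT}) together with the homeomorphism $p\mapsto[{\cal E}_p]$ onto its image; equivalently one verifies that $V_\varepsilon$ is proper onto its image, so that the injective holomorphic immersion is a homeomorphism onto its image. I expect the immersivity step to be the main obstacle: it hinges on using simultaneously the two functorial reconstructions of ${\cal F}$---its bidual, to detect the $\Ag_\varepsilon$-direction via versality, and its singular subscheme, to detect the $X$-direction---and on the non-Kählerian vanishing $H^0({\cal K}_X\otimes{\cal E}nd({\cal E}_p))=0$ underpinning both (1) and the versality, which is exactly where the class VII hypothesis is used.
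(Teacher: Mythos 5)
Your proposal is correct in substance, and in two places it takes a genuinely different route from the paper, so let me compare. For part (1) you prove $\Ext^2({\cal F},{\cal F})=0$ by Serre duality together with the extension isomorphism $\Hom({\cal F},{\cal E}_p\otimes{\cal K}_X)\simeq\Hom({\cal E}_p,{\cal E}_p\otimes{\cal K}_X)$ across the singular point (legitimate, since ${\cal E}_p\otimes{\cal K}_X$ is locally free and ${\cal F}\hookrightarrow{\cal F}^\we={\cal E}_p$ is an isomorphism off $x$), reducing everything to $h^0({\cal K}_X\otimes{\cal E}nd({\cal E}_p))=0$; the paper instead runs the local-to-global spectral sequence and kills ${\cal E}xt^2({\cal F},{\cal F})$ by a stalkwise homological-dimension computation. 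Both rest on Corollary \ref{versal}, and your argument is shorter. Well-definedness, injectivity and holomorphy coincide with the paper's treatment (same family $\mathscr{F}^\sigma$, same flatness statement, Lemma \ref{claimLemma}). For immersivity the paper computes the two components of $dV_\varepsilon$ through the maps $J$, $R$ of the local-to-global sequence (Lemma \ref{PartDer}); you argue instead that triviality of the first-order deformation of ${\cal F}$ forces triviality of the induced deformations of its bidual and of its singular subscheme. This is a clean repackaging, but two of your catchphrases hide real steps: ``functoriality of bidualisation'' amounts to the fact that the relative bidual of $\mathscr{F}$ over the double point is $\mathrm{pr}^*\mathscr{E}$ (true because the quotient is a relative codimension-two locally complete intersection, so its ${\cal E}xt^i$ into the structure sheaf vanish for $i<2$), equivalently Lemma \ref{claimLemma}(2) combined with injectivity of $\tau:\Ext^1({\cal E}_p,{\cal E}_p)\to\Ext^1({\cal F},{\cal E}_p)$, which follows from $\Ext^1({\cal O}_{\{x\}},{\cal E}_p)=0$; and ``moves to first order precisely by $w$'' is exactly the computation of Proposition \ref{IdealFam}. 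The largest divergence is the homeomorphism-onto-image step: the paper extends $V_\varepsilon$ to the compact set $\bar\Ag_\varepsilon\times X$, proves the image is Hausdorff by the separation criterion (Corollary \ref{criterion}, via $\Hom$-vanishing between distinct members of the family), and concludes by the compact-to-Hausdorff argument; you instead build a continuous left inverse $[{\cal F}]\mapsto\big([{\cal F}^\we],\mathrm{Sing}({\cal F})\big)$. This route does work and buys you freedom from both the compactification and the Hausdorffness discussion, but its continuity is not free: inside a versal deformation one needs that along the singular locus the bidual family is again a flat family of locally free sheaves with flat quotient (\cite[Lemma 9.6.1]{HL}, i.e. precisely the mechanism of Proposition \ref{Open1}) together with versality of $\mathscr{E}$, to produce the holomorphic maps recovering $(p,x)$; moreover your citation of the continuity theorem \cite{BTT} is misplaced there, since that theorem concerns convergence into the Donaldson compactification rather than continuity of biduals inside ${\cal M}^\si$, and the alternative suggestion ``verify properness onto the image'' is not easier than the paper's compactification trick — it is essentially the same argument.
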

In other words, for sufficiently small $\varepsilon>0$, $V_\varepsilon$ is a holomorphic embedding.
\begin{proof}
(1) Put ${\cal F}:={\cal F}({\cal E}_p,x,\eta)$, ${\cal E}:={\cal E}_p$ to save on notation. The fact that ${\cal F}$ is simple for sufficiently small $|\deg_g|(p)$  follows from Corollary \ref{F-Bundles}.

To prove that $\mathrm{Ext}^2({\cal F},{\cal F})=0$, use the local-global spectral sequence. Since the singularity set of ${\cal F}$ is 0-dimensional, one has 
$$H^1({\cal E}xt^1({\cal F},{\cal F}))=0,$$
hence  it suffices to show that, when $|\deg|_g(p)$ is sufficiently small, one has
\begin{enumerate}[(a)]
\item 	$H^2({\cal H}om({\cal F} ,{\cal F}))=0$,
\item  ${\cal E}xt^2({\cal F} ,{\cal F})=0$.
\end{enumerate}
To prove (a) note that the sheaf ${\cal H}om({\cal F} ,{\cal F})$ fits in a short exact sequence
\begin{equation}\label{ExSeqHom} 
0\to {\cal H}om({\cal F} ,{\cal F} )\to {\cal H}om( {\cal F}, {\cal E})={\cal H}om( {\cal E}, {\cal E})\textmap{\psi_\eta} {\cal H}om\big(\qmod{{\cal F}}{{\cal I}_{x}{\cal E}}\,, {\cal O}_{\{x\}}\big)\to 0\,,	
\end{equation}
where $\psi_\eta$ is the composition 
$${\cal H}om( {\cal E}, {\cal E})\to  {\cal H}om\big( \qmod{{\cal E}}{{\cal I}_{x}{\cal E}}, \qmod{{\cal E}}{{\cal I}_{x}{\cal E}}\big)\to {\cal H}om\big(\qmod{{\cal F}}{{\cal I}_{x}{\cal E}}, \qmod{{\cal E}}{{\cal I}_{x}{\cal E}}\big)\textmap{\eta\circ }{\cal H}om\big(\qmod{{\cal F}}{{\cal I}_{x}{\cal E}}, {\cal O}_{\{x\}}\big) .$$

Since ${\cal H}om\big(\qmod{{\cal F}}{{\cal I}_{x}{\cal E}},{\cal O}_{\{x\}}\big)$ is a torsion sheaf supported at $\{x\}$, we have
$$H^2({\cal H}om({\cal F},{\cal F}))\simeq  H^2({\cal H}om( {\cal E}, {\cal E}))\,,
$$	
which vanishes if $\varepsilon$ is sufficiently small by Corollary \ref{versal}.
\vspace{2mm} \\
For (b) note that the stalk ${\cal F}_y$ is a free ${\cal O}_{y}$-module for any point $y\ne x$, whereas
$${\cal F}_x\simeq {\cal O}_x\oplus \big\{{\cal I}_x\big\}_x
$$
as ${\cal O}_x$-modules. Therefore we have to prove that
$$\mathrm{Ext}^2_{{\cal O}_x}\big(\big\{{\cal I}_x\big\}_x,{\cal O}_x\big)=\mathrm{Ext}^2_{{\cal O}_x}\big(\big\{{\cal I}_x\big\}_x,\big\{{\cal I}_x\big\}_x\big)=0\,.
$$
Using the exact sequence
\begin{equation}\label{ExSeq} 0\to {\cal O}_x\to {\cal O}_x^{\oplus 2}\to \big\{{\cal I}_x\big\}_x\to 0
\end{equation}
we see that the homological dimension  of $\big\{{\cal I}_x\big\}_x$ is 1, hence    
$$\mathrm{Ext}^i_{{\cal O}_x}\big(\big\{{\cal I}_x\big\}_x,{\cal O}_x\big)=0$$
 for $i\geq 2$ \cite[section V.3, formula 3.20]{Ko}. Using   (\ref{ExSeq}) again, we obtain an exact sequence
$$\dots\to \mathrm{Ext}^2_{{\cal O}_x}\big(\big\{{\cal I}_x\big\}_x,{\cal O}_x^{\oplus 2}\big)\to \mathrm{Ext}^2_{{\cal O}_x}\big(\big\{{\cal I}_x\big\}_x,\big\{{\cal I}_x\big\}_x\big)\to \mathrm{Ext}^3_{{\cal O}_x}\big(\big\{{\cal I}_x\big\}_x,{\cal O}_x\big)\to\dots
$$
which shows that $\mathrm{Ext}^2_{{\cal O}_x}\big(\big\{{\cal I}_x\big\}_x,\big\{{\cal I}_x\big\}_x\big)=0$, too.
\\ \\
(2) Corollary \ref{F-Bundles} shows that $V_\varepsilon$ is well-defined. The injectivity of $V_\varepsilon$ follows from Proposition \ref{morphisms}, taking into account that ${\cal E}_p$, ${\cal E}_{p'}$ are non-isomorphic when $p\ne p'$ and $|\deg_g|(p)$, $|\deg_g|(p')$ are sufficiently small.\\

We prove  that   $V_\varepsilon$ is holomorphic.
Let  $U\subset X$  be a contractible, Stein open subset of $X$, $\sigma$ be a trivialization of $\mathscr{L}_U:=\resto{\mathscr{L}}{\Pic^\T\times U}$, and let $\eta^\sigma: \mathscr{E}_U\to {\cal O}_{\Pg\times U}$ be the associated  epimorphism.  On the product $\Pg\times U \times X$ we consider the   following sheaves:

 \begin{itemize}
 \item 	$\EXUX:=p_{\Pg X}^*(\mathscr{E})$,
 \item 	$\EUUX:=p_{\Pg U}^*(\mathscr{E}_U)$.
  \end{itemize}
 
 Denoting by $\Delta_{\sUX}\simeq U$ the graph of the inclusion map $U\hookrightarrow X$, we get an obvious identification 
 $$\resto{\EXUX}{\Pg\times \Delta_{\ssUX}}=\resto{\EUUX}{\Pg\times \Delta_{\ssUX}}\,,$$
 so an obvious epimorphism
 $$w^\sigma:\resto{\EXUX}{\Pg\times \Delta_{\ssUX}}\to {\cal O}_{\Pg\times\Delta_{\ssUX}}
 $$
 which corresponds to $\eta^\sigma$ via the canonical identifications.   Put
\begin{equation}\label{Fsigma}
\mathscr{F}^\sigma:=\ker\big(\EXUX\to \EXUX_{\Pg\times\Delta_{\ssUX}}\textmap{w^\sigma}  {\cal O}_{\Pg\times\Delta_{\ssUX}}\big) .	
\end{equation}
Since  $\EXUX$ and ${\cal O}_{\Pg\times\Delta_{\ssUX}}$ are flat over $\Pg\times U$, it follows by Lemma \ref{claimLemma} proved below that    $\mathscr{F}^\sigma$ is also flat over $\Pg\times U$, and that  the restriction ${\cal F}_{p,u}^\sigma$ of $\mathscr{F}^\sigma$ to a fibre $\{(p,u)\}\times X$ (regarded as a sheaf on $X$) is just the kernel of the composition 
 $${\cal E}_p\to  {\cal E}_p(u)\textmap{\eta^\sigma_{p,u}} \C.
 $$ 
 Therefore one  has
 $$[{\cal F}_{p,u}^\sigma]= V_\varepsilon(p,u)\ \forall (p,u)\in\Ag_\varepsilon\times U\, .
 $$
 Since $\mathscr{F}^\sigma$ is flat over $\Pg\times U$, this proves that $V_\varepsilon$ is holomorphic on $\Ag_\varepsilon\times U$. Using a covering of $X$ by Stein, contractible open subsets, we see that $V_\varepsilon$ is holomorphic on $\Ag_\varepsilon\times X$.\\
 
 We will show now that $V_\varepsilon$ is an immersion. The tangent space $T_{[{\cal F}]}{\cal M}^\si$ can be identified with the kernel $\Ext^1_0({\cal F},{\cal F})$ of the trace map $\Ext^1({\cal F},{\cal F})\to H^1(X,{\cal O}_X)$ \cite[section 3]{To}. The local-global spectral sequence gives the exact sequence
 $$0\to H^1({\cal H}om_0({\cal F},{\cal F}))\textmap{J} \Ext^1_0({\cal F},{\cal F})\textmap{R} H^0({\cal E}xt^1({\cal F},{\cal F}))\to 0\,,
 $$
 where, on the right, we took into account that $H^2({\cal H}om({\cal F},{\cal F}))=0$. The image of $J$ is the subspace of $\Ext^1_0({\cal F},{\cal F})$ consisting of trace-free extension classes of ${\cal F}$ by ${\cal F}$, which are locally split. The morphism $R$ has a simple geometric interpretation:  If
 $$0\to {\cal F}\textmap{a} {\cal F}'\textmap{b} {\cal F}\to 0
 $$
 represents an extension class $\varepsilon\in \Ext^1_0({\cal F},{\cal F})$ then, for any $x\in X$, $R(\varepsilon)(x)\in \Ext^1_{{\cal O}_x}({\cal F}_x,{\cal F}_x)$ is the extension class of the ${\cal O}_x$-module exact sequence
 $$0\to {\cal F}_x\textmap{a_x} {\cal F}'_x\textmap{b_x} {\cal F}_x\to 0\,.
 $$
 
 The injectivity of the tangent map $D_{p,x}V_\varepsilon: T_{(p,x)}(\Pg\times X)\to T_{[{\cal F}]}{\cal M}^\si=\Ext^1_0({\cal F},{\cal F})$ follows from Lemma \ref{PartDer} below.
 \\
 
 Finally we prove that, for sufficiently small $\varepsilon>0$ the map $V_\varepsilon$ is homeomorphic on its image. Put
 $$\bar A_\varepsilon:=\big\{l\in\Pic^\T|\ \deg_g(l)\in[-\varepsilon,\varepsilon] \big\},\  \bar\Ag_\varepsilon:=\qmod{\bar A_\varepsilon}{l\mapsto l^{\smvee}},
$$
and let $\bar V_\varepsilon:\bar\Ag_\varepsilon\times X\to {\cal M}^\si$ be the map defined again by  
$$\bar V_\varepsilon(p,x):=[{\cal F}({\cal E}_p,x,\eta)],
$$
for an admissible epimorphism $\eta:{\cal E}_p(x)\to\C$.   It is easy to see that, for sufficiently small $\varepsilon>0$, the map $\bar V_\varepsilon$ is continuous, and injective. Moreover, its image $\bar V_\varepsilon(\bar\Ag_\varepsilon\times X)$ is Hausdorff. This follows using the non-separability criterion explained in section \ref{ComSub}, and noting that, for two distinct pairs $(p,x)\ne (p',x')$ with $p$, $p'\in \bar\Ag_\varepsilon$, one has  $\Hom\big({\cal F}({\cal E}_p,x,\eta),{\cal F}({\cal E}_{p'},x',\eta')\big)=0$. Therefore $\bar V_\varepsilon$ induces a continuous bijection $\bar\Ag_\varepsilon\times X\to \bar V_\varepsilon\big(\bar\Ag_\varepsilon\times X\big)$ from a compact space to a Hausdorff space. By a well-known result in topology, this bijection is a homeomorphism. It suffices to note that $V_\varepsilon$ is the restriction of $\bar V_\varepsilon$ to $\Ag_\varepsilon\times X$.

\end{proof}
\begin{lm}\label{PartDer} \begin{enumerate}
\item The partial derivative $\frac{\partial V_\varepsilon}{\partial p}:T_p\Pg\to\Ext^1_0({\cal F},{\cal F})$ factorises as $\frac{\partial V_\varepsilon}{\partial p}=J\circ h$, where $h:T_p\Pg\to H^1({\cal H}om_0({\cal F},{\cal F}))$ is an isomorphism.  
\item The composition $R\circ \frac{\partial V_\varepsilon}{\partial x}:T_xX\to H^0({\cal E}xt^1({\cal F},{\cal F}))$ is injective.
\end{enumerate}	
\end{lm}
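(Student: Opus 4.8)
The plan is to treat the two assertions separately, the guiding principle being that deforming $p$ leaves the singular point $x$ fixed whereas deforming $x$ moves it. I work in a local trivialisation ${\cal E}_p\simeq{\cal O}^{2}$ near $x$ in which $\eta$ is the projection to the first coordinate, so that ${\cal F}={\cal F}({\cal E}_p,x,\eta)$ is locally $\{{\cal I}_x\}_x\,e_1\oplus{\cal O}\,e_2$ and ${\cal F}_x\simeq\{{\cal I}_x\}_x\oplus{\cal O}_x$ as in the proof of Proposition \ref{embedding}. For (1), fix $x$ and $\eta$: a first-order deformation of $p$ deforms only the locally free sheaf ${\cal E}_p$, and since ${\cal E}_p$ stays locally free while the defining sequence (\ref{SingF}) is formed at the fixed point $x$, the induced first-order deformation of ${\cal F}$ is locally trivial at $x$. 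By the geometric description of $R$ recalled before the lemma this gives $R\circ\frac{\partial V_\varepsilon}{\partial p}=0$, so $\frac{\partial V_\varepsilon}{\partial p}$ factors through $\ker R=\im J$; since $J$ is injective this produces the map $h:T_p\Pg\to H^1({\cal H}om_0({\cal F},{\cal F}))$.

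To prove $h$ is an isomorphism I would first compute its target. Taking cohomology in (\ref{ExSeqHom}) and using that ${\cal H}om({\cal F}/{\cal I}_x{\cal E},{\cal O}_{\{x\}})$ is a skyscraper, hence has vanishing $H^1$, one obtains $H^1({\cal H}om({\cal F},{\cal F}))\cong H^1({\cal E}nd({\cal E}_p))$ provided $\psi_\eta$ is surjective on global sections; admissibility of $\eta$ guarantees exactly this, since the endomorphism $\id_{{\cal L}_l}$ (resp. $j_l\circ r_l$ when $l\in\rho$) is not sent to $0$. Because the inclusion ${\cal H}om({\cal F},{\cal F})\hookrightarrow{\cal E}nd({\cal E}_p)$ preserves traces, this isomorphism restricts to $H^1({\cal H}om_0({\cal F},{\cal F}))\cong H^1({\cal E}nd_0({\cal E}_p))$, a one-dimensional space by Corollary \ref{versal}, while $T_p\Pg$ is also one-dimensional. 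Versality of $\mathscr{E}$ (Corollary \ref{versal}) makes the Kodaira--Spencer map $T_p\Pg\to H^1({\cal E}nd_0({\cal E}_p))$ an isomorphism, and a diagram chase identifies $h$ with this map under the above identification; hence $h$ is an isomorphism.

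For (2), fix $p$ and $\eta$ and vary $x$. Since ${\cal F}$ is locally free off $x$, the sheaf ${\cal E}xt^1({\cal F},{\cal F})$ is a skyscraper at $x$ with $H^0({\cal E}xt^1({\cal F},{\cal F}))=\Ext^1_{{\cal O}_x}({\cal F}_x,{\cal F}_x)$. In the local model the summand ${\cal O}\,e_2$ is rigid and the tautological family ${\cal F}({\cal E}_p,x',\eta)$ deforms only the factor $\{{\cal I}_{x'}\}_{x'}e_1$, so $R\circ\frac{\partial V_\varepsilon}{\partial x}$ takes values in $\Ext^1_{{\cal O}_x}(\{{\cal I}_x\}_x,\{{\cal I}_x\}_x)$, and I would identify it with the boundary map $\Hom(\{{\cal I}_x\}_x,{\cal O}_{\{x\}})\to\Ext^1_{{\cal O}_x}(\{{\cal I}_x\}_x,\{{\cal I}_x\}_x)$ of the sequence $0\to{\cal I}_x\to{\cal O}_X\to{\cal O}_{\{x\}}\to 0$, whose source is canonically $T_xX$. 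Using the free resolution (\ref{ExSeq}) one computes $\Ext^1_{{\cal O}_x}(\{{\cal I}_x\}_x,\{{\cal I}_x\}_x)\cong\{{\cal I}_x\}_x/\{{\cal I}_x\}_x^{2}$, and the boundary map is injective because the composite ${\cal I}_x\hookrightarrow{\cal O}_X\to{\cal O}_{\{x\}}$ vanishes, so the image of ${\cal H}om({\cal I}_x,{\cal O}_X)={\cal O}_X$ is killed. Injectivity of $R\circ\frac{\partial V_\varepsilon}{\partial x}$ follows.

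The main obstacle is the identification underlying (2): one must check that the first-order displacement of the singular point is recorded faithfully by the local extension class — that is, that $R\circ\frac{\partial V_\varepsilon}{\partial x}$ really is the boundary map above, and that the free direction ${\cal O}\,e_2$ contributes nothing. This requires an honest local $\Ext$ computation together with a verification that the geometric interpretation of $R$ applies to the tautological family of sheaves ${\cal F}({\cal E}_p,x,\eta)$ as $x$ varies; part (1), by contrast, is essentially formal once the versality statement from Corollary \ref{versal} is in hand.
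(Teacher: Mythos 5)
Your strategy coincides with the paper's: both parts are reduced to the local product structure ${\cal F}\simeq{\cal O}\oplus{\cal I}_x$ near $x$ (in the paper this is the decomposition (\ref{DecFU}) of the family $\mathscr{F}^\sigma$); part (1) rests on the isomorphism $H^1({\cal H}om_0({\cal F},{\cal F}))\cong H^1({\cal E}nd_0({\cal E}_p))$ coming from (\ref{ExSeqHom}) together with versality of $\mathscr{E}$ (Corollary \ref{versal}); part (2) rests on a local $\Ext$ computation at $x$. Your observation that admissibility of $\eta$ makes $\psi_\eta$ surjective on global sections is correct, as is your verification that the connecting map $\Hom(\{{\cal I}_x\}_x,{\cal O}_{\{x\}})\to\Ext^1_{{\cal O}_x}(\{{\cal I}_x\}_x,\{{\cal I}_x\}_x)$ is injective (the image of $\Hom(\{{\cal I}_x\}_x,{\cal O}_x)$ dies because the composition ${\cal I}_x\hookrightarrow{\cal O}_X\to{\cal O}_{\{x\}}$ vanishes); and your proposed identification of the derivative with that connecting map is in fact compatible, up to sign, with the paper's.

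There is nevertheless a genuine gap, and you have named it yourself: the statement that $R\circ\frac{\partial V_\varepsilon}{\partial x}$ \emph{is} that connecting map --- i.e.\ that the first-order displacement of the singular point is recorded by the local extension class as the velocity vector --- is precisely the content of part (2), and your proposal contains no proof of it. In the paper this is Proposition \ref{IdealFam}(2): for the family of ideal sheaves ${\cal I}_\Phi$ of the graph of a path $\varphi$, the image of $\epsilon_{\frac{\partial}{\partial z}(0)}({\cal I}_\Phi)$ in $H^0\big({\cal E}xt^1({\cal I}_{x_0},{\cal I}_{x_0})\big)\cong{\cal T}(x_0)$ equals $\varphi'(0)$, proved by an explicit computation with the Koszul resolution, the canonical isomorphism (\ref{can}) and formula (\ref{ext-formula}); this computation, not the formal identification of the target space, is the heart of the lemma. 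The same remark applies, less severely, to part (1): the ``diagram chase'' identifying $h$ with the Kodaira--Spencer map of $\mathscr{E}$ is exactly the paper's Lemma \ref{claimLemma}, namely $\theta(\epsilon_w(\mathscr{F}))=\tau(\epsilon_w(\mathscr{E}))$ for a flat subfamily with flat quotient, which requires a flatness argument and an explicit isomorphism between the two extensions extracted from the $3\times 3$ diagram over the doubled fibre $2X_0$; it is routine but not automatic, since a priori the infinitesimal deformation of the kernel family need not be controlled by that of the ambient family. In short, your outline assembles all the right ingredients, but the two deformation-theoretic comparison statements on which both parts rest are asserted rather than proved, and for part (2) the missing computation is the entire substance of the claim.
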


\begin{proof}   The definition of $\mathscr{F}^\sigma$ (\ref{Fsigma}) gives the exact sequence 
\begin{equation}\label{ExSeqF} 0\to \mathscr{F}^\sigma\hookrightarrow \EXUX \textmap{w_\sigma} {\cal O}_{\Pg\times\Delta_{\ssUX}}\to 0\,,
\end{equation}
where $w_\sigma$ stands for the composition 
$\EXUX\to \EXUX_{\Pg\times\Delta_{\ssUX}}\textmap{w^\sigma}  {\cal O}_{\Pg\times\Delta_{\ssUX}}$. The decomposition (\ref{Dec}) of  the restriction $\mathscr{E}_U=\resto{\mathscr{E}}{\Pg\times U}$ and the definition of the sheaf $\mathscr{F}^\sigma$  gives an obvious isomorphism
\begin{equation}\label{DecFU}
\resto{\mathscr{F}^\sigma}{\Pg\times U\times U}={\cal O}_{\Pg\times U\times U}\oplus {\cal I}_{\Pg\times\Delta_{\ssUU}}\,, 
\end{equation}	
where $\Delta_{UU}$ is the graph of $\id_U$.\\
\\
(1)  Let $(p,x)\in\Pg\times U$, and put ${\cal E}:={\cal E}_p$,  ${\cal F}:={\cal F}_{(p,x)}^\sigma={\cal F}({\cal E}_p,x,\eta^{\sigma}_{p,x})$.   The direct sum decomposition (\ref{DecFU}) shows that 
$$\resto{\mathscr{F}^\sigma}{\Pg\times \{x\}\times U}=p_U^*({\cal O}_U\oplus {\cal I}_x)\,,$$ 
hence this restriction can be regarded as a {\it constant} family of sheaves on $U$ parameterized by $\Pg\times \{x\}$. Taking into account the geometric interpretation of the morphism $R$, this shows that, for every $p\in\Pg$ and for every $\xi\in T_p(\Pg)$ one has
$$R\big(\frac{\partial V_\varepsilon}{\partial p}(\xi,0)\big)=R\epsilon_{(\xi,0)}\big(\mathscr{F}^\sigma\big)=0\,,
$$
so $\frac{\partial V_\varepsilon}{\partial p}$ factorizes as $\frac{\partial V_\varepsilon}{\partial p}=J\circ h$, for a morphism $h:T_p\Pg\to H^1({\cal H}om_0({\cal F},{\cal F}))$. We will prove  that $h$ is an isomorphism. Note first that the obvious morphisms
$$\theta:\Ext^1({\cal F},{\cal F})\to \Ext^1({\cal F},{\cal E})\,,\ \tau: \Ext^1({\cal E},{\cal E})=H^1({\cal H}om({\cal E},{\cal E}))\to \Ext^1({\cal F},{\cal E})$$
associated with the embedding ${\cal F}\subset {\cal E}$ induce morphisms 
$$\theta_0:H^1({\cal H}om({\cal F},{\cal F}))\to  H^1({\cal H}om({\cal F},{\cal E}))\,,\ \tau_0:H^1({\cal H}om({\cal E},{\cal E}))\to  H^1({\cal H}om({\cal F},{\cal E}))\,,$$
which (taking into account the exact sequence (\ref{ExSeqHom})) are isomorphisms.
Let now $v\in T_p(\Pg)$, and $w=(v,0)\in T_{(p,x)}(\Pg\times X)$. By Lemma \ref{claimLemma}  below, we obtain
$$\theta_0(h(v))=\tau_0(\epsilon_{(v,0)}(\EXUX))=\tau_0(\epsilon_{v}(\mathscr{E})).
$$
On the other hand, by Proposition \ref{coverings} in the appendix, it follows that $\epsilon_{v}(\mathscr{E})\ne 0$ if $v\ne 0$. This proves that $h$ is injective. Taking into account that $\dim(T_p(\Pg))=\dim(H^1({\cal H}om_0({\cal F},{\cal F})))=1$, the statement (1) follows.
\\ \\
(2) Using (\ref{DecFU}) we obtain an identification 
$\resto{\mathscr{F}^\sigma}{\{p\}\times U\times U}={\cal O}_{\{p\}\times U\times U}\oplus {\cal I}_{\{p\}\times\Delta_U}$. By Proposition \ref{IdealFam} proved in the appendix we obtain for a tangent vector $\eta\in T_xX$
$$R\big(\frac{\partial V_\varepsilon}{\partial x}(\eta)\big)=R\big( \epsilon_{(0,\eta)}(\mathscr{F}^\sigma)\big)=\epsilon_\eta({\cal I}_{\Delta_U})=\eta\,,
$$
which proves (2).
\end{proof}
\begin{lm}\label{claimLemma} Let $S$, $X$ be complex manifolds with $X$ compact, and let
\begin{equation}\label{SES}
0\to \mathscr{F}\textmap{j} \mathscr{E}\textmap{q} \mathscr{Q}\to 0	\end{equation}
be a short exact sequence of coherent sheaves on $S\times X$, where $\mathscr{E}$ and $\mathscr{Q}$ are flat over $S$.   We denote by ${\cal E}_s$, ${\cal F}_s$ the sheaves on $X$ given by  $\resto{\mathscr{E}}{\{s\}\times X}$, $\resto{\mathscr{F}}{\{s\}\times X}$. Then 
\begin{enumerate}
\item $\mathscr{F}$ is flat over $S$, and for any $s\in S$ the morphism $j_s:{\cal F}_s\to {\cal E}_s$ induced by $j$ is injective,
\item For any $s\in S$ and any tangent vector $w\in T_sS$ one has 
$$\theta(\epsilon_w(\mathscr{F}))=\tau(\epsilon_w(\mathscr{E}))\,,
$$
where $\theta:\Ext^1({\cal F}_s,{\cal F}_s)\to \Ext^1({\cal F}_s,{\cal E}_s)\,,\ \tau: \Ext^1({\cal E}_s,{\cal E}_s)\to \Ext^1({\cal F}_s,{\cal E}_s)$ are the morphisms induced by the monomorphism $j_s:{\cal F}_s\to {\cal E}_s$.
\end{enumerate}
\end{lm}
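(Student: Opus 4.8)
\emph{Plan.} Both assertions are local over $S$ along the projection $p\colon S\times X\to S$ and reduce to commutative algebra on the stalks. The plan is to dispose of part (1) by a $\mathrm{Tor}$-argument, and then to prove part (2) by pulling the sequence (\ref{SES}) back along a first-order arc representing $w$ and invoking the naturality of the Kodaira--Spencer class. The flatness established in (1) is precisely what makes the infinitesimal picture in (2) work.

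\emph{Part (1).} I would first recall that flatness over $S$ is stalkwise: at $(s,x)$ it asks that $\mathscr F_{(s,x)}$ be flat over ${\cal O}_{S,s}$. From the exact sequence of stalks $0\to\mathscr F_{(s,x)}\to\mathscr E_{(s,x)}\to\mathscr Q_{(s,x)}\to 0$ and the long exact $\mathrm{Tor}^{{\cal O}_{S,s}}_\bullet(-,N)$ sequence, the flatness of $\mathscr Q_{(s,x)}$ (all higher $\mathrm{Tor}$ vanish) gives $\mathrm{Tor}^{{\cal O}_{S,s}}_i(\mathscr F_{(s,x)},N)\cong\mathrm{Tor}^{{\cal O}_{S,s}}_i(\mathscr E_{(s,x)},N)$ for every $i\geq 1$ and every $N$; since $\mathscr E$ is flat these groups vanish, so $\mathscr F$ is flat over $S$. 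For the injectivity of $j_s$ I would restrict to the fibre $\{s\}\times X$, i.e. apply $-\otimes_{{\cal O}_{S,s}}k(s)$: because $\mathscr Q$ is flat over $S$ one has $\mathrm{Tor}^{{\cal O}_{S,s}}_1(\mathscr Q,k(s))=0$, so the restricted sequence $0\to{\cal F}_s\to{\cal E}_s\to{\cal Q}_s\to 0$ remains exact on the left, which is exactly the injectivity of $j_s$.

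\emph{Part (2).} The idea is to exhibit $\theta(\epsilon_w(\mathscr F))$ and $\tau(\epsilon_w(\mathscr E))$ as the two classes attached to a single morphism of extensions. Represent $w\in T_sS$ by a map from the dual numbers $\C[\epsilon]/(\epsilon^2)$ into $S$ hitting $s$, and pull (\ref{SES}) back along this arc times $\id_X$, obtaining first-order deformations $\widetilde{\mathscr F},\widetilde{\mathscr E},\widetilde{\mathscr Q}$ of ${\cal F}_s,{\cal E}_s,{\cal Q}_s$. By part (1) all three families are flat over $S$, hence flat over $\C[\epsilon]/(\epsilon^2)$ after base change, and the pulled-back sequence $0\to\widetilde{\mathscr F}\xrightarrow{\widetilde\jmath}\widetilde{\mathscr E}\to\widetilde{\mathscr Q}\to 0$ stays short exact (again because $\mathrm{Tor}_1$ of the flat $\mathscr Q$ vanishes). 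Flatness over the dual numbers then yields, for each family, its Kodaira--Spencer extension: multiplication by $\epsilon$ gives $0\to{\cal F}_s\xrightarrow{\epsilon}\widetilde{\mathscr F}\to{\cal F}_s\to 0$ with class $\epsilon_w(\mathscr F)$, and likewise for $\mathscr E$.

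The decisive point is that $\widetilde\jmath$ assembles these two extensions into a ladder covering $(j_s,j_s)$: it is ${\cal O}$-linear, hence commutes with multiplication by $\epsilon$, and it reduces mod $\epsilon$ to $j_s$ on both the kernel and cokernel copies of ${\cal F}_s$. By the standard functoriality of $\Ext^1$ --- a morphism of short exact sequences covering $(a,c)$ identifies $a_*$ of the top class with $c^*$ of the bottom class --- this ladder yields exactly $(j_s)_*\epsilon_w(\mathscr F)=(j_s)^*\epsilon_w(\mathscr E)$, i.e. $\theta(\epsilon_w(\mathscr F))=\tau(\epsilon_w(\mathscr E))$ in $\Ext^1({\cal F}_s,{\cal E}_s)$. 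I expect the only genuine obstacle to be the bookkeeping in this last step: one must check that the pulled-back sequence over $\C[\epsilon]/(\epsilon^2)$ is simultaneously flat and exact, so that the three Kodaira--Spencer extensions are defined and fit into the ladder --- and this is precisely where the flatness of $\mathscr F$ from part (1) and the vanishing of $\mathrm{Tor}_1(\mathscr Q,-)$ enter. Once this is granted, the identity is a formal consequence of Ext-functoriality rather than a computation.
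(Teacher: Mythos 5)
Your proposal is correct and follows essentially the same route as the paper: part (1) is the same Tor argument (the paper cites Matsumura's flatness criterion for the injectivity of $j_s$, which is exactly your $\mathrm{Tor}_1(\mathscr{Q},\cdot)=0$ observation), and for part (2) the paper likewise restricts to the first-order neighbourhood of the fibre (the divisor $2X_0$ after reducing $S$ to a disk, which is your dual-numbers base change) and compares the two Kodaira--Spencer extensions through the ladder induced by $j$. The only cosmetic difference is that where you invoke the standard functoriality of $\Ext^1$ for a morphism of extensions covering $(j_s,j_s)$, the paper proves that fact by hand, exhibiting an explicit isomorphism $k(x,y)=J_0(x)-z''(y)$ between the pushout $\mathscr{F}_{2X_0}\oplus_{\mathscr{F}_{X_0}}\mathscr{E}_{X_0}$ and the pullback $p''^{-1}(\mathscr{F}_{X_0})$.
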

\begin{proof}
 (1) Using the short exact sequence (\ref{SES}) and the flatness of $\mathscr{E}$,  $\mathscr{Q}$ over $S$, it follows that for any $(s,x)\in S\times X$ and any ${\cal O}_s$-module ${\cal A}$, one has $\mathrm{Tor}_k(\mathscr{F}_{(s,x)},{\cal A})=0$ for $k>0$. Therefore $\mathscr{F}$ is flat over $S$. The injectivity of $j_s$ follows from a well-known flatness criterion \cite[p. 150]{Mat}.
 
 (2) We may suppose that $S\subset\C$ is a disk centered at 0, $s=0$ and $w=\frac{d}{dz}$. The fibre $X_0$ is a divisor in $Y:=S\times X$.  Tensoring  the short exact sequence
 $$0\to {\cal O}_{X_0}\textmap{z\cdot} {\cal O}_{2X_0}\to {\cal O}_{X_0}\to 0
 $$
with the ${\cal O}_S$-flat sheaves $\mathscr{F}$, $\mathscr{E}$,  $\mathscr{Q}$ we obtain the following
the commutative  diagram with exact rows and columns
\begin{diagram}[h=6mm]
 &  & 0 &  &  	0&  &0 &  \\ 
&& \dTo & & \dTo &&\dTo&&\\
0&\rTo & \mathscr{F}_{X_0} &\rTo^{z'} &  	 \mathscr{F}_{2X_0}&\rTo^{p'} &\mathscr{F}_{X_0} &\rTo& 0\phantom{\,.}\\ 
&& \dInto^{j_0} & & \dInto^{J_0} && \dInto^{j_0}&&\\
0&\rTo & \mathscr{E}_{X_0} &\rTo^{z''} &  	 \mathscr{E}_{2X_0}&\rTo^{p''} &\mathscr{E}_{X_0} &\rTo& 0\phantom{\,.}\\
&& \dTo^{q_0} & & \dTo^{Q_0}&& \dTo^{q_0} &&\\
0&\rTo & \mathscr{Q}_{X_0} &\rTo^{\tilde z} &  	 \mathscr{Q}_{2X_0}&\rTo^{\tilde p} &\mathscr{Q}_{X_0} &\rTo& 0\,.\\
&& \dTo & & \dTo &&\dTo&&\\
 &  & 0 &  &  	0&  &0 &  
\end{diagram}
The morphisms $z'$, $z''$, $\tilde z$ are induced by multiplication with $z$, and $p'$, $p''$ , $\tilde p$ are restriction morphisms.
 The infinitesimal deformations $\epsilon_w(\mathscr{F})$, $\epsilon_w(\mathscr{E})$ are the extension classes of the first two horizontal rows. By definition, $\tau(\epsilon_w(\mathscr{E}))$ is the extension class of
\begin{equation}\label{SES1}
0\to  \mathscr{E}_{X_0} \textmap{\varphi''}   { p''}^{-1}(\mathscr{F}_{X_0} )\textmap{p''}  \mathscr{F}_{X_0}\to  0\,,	
\end{equation}
 and $\theta(\epsilon_w(\mathscr{F}))$ is the extension class of
\begin{equation}\label{SES2}
0\to  \mathscr{E}_{X_0}\to  \mathscr{F}_{2X_s}\oplus_{\mathscr{F}_{X_0}} \mathscr{E}_{X_0}\textmap{p'\circ p_1} \mathscr{F}_{X_0}\to 0\,.	
\end{equation}
Let  $k: \mathscr{F}_{2X_s}\oplus \mathscr{E}_{X_0}\to \mathscr{E}_{2X_0}$ be the morphism given by   $k(x,y)= J_0(x)-z''(y)$. A simple diagram chasing shows that  $\ker(k)=(z'\oplus j_0)(\mathscr{F}_{X_0})$ and $\im(k)= { p''}^{-1}(\mathscr{F}_{X_0} )$. Therefore $k$ induces an isomorphism 
$$\mathscr{F}_{2X_s}\oplus_{\mathscr{F}_{X_s}}\mathscr{E}_{X_s}\to  { p''}^{-1}(\mathscr{F}_{X_s})\,.$$
It also defines an isomorphism between the   extensions (\ref{SES1}), (\ref{SES2}).
  \end{proof}

\subsection{Topological properties of the map $V_\varepsilon$}

We will need the following notations:
$${\cal M}^\si_{\rm sing}:=\big\{[{\cal F}]\in {\cal M}^\si|\ {\cal F}\hbox{ is singular}\big\}\,,\ {\cal M}^\si_{\rm lf}:=\big\{[{\cal F}]\in {\cal M}^\si|\ {\cal F}\hbox{ is locally free}\big\}\,, $$
$${\cal M}^\st_{\rm lf}=\big\{[{\cal F}]\in {\cal M}^\si_{\rm lf}|\ {\cal F}\hbox{ is stable}\big\}\,,\ \{{\cal M}^\st_{\rm lf}\}_{\rm reg}=\big\{[{\cal E}]\in {\cal M}^\st_{\rm lf}|\ H^2({\cal E}nd_0({\cal E}))=0\big\}\,.
$$
Note that ${\cal M}^\si_{\rm sing}$ is an analytic set of ${\cal M}^\si$,  ${\cal M}^\si_{\rm lf}$ is its (Zariski open) complement, and ${\cal M}^\st_{\rm lf}$ ($\{{\cal M}^\st_{\rm lf}\}_{\rm reg}$) is a Hausdorff open subset of ${\cal M}^\si_{\rm lf}$ which can be identified with the moduli space ${\cal M}^\st(E)$ (respectively ${\cal M}^\st(E)_\reg$) introduced in the section \ref{intro}.

The first result of this section shows that the map $V_\varepsilon$  has an important geometric interpretation:  for a point 
$p_0\in\Ag_\varepsilon$  (with $\varepsilon$ sufficiently small) any small {\it singular} simple deformation of ${\cal F}(p_0,x_0,\eta_0)$ can be obtained by deforming the triple $(p_0,x_0,\eta_0)$, so any such deformation remains in the image of $V_\varepsilon$.  

\begin{pr} \label{Open1} Let $(X,g)$ be a class VII surface endowed with a Gauduchon metric. For sufficiently small $\varepsilon>0$ the  map  $\Ag_\varepsilon\times X \to {\cal M}^\si_{\rm sing}$ induced by $V_\varepsilon$ is open.\end{pr}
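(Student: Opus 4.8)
The plan is to reduce the openness of the induced map $V_\varepsilon\colon\Ag_\varepsilon\times X\to{\cal M}^\si_{\rm sing}$ to a purely local statement about ${\cal M}^\si_{\rm sing}$ near the image. By Proposition \ref{embedding} the map $V_\varepsilon$ is a homeomorphism onto its image ${\cal P}:=V_\varepsilon(\Ag_\varepsilon\times X)$, which is a locally closed complex submanifold of complex dimension $3$ of the smooth $4$-dimensional manifold ${\cal M}^\si_\reg$, and by construction ${\cal P}\subset{\cal M}^\si_{\rm sing}$. Since ${\cal P}\subset{\cal M}^\si_{\rm sing}\subset{\cal M}^\si$, the subspace topology ${\cal P}$ inherits from ${\cal M}^\si$ agrees with the one it inherits from ${\cal M}^\si_{\rm sing}$, so the induced map is open \emph{if and only if} ${\cal P}$ is an open subset of ${\cal M}^\si_{\rm sing}$. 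I would therefore fix a point ${\cal F}_0={\cal F}({\cal E}_{p_0},x_0,\eta_0)\in{\cal P}$ and prove that ${\cal M}^\si_{\rm sing}={\cal P}$ in a neighbourhood of ${\cal F}_0$.

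First the dimension count. Because ${\cal F}_0\in{\cal M}^\si_\reg$ by Proposition \ref{embedding}(1), ${\cal M}^\si$ is smooth at ${\cal F}_0$ of dimension $\dim_\C\Ext^1_0({\cal F}_0,{\cal F}_0)=4$. The analytic set ${\cal M}^\si_{\rm sing}$ contains the $3$-fold ${\cal P}$, so $\dim_{{\cal F}_0}{\cal M}^\si_{\rm sing}\geq 3$; on the other hand ${\cal M}^\si_{\rm sing}$ is a \emph{proper} subset near ${\cal F}_0$, because the ``smoothing'' deformation of ${\cal F}_0$ (the Serre construction glueing the two local summands of ${\cal F}_0$ at $x_0$ into a locally free sheaf) is unobstructed by smoothness of ${\cal M}^\si$ and produces locally free sheaves arbitrarily close to ${\cal F}_0$. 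Hence $\dim_{{\cal F}_0}{\cal M}^\si_{\rm sing}=3$, and it will suffice to show that ${\cal M}^\si_{\rm sing}$ is \emph{smooth} at ${\cal F}_0$, i.e. that $\dim_\C T_{{\cal F}_0}{\cal M}^\si_{\rm sing}\leq 3$: a smooth $3$-fold ${\cal P}$ contained in a smooth $3$-fold ${\cal M}^\si_{\rm sing}$ is automatically open in it.

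The tangent space bound is where Lemma \ref{PartDer} enters. The local-global spectral sequence gives the exact sequence $0\to H^1({\cal H}om_0({\cal F}_0,{\cal F}_0))\textmap{J}\Ext^1_0({\cal F}_0,{\cal F}_0)\textmap{R}H^0({\cal E}xt^1({\cal F}_0,{\cal F}_0))\to 0$, in which $\dim\ker R=\dim H^1({\cal H}om_0({\cal F}_0,{\cal F}_0))=1$ by Lemma \ref{PartDer}(1). The singularity of ${\cal F}_0$ is a single point $x_0$ of length $1$, with germ ${\cal F}_{0,x_0}\simeq{\cal O}_{x_0}\oplus\{{\cal I}_{x_0}\}_{x_0}$, so $H^0({\cal E}xt^1({\cal F}_0,{\cal F}_0))=\Ext^1_{{\cal O}_{x_0}}({\cal F}_{0,x_0},{\cal F}_{0,x_0})$ is the space of first-order deformations of this germ; it splits into a $2$-dimensional summand $\Ext^1_{{\cal O}_{x_0}}(\{{\cal I}_{x_0}\}_{x_0},\{{\cal I}_{x_0}\}_{x_0})$ of deformations that \emph{move} the point $x_0$ (and keep the germ non-locally-free) and the $1$-dimensional ``smoothing'' summand $\Ext^1_{{\cal O}_{x_0}}(\{{\cal I}_{x_0}\}_{x_0},{\cal O}_{x_0})$ that makes the germ locally free. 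Using the geometric description of $R$ from the proof of Lemma \ref{PartDer}, every tangent vector to ${\cal M}^\si_{\rm sing}$ maps under $R$ into the hyperplane of classes with vanishing smoothing component; and by Lemma \ref{PartDer}(2) the $2$-dimensional move-the-point summand is exactly $R\bigl(\frac{\partial V_\varepsilon}{\partial x}(T_{x_0}X)\bigr)$. Hence $T_{{\cal F}_0}{\cal M}^\si_{\rm sing}\subseteq R^{-1}(\text{move-the-point summand})$, of dimension $\leq 1+2=3$, which gives the bound and concludes that ${\cal M}^\si_{\rm sing}={\cal P}$ near ${\cal F}_0$.

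The hard part will be the justification of the inclusion $R\bigl(T_{{\cal F}_0}{\cal M}^\si_{\rm sing}\bigr)\subseteq(\text{move-the-point summand})$, that is, the precise identification of the germ cut out in the smooth $4$-fold ${\cal M}^\si$ by the condition that the sheaf be non-locally-free and of its Zariski tangent space. This requires the local deformation analysis of the length-$1$ singularity ${\cal O}_{x_0}\oplus\{{\cal I}_{x_0}\}_{x_0}$ — in particular the fact that the Serre-construction direction is a genuine \emph{transverse} smoothing, so that the constant-singularity condition linearises to the vanishing of the smoothing component — phrased relatively over a local universal family. An alternative route, which bypasses this local computation but faces a different obstacle, is to show directly that every singular simple sheaf ${\cal F}'$ close to ${\cal F}_0$ lies in ${\cal P}$: by Proposition \ref{singF} its bidual ${\cal E}'={\cal F}'^{\we}$ is a rank $2$ bundle with $\det({\cal E}')\simeq{\cal O}_X$ and $c_2({\cal E}')=0$, by versality of $\mathscr{E}$ (Corollary \ref{versal}) it is isomorphic to some ${\cal E}_{p'}$ with $p'$ close to $p_0$, and then Corollary \ref{F-Bundles} together with the isomorphism criterion forces ${\cal F}'\simeq{\cal F}({\cal E}_{p'},x',\eta')$ for an admissible $\eta'$; here the obstacle is instead the continuity of the double-dual map ${\cal F}'\mapsto{\cal E}'$ on the constant-length locus, needed to ensure that ${\cal E}'$ is genuinely a small deformation of ${\cal E}_{p_0}$.
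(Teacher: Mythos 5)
Your proposal correctly reduces the statement to showing that ${\cal P}:=V_\varepsilon(\Ag_\varepsilon\times X)$ is open in ${\cal M}^\si_{\rm sing}$, but neither of your two routes closes the argument, and in both cases the unproved step is exactly where the content of the proposition lies. In your primary route everything hinges on the inclusion $R\big(T_{[{\cal F}_0]}{\cal M}^\si_{\rm sing}\big)\subseteq \Ext^1_{{\cal O}_{x_0}}\big(\{{\cal I}_{x_0}\}_{x_0},\{{\cal I}_{x_0}\}_{x_0}\big)$, which you explicitly defer as ``the hard part''; note moreover that even your preliminary claim $\dim_{[{\cal F}_0]}{\cal M}^\si_{\rm sing}=3$ (via the existence of nearby locally free deformations) already requires the same local analysis, since a nonzero first-order smoothing component of a tangent vector does not by itself guarantee that the sheaves along an arc in that direction are actually locally free. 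So this route, as written, is a plan rather than a proof.

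The paper's proof is essentially your ``alternative route'', and it resolves precisely the obstacle you flag there by working with flat families instead of individual sheaves. It takes a universal deformation $\mathscr{H}$ of ${\cal F}_0={\cal F}({\cal E}_{p_0},x_0,\eta_0)$ over a ball $B\subset\C^4$, restricts it to $S:=\{s\in B\ |\ {\cal H}_s \hbox{ singular}\}$, and then argues: (i) by Proposition \ref{singF} together with \cite[Lemma 9.6.1]{HL}, the relative bidual $\mathscr{S}^\we$ is locally free on $S\times X$ and restricts fibrewise to the biduals ${\cal S}_s^\we$ --- this is exactly the ``continuity of the double dual'' you identified as missing; (ii) the quotient $\mathscr{Q}=\mathscr{S}^\we/\mathscr{S}$ is then flat over $S$ by the standard flatness criterion, and since the Douady space of length-one quotients of ${\cal O}_X$ is $X$ itself, the map $\psi:S\to X$ sending $s$ to the singular point of ${\cal S}_s$ is holomorphic; (iii) versality of the family $\mathscr{E}$ (Corollary \ref{versal}) gives, after shrinking, a holomorphic map $\varphi:U\to\Ag_\varepsilon$ with $\resto{\mathscr{S}^\we}{U\times X}\simeq(\varphi\times\id_X)^*(\mathscr{E})$; and (iv) simplicity of ${\cal S}_s$ forces the resulting epimorphism ${\cal E}_{\varphi(s)}\to{\cal O}_{\{\psi(s)\}}$ to come from an admissible epimorphism, whence $[{\cal S}_s]=V_\varepsilon(\varphi(s),\psi(s))$. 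Since $B$ maps onto a neighbourhood of $[{\cal F}_0]$ in ${\cal M}^\si$, the holomorphic parametrization $f=(\varphi,\psi)$ shows directly that $V_\varepsilon$ carries neighbourhoods of $(p_0,x_0)$ onto neighbourhoods of $[{\cal F}_0]$ in ${\cal M}^\si_{\rm sing}$. Steps (i)--(iii) are the ingredients missing from your sketch; once they are in place, the local $\Ext$ computation of your first route is never needed.
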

\begin{proof}   Let $(p_0,x_0)\in  \Ag_\varepsilon\times X$, $\eta_0:{\cal E}_p(x_0)\to\C$ be an admissible epimorphism, and let $\mathscr{H}$ be a sheaf on $B\times X$, flat over $B$ (where $B\subset\C^4$ is the standard ball), with an identification ${\cal H}_0={\cal F}(p_0,x_0,\eta_0)$, which is a universal deformation of ${\cal F}(p_0,x_0,\eta_0)$  in the category of simple, torsion-free sheaves with trivial determinant and $c_2=1$. Note that ${\cal M}^{\si}$ is smooth of dimension 4 at $[{\cal F}(p_0,x_0,\eta_0)]$ by regularity and the Riemann-Roch theorem \cite{To}. Put 
$$S:=\{s\in B|\ {\cal H}_s\hbox{ is singular}\}, 
$$
let $\mathscr{S}$ be the restriction of $\mathscr{H}$ to $S\times X$ and, for $s\in S$, denote by ${\cal S}_s$ the restriction of $\mathscr{S}$ to $\{s\}\times X$ (which coincides with ${\cal H}_s$). It suffices to prove that  there exists an open neighbourhood $U$ of $0$ in $S$, and a holomorphic map $f:U\to \Ag_\varepsilon\times X$ such that $f(0)=(p_0,x_0)$, and for any $s\in U$ the isomorphism type of ${\cal S}_s$ coincides with $V_\varepsilon(f(s))$. By   Proposition \ref{singF} and \cite[Lemma 9.6.1]{HL} it follows that $\mathscr{S}^{\we}$ is locally free, and for any $s\in S$ the induced morphism 
$\resto{\mathscr{S}}{\{s\}\times X}\to \resto{\mathscr{S}^\we}{\{s\}\times X}
$
coincides with the canonical embedding ${\cal S}_s\to{\cal S}_s^\we$, in particular it is a monomorphism. Using the short exact sequence
$$0\to \mathscr{S}\hookrightarrow \mathscr{S}^{\we}\to \qmod{\mathscr{S}^{\we}}{\mathscr{S}}\to 0
$$
and a well-known flatness criterion \cite[p. 150]{Mat}, we see that $\mathscr{Q}:= {\mathscr{S}^{\we}}/{\mathscr{S}}$ is flat over $S$, and, for any $s\in S$, the sequence
$$0\to {\cal S}_s\to ({\cal S}^{\we})_s={\cal S}_s^{\we}\to {\cal Q}_s\to 0
$$
is exact. Using again Proposition  \ref{singF} it follows that for any $s\in S$
\begin{enumerate}
\item there exists a unique point $\psi(s)\in X$ such that ${\cal Q}_s\simeq{\cal O}_{\{\psi(s)\}}$, 
\item $\mathscr{S}^{\we}$ is a flat family of locally free sheaves on $X$ with trivial determinant and $c_2=0$.
\end{enumerate}

But the map $x\mapsto {\cal O}_{\{x\}}$ defines a biholomorphism between $X$ and the Douady moduli space of length 1 quotient sheaves of ${\cal O}_X$. Therefore the flatness of $\mathscr{Q}$ over $S$ shows that the map $\psi:S\to X$ is holomorphic. On the other hand (supposing $\varepsilon$ is sufficiently small)  the family $\mathscr{E}$ is versal at $p_0$  by Corollary \ref{versal}; it follows that there exists  an open neighbourhood $U$ of $0$ in $S$,  a holomorphic map $\varphi:U\to \Ag_\varepsilon$, and an isomorphism  
$$\resto{\mathscr{S}^{\we}}{U\times X}\simeq(\varphi\times\id_X)^*(\mathscr{E}).$$
Therefore, with these notations, we see that for any $s\in S$, the sheaf ${\cal S}_s$ fits in an exact sequence
$$0\to  {\cal S}_s\to {\cal E}_{\varphi(s)}\textmap{r_s} {\cal O}_{\{\psi(s)\}}\to 0.
$$
Since ${\cal S}_s$ is simple, it follows easily that $r_s$ is induced by an admissible epimorphism ${\cal E}_{\varphi(s)}(\psi(s))\to\C$. Therefore the isomorphism type of ${\cal S}_s$ is $V(\varphi(s),\psi(s))$.
\end{proof}
Proposition \ref{Open1} can be reformulated as follows: 
\begin{co}\label{OpenCo} In the conditions of Proposition \ref{Open1} the following holds: For any pair $(p_0,x_0)\in \Ag_\varepsilon\times X$, and for any neighbourhood $N$ of $(p_0,x_0)$,  the point $V_\varepsilon(p_0,x_0)$ has a neighbourhood ${\cal U}$ such that ${\cal U}\setminus V_\varepsilon(N) \subset {\cal M}^\si_{\rm lf}$.
\end{co}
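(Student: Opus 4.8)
The plan is to deduce the Corollary directly from the openness statement of Proposition \ref{Open1}, combined with the observation made just before its statement that ${\cal M}^\si_{\rm lf}$ is exactly the complement in ${\cal M}^\si$ of the analytic subset ${\cal M}^\si_{\rm sing}$. The essential point is that $V_\varepsilon$ already takes values in ${\cal M}^\si_{\rm sing}$: every sheaf ${\cal F}({\cal E}_p,x,\eta)=\ker({\cal E}_p\to {\cal O}_{\{x\}})$ fails to be locally free at $x$, so Proposition \ref{Open1} is precisely the assertion that the corestriction $V_\varepsilon:\Ag_\varepsilon\times X\to {\cal M}^\si_{\rm sing}$ is an open map for the subspace topology inherited from ${\cal M}^\si$. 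Everything then follows by unwinding the definition of that subspace topology.

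First I would replace $N$ by its interior, which is still a neighbourhood of $(p_0,x_0)$, so that we may assume $N$ open. By the openness just recalled, $V_\varepsilon(N)$ is open in ${\cal M}^\si_{\rm sing}$, and hence by the definition of the subspace topology there exists an open set ${\cal U}\subset {\cal M}^\si$ with ${\cal U}\cap {\cal M}^\si_{\rm sing}=V_\varepsilon(N)$. Since $V_\varepsilon(p_0,x_0)\in V_\varepsilon(N)\subset {\cal U}$, this ${\cal U}$ is a neighbourhood of $V_\varepsilon(p_0,x_0)$, as required. For the remaining inclusion, take $[{\cal F}]\in {\cal U}\setminus V_\varepsilon(N)$: were ${\cal F}$ singular, i.e.\ $[{\cal F}]\in {\cal M}^\si_{\rm sing}$, we would have $[{\cal F}]\in {\cal U}\cap {\cal M}^\si_{\rm sing}=V_\varepsilon(N)$, contradicting the choice of $[{\cal F}]$. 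Hence ${\cal F}$ is locally free, that is $[{\cal F}]\in {\cal M}^\si_{\rm lf}$, which gives ${\cal U}\setminus V_\varepsilon(N)\subset {\cal M}^\si_{\rm lf}$.

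I do not expect any genuine analytic obstacle here: the whole content is already contained in Proposition \ref{Open1}, and the Corollary is a purely formal topological consequence of openness onto the stratum ${\cal M}^\si_{\rm sing}$ together with the complementarity ${\cal M}^\si_{\rm lf}={\cal M}^\si\setminus {\cal M}^\si_{\rm sing}$. The only point meriting care is to use the openness of $V_\varepsilon$ \emph{as a map into the singular stratum} rather than into ${\cal M}^\si$ itself (where $V_\varepsilon$ is very far from open, its image being a $4$-dimensional subset of a $4$-dimensional space only because ${\cal M}^\si_{\rm sing}$ is of the same dimension), and this is exactly what Proposition \ref{Open1} supplies.
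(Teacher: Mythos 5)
Your proof is correct and is precisely what the paper intends: the paper offers no separate argument, introducing the Corollary only with the phrase that Proposition \ref{Open1} ``can be reformulated'' this way, and your unwinding of the subspace topology of ${\cal M}^\si_{\rm sing}\subset {\cal M}^\si$ together with the complementarity ${\cal M}^\si_{\rm lf}={\cal M}^\si\setminus{\cal M}^\si_{\rm sing}$ is exactly that reformulation. (Only the parenthetical dimension count at the end is off --- $\Ag_\varepsilon\times X$ has complex dimension $3$, not $4$, inside the $4$-dimensional ${\cal M}^\si$ --- but this remark plays no role in the argument.)
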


We will need the following stronger version of this statement:

\begin{pr} \label{Open2} In the conditions of Proposition \ref{Open1} the following holds: For any pair $(p_0,x_0)\in \Ag_\varepsilon\times X$, and for any neighbourhood $N$ of $(p_0,x_0)$,  the point $V_\varepsilon(p_0,x_0)$ has a neighbourhood ${\cal U}$ such that ${\cal U}\setminus V_\varepsilon(N) \subset {\cal M}^\st_{\rm lf}$.
\end{pr}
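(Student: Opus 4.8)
The plan is to refine Corollary \ref{OpenCo}. Fix the neighbourhood ${\cal U}_0$ of $V_\varepsilon(p_0,x_0)$ produced there, so that ${\cal U}_0\setminus V_\varepsilon(N)\subset{\cal M}^\si_{\rm lf}$; since $V_\varepsilon(N)$ consists of singular sheaves, it suffices to show that no sequence of \emph{non-stable} locally free simple sheaves converges to $V_\varepsilon(p_0,x_0)=[{\cal F}]$, where I write ${\cal F}:={\cal F}({\cal E}_{p_0},x_0,\eta_0)$ and choose the representative $l_0\in p_0$ with $d_0:=\deg_g({\cal L}_{l_0})\in[0,\varepsilon)$. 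The decisive input, which I would isolate as a lemma, is the vanishing
\[
\Hom({\cal M},{\cal F})=0\qquad\hbox{for every line bundle }{\cal M}\hbox{ with }\deg_g({\cal M})\geq0,
\]
valid once $\varepsilon<\vg(g)$ and $\varepsilon$ is small enough for Corollary \ref{CoroEndo} to apply. Indeed $\Hom({\cal M},{\cal F})$ injects into $\Hom({\cal M},{\cal E}_{p_0})$; a nonzero element composed with a projection of ${\cal E}_{p_0}={\cal L}_{l_0}\oplus{\cal L}_{l_0}^\smvee$ onto a summand, say ${\cal L}_{l_0}$, is injective with effective cokernel divisor $D$ satisfying $\vol_g(D)=d_0-\deg_g({\cal M})\leq d_0<\vg(g)$, which forces $D=\emptyset$ and ${\cal M}\cong{\cal L}_{l_0}$. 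Corollary \ref{CoroEndo} then identifies the map with a nonzero multiple of the canonical inclusion ${\cal L}_{l_0}\hookrightarrow{\cal E}_{p_0}$, whose fibre at $x_0$ is the canonical line ${\cal L}_{l_0}(x_0)$; by admissibility of $\eta_0$ this line is not contained in $\ker\eta_0$, so the map does not factor through ${\cal F}=\ker(\tilde\eta_0)$, a contradiction. The subcase where the ${\cal L}_{l_0}^\smvee$-component is nonzero forces $d_0=0$ and is handled identically, and the ramified case $l_0\in\rho$ is treated the same way, using the canonical sequence (\ref{extEp}) in place of the splitting.

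With the vanishing lemma in hand, I would argue by contradiction: suppose there is a sequence $[{\cal E}_n]\in{\cal U}_0\setminus V_\varepsilon(N)$ converging to $[{\cal F}]$, with each ${\cal E}_n$ locally free (automatic by Corollary \ref{OpenCo}), simple and not stable. Each ${\cal E}_n$ then carries a saturated destabilising sub-line-bundle ${\cal M}_n\hookrightarrow{\cal E}_n$ with $\deg_g({\cal M}_n)\geq0$, fitting in a sequence $0\to{\cal M}_n\to{\cal E}_n\to{\cal M}_n^\smvee\otimes{\cal I}_{Z_n}\to0$. Comparing Chern classes gives $\mathrm{length}(Z_n)=1+c_1({\cal M}_n)^2\geq0$, while the negative definiteness of the intersection form of a class VII surface gives $c_1({\cal M}_n)^2\leq0$; hence $c_1({\cal M}_n)^2\in\{0,-1\}$. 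Since a negative definite lattice contains only finitely many classes of bounded square, $c_1({\cal M}_n)$ takes finitely many values, and after passing to a subsequence I may assume $c_1({\cal M}_n)=c$ is constant, so ${\cal M}_n\in\Pic^c$.

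The hard part is to bound $\deg_g({\cal M}_n)$ from above, so as to confine the ${\cal M}_n$ to a compact subset of $\Pic^c\simeq\C^*$: this is precisely a uniform bound on the maximal destabilising degree over the bounded family $\{{\cal E}_n\}\cup\{{\cal F}\}$ (a convergent sequence, realised inside the versal deformation $\mathscr{H}$ of ${\cal F}$, ${\cal M}^\si$ being smooth of dimension $4$ at $[{\cal F}]$), and it is exactly the point at which the non-Kählerian setting resists the naive semicontinuity of stability. I would supply it from the boundedness of the maximal destabilising slope over a bounded family of sheaves on a Gauduchon surface. Granting this, a further subsequence yields ${\cal M}_n\to{\cal M}_\infty$ in $\Pic^c$ with $\deg_g({\cal M}_\infty)\geq0$. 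Finally I would form the family ${\cal G}:=p_{13}^*\mathscr{H}\otimes p_{23}^*\mathscr{P}^\smvee$ on $B\times\Pic^c\times X$, where $\mathscr{P}$ is a Poincaré bundle on $\Pic^c\times X$, and apply upper semicontinuity of $h^0$ on its fibres: since $h^0({\cal E}_n\otimes{\cal M}_n^\smvee)=\dim\Hom({\cal M}_n,{\cal E}_n)\geq1$ and $({\cal E}_n,{\cal M}_n)\to({\cal F},{\cal M}_\infty)$, semicontinuity gives $\Hom({\cal M}_\infty,{\cal F})=h^0({\cal F}\otimes{\cal M}_\infty^\smvee)\geq1$, contradicting the vanishing lemma. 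Therefore non-stable locally free sheaves do not accumulate at $[{\cal F}]$, and intersecting the resulting neighbourhood with ${\cal U}_0$ produces a ${\cal U}$ with ${\cal U}\setminus V_\varepsilon(N)\subset{\cal M}^\st_{\rm lf}$, as required.
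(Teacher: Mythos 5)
Your proposal follows the same overall skeleton as the paper's proof -- reductio ad absurdum inside the versal deformation $\mathscr{H}$, finiteness of the topological types of the destabilising objects, a degree bound giving compactness in the Picard group, semicontinuity to produce a nonzero $\Hom$ in the limit, and an explicit vanishing statement for ${\cal F}$ that yields the contradiction -- but you implement it dually: you work with destabilising \emph{sub}-line-bundles (maps in), whereas the paper works with rank-1 torsion-free \emph{quotients} ${\cal L}_n=\im(u_n)^\we$ (maps out). Your dual implementation buys two genuine simplifications. First, since $\Hom({\cal M}_n,{\cal E}_n)=H^0({\cal E}_n\otimes{\cal M}_n^\smvee)$ and your twisted family $p_{13}^*\mathscr{H}\otimes p_{23}^*\mathscr{P}^\smvee$ is flat, ordinary Grauert semicontinuity suffices where the paper must invoke Flenner's theorem on relative Ext-sheaves (Proposition \ref{SemiCont}). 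Second, your saturation argument collapses the paper's two-stage limit into one: the paper cannot stop after obtaining $\Hom({\cal H}_0,{\cal L}_\infty)\ne0$ with ${\cal L}_\infty\simeq{\cal L}_{l_0}^{\pm1}$, because $\Hom({\cal F},{\cal L}_{l_0})$ is genuinely nonzero (the epimorphism $r_{l_0,x_0,\eta_0}$ of Remark \ref{CanShESFRem}), so it needs a second pass through Lemma \ref{filtrables} to refine the maps to targets ${\cal L}_n\otimes{\cal I}_{x_n}$, extract a limit point $x_\infty\in X$, and only then contradict $\Hom({\cal F},{\cal L}_{l_0}^{\pm1}\otimes{\cal I}_x)=0$. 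Your vanishing lemma $\Hom({\cal M},{\cal F})=0$ for all $\deg_g({\cal M})\geq0$ holds outright (your argument for it, including the two subcases and the ramified case, is correct), which is why no such refinement is needed on your side. Your Chern-class finiteness argument also correctly replaces the paper's Lemma \ref{filtrablesLm}.

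The one place your proof is not self-contained is exactly the step you flag as ``the hard part'': the upper bound on $\deg_g({\cal M}_n)$. You appeal to ``boundedness of the maximal destabilising slope over a bounded family of sheaves on a Gauduchon surface'' as if it were citable, but in the non-K\"ahlerian setting this is not an off-the-shelf result -- it is precisely what the paper proves as Lemma \ref{proper} (for a flat family over a compact base, the degrees of line bundles admitting nonzero morphisms \emph{from} the fibres are bounded below, proved by endowing the line bundles with Hermitian--Einstein metrics and applying the vanishing theorem for connections with $i\Lambda_g F$ negative). Fortunately, in your rank-2, trivial-determinant situation your statement reduces formally to that one: a saturated sub ${\cal M}_n\hookrightarrow{\cal E}_n$ gives a nonzero morphism ${\cal E}_n\to{\cal M}_n^\smvee\otimes{\cal I}_{Z_n}\hookrightarrow{\cal M}_n^\smvee$, so a lower bound $\deg_g({\cal M}_n^\smvee)\geq d_K$ from Lemma \ref{proper} is an upper bound $\deg_g({\cal M}_n)\leq-d_K$. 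With that substitution your argument closes completely; the gap is thus one of proof rather than a step that fails, but as written the proposal rests on an unproven (and, at this level of generality, nontrivial) boundedness assertion.
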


For the proof we will need the following simple general results concerning filtrable $\SL(2,\C)$-bundles with $c_2=1$ on class VII surfaces.  Let $X$ be a class VII surface, and put $b:=b_2(X)$. The intersection form $q_X$ of $X$ is negative definite. Using the first Donaldson theorem \cite{D2} and the fact that $c_1({\cal K}_X)$ is a characteristic element for $q_X$, one obtains (see \cite{Te4}, \cite{Te5}) a basis $(e_1,\dots,e_b)$ of $H^2(X,\Z)/\Tors(H^2(X,\Z))$ such that 
$$q_X(e_i,e_j)=-\delta_{ij}\,,\ c_1({\cal K}_X)+\Tors=\sum_{i=1}^b e_i.$$  
Using these classes, the filtrable $\SL(2,\C)$-bundles with $c_2=1$ on   $X$ can be easily classified as follows.   

\begin{lm}\label{filtrables}
Let ${\cal E}$ be a	filtrable $\SL(2,\C)$-bundle with $c_2=1$ on $X$, and $q:{\cal E}\to {\cal M}$ be an epimorphism on a  torsion-free coherent sheaf of rank 1. Then one of the following holds
\begin{enumerate}
\item ${\cal M}$ is  locally free, there exists $i\in \{1,\dots,b\}$ such that  $c_1({\cal M})+\Tors\in \{\pm e_i\}$, and $\ker(q)\simeq {\cal M}^{\smvee}$. Therefore in this case ${\cal E}$ fits in a short exact sequence	
$$0\to {\cal M}^{\smvee}\to {\cal E}\to {\cal M}\to 0.
$$
with $c_1({\cal M})+\Tors\in \{\pm e_i\}$.
\item There exists a line bundle ${\cal L}$ on $X$ with torsion Chern class, and a point $x\in X$ such that 
 \begin{enumerate}
\item  The pair $(x,{\cal L}^{\otimes 2}\otimes {\cal K}_X)$ satisfies the Cayley-Bacharach condition: any   section of ${\cal L}^{\otimes 2}\otimes {\cal K}_X$ vanishes at $x$.
\item ${\cal M}\simeq {\cal L}\otimes {\cal I}_x$, and $\ker(q)\simeq {\cal L}^{\smvee}$.
\end{enumerate}
Therefore in this case ${\cal E}$ fits in a short exact sequence	
$$0\to {\cal L}^{\smvee}\to {\cal E}\to {\cal L}\otimes {\cal I}_x\to 0.$$
\end{enumerate}

\end{lm}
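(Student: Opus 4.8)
The plan is to read everything off the short exact sequence determined by $q$, using only the negative-definiteness of $q_X$ for the numerical part, and to isolate the Cayley--Bacharach statement as the one delicate point, which I would obtain from the local freeness of ${\cal E}$ via the Serre construction.

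First I would set ${\cal N}:=\ker(q)$, so that $0\to {\cal N}\to {\cal E}\to {\cal M}\to 0$ is exact with ${\cal N}$ of rank $1$. Since ${\cal M}$ is torsion-free, ${\cal N}$ is saturated in the locally free sheaf ${\cal E}$: double-dualising the inclusion gives a monomorphism ${\cal N}^{\we}\hookrightarrow {\cal E}^{\we}={\cal E}$ (injective because ${\cal N}^\we$ is torsion-free and the map is generically the inclusion), and as ${\cal N}\subseteq {\cal N}^\we$ have the same rank, ${\cal N}^\we/{\cal N}$ would be a nonzero torsion subsheaf of ${\cal E}/{\cal N}\simeq{\cal M}$ unless ${\cal N}={\cal N}^\we$. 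Hence ${\cal N}$ is reflexive, and on the smooth surface $X$ therefore a line bundle. Writing ${\cal L}:={\cal M}^{\we}$ and ${\cal M}={\cal L}\otimes{\cal I}_W$ for a $0$-dimensional subscheme $W$, the relation $\det({\cal E})\simeq{\cal N}\otimes{\cal L}\simeq{\cal O}_X$ forces ${\cal N}\simeq{\cal L}^\smvee$, so that ${\cal E}$ sits in $0\to{\cal L}^\smvee\to{\cal E}\to{\cal L}\otimes{\cal I}_W\to 0$.

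Next I would compute Chern classes by the Whitney formula, getting $c_1({\cal E})=0$ automatically and $1=c_2({\cal E})=-c_1({\cal L})^2+\ell(W)$, where $\ell(W)\ge 0$ is the length of $W$. Since $q_X$ is negative definite, $c_1({\cal L})^2\le 0$, so this equation forces $c_1({\cal L})^2\in\{0,-1\}$. If $c_1({\cal L})^2=-1$, then $\ell(W)=0$, ${\cal M}={\cal L}$ is locally free, $\ker(q)\simeq{\cal L}^\smvee={\cal M}^\smvee$, and in the lattice $\big(H^2(X,\Z)/\Tors(H^2(X,\Z)),q_X\big)\simeq(\Z^b,-\mathrm{Id})$ an element of square $-1$ is $\pm e_i$; this is case (1). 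If $c_1({\cal L})^2=0$, negative-definiteness forces $c_1({\cal L})\in\Tors(H^2(X,\Z))$ and $\ell(W)=1$, so $W=\{x\}$ is a reduced point and ${\cal M}\simeq{\cal L}\otimes{\cal I}_x$, $\ker(q)\simeq{\cal L}^\smvee$; this is case (2) apart from the condition (2a).

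The main obstacle is the Cayley--Bacharach condition (2a), which I expect to be the only genuinely nontrivial step. The extension class of $0\to{\cal L}^\smvee\to{\cal E}\to{\cal L}\otimes{\cal I}_x\to 0$ lives in $\Ext^1({\cal L}\otimes{\cal I}_x,{\cal L}^\smvee)\simeq\Ext^1({\cal I}_x,{\cal L}^{\smvee\otimes 2})$, and the Koszul resolution of ${\cal I}_x$ gives ${\cal E}xt^1({\cal I}_x,{\cal L}^{\smvee\otimes 2})\simeq{\cal O}_{\{x\}}$ and ${\cal E}xt^{\ge 2}=0$, so the local-to-global spectral sequence yields the exact sequence $0\to H^1({\cal L}^{\smvee\otimes 2})\to\Ext^1({\cal I}_x,{\cal L}^{\smvee\otimes 2})\stackrel{R}{\to} H^0({\cal O}_{\{x\}})\stackrel{\delta}{\to} H^2({\cal L}^{\smvee\otimes 2})$. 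Local freeness of ${\cal E}$ at $x$ is precisely the statement that $R$ sends the extension class to a generator of $H^0({\cal O}_{\{x\}})\simeq\C$ (otherwise ${\cal E}$ would be locally isomorphic near $x$ to ${\cal L}^\smvee\oplus({\cal L}\otimes{\cal I}_x)$, which is not locally free); hence $R$ is surjective and $\delta=0$. Finally, Serre duality identifies $\delta$ with the transpose of the evaluation map $H^0({\cal L}^{\otimes 2}\otimes{\cal K}_X)\to ({\cal L}^{\otimes 2}\otimes{\cal K}_X)(x)$, so $\delta=0$ is equivalent to the vanishing at $x$ of every section of ${\cal L}^{\otimes 2}\otimes{\cal K}_X$, which is exactly the required Cayley--Bacharach condition. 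Together with $\ker(q)\simeq{\cal L}^\smvee$ this completes case (2).
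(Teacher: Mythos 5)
Your proof is correct. The paper states Lemma \ref{filtrables} without proof (introducing it with ``can be easily classified as follows''), and your argument is exactly the standard one its setup points to: saturating $\ker(q)$ to a line bundle, the Whitney formula together with negative definiteness of $q_X$ to force $c_1({\cal L})^2\in\{0,-1\}$ (whence the two cases, with square $-1$ classes being $\pm e_i$ in the orthonormal basis provided by Donaldson's theorem), and the Serre correspondence (\cite[Lemma 5.1.2]{OSS}, which the paper itself invokes in Section \ref{Serre}) identifying local freeness of ${\cal E}$ with surjectivity of $R$, hence with vanishing of $\delta$, hence via Serre duality with the Cayley--Bacharach condition.
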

 
 Recall that any filtrable locally free sheaf of rank 2 on a complex surface admits an epimorphism onto a torsion-free coherent sheaf of rank 1, hence Lemma \ref{filtrables} classifies all filtrable $\SL(2,\C)$-bundles with $c_2=1$ on class VII surfaces. Lemma \ref{filtrables} shows that 
 \begin{lm}\label{filtrablesLm} Let $X$ be a class VII surface. The set 
\begin{multline*}
\big\{c_1({\cal M}^{\we})|\    \hbox{${\cal M}$ is a rank 1 torsion-free quotient }\\ \hbox{ 	of an  $\SL(2,\C)$-bundle ${\cal E}$ with $c_2=1$ on $X$}\big\}	
\end{multline*}
is finite.
 \end{lm}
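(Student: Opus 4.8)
The plan is to deduce the statement directly from the classification in Lemma \ref{filtrables}, together with the finiteness of the torsion subgroup of $H^2(X,\Z)$.

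First I would observe that the hypothesis forces ${\cal E}$ to be filtrable, so that Lemma \ref{filtrables} is applicable. Indeed, if $q:{\cal E}\to{\cal M}$ is an epimorphism onto a rank $1$ torsion-free sheaf ${\cal M}$, then $\ker(q)$ is a rank $1$ coherent subsheaf of the rank $2$ bundle ${\cal E}$, which exhibits ${\cal E}$ as a filtrable $\SL(2,\C)$-bundle. Hence $q$ falls under one of the two cases of Lemma \ref{filtrables}, and it suffices to bound the possible values of $c_1({\cal M}^\we)$ in each case.

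Next I would compute $c_1({\cal M}^\we)$ separately. In case (1) the sheaf ${\cal M}$ is locally free, so ${\cal M}^\we={\cal M}$ and $c_1({\cal M}^\we)=c_1({\cal M})$; by the lemma the image of this class in $H^2(X,\Z)/\Tors(H^2(X,\Z))$ equals $\pm e_i$ for some $i\in\{1,\dots,b\}$, so $c_1({\cal M}^\we)$ lies in the preimage of $\{\pm e_1,\dots,\pm e_b\}$ under the projection modulo torsion, a set which is a union of $2b$ cosets of $\Tors(H^2(X,\Z))$. In case (2) one has ${\cal M}\simeq{\cal L}\otimes{\cal I}_x$ with $c_1({\cal L})\in\Tors(H^2(X,\Z))$; since ${\cal L}$ is a line bundle and ${\cal I}_x^\we={\cal O}_X$, we get ${\cal M}^\we={\cal L}$, whence $c_1({\cal M}^\we)=c_1({\cal L})\in\Tors(H^2(X,\Z))$. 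The Cayley--Bacharach constraint appearing in case (2) only restricts the admissible pairs $(x,{\cal L})$ further, and so may be ignored for the purpose of this finiteness bound.

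Finally, since $H^2(X,\Z)$ is finitely generated, its torsion subgroup $\Tors(H^2(X,\Z))$ is a finite group; combined with the finitely many classes $\pm e_1,\dots,\pm e_b$, each of the two sets produced above is finite, and the set in the statement is contained in their union. The entire argument is essentially bookkeeping, and I do not expect any genuine obstacle once Lemma \ref{filtrables} is in hand; the only points requiring a little care are the verification that the existence of the quotient $q$ makes ${\cal E}$ filtrable, and the identification ${\cal M}^\we={\cal L}$ in case (2) via ${\cal I}_x^\we={\cal O}_X$.
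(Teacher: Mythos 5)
Your proposal is correct and follows essentially the same route as the paper, which deduces the finiteness directly from the classification in Lemma \ref{filtrables}: case (1) confines $c_1({\cal M}^\we)$ to finitely many cosets of the finite group $\Tors(H^2(X,\Z))$ determined by $\pm e_1,\dots,\pm e_b$, while case (2) gives ${\cal M}^\we\simeq{\cal L}$ with $c_1({\cal L})\in\Tors(H^2(X,\Z))$. The paper leaves these verifications implicit ("Lemma \ref{filtrables} shows that"), and your write-up simply makes the same bookkeeping explicit, including the two points you flag (filtrability from the existence of the quotient, and $({\cal L}\otimes{\cal I}_x)^\we={\cal L}$), both of which are handled correctly.
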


\begin{proof}  (of Proposition \ref{Open2})
As in the proof of Proposition \ref{Open1} let $(p_0,x_0)\in  \Ag_\varepsilon\times X$, $\eta_0:{\cal E}_p(x_0)\to\C$ be an admissible epimorphism, and let $\mathscr{H}$ be a sheaf on $B\times X$, flat over $B$ (where $B\subset\C^4$ is the standard ball), with an identification ${\cal H}_0={\cal F}(p_0,x_0,\eta_0)$, which is a universal deformation of ${\cal F}(p_0,x_0,\eta_0)$  in the category of simple, torsion-free sheaves with trivial determinant and $c_2=1$. Taking into account Corollary \ref{OpenCo}, the conclusion of Proposition \ref{Open2} reduces to the 
\vspace{2mm}\\ 
{\it Claim:} For sufficiently small $\alpha>0$ one has
$\big\{\{[{\cal H}_z]|\ z\in B_\alpha\}\setminus V_\varepsilon(N)\big\}\subset  {\cal M}^\st_{\rm lf}$.
\vspace{2mm}\\
Suppose by reductio ad absurdum that this does not hold. Then there would exist  a sequence $(z_n)_n$ in $B$ such that $\lim_{n\to\infty} z_n=0$, and for any $n\in\N$ one has
\begin{enumerate}
\item 	$[{\cal H}_{z_n}]\not\in V_\varepsilon(N)$,
\item ${\cal H}_n$ is not stable.
\end{enumerate}

By Corollary \ref{OpenCo} the sheaf ${\cal H}_{z_n}$ will be locally free for any sufficiently large $n$. We may assume that this is the case for any $n$. Since ${\cal H}_{z_n}$ is not stable, there exists a line bundle ${\cal L}_n$ with $\deg({\cal L}_n)\leq 0$ and a non-trivial morphism $u_n:{\cal H}_{z_n}\to {\cal L}_n$.  We may of course assume that ${\cal L}_n$ is ``minimal", i.e. it coincides with the bidual $\im(u_n)^{\we}$.

By Lemma \ref{proper} below it follows that the sequence $\big(\deg_g({\cal L}_{n})\big)_n $ is also bounded from below. Using Lemma \ref{filtrablesLm} (which applies, because ${\cal H}_{z_n}$ are locally free) it follows that $\big([{\cal L}_{n}]\big)_n$ is a sequence of a compact subset of $\Pic(X)$, hence, passing again to a subsequence if necessary, we may assume that this sequence converges to a point $[{\cal L}_\infty]\in\Pic(X)$.  We will have $\deg_g({\cal L}_\infty)\leq 0$, and, using Flenner's semicontinuity theorem (see Proposition \ref{SemiCont}), it follows $\Hom({\cal H}_0,{\cal L}_\infty)\ne 0$. Since ${\cal H}_0={\cal F}(p_0,x_0,\eta_0)$, choosing a representative $l_0\in p_0$ and using Remark \ref{CanShESFRem},  we obtain $\Hom({\cal L}_{l_0},{\cal L}_\infty)\ne 0$, or $\Hom({\cal L}^\smvee_{l_0},{\cal L}_\infty)\ne 0$. But $\deg_g({\cal L}_{l_0}^\smvee\otimes{\cal L}_\infty)\leq \varepsilon$, $\deg_g({\cal L}_{l_0}\otimes{\cal L}_\infty)\leq \varepsilon$. Choosing $\varepsilon<\vg(g)$  it will follow by Remark \ref{VanSmallDeg} that ${\cal L}_\infty\simeq {\cal L}_{l_0}$, or ${\cal L}_\infty\simeq {\cal L}_{l_0}^\smvee$, so in particular $[{\cal L}_\infty]\in\Pic^\T$. But this implies that for any sufficiently large $n\in\N$ one has $[{\cal L}_{n}]\in\Pic^\T$.  Since ${\cal H}_{n}$ is locally free, Lemma \ref{filtrables} shows that for any (sufficiently large $n$) there exists $x_n\in X$ such that $\Hom({\cal H}_{n},{\cal L}_{n}\otimes {\cal I}_{x_n})\ne 0$. Using the compactness of $X$, we get a point $x_\infty\in X$ such that $\Hom({\cal H}_{0},{\cal L}_{\infty}\otimes {\cal I}_{x_\infty})\ne 0$. But it is easy to see that (supposing $|\deg_g(l_0)|$ is sufficiently small) one has 
 $$\Hom({\cal F}(p_0,x_0,\eta_0),{\cal L}_{l_0}\otimes {\cal I}_{x})= \Hom({\cal F}(p_0,x_0,\eta_0),{\cal L}_{l_0}^\smvee\otimes {\cal I}_{x})=0\ \forall x\in X.$$
Therefore we obtained a contradiction, which completes the   proof.\end{proof}
\begin{lm}\label{proper}
Let $(X,g)$ be a compact Gauduchon surface, $M$ be a complex manifold and $\mathscr{E}$ be a sheaf on $M\times X$, flat over $M$. For any compact subset $K\subset M$ the set
$$\{\deg_g({\cal L})|\ \exists u\in K \hbox{ such that }\Hom({\cal E}_u,{\cal L})\ne 0\}
$$ 
is bounded from below.	
\end{lm}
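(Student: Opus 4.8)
The plan is to deduce the lower bound from two facts: that the degree $\deg_g({\cal E}_u)$ stays bounded as $u$ ranges over $K$, and that the degrees of the subsheaves of ${\cal E}_u$ are bounded above, uniformly in $u\in K$. First I would set up the degree bookkeeping. Given $u\in K$ and a nonzero morphism $\phi\colon{\cal E}_u\to{\cal L}$, its image $\im(\phi)$ is a rank $1$ torsion-free subsheaf of ${\cal L}$; the quotient ${\cal L}/\im(\phi)$ is a torsion sheaf whose determinant is the class of an effective divisor, so $\deg_g({\cal L})\geq \deg_g(\im(\phi))$. From the exact sequence $0\to\ker(\phi)\to{\cal E}_u\to\im(\phi)\to 0$ one gets
$$\deg_g(\im(\phi))=\deg_g({\cal E}_u)-\deg_g(\ker(\phi)),$$
whence, writing $\degmax({\cal E}_u)$ for the supremum of $\deg_g({\cal F}')$ over all coherent subsheaves ${\cal F}'\subseteq{\cal E}_u$ (of which $\ker(\phi)$ is one),
$$\deg_g({\cal L})\geq\deg_g({\cal E}_u)-\degmax({\cal E}_u).$$
It therefore suffices to bound $\deg_g({\cal E}_u)$ from below and $\degmax({\cal E}_u)$ from above, uniformly over $u\in K$. (As any morphism to a line bundle factors through the torsion-free quotient of ${\cal E}_u$, I would reduce at the outset to the case of torsion-free fibres, which is the relevant one here.)

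The term $\deg_g({\cal E}_u)=\deg_g(\det{\cal E}_u)$ is straightforward. Since $\mathscr{E}$ is flat over $M$, $X$ is compact, and the singular loci of the fibres are $0$-dimensional, the determinant $\det\mathscr{E}$ is a line bundle on $M\times X$, so $u\mapsto[\det{\cal E}_u]$ is a holomorphic, hence continuous, map $K\to\Pic(X)$. As $\deg_g\colon\Pic(X)\to\R$ is a continuous homomorphism and $K$ is compact, $u\mapsto\deg_g({\cal E}_u)$ is bounded on $K$.

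The essential point, and the step I expect to be the main obstacle, is the uniform bound $\sup_{u\in K}\degmax({\cal E}_u)<\infty$. In the projective-algebraic framework this would follow by restricting to a polarising curve, but here the Gauduchon degree is not a topological invariant and $X$ may carry no curves whatsoever, so that argument is unavailable; one must instead proceed analytically. I would appeal to Harder--Narasimhan theory for the Gauduchon degree: for a single torsion-free sheaf the degrees of its subsheaves are bounded above, i.e. $\degmax<\infty$ (finiteness of the maximal slope), and for the flat family $\mathscr{E}$ the function $u\mapsto\degmax({\cal E}_u)$ is upper semicontinuous \cite{LT}. Being upper semicontinuous on the compact set $K$, it attains a finite maximum there.

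Combining the three steps gives
$$\deg_g({\cal L})\geq\inf_{u\in K}\deg_g({\cal E}_u)-\sup_{u\in K}\degmax({\cal E}_u)>-\infty,$$
which is the asserted lower bound, the bound being independent of ${\cal L}$ and of $u\in K$. The only genuinely non-trivial input is the finiteness and semicontinuity of $\degmax$ in the Gauduchon (possibly non-Kählerian, possibly curve-free) setting; everything else is elementary degree bookkeeping together with the continuity of $\deg_g$ on $\Pic(X)$.
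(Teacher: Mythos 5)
Your degree bookkeeping is correct, and it does isolate the real issue, but the proposal has a genuine gap exactly where you place your single citation. The inequality $\deg_g({\cal L})\geq\deg_g({\cal E}_u)-\degmax({\cal E}_u)$, together with the boundedness of $u\mapsto\deg_g({\cal E}_u)$ on $K$, reduces the lemma to the claim $\sup_{u\in K}\degmax({\cal E}_u)<\infty$; but this claim is essentially a reformulation of the lemma itself, and the reference you give for it does not cover the situation at hand. The openness/semicontinuity results of \cite{LT} concern families of holomorphic structures on a \emph{fixed smooth vector bundle} --- all members locally free --- and rest on Chern--Weil theory for weakly holomorphic subbundle projections (Uhlenbeck--Yau), a technique that has no direct meaning for a coherent fibre which is not locally free. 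Here $\mathscr{E}$ is an arbitrary flat family: its fibres may be singular torsion-free sheaves (precisely the case in which the paper applies the lemma, namely to a versal deformation of ${\cal F}(p_0,x_0,\eta_0)$), or may even have torsion. Your proposed preliminary reduction --- replacing each ${\cal E}_u$ by its torsion-free quotient --- makes matters worse rather than better, because those quotients no longer form a flat family, so neither the continuity of the fibrewise degree nor any family semicontinuity statement applies to them. As written, then, the only genuinely non-trivial step is neither proved nor quotable.

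For contrast, the paper's proof sidesteps Harder--Narasimhan theory entirely. Using \cite[Proposition 3.1]{To} it chooses an epimorphism $q:\mathscr{F}\to\mathscr{E}$ with $\mathscr{F}$ locally free on $M\times X$; since a nonzero $\phi:{\cal E}_u\to{\cal L}$ composed with the fibrewise epimorphism $q_u$ is nonzero, $\Hom({\cal E}_u,{\cal L})\ne 0$ forces $\Hom({\cal F}_u,{\cal L})\ne 0$, so it suffices to treat the locally free family $\mathscr{F}$. Fixing a Hermitian metric on $\mathscr{F}$ and equipping each ${\cal L}$ with a Hermitian--Einstein metric (whose Einstein constant is proportional to $\deg_g({\cal L})$), compactness of $K$ yields a constant $d_K$ such that ${\cal F}_u^\smvee\otimes{\cal L}$ has negative definite mean curvature for all $u\in K$ whenever $\deg_g({\cal L})<d_K$, and the vanishing theorem of \cite{Ko} then gives $\Hom({\cal F}_u,{\cal L})=0$. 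If you wish to keep your Harder--Narasimhan-flavoured argument, this resolution step is also the natural repair of your proof: running your bookkeeping on $\mathscr{F}$ instead of $\mathscr{E}$ (note that preimages under $q_u$ convert subsheaves of ${\cal E}_u$ into subsheaves of ${\cal F}_u$ of controlled degree) reduces everything to a uniform bound on $\degmax({\cal F}_u)$ for a locally free family, which can indeed be extracted from Chern--Weil theory for weakly holomorphic projections with respect to a continuously varying metric --- but that is an argument you must actually supply, not one available from \cite{LT} in the generality your proposal requires.
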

\begin{proof}
Using \cite[Proposition  3.1]{To} it follows that there exists a sheaf epimorphism $q: \mathscr{F}\to \mathscr{E}$, where $\mathscr{F}$  is a locally free sheaf on $M\times X$. Therefore, for any $u\in M$ we get an epimorphism $q_u:{\cal F}_u\to {\cal E}_u$, where ${\cal F}_u$ is locally free. Let $h$ be a Hermitian metric on $\mathscr{F}$, $h_u$ be the induced metric on ${\cal F}_u$, and for any line bundle ${\cal L}$ on $X$, let $h_{\cal L}$ be a Hermitian-Einstein metric on ${\cal L}$. We denote by $A_u$, $A_{{\cal L}}$ the Chern connections associated with $h_u$, $h_{\cal L}$. One has
$$F_{A_u^\smvee\otimes A_{\cal L}}=F_{A_u^\smvee}\otimes\id_{{\cal L}}+\id_{{\cal F}_u^\smvee}\otimes F_{A_{\cal L}}=F_{A_u^\smvee}\otimes\id_{{\cal L}}+F_{A_{\cal L}}\id_{{\cal F}_u^\smvee\otimes{\cal L}}  
$$
Since $K$ is compact, and $i\Lambda F_{A_{\cal L}}=c_{\cal L}$, where the Einstein constant $c_{\cal L}$ is proportional to $\deg_g({\cal L})$, it follows that there exists $d_K\in\R$ such that $i\Lambda_g(F_{A_u^\smvee\otimes A_{\cal L}})$ is negative definite for any $u\in K$ and any line bundle ${\cal L}$ with $\deg({\cal L})<d_K$. Using a well-known vanishing theorem \cite{Ko}, we obtain $\Hom({\cal F}_u,{\cal L})=0$ for any $u\in K$ and line bundle ${\cal L}$ with $\deg({\cal L})<d_K$.  It suffices to note that $\Hom({\cal F}_u,{\cal L})=0$ implies $\Hom({\cal E}_u,{\cal L})=0$.
\end{proof}

Put ${\cal V}_\varepsilon:=V_\varepsilon(\Ag_\varepsilon\times X)$.
\begin{co}\label{MainCo} Let $(X,g)$ be a class VII surface endowed with a Gauduchon metric. For sufficiently small $\varepsilon>0$ there exists a Hausdorff open neighborhood ${\cal U}$ of ${\cal V}_\varepsilon$ in ${\cal M}^\si$ such that
\begin{enumerate}
\item ${\cal V}_\varepsilon$ is closed in ${\cal U}$,
\item ${\cal U}\setminus {\cal V}_\varepsilon\subset \{{\cal M}^\st_{\rm lf}\}_\reg$.
\end{enumerate}
\end{co}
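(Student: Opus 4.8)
The plan is to assemble ${\cal U}$ from the local neighbourhoods furnished by Proposition \ref{Open2} and to deduce all three required properties from results already in hand, the only genuinely new input being a cross-vanishing of homomorphisms. First I record two reductions. Since $X$ is a class VII surface, $h^2({\cal O}_X)=h^0({\cal K}_X)=0$, so the trace map identifies $\Ext^2({\cal F},{\cal F})$ with $\Ext^2_0({\cal F},{\cal F})$ for every simple ${\cal F}$; for a stable locally free ${\cal E}$ the splitting ${\cal E}nd({\cal E})={\cal O}_X\oplus{\cal E}nd_0({\cal E})$ then gives $\Ext^2({\cal E},{\cal E})=H^2({\cal E}nd_0({\cal E}))$, whence
\[
\{{\cal M}^\st_{\rm lf}\}_\reg={\cal M}^\st_{\rm lf}\cap{\cal M}^\si_\reg .
\]
Secondly, property (1) is automatic once (2) holds: if ${\cal U}\setminus{\cal V}_\varepsilon\subset{\cal M}^\st_{\rm lf}\subset{\cal M}^\si_{\rm lf}$, then, as ${\cal M}^\si_{\rm lf}$ is open and disjoint from the singular locus ${\cal V}_\varepsilon\subset{\cal M}^\si_{\rm sing}$, we get ${\cal U}\setminus{\cal V}_\varepsilon={\cal U}\cap{\cal M}^\si_{\rm lf}$, which is open in ${\cal U}$. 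So the whole content lies in (2) together with Hausdorffness.

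For the construction I would deliberately avoid invoking compactness of the (non-compact) set ${\cal V}_\varepsilon$, and instead take a union of local pieces. For each $(p_0,x_0)\in\Ag_\varepsilon\times X$, Proposition \ref{Open2} supplies a product neighbourhood $N_{(p_0,x_0)}\subset\Ag_\varepsilon\times X$ and an open set ${\cal W}_{(p_0,x_0)}\ni V_\varepsilon(p_0,x_0)$ with ${\cal W}_{(p_0,x_0)}\setminus V_\varepsilon(N_{(p_0,x_0)})\subset{\cal M}^\st_{\rm lf}$; taking $N_{(p_0,x_0)}$ inside $\Ag_\varepsilon\times X$ forces $V_\varepsilon(N_{(p_0,x_0)})\subset{\cal V}_\varepsilon$, hence ${\cal W}_{(p_0,x_0)}\setminus{\cal V}_\varepsilon\subset{\cal M}^\st_{\rm lf}$. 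Setting ${\cal U}:=\big(\bigcup_{(p_0,x_0)}{\cal W}_{(p_0,x_0)}\big)\cap{\cal M}^\si_\reg$ — legitimate since ${\cal V}_\varepsilon\subset{\cal M}^\si_\reg$ by Proposition \ref{embedding} and ${\cal M}^\si_\reg$ is open — yields an open neighbourhood of ${\cal V}_\varepsilon$ with ${\cal U}\setminus{\cal V}_\varepsilon\subset{\cal M}^\st_{\rm lf}\cap{\cal M}^\si_\reg=\{{\cal M}^\st_{\rm lf}\}_\reg$: any $w\in{\cal U}\setminus{\cal V}_\varepsilon$ lies in some ${\cal W}_{(p_0,x_0)}$ yet not in ${\cal V}_\varepsilon$, so it is stable locally free and regular. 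This settles (1) and (2) at once.

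The remaining and principal point is that ${\cal U}$ is Hausdorff. By the non-separability criterion of Section \ref{ComSub} it suffices to rule out nonzero homomorphisms in both directions between distinct points of ${\cal U}$. Two points of ${\cal V}_\varepsilon$ are separated because the relevant cross-homomorphisms vanish, as shown in the proof of Proposition \ref{embedding}; two points of ${\cal U}\setminus{\cal V}_\varepsilon\subset{\cal M}^\st_{\rm lf}$ are separated because ${\cal M}^\st_{\rm lf}$ is a Hausdorff open subspace. The crux is the mixed case, and I expect it to be the technical heart of the whole argument: I must exclude non-separation of a singular ${\cal F}={\cal F}({\cal E}_p,x,\eta)\in{\cal V}_\varepsilon$ from a stable locally free ${\cal E}$, which I would do by proving $\Hom({\cal E},{\cal F})=0$. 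A nonzero map ${\cal E}\to{\cal F}$ cannot have full-rank image, for otherwise biduality and the triviality of determinants would yield an isomorphism ${\cal E}\cong{\cal F}^\we={\cal E}_p$, contradicting stability (the bundles ${\cal E}_p$ are split or properly semistable); hence its image $\mathcal I$ is a rank-one subsheaf of ${\cal F}\subset{\cal E}_p$ and a quotient of the stable sheaf ${\cal E}$, so $\mu(\mathcal I)>0$. Because $\mathcal I\hookrightarrow{\cal E}_p={\cal L}_l\oplus{\cal L}_l^\smvee$, one has $0<\deg_g(\mathcal I)\le|\deg_g(l)|<\varepsilon$, and choosing $\varepsilon<\vg(g)$ forces $c_1(\mathcal I)$ to be torsion. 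On the other hand the classification of rank-one quotients of $\SL(2,\C)$-bundles with $c_2=1$ (Lemma \ref{filtrables}) applied to ${\cal E}\twoheadrightarrow\mathcal I$ gives either $c_1(\mathcal I)=\pm e_i$, a non-torsion class, or $\deg_g(\mathcal I)=0$; the first contradicts the torsionness just obtained, the second contradicts $\mu(\mathcal I)>0$. Thus $\Hom({\cal E},{\cal F})=0$, the mixed pair is separated, and ${\cal U}$ is Hausdorff; everything else is a formal assembly of Propositions \ref{embedding} and \ref{Open2}.
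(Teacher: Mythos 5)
Your assembly of ${\cal U}$ from the local pieces of Proposition \ref{Open2}, your reduction of (1) to (2), and your treatment of pairs inside ${\cal V}_\varepsilon$ (via the vanishing from the proof of Proposition \ref{embedding}) and inside ${\cal M}^\st_{\rm lf}$ are all fine. The gap is in the mixed case, and it is not a repairable slip: the claim that $\Hom({\cal E},{\cal F})=0$ for \emph{every} stable locally free ${\cal E}$ and every ${\cal F}={\cal F}({\cal E}_p,x,\eta)\in{\cal V}_\varepsilon$ is false. The error enters when you read case (2) of Lemma \ref{filtrables} as giving $\deg_g=0$: that case only gives a \emph{torsion} Chern class, and on a class VII surface a torsion $c_1$ puts no constraint on the degree, since $\deg_g$ is surjective already on $\Pic^0(X)\simeq\C^*$ — this is precisely the non-Kählerian phenomenon the paper is built around. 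A concrete counterexample is the Serre extension ${\cal E}={\cal S}(p,x)$ of Proposition \ref{SSheaves}: for $p=\pi(l)$ with $-\varepsilon<\deg_g(l)<0$ it is stable and locally free, yet the compositions ${\cal S}(p,x)\twoheadrightarrow{\cal L}_l^\smvee\otimes{\cal I}_x\hookrightarrow{\cal F}({\cal E}_p,x,\eta)$ and ${\cal F}({\cal E}_p,x,\eta)\twoheadrightarrow{\cal L}_l\hookrightarrow{\cal S}(p,x)$ (see Remark \ref{CanShESFRem}) are nonzero, so $\Hom$ is nonzero in \emph{both} directions; in your argument this sheaf lands exactly in case (2) of Lemma \ref{filtrables} with positive degree $-\deg_g(l)$. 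Worse, $[{\cal S}(p,x)]$ and $V_\varepsilon(p,x)$ are genuinely non-separable in ${\cal M}^\si$ — this is the very phenomenon exploited in Section \ref{ExtendingSect}, where the map $f$ sends $V_\varepsilon(p,x)$ to $[{\cal S}(p,x)]$ and is shown to be continuous. Consequently no pointwise separation argument can succeed: any open set containing ${\cal V}_\varepsilon$ together with even one such Serre extension is automatically non-Hausdorff, and Proposition \ref{Open2} gives no mechanism whatsoever for your sets ${\cal W}_{(p_0,x_0)}$ to exclude these points, since they lie in ${\cal M}^\st_{\rm lf}$.

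This is exactly where the paper's proof uses an idea absent from yours: compactness. One first notes (as in the proof of Proposition \ref{embedding}) that $\Hom$ vanishes between distinct members of the \emph{compact} set $V_\varepsilon(\bar\Ag_\varepsilon\times X)$, and then invokes Proposition \ref{HausdNeighb} — a genuinely point-set-topological statement resting on Proposition \ref{MateiProp}, valid only for compact locally closed subsets — to produce a Hausdorff \emph{open} neighbourhood ${\cal N}$ of $V_\varepsilon(\bar\Ag_\varepsilon\times X)$. The corollary's ${\cal U}$ is then the intersection of your kind of union with ${\cal N}$; since ${\cal N}$ is open and Hausdorff and contains ${\cal V}_\varepsilon$, it cannot contain any point non-separable from ${\cal V}_\varepsilon$, so the intersection is what discards the Serre-extension partners. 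Without this step your ${\cal U}$ need not be Hausdorff, and your attempted proof that it is rests on a statement that the paper's own construction refutes.
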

\begin{proof}
Using 	Propositions \ref{embedding}  and \ref{Open2}, every point $e\in {\cal V}_\varepsilon$ has a neighbourhood ${\cal U}_e$ in ${\cal M}^\si_\reg$ such that ${\cal V}_\varepsilon\cap {\cal U}_e$ is closed in ${\cal U}_e$ and ${\cal U}_e\setminus {\cal V}_\varepsilon\subset {\cal M}^\st_{\rm lf}$.  It suffices to put 
$${\cal U}:=\bigg(\union_{e\in {\cal V}_\varepsilon} {\cal U}_e\bigg)\cap {\cal N},$$
where ${\cal N}$ is a Hausdorff neighbourhood of $V_\varepsilon(\bar \Ag_\varepsilon\times X)$ (see Proposition \ref{criterion}).
\end{proof}

\section{Extending the complex structure}
\label{ExtendingSect}
 
Let $(X,g)$ be  a class VII surface endowed with a Gauduchon metric.  We have seen that, for a pair $(p,x)\in\big(\Pg\setminus \Cg\big)\times X$ and an admissible epimorphism $\eta:{\cal E}(x)\to \C$, the sheaf ${\cal F}(p,x,\eta)$  is not even slope semistable, because it is destabilised by its subsheaf ${\cal L}^\smvee\otimes {\cal I}_x$, where $l=[{\cal L}]$ is the unique representative of $p$ with negative degree. We will extend the holomorphic structure of ${\cal M}^\st(E)$ to its Donaldson compactification using the gluing Lemma \ref{glue}. The parametrisation $f$ intervening  in this theorem will be defined on an open neighbourhood of ${\cal V}_\varepsilon$ in  ${\cal M}^\si$, and will map an element $[{\cal F}(p,x,\eta)]\in {\cal V}_\varepsilon$ with $|\deg_g|(p)>0$ to the isomorphism class of a stable, locally free Serre extension. This  isomorphism class is non-separable (in the Hausdorff sense)  from $[{\cal F}(p,x,\eta)]$ in ${\cal M}^\si$. 

\subsection{Families of Serre extensions}\label{Serre}

The family  of Serre extensions we need is  defined as follows: \\

Let $U$ be an open set of $X$. Denote by $\mathscr{L}_\varepsilon$, $\mathscr{L}_{\varepsilon U}$  the restriction of the Poincaré line bundle $\mathscr{L}$ to $A_\varepsilon\times X$, respectively $A_\varepsilon\times U$ . On the product $\Pi_\varepsilon^U:=A_\varepsilon\times U\times X$  consider the line bundle
$$\mathscr{M}:=p_{A_\varepsilon X}^*(\mathscr{L}_\varepsilon)\otimes p_{A_\varepsilon U}^*(\mathscr{L}^{\smvee}_{\varepsilon U}),
$$
Put $Z:=A_\varepsilon\times\Delta_{UX}$, where $\Delta_{UX}$ is the graph of the embedding   $U\hookrightarrow X$.  We have canonical isomorphisms
$${\cal E}xt^1\big({\cal I}_{Z}\otimes \mathscr{M}^\smvee,\, p_U^*({\cal K}_U)\otimes\mathscr{M} \big)={\cal E}xt^1\big({\cal I}_{Z}\,,\, p_U^*({\cal K}_U)\otimes\mathscr{M}^{\otimes 2}\big) =$$
$$=\omega_{Z/\Pi^U_\varepsilon}\otimes_{{\cal O}_Z} \big(p_U^*({\cal K}_U)\otimes \mathscr{M}^{\otimes 2}\big)  \simeq {\cal O}_Z\,.
$$
For the second isomorphism we used formula (\ref{can}) proved in the appendix; for the third isomorphism we used the obvious isomorphism $\resto{\mathscr{M}}{Z}={\cal O}_Z$, and the obvious identification between the normal bundle of $Z$ and the pull-back of the tangent bundle $T_U$. The local-global spectral  sequence gives an exact sequence
\begin{equation}\label{LGSS} 
\begin{gathered} 0\to H^1\big(p_U^*({\cal K}_U)\otimes\mathscr{M}^{\otimes 2}\big)\to  \Ext^1\big({\cal I}_{Z}\otimes \mathscr{M}^\smvee,\, p_U^*({\cal K}_U)\otimes\mathscr{M}\big)\textmap{\ag_U}\\
H^0\big({\cal E}xt^1\big({\cal I}_{Z}\otimes \mathscr{M}^\smvee,\, p_U^*({\cal K}_U)\otimes\mathscr{M} \big)\big)=H^0({\cal O}_Z)\to  H^2\big(p_U^*({\cal K}_U)\otimes\mathscr{M}^{\otimes 2}\big)\,. 
\end{gathered}
\end{equation}

The cohomology spaces $H^k\big(p_U^*({\cal K}_U)\otimes\mathscr{M}^{\otimes 2}\big)$ can be computed using the Leray spectral sequence associated with the projection $p_{A_\varepsilon U}:\Pi^U_\varepsilon\to A_\varepsilon\times U$.  Using the projection formula and denoting by $q_{A_\varepsilon}:A_\varepsilon\times U \to A_\varepsilon$, $q_U:A_\varepsilon\times U \to U$, $\pi_{A_\varepsilon}:A_\varepsilon\times X\to A_\varepsilon$ the obvious projections, we get
\begin{equation}\label{Rk} R^k(p_{A_\varepsilon U*})\big(p_U^*({\cal K}_U)\otimes\mathscr{M}^{\otimes 2}\big)=   \mathscr{L}_{\varepsilon U}^{\smvee\otimes 2}\otimes q_U^*({\cal K}_U)\otimes q_{A_\varepsilon}^*\big(R^k \pi_{A_\varepsilon*}(\mathscr{L}^{\otimes 2}_\varepsilon)\big)\,.
\end{equation}
Since $H^0({\cal L}_l^{\otimes 2})=0$ for generic $l$,  formula  (\ref{Rk}) shows   that $p_{A_\varepsilon U*}\big(p_U^*({\cal K}_U)\otimes\mathscr{M}^{\otimes 2}\big)=0$. Let now $\varepsilon>0$ be sufficiently small such that 
\begin{enumerate}[(C1)]
\item $H^2({\cal L}_l^{\otimes 2})=0$ for  any $l\in A_\varepsilon$ (see Lemma \ref{H2FlatLB}), 
\item $H^0({\cal L}_l^{\otimes 2})=0$ for any $l\in A_\varepsilon\setminus\rho$ (see Remark \ref{VanSmallDeg}).
\end{enumerate}

The first condition implies $R^2(p_{A_\varepsilon U*})\big(p_U^*({\cal K}_U)\otimes\mathscr{M}^{\otimes 2}\big)=0$, and the second implies that the sheaf $R^1(p_{A_\varepsilon U*})\big(p_U^*({\cal K}_U)\otimes\mathscr{M}^{\otimes 2}\big)$ is supported on $\rho\times U$.  Therefore, if $U$ is Stein, one obtains $H^i\big(R^k(p_{A_\varepsilon U*})\big(p_U^*({\cal K}_U)\otimes\mathscr{M}^{\otimes 2}\big)\big)=0$ for $i+k=2$, so $H^2\big(p_{U*}({\cal K}_U)\otimes\mathscr{M}^{\otimes 2}\big)=0$, implying that the morphism $\ag_U$ in (\ref{LGSS}) is surjective.
For any $\nu\in \ag_U^{-1}(1)$ we obtain a short exact sequence
$$0\to p_U^*({\cal K}_U)\otimes\mathscr{M}\to \mathscr{S}^\nu\to {\cal I}_{Z}\otimes \mathscr{M}^\smvee\to 0
$$
on $\Pi_\varepsilon^U$  with locally free central term (by Serre's Lemma, see \cite[Lemma 5.1.2]{OSS}). The 2-bundle ${\cal S}^\nu_{(l,u)}$ on $X$ defined by the restriction of $\mathscr{S}^\nu$ to $\{(l,u)\}\times X$ fits in a short exact sequence
$$0\to {\cal L}_l\to {\cal S}^\nu_{(l,u)}\to {\cal I}_u\otimes {\cal L}_l^\smvee\to 0\,, 
$$ 
so it is a Serre extension of ${\cal I}_u\otimes {\cal L}_l^\smvee$ by ${\cal L}_l$. On the other hand

\begin{pr} \label{SSheaves} Let $\varepsilon>0$ be  sufficiently small such that conditions (C1), (C2) above hold. Then  
\begin{enumerate}	
\item For every $(l,x)\in   \big(A_\varepsilon\setminus\rho\big) \times X$  one has $\dim(\Ext^1({\cal L}_l^\smvee\otimes {\cal I}_x,{\cal L}_l))=1$, and there exists a  locally free sheaf ${\cal S}(l,x)$, unique up to isomorphism, which fits in an exact   sequence of the form
$$0\to {\cal L}_l\to {\cal S}(l,x)\to {\cal L}^\smvee_l\otimes {\cal I}_x\to 0.
$$
\item  Suppose that   $\varepsilon<\vg(g)$. Then ${\cal S}(l,x)$ is  (semi)stable if and only of $\deg_g(l)<0$ ($\deg_g(l)\leq 0$) . 
\end{enumerate}	
	\end{pr}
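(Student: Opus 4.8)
The plan is to treat the two parts separately. For part (1) I would first use the invertibility of ${\cal L}_l$ to rewrite $\Ext^1({\cal L}_l^\smvee\otimes {\cal I}_x,{\cal L}_l)\cong\Ext^1({\cal I}_x,{\cal L}_l^{\otimes 2})$ and then run the local-to-global spectral sequence. Since ${\cal H}om({\cal I}_x,{\cal O}_X)={\cal O}_X$ and ${\cal E}xt^1({\cal I}_x,{\cal O}_X)\cong{\cal E}xt^2({\cal O}_{\{x\}},{\cal O}_X)\cong{\cal O}_{\{x\}}$ (a point on a smooth surface being a local complete intersection of codimension $2$), the spectral sequence produces an exact sequence
$$0\to H^1({\cal L}_l^{\otimes 2})\to\Ext^1({\cal I}_x,{\cal L}_l^{\otimes 2})\to H^0\big({\cal E}xt^1({\cal I}_x,{\cal L}_l^{\otimes 2})\big)\to H^2({\cal L}_l^{\otimes 2}),$$
in which $H^0({\cal E}xt^1)\cong\C$ is one-dimensional. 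Condition (C1) kills $H^2({\cal L}_l^{\otimes 2})$; and since $c_1({\cal L}_l)$ is torsion, all intersection numbers of $c_1({\cal L}_l^{\otimes 2})$ vanish in real cohomology, so Riemann--Roch together with $\chi({\cal O}_X)=0$ gives $\chi({\cal L}_l^{\otimes 2})=0$. Combined with $h^2=0$ from (C1) and $h^0=0$ from (C2) (valid precisely because $l\notin\rho$), this yields $h^1({\cal L}_l^{\otimes 2})=0$. Hence the middle arrow is an isomorphism and $\dim\Ext^1({\cal L}_l^\smvee\otimes{\cal I}_x,{\cal L}_l)=1$. The unique non-zero extension class therefore generates the stalk of ${\cal E}xt^1({\cal I}_x,{\cal L}_l^{\otimes 2})$ at $x$, so Serre's Lemma \cite[Lemma 5.1.2]{OSS} applies to give a locally free middle term ${\cal S}(l,x)$; uniqueness up to isomorphism is immediate, since rescaling the (one-dimensional space of) extension class merely rescales the extension.

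For part (2) I would analyse all saturated line subbundles ${\cal A}\subset{\cal S}(l,x)$ (necessarily line bundles, as saturated rank-one subsheaves of a locally free sheaf on a smooth surface are reflexive). Write $d:=\deg_g(l)$, so that $|d|<\varepsilon<\vg(g)$, and compose the inclusion ${\cal A}\hookrightarrow{\cal S}(l,x)$ with the quotient map ${\cal S}(l,x)\to{\cal L}_l^\smvee\otimes{\cal I}_x$. If this composite vanishes, then ${\cal A}\subseteq{\cal L}_l$; since ${\cal L}_l$ is saturated (its quotient ${\cal L}_l^\smvee\otimes{\cal I}_x$ is torsion-free), saturatedness of ${\cal A}$ forces ${\cal A}={\cal L}_l$, with $\deg_g({\cal A})=d$. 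If instead the composite is non-zero it is injective, hence defines a non-zero section of ${\cal A}^\smvee\otimes{\cal L}_l^\smvee$ vanishing at $x$; this line bundle is then ${\cal O}_X(D)$ for a non-empty effective divisor $D$ passing through $x$, so $\deg_g({\cal A}^\smvee\otimes{\cal L}_l^\smvee)=\vol_g(D)\geq\vg(g)$ and therefore
$$\deg_g({\cal A})\leq -\vg(g)-d\leq -\vg(g)+|d|<0.$$
Thus every saturated line subbundle different from ${\cal L}_l$ has strictly negative degree, while ${\cal L}_l$ itself has degree $d$.

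Since ${\cal S}(l,x)$ has trivial determinant its $g$-slope is $0$, and it suffices to test saturated rank-one subsheaves; the dichotomy above then gives the result at once. Indeed ${\cal S}(l,x)$ is stable if and only if every such subbundle has negative degree, i.e. if and only if $d<0$, and semistable if and only if every such subbundle has degree $\leq 0$, i.e. if and only if $d\leq 0$. I expect the non-zero-composite case to be the main obstacle, and its resolution is the conceptual heart of the statement: the point $x$ genuinely obstructs the would-be destabiliser ${\cal L}_l^\smvee$ of degree $-d>0$ from embedding, because a non-zero map ${\cal L}_l^\smvee\to{\cal L}_l^\smvee\otimes{\cal I}_x$ would be a section of ${\cal I}_x$, hence zero; and more generally the minimal-volume bound $\varepsilon<\vg(g)$ is exactly what forces any competing subsheaf arising through the quotient to be too negative to destabilise. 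This is where the hypothesis $\varepsilon<\vg(g)$ (via Remark \ref{VanSmallDeg} and the definition of $\vg(g)$) is indispensable.
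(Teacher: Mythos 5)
Your proposal is correct and takes essentially the same route as the paper: for part (1) the identical local-to-global exact sequence together with (C1), (C2) and Serre's lemma (you make explicit the Riemann--Roch step $\chi({\cal L}_l^{\otimes 2})=0$, hence $h^1({\cal L}_l^{\otimes 2})=0$, which the paper leaves implicit in ``follows easily''), and for part (2) the same mechanism of composing a would-be destabiliser with the quotient map onto ${\cal L}_l^\smvee\otimes{\cal I}_x$ and invoking the minimal-volume bound $\varepsilon<\vg(g)$. The only cosmetic difference is that you first classify all saturated line subbundles and then read off stability and semistability simultaneously, whereas the paper dispatches the trivial direction separately and proves the converse by contradiction.
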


\begin{proof}
(1) The first statement follows easily using the exact sequence
$$0\to H^1({\cal L}_l^{\otimes 2})\to \Ext^1({\cal L}_l^\smvee\otimes {\cal I}_x,{\cal L}_l)\to H^0({\cal E}xt^1({\cal L}_l^\smvee\otimes {\cal I}_x,{\cal L}_l)\to H^2({\cal L}_l^{\otimes 2})\to 0
$$
and the conditions (C1), (C2).
\\ \\
(2) If $\deg_g(l)\geq 0$ ($\deg_g(l) > 0$) then ${\cal S}(l,x)$ is obviously non-stable (non semi-stable). Conversely, suppose that $\deg_g(l)< 0$ ($\deg_g(l)\leq  0$). If ${\cal L}\hookrightarrow {\cal S}(l,x)$ is a destabilising locally free subsheaf of rank 1 of ${\cal S}(l,x)$, then $H^0({\cal L}^\smvee\otimes {\cal L}_l^\smvee\otimes {\cal I}_x)\ne 0$ and $\deg_g({\cal L})\geq 0$ (respectively $\deg_g({\cal L})> 0$). This implies that the line bundle ${\cal L}^\smvee\otimes {\cal L}_l^\smvee$ has a non-trivial section whose zero divisor $D$ contains $x$, in particular is non-empty. We get
$$\vol_g(D)=|\deg_g(l)|-\deg_g({\cal L})\leq |\deg_g(l)|,
$$
which contradicts the assumption $\varepsilon< \vg(g)$ (see Definition \ref{vg}).\end{proof}
For $l\in \rho$ one has ${\cal L}_l^\smvee\simeq {\cal L}_l$, hence $H^1({\cal L}_l^{\otimes 2})\simeq H^1({\cal O}_X)\simeq \C$.  Therefore in this case the situation is different:
\begin{re}
For any line bundle ${\cal L}$ on $X$ one has $\dim(\Ext^1({\cal L}\otimes {\cal I}_x,{\cal L}))=2$, and the set of isomorphism classes of locally free sheaves ${\cal S}$ which fit into an exact sequence of the form	
$$0\to {\cal L}\to {\cal S}\to {\cal L}\otimes {\cal I}_x\to 0.
$$
can be identified with the complement of a singleton in  $\P(\Ext^1({\cal L}\otimes {\cal I}_x,{\cal L}))$.
\end{re}

\begin{co}
Let $\varepsilon$ be sufficiently small such that (C1), (C2) hold, and $(l,x)\in (A_\varepsilon\setminus\rho)\times X$. Then ${\cal S}^\nu_{(l,x)}\simeq {\cal S}(l,x)$ for any Stein open set $U\ni x$ and  any $\nu\in\ag_U^{-1}(1)$. 	
\end{co}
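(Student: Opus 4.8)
The plan is to deduce the isomorphism directly from the uniqueness clause of Proposition \ref{SSheaves}(1), once I match up the two extension sequences. The statement is essentially a bookkeeping remark: both $\mathscr{S}^\nu_{(l,x)}$ and $\mathcal{S}(l,x)$ are locally free sheaves fitting into an extension of the same shape, and under the standing hypotheses on $\varepsilon$ there is only one such sheaf up to isomorphism.

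First I would observe that, since $l\in A_\varepsilon\setminus\rho$ and $x\in U$, the point $(l,x)$ lies in $A_\varepsilon\times U$, so the restriction ${\cal S}^\nu_{(l,x)}$ of $\mathscr{S}^\nu$ to the fibre $\{(l,x)\}\times X\cong X$ is defined. By the construction preceding the corollary, this restriction is a rank $2$ locally free sheaf on $X$ (it is the fibre of the locally free central term $\mathscr{S}^\nu$, which is locally free by Serre's Lemma \cite[Lemma 5.1.2]{OSS}), and, setting $u=x$ in the sequence already established there, it fits into
$$0\to {\cal L}_l\to {\cal S}^\nu_{(l,x)}\to {\cal L}_l^\smvee\otimes{\cal I}_x\to 0.$$
Thus ${\cal S}^\nu_{(l,x)}$ is a \emph{locally free} Serre extension of ${\cal L}_l^\smvee\otimes{\cal I}_x$ by ${\cal L}_l$, of exactly the form appearing in Proposition \ref{SSheaves}(1).

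Next I would invoke that proposition: for $(l,x)\in(A_\varepsilon\setminus\rho)\times X$ one has $\dim_\C\Ext^1({\cal L}_l^\smvee\otimes{\cal I}_x,{\cal L}_l)=1$, and ${\cal S}(l,x)$ is the unique, up to isomorphism, locally free sheaf admitting such an extension. The one point worth spelling out is that the extension class of ${\cal S}^\nu_{(l,x)}$ is necessarily non-zero: the split extension ${\cal L}_l\oplus({\cal L}_l^\smvee\otimes{\cal I}_x)$ is not locally free at $x$ (as ${\cal I}_x$ is not locally free there), whereas ${\cal S}^\nu_{(l,x)}$ is a bundle. Since $\Ext^1$ is one-dimensional, any two non-zero classes differ by a nonzero scalar and therefore yield isomorphic extensions; hence every locally free extension of this form—in particular ${\cal S}^\nu_{(l,x)}$—is isomorphic to ${\cal S}(l,x)$, giving ${\cal S}^\nu_{(l,x)}\simeq{\cal S}(l,x)$.

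I do not expect any genuine obstacle: the entire content is carried by the uniqueness in Proposition \ref{SSheaves}(1), and the only step requiring a remark is that local freeness of ${\cal S}^\nu_{(l,x)}$ rules out the split class, which together with $\dim\Ext^1=1$ pins down the isomorphism type.
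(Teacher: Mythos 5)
Your proof is correct and is exactly the argument the paper intends: the corollary is stated without proof precisely because it follows at once from the uniqueness clause of Proposition \ref{SSheaves}(1), applied to the already-established fact that ${\cal S}^\nu_{(l,x)}$ is a locally free extension of ${\cal L}_l^\smvee\otimes{\cal I}_x$ by ${\cal L}_l$. Your added remark — that local freeness excludes the split class and that in a one-dimensional $\Ext^1$ all non-zero classes have isomorphic middle terms — is the standard justification underlying that uniqueness, so nothing is missing.
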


\subsection{The proof of Theorem \ref{Th1}}

For a point $p=\{l,l^\smvee\}\in\Cg$, we denote by $\ag_p$ the gauge class of the flat $\SU(2)$ connection $a_l\oplus a_{l^\smvee}$, where, for $l\in \mathrm{C}$, $a_l$ stands for the  flat $\U(1)$-connection associated with $l$. For $p\in\Ag_\varepsilon\setminus\Cg$ we put  ${\cal S}(p,x):={\cal S}(l,x)$, where $l$ is the representative of $p$ of negative degree. Let $\varepsilon>0$ be sufficiently small such that the properties of Proposition \ref{SSheaves} hold, and let ${\cal U}$ be an open neighbourhood of ${\cal V}_\varepsilon$ in ${\cal M}^\si$ satisfying the properties of Corollary \ref{MainCo}. Put 
\begin{align*}
{\cal V}_0&:=\{[{\cal F}({\cal E}_p,x,\eta)]|\ p\in\Cg,\ x\in X,\ \eta:{\cal E}_p(x)\textmap{{\rm adm.\  epim.}}\C\}\\
&\ =V_\varepsilon(\Cg\times X)\,.
\end{align*}

Recall that, by Proposition \ref{R0X}, the union  $\Mg:={\cal M}^\ASD(E)^*_{\reg}\cup \big({\cal R}_0\times X\big)$ is open in $\overline{{\cal M}}^\ASD(E)$ and is a topological 8-manifold. Define $f:{\cal U} \to \Mg$  by
\begin{equation}\label{f}
f([{\cal F}]):=\left\{
\begin{array}{ccc}
[{\cal F}] &\rm if & [{\cal F}]\in {\cal U}\setminus {\cal V}_\varepsilon\,,\\
\ [{\cal S}(p,x)] &\rm if & [{\cal F}]=V_\varepsilon(p,x)\hbox{ with }(p,x)\in{\cal V}_\varepsilon\setminus {\cal V}_0\,,\\
(\ag_p,x) &\rm if &[{\cal F}]=V_\varepsilon(p,x)\hbox{ with }(p,x)\in\  {\cal V}_0\,.
\end{array}
\right.	
\end{equation}
In the  first two lines of the  above formula we have used the Kobayashi-Hitchin identification ${\cal M}^\ASD_a(E)^*\simeq {\cal M}^\st(E)={\cal M}^\st_{\rm lf}$.
\begin{pr} Let $(X,g)$ be a class VII surface endowed with a Gauduchon metric. 
If $\varepsilon$ and ${\cal U}$ are sufficiently small, the following holds:
\begin{enumerate}
\item $f$ is injective.
 \item $f^{-1}({\cal M}^\st(E)_\reg)={\cal U}\setminus {\cal V}_0$, and the map ${\cal U}\setminus {\cal V}_0\to {\cal M}^\st(E)_\reg$ induced by $f$ is   holomorphic.
 \item $f$ is continuous.

\end{enumerate}
	
\end{pr}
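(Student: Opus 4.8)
The plan is to verify the three assertions in the order (1), (3), and the holomorphicity in (2) last, since holomorphicity will be deduced from continuity. Throughout I use the partition $\mathcal{U}=(\mathcal{U}\setminus{\cal V}_\varepsilon)\sqcup({\cal V}_\varepsilon\setminus{\cal V}_0)\sqcup{\cal V}_0$ on which $f$ is given by the three lines of (\ref{f}), and the properties from Corollary \ref{MainCo} that $\mathcal{U}$ is Hausdorff, ${\cal V}_\varepsilon$ is closed in $\mathcal{U}$, and $\mathcal{U}\setminus{\cal V}_\varepsilon\subset\{{\cal M}^\st_{\rm lf}\}_\reg={\cal M}^\st(E)_\reg$. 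First I would record the images: the first line is the inclusion into ${\cal M}^\st(E)_\reg$; the second lands in ${\cal M}^\st(E)_\reg$ as well, since $[{\cal S}(p,x)]$ is stable by Proposition \ref{SSheaves} and, for $\varepsilon$ small, regular by the small-degree/Serre-duality vanishing of Corollary \ref{versal}; the third lands in ${\cal R}_0\times X$. As ${\cal M}^\st(E)_\reg$ and ${\cal R}_0\times X$ are disjoint in $\Mg$, no point of ${\cal V}_0$ collides with the rest, and on ${\cal V}_0$ itself $f$ is injective because $p\mapsto\ag_p$ is the bijection $\Cg\to{\cal R}_0$ and $V_\varepsilon$ is injective. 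On $\mathcal{U}\setminus{\cal V}_\varepsilon$, $f$ is the inclusion; on ${\cal V}_\varepsilon\setminus{\cal V}_0$ the map $(p,x)\mapsto[{\cal S}(p,x)]$ is injective because one recovers $x$ as the support of the singularity of the torsion quotient and $l$ as the class of the unique saturated sub-line-bundle of degree in $(-\varepsilon,0)$, uniqueness following from Lemma \ref{filtrables} (which forces torsion Chern class for small $\varepsilon$) together with Remark \ref{VanSmallDeg}. The only remaining possibility — a point of ${\cal V}_\varepsilon\setminus{\cal V}_0$ and one of $\mathcal{U}\setminus{\cal V}_\varepsilon$ sharing an image — is excluded by Hausdorffness: $[{\cal S}(p,x)]$ is non-separable in ${\cal M}^\si$ from $V_\varepsilon(p,x)\in\mathcal{U}$, so the open Hausdorff subspace $\mathcal{U}$ cannot contain both, whence $[{\cal S}(p,x)]\notin\mathcal{U}$. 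The set identity $f^{-1}({\cal M}^\st(E)_\reg)=\mathcal{U}\setminus{\cal V}_0$ in (2) then follows immediately from the image computation, since $f({\cal V}_0)\subset{\cal R}_0\times X$ is disjoint from ${\cal M}^\st(E)_\reg$.

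The heart of the Proposition is continuity (3), which I would check along sequences. On the open set $\mathcal{U}\setminus{\cal V}_\varepsilon$ the map is the identity, and since ${\cal V}_\varepsilon$ is closed it remains to treat a point $\xi\in{\cal V}_\varepsilon$ approached (i) within ${\cal V}_\varepsilon$ and (ii) from $\mathcal{U}\setminus{\cal V}_\varepsilon$. For (i), writing $\xi=V_\varepsilon(p_0,x_0)$ and using that $V_\varepsilon$ is a homeomorphism onto its image (Proposition \ref{embedding}), I reduce to continuity of $(p,x)\mapsto f(V_\varepsilon(p,x))$ on $\Ag_\varepsilon\times X$: over $(\Ag_\varepsilon\setminus\Cg)\times X$ it is $[{\cal S}(p,x)]$, continuous because the Serre bundles form the holomorphic family $\mathscr{S}^\nu$ of \ref{Serre}; over $\Cg\times X$ it is $(\ag_p,x)$, continuous through $\Cg\simeq{\cal R}_0$; and the passage $p\to\Cg$ is governed by the continuity theorem \cite{BTT} applied to $\mathscr{S}^\nu$ near $\Cg$, whose Uhlenbeck degeneration is the flat reducible $\ag_p$ with a point instanton bubbling at $x$, i.e. the virtual point $(\ag_p,x)$. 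For (ii) with $\xi\in{\cal V}_0$ the double dual ${\cal E}_{p_0}$ is \emph{polystable} of degree $0$, so \cite{BTT} applies directly to a versal deformation and identifies the Donaldson limit of locally free $[{\cal F}_n]\to\xi$ with $(\ag_{p_0},x_0)$.

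The genuinely delicate case, and the step I expect to be the main obstacle, is (ii) with $\xi=V_\varepsilon(p_0,x_0)\in{\cal V}_\varepsilon\setminus{\cal V}_0$: here ${\cal E}_{p_0}={\cal L}_{l_0}\oplus{\cal L}_{l_0}^\smvee$ is \emph{unstable}, so a sequence of locally free stable bundles $[{\cal F}_n]\to\xi$ in the sheaf topology has no a priori reason to converge in the gauge-theoretic topology of $\Mg$ to $f(\xi)=[{\cal S}(p_0,x_0)]$, and reconciling these two topologies across the non-separated pair $\big(V_\varepsilon(p_0,x_0),[{\cal S}(p_0,x_0)]\big)$ is the crux. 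My plan is to exploit that both $V_\varepsilon(p_0,x_0)$ and $[{\cal S}(p_0,x_0)]$ are smooth (regular) points of ${\cal M}^\si$ whose versal deformations, by Propositions \ref{Open1} and \ref{Open2}, share the locally free stable sheaves as a common dense open stratum, complementary to an analytic hypersurface; one then shows that a deleted neighbourhood of $\xi$ inside $\{{\cal M}^\st_{\rm lf}\}_\reg$ is simultaneously a deleted neighbourhood of $[{\cal S}(p_0,x_0)]$, so that $[{\cal F}_n]\to\xi$ forces $[{\cal F}_n]\to[{\cal S}(p_0,x_0)]$ in $\{{\cal M}^\st_{\rm lf}\}_\reg$, hence in $\Mg$ via the Kobayashi--Hitchin homeomorphism.

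Finally, with continuity established, the holomorphicity in (2) is automatic. On $\mathcal{U}\setminus{\cal V}_0$ the map $f$ is holomorphic away from the analytic hypersurface ${\cal V}_\varepsilon\setminus{\cal V}_0$ (where it is the identity onto the smooth manifold ${\cal M}^\st(E)_\reg$), is continuous everywhere by (3), and takes values in ${\cal M}^\st(E)_\reg$; composing with local holomorphic coordinates on the target and invoking the Riemann removable-singularity theorem — continuity supplies local boundedness, and the exceptional set ${\cal V}_\varepsilon\setminus{\cal V}_0$ has codimension one — extends $f$ holomorphically across ${\cal V}_\varepsilon\setminus{\cal V}_0$, which gives the holomorphicity of the induced map $\mathcal{U}\setminus{\cal V}_0\to{\cal M}^\st(E)_\reg$.
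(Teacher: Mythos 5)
Your reorganisation --- injectivity first, then continuity, then holomorphy via the Riemann extension theorem --- would be legitimate if continuity could be proved directly, and several steps are sound: the Hausdorffness argument for (1) is a nice alternative to the paper's proof (which instead shrinks ${\cal U}$ using a Grauert-semicontinuity/compactness argument), though note it leans on the non-separability of the pair $\big(V_\varepsilon(p,x),[{\cal S}(p,x)]\big)$, a fact whose proof is itself the mechanism you are missing below; the treatment of sequences inside ${\cal V}_\varepsilon$ and of sequences approaching ${\cal V}_0$ via \cite{BTT} matches the paper; and the Riemann-extension deduction of (2) from (3) is valid. The genuine gap sits exactly where you flag the crux: continuity at $\xi=V_\varepsilon(p_0,x_0)\in{\cal V}_\varepsilon\setminus{\cal V}_0$ along sequences from ${\cal U}\setminus{\cal V}_\varepsilon$. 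Your proposed argument --- both points are regular and their versal deformations ``share the locally free stable sheaves as a common dense open stratum'', hence a deleted neighbourhood of $\xi$ is simultaneously a deleted neighbourhood of $[{\cal S}(p_0,x_0)]$ --- is not a proof. Non-separability only says that every neighbourhood of $\xi$ meets every neighbourhood of $[{\cal S}(p_0,x_0)]$; it does not say that for every neighbourhood $B_S$ of $[{\cal S}(p_0,x_0)]$ there is a neighbourhood $B_\xi$ of $\xi$ with $B_\xi\setminus{\cal V}_\varepsilon\subset B_S$. That cofinality statement is precisely equivalent to the continuity of $f$ at $\xi$, so asserting it is circular, and the ``common stratum'' property is far too weak (two disjoint open subsets of $\{{\cal M}^\st_{\rm lf}\}_\reg$ also share that stratum). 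A clopen/connectedness argument does not rescue it, because limits of points of $B_\xi\cap B_S$ may land on $\partial B_S$ rather than in $B_S$.

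What is missing is the paper's actual mechanism: the elementary transformation of flat families (Section \ref{ElTrDef}, Corollary \ref{MainComparisonCo}). Given the embedding $\theta:N\times D\to{\cal U}$ of Proposition \ref{embedding} with $\theta(p,x,0)=V_\varepsilon(p,x)$, one performs the elementary transformation of the family $\theta(p,x,\cdot)$ along the destabilising quotient $r_{l,x,\eta}:{\cal F}(p,x,\eta)\to{\cal L}_l$; this produces a second flat family over $D$, equal to the first over $D^*$, whose central fibre is ${\cal S}(p,x)$ --- here Corollary \ref{MainComparisonCo} is needed to see that the resulting extension class is non-trivial, after which Proposition \ref{SSheaves} identifies the central fibre. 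The classifying map of this second family is exactly $z\mapsto f(\theta(p,x,z))$, which is therefore holomorphic in $z$; together with the obvious holomorphy in the $N$-direction this yields assertion (2) directly, and continuity on all of ${\cal U}\setminus{\cal V}_0$ --- in particular your delicate case --- then follows, leaving \cite{BTT} to handle only the points of ${\cal V}_0$, which is how the paper orders the argument. Without this construction (or an equivalent way of producing the second family through $[{\cal S}(p_0,x_0)]$), your proof of (3), and hence of (2), does not close.
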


\begin{proof} (1) We prove  that (assuming $\varepsilon$ and ${\cal U}$  sufficiently small) $f$ is injective. The restrictions $\resto{f}{{\cal U}\setminus{\cal V}_\varepsilon}$, $\resto{f}{{\cal V}_\varepsilon}$ are obviously injective, so it suffices to prove  that  one has $\im\big(\resto{f}{{\cal U}\setminus{\cal V}_\varepsilon})\cap\im\big(\resto{f}{{\cal V}_\varepsilon}\big)=\emptyset$.  Since $f({\cal U}\setminus{\cal V}_\varepsilon)\subset {\cal U}$ it suffices to prove that (assuming $\varepsilon$ and ${\cal U}$   sufficiently small) one has
\begin{equation}\label{EmptInt} {\cal U} \cap f({\cal V}_\varepsilon)=\emptyset\,.
	\end{equation}
The set 
$$Y:=\{(l,[{\cal F}])\in \Pic^\T\times {\cal U}|\ H^0({\cal L}_l\otimes {\cal F})\ne 0\}
$$
is closed in $\Pic^\T\times {\cal U}$ by Grauert's semicontinuity theorem. Let $\Pic^\T_{[0,\varepsilon]}\subset \Pic^\T$  be the union of compact annuli defined by  the condition $0\leq \deg_g(l)\leq \varepsilon$. The projection 
$$Z:=\mathrm{p}_{\cal U}\big(Y\cap (\Pic^\T_{[0,\varepsilon]}\times {\cal U})\big)$$
  is a compact subset of ${\cal U}$  which is disjoint from ${\cal V}_\varepsilon$, because (for sufficiently small $\varepsilon>0$) one has 
$$H^0({\cal L}_l\otimes {\cal F}({\cal E}_p,x,\eta))=0$$
for any $(l,p)\in \Pic^\T_{[0,\varepsilon]}\times \Ag_\varepsilon$ and any admissible epimorphism $\eta:{\cal E}_p(x)\to\C$.
On the other hand one has obviously $f({\cal V}_\varepsilon)\subset Z$, hence, replacing ${\cal U}$ by ${\cal U}\setminus Z$ if necessary, formula $(\ref{EmptInt})$ will hold.\\
\\
(2) The equality $f^{-1}({\cal M}^\st(E)_\reg)={\cal U}\setminus {\cal V}_0$ is obvious taking into account the definition of $f$ and the way in which $\varepsilon$ and ${\cal U}$ have been chosen.  We prove  that the map ${\cal U}\setminus {\cal V}_0\to {\cal M}^\st(E)_\reg$ induced by $f$ is holomorphic.   The holomorphy on  ${\cal U}\setminus {\cal V}_\varepsilon$ is obvious. It remains to show  that $f$ is holomorphic at any point $[{\cal F}(p_0,x_0,\eta_0)]\in {\cal V}_\varepsilon\setminus {\cal V}_0$. Using Proposition \ref{embedding} (\ref{HolEmbed}) we can find an open neighbourhood $N$ of $(p_0,x_0)$ in $(\Ag_\varepsilon\setminus\Cg)\times X$ and an open embedding $\theta:N\times D\to {\cal U}$ such that $\theta(p,x,0)=V_\varepsilon(p,x)$ for any $(p,x)\in N$, where $D$ is the unit disk. It suffices to prove that $f\circ\theta$ is holomorphic on $N\times D$ or, equivalently, that $f\circ\theta$ is separately holomorphic. 

The holomorphy in the $N$ direction is obvious. For the holomorphy in the $D$-direction we proceed as follows. Fix $(p,x)\in N$. The map $\theta(p,x,\cdot):D\to {\cal U}$  is defined by a sheaf ${\cal F}$ on $D\times X$, flat over $D$ with an identification ${\cal F}_0={\cal F}(p,x,\eta)$. Let $l$ be the representative  of $p$ of negative degree, and 
$$r_{l,x,\eta}:{\cal F}(p,x,\eta)={\cal F}_0\to {\cal L}_l$$
 the corresponding destabilising epimorphism. The elementary transformation of the pair $({\cal F},r_{l,x,\eta})$	 is a sheaf ${\cal F}'$ on $D\times X$, flat over $D$, with ${\cal F}'_z={\cal F}_z$ for $z\ne 0$ and whose restriction to $\{0\}\times X$ fits into a short exact sequence
 $$0\to {\cal L}_l\to {\cal F}'_0\to {\cal L}_l^{\smvee}\otimes {\cal I}_x\to 0. 
 $$
 (see Section \ref{ElTrDef}). Using Corollary \ref{MainComparisonCo} it follows that the extension class associated to this exact sequence  is non-trivial, hence ${\cal F}'_0\simeq {\cal S}(p,x)$. On the other hand, since ${\cal F}'$ is flat over $D$, this proves that $(f\circ\theta)(p,x,\cdot):D\to {\cal U}$ is holomorphic on $D$. 
  \\ \\
 (3) Using (2) it follows that $f$ is continuous on ${\cal U}\setminus {\cal V}_0$. The critical continuity at the points of ${\cal V}_0$ follows using the continuity theorem \cite{BTT} as follows. Let $([{\cal F}_n])_n$ be a sequence in ${\cal U}$ with $\lim_{n\to\infty} [{\cal F}_n]=[{\cal F}_\infty]=V_\varepsilon(p_\infty,x)\in {\cal V}_0$. We have to prove that 
\begin{equation}\label{lim}
\lim_{n\to\infty} f([{\cal F}_n])=([\ag_{p_\infty}],x).	
\end{equation}
 Using a standard argument, we can reduce the problem to three separate cases:
 \begin{enumerate}[1.]
\item $([{\cal F}_n])_n$ is a sequence of ${\cal V}_0$.
\item  $([{\cal F}_n])_n$ is a sequence of ${\cal V}_\varepsilon\setminus {\cal V}_0$.

\item  	$([{\cal F}_n])_n$ is a sequence of ${\cal U}\setminus {\cal V}_\varepsilon$.
\end{enumerate}

In first case the claim  (\ref{lim}) follows easily using the fact the map $V_\varepsilon$ is homeomorphism on its image (see Proposition \ref{embedding}).  In the second case one applies the continuity theorem to a family of Serre extensions $\mathscr{S}^\nu$ associated with a Stein open set $U\ni x$ and a lift $\nu\in\ag_U^{-1}(1)$ (see Section \ref{Serre}). In the third case one applies the continuity theorem \cite{BTT} to  a versal deformation of the simple, torsion-free sheaf ${\cal F}(p_\infty,x,\eta)$, where $\eta:{\cal E}_{p_\infty}\to\C$ is an admissible epimorphism.
 \end{proof}

 Theorem \ref{Th1} stated in Section  \ref{FundamQ} follows now directly from the gluing Lemma \ref{glue}.
 
\subsection{The proof of Theorem \ref{Th2}} 

Let $X$ be a complex surface, ${\cal E}$ be a holomorphic bundle on $X$,  $\pi:\P({\cal E})\to X$  be the projectivisation of ${\cal E}$,  ${\cal L}\subset \pi^*({\cal E})$ be the tautological line bundle on $\P({\cal E})$, and ${\cal M}$ the quotient $\pi^*({\cal E})/{\cal L}$.  On the product $\P({\cal E})\times X$ consider the bundles
$$\mathscr{E}:=p_2^*({\cal E})\,,\ \mathscr{L}:=p_1^*({\cal L})\,,\ \mathscr{M}:=p_1^*({\cal M})\,.$$
Let $Z  \subset \P({\cal E})\times X$ be the graph of $\pi$, and note that, via the obvious isomorphism $Z\simeq \P({\cal E})$ the restrictions $\resto{\mathscr{E}}{Z}$, $\resto{\mathscr{L}}{Z}$, $\resto{\mathscr{M}}{Z}$ are identified with $\pi^*({\cal E})$,  ${\cal L}$ and ${\cal M}$ respectively. Therefore one obtains a short exact sequence
$$0\to \resto{\mathscr{L}}{Z}\textmap{u} \resto{\mathscr{E}}{Z}\textmap{\mu} \resto{\mathscr{M}}{Z}\to 0\,.
$$
Let $\tilde\mu$ be the composition $\mathscr{E}\to \resto{\mathscr{E}}{Z}\textmap{\mu}\resto{\mathscr{M}}{Z}$, and put $\mathscr{F}:=\ker(\tilde\mu)$.   For a point $x\in X$ and a line $y\in \P({\cal E}(x))$ put  $q_y:={\cal E}(x)/y$, and let $\eta_y:{\cal E}(x)\to q_y$ be the canonical epimorphism. The sheaf ${\cal F}_y$ on $X$ given by the restriction $\resto{\mathscr{F}}{\{y\}\times X}$ fits into the exact sequence
\begin{equation}\label{SESFy}
0\to {\cal F}_y\hookrightarrow {\cal E}\textmap{\tilde\eta_y}q_y\otimes{\cal O}_{\{x\}} \to 0\,,	
\end{equation}
where $\tilde\eta_y$ is induced by $\eta_y$. As in the proof of Proposition \ref{embedding} we obtain for any $y\in \P({\cal E})$  a canonical exact sequence
\begin{equation}\label{HomFy} 0\to {\cal H}om({\cal F}_y ,{\cal F}_y )\to {\cal H}om( {\cal E}, {\cal E})\textmap{\psi_y} ( y^\smvee \otimes  q_y)\otimes {\cal O}_{\{x\}} \to 0\;.
\end{equation}
\begin{lm} For any $x\in X$ and $y\in \P({\cal E}(x))$ one has
\begin{enumerate}
\item ${\cal E}xt^k({\cal F}_y, {\cal F}_y)=0\ \forall k\geq 2$\,.
\item  A canonical isomorphism $ {\cal E}xt^1({\cal F}_y,{\cal F}_y)  \textmap{\delta_y}   {\cal E}xt^2(q_y\otimes{\cal O}_{\{x\}},{\cal F}_y)$\,.

\item  Canonical isomorphisms 
$$H^2({\cal E}nd({\cal F}_y))\textmap{\simeq} H^2({\cal E}nd({\cal E}))\,,\ H^2({\cal E}nd_0({\cal F}_y))\textmap{\simeq} H^2({\cal E}nd_0({\cal E}))\,.$$
\item A canonical short exact sequence 
\begin{equation}\label{CanShExSeqTx}
0\to {\cal T}(x)\to H^0({\cal E}xt^1({\cal F}_y,{\cal F}_y))\textmap{h} \Omega^2(x)^\smvee\otimes q_y^\smvee \otimes y \to  0\,.	
\end{equation}
\end{enumerate}

\begin{proof}
The first claim follows as in the proof of 	Proposition \ref{embedding};  (2)  follows directly from (\ref{SESFy}) taking into account that ${\cal E}$ is locally free, and  (3) follows from (\ref{HomFy}) taking into account that $H^k({\cal O}_{\{x\}})=0$ for $k>0$. For (4)  use (2),  the  exact sequence 
$$0\to  {\cal E}xt^1(q_y\otimes{\cal O}_{\{x\}}, q_y\otimes{\cal O}_{\{x\}})   \to     {\cal E}xt^2(q_y\otimes{\cal O}_{\{x\}},{\cal F}_y) \to  {\cal E}xt^2(q_y\otimes{\cal O}_{\{x\}},{\cal E})  \textmap{a} $$
$$\to   {\cal E}xt^2(q_y\otimes{\cal O}_{\{x\}},q_y\otimes{\cal O}_{\{x\}})\to 0\,,
$$
and the canonical isomorphisms (see Section \ref{FamIdealSh}):
$${\cal E}xt^1(q_y\otimes{\cal O}_{\{x\}}, q_y\otimes{\cal O}_{\{x\}})={\cal E}xt^1( {\cal O}_{\{x\}},  {\cal O}_{\{x\}})={\cal H}om({\cal I}_x, {\cal O}_{\{x\}})={\cal T}(x)\,,
$$
$${\cal E}xt^2(q_y\otimes{\cal O}_{\{x\}},{\cal E}))={\cal E}xt^1({\cal I}_{x},q_y^\vee\otimes{\cal E})=\Omega^2(x)^\smvee\otimes q_y^\vee\otimes  {\cal E}(x)\,,
$$
$$ {\cal E}xt^2(q_y\otimes{\cal O}_{\{x\}},q_y\otimes{\cal O}_{\{x\}})={\cal E}xt^1( {\cal I}_{x},  {\cal O}_{\{x\}})=\Omega^2(x)^\smvee.$$
The morphism $\Omega^2(x)^\smvee\otimes q_y^\vee\otimes {\cal E}\to \Omega^2(x)^\smvee$  which corresponds to $a$ via the latter identifications is $\id_{\Omega^2(x)^\smvee}\otimes \id_{q_y^\vee}\otimes \eta_y$, so $\ker(a)=\Omega^2(x)^\smvee\otimes q_y^\vee\otimes y$.

\end{proof}

\end{lm}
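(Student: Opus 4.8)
The plan is to reduce every assertion to the defining sequence (\ref{SESFy}), $0\to{\cal F}_y\to{\cal E}\to q_y\otimes{\cal O}_{\{x\}}\to 0$, exploiting throughout that ${\cal E}$ is locally free, that $q_y\otimes{\cal O}_{\{x\}}$ is a skyscraper, and that $X$ is a surface (so local rings have global dimension $2$ and ${\cal E}xt^k=0$ for $k\ge 3$ automatically). For the first claim I would work locally at $x$: away from $x$ the sheaf ${\cal F}_y$ coincides with the locally free ${\cal E}$, so all higher local Ext sheaves vanish there. Choosing a local frame of ${\cal E}$ adapted to $y=\ker\eta_y$, the stalk ${\cal F}_{y,x}$ is isomorphic to ${\cal O}_x\oplus\{{\cal I}_x\}_x$, exactly as in the proof of Proposition \ref{embedding}; since $\{{\cal I}_x\}_x$ has homological dimension $1$ over the regular local ring ${\cal O}_x$, the local computation there gives ${\cal E}xt^k({\cal F}_y,{\cal F}_y)=0$ for all $k\ge 2$.

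For the canonical isomorphism $\delta_y$ of part (2) I would apply ${\cal E}xt^\bullet(-,{\cal F}_y)$ to (\ref{SESFy}). Because ${\cal E}$ is locally free, ${\cal E}xt^k({\cal E},{\cal F}_y)=0$ for every $k\ge 1$, so in the resulting long exact sequence the connecting maps are isomorphisms; in particular the connecting map ${\cal E}xt^1({\cal F}_y,{\cal F}_y)\to{\cal E}xt^2(q_y\otimes{\cal O}_{\{x\}},{\cal F}_y)$ is the sought isomorphism $\delta_y$. For part (3) I would instead pass to global cohomology in the sheaf sequence (\ref{HomFy}), whose cokernel $(y^\smvee\otimes q_y)\otimes{\cal O}_{\{x\}}$ is a skyscraper with no higher cohomology; the long exact sequence then yields $H^2({\cal H}om({\cal F}_y,{\cal F}_y))\simeq H^2({\cal E}nd({\cal E}))$. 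For the trace-free version I would observe that $\psi_y$ annihilates the subsheaf ${\cal O}\cdot\id_{\cal E}$ (since $\eta_y$ vanishes on $y$), hence factors through ${\cal E}nd_0({\cal E})$ and stays surjective onto the same skyscraper; the identical cohomology argument gives $H^2({\cal E}nd_0({\cal F}_y))\simeq H^2({\cal E}nd_0({\cal E}))$.

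Part (4) is where the real work lies, and I expect the bookkeeping of the canonical local identifications to be the main obstacle. Using $\delta_y$ from (2) I would replace $H^0({\cal E}xt^1({\cal F}_y,{\cal F}_y))$ by $H^0({\cal E}xt^2(q_y\otimes{\cal O}_{\{x\}},{\cal F}_y))$, and then apply ${\cal E}xt^\bullet(q_y\otimes{\cal O}_{\{x\}},-)$ to (\ref{SESFy}). Since ${\cal E}$ is locally free one has ${\cal E}xt^1(q_y\otimes{\cal O}_{\{x\}},{\cal E})=0$, and ${\cal E}xt^3(q_y\otimes{\cal O}_{\{x\}},{\cal F}_y)=0$ on a surface, so the long exact sequence collapses to
\[
0\to{\cal E}xt^1(q_y\otimes{\cal O}_{\{x\}},q_y\otimes{\cal O}_{\{x\}})\to{\cal E}xt^2(q_y\otimes{\cal O}_{\{x\}},{\cal F}_y)\to{\cal E}xt^2(q_y\otimes{\cal O}_{\{x\}},{\cal E})\textmap{a}{\cal E}xt^2(q_y\otimes{\cal O}_{\{x\}},q_y\otimes{\cal O}_{\{x\}})\to 0.
\]
I would then identify the three outer terms with ${\cal T}(x)$, with $\Omega^2(x)^\smvee\otimes q_y^\smvee\otimes{\cal E}(x)$, and with $\Omega^2(x)^\smvee$ respectively, using the standard local-duality identifications recorded in Section \ref{FamIdealSh} (via the sequence $0\to{\cal I}_x\to{\cal O}_X\to{\cal O}_{\{x\}}\to0$).

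The delicate point is to recognize the boundary map $a$ explicitly: under these identifications it becomes $\id_{\Omega^2(x)^\smvee}\otimes\id_{q_y^\smvee}\otimes\eta_y$, so that its kernel is precisely $\Omega^2(x)^\smvee\otimes q_y^\smvee\otimes y$. Granting this, the displayed four-term sequence restricts to the short exact sequence $0\to{\cal T}(x)\to{\cal E}xt^2(q_y\otimes{\cal O}_{\{x\}},{\cal F}_y)\to\Omega^2(x)^\smvee\otimes q_y^\smvee\otimes y\to0$; since all the sheaves involved are skyscrapers supported at $x$, taking $H^0$ is exact and, combined with $\delta_y$, produces the asserted sequence (\ref{CanShExSeqTx}). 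All steps other than the identification of $a$ are formal consequences of the local-to-global and local Ext long exact sequences together with the vanishing dictated by ${\cal E}$ being locally free and $X$ being a surface.
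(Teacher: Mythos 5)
Your proof is correct and follows essentially the same route as the paper's: the local computation of ${\cal F}_{y,x}\simeq{\cal O}_x\oplus\{{\cal I}_x\}_x$ for (1), the long exact sequence of ${\cal E}xt(-,{\cal F}_y)$ applied to (\ref{SESFy}) for (2), the cohomology sequence of (\ref{HomFy}) for (3), and for (4) the four-term ${\cal E}xt(q_y\otimes{\cal O}_{\{x\}},-)$ sequence together with the canonical identifications of Section \ref{FamIdealSh} and the recognition of $a$ as $\id\otimes\id\otimes\eta_y$. The only differences are cosmetic: you spell out the vanishings behind the four-term sequence and the trace-free case of (3), which the paper leaves implicit, and like the paper you assert rather than verify the naturality identification of $a$.
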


\begin{lm}\label{Esimple}
If ${\cal E}$ is simple, then for any $y\in\P({\cal E})$ the sheaf ${\cal F}_y$ is simple, and one has a canonical short exact sequence
$$0\to y^\smvee \otimes  q_y\to  H^1({\cal H}om_0({\cal F}_y ,{\cal F}_y ))\to H^1({\cal H}om_0({\cal E} ,{\cal E} ))\to 0\,.
$$

\end{lm}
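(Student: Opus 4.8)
The plan is to derive both statements from the canonical short exact sequence (\ref{HomFy}) of the preceding lemma together with the hypothesis that ${\cal E}$ is simple, passing to the trace-free summands only at the last step. \emph{Simplicity of ${\cal F}_y$:} Applying the global section functor to (\ref{HomFy}) and using that $(y^\smvee\otimes q_y)\otimes{\cal O}_{\{x\}}$ is a skyscraper sheaf, so that its $H^0$ is the one-dimensional space $y^\smvee\otimes q_y$, yields the exact sequence
$$0\to H^0({\cal H}om({\cal F}_y,{\cal F}_y))\to H^0({\cal E}nd({\cal E}))\xrightarrow{H^0(\psi_y)} y^\smvee\otimes q_y\,.$$
Since ${\cal E}$ is simple, $H^0({\cal E}nd({\cal E}))=\C\,\id_{\cal E}$; and $\psi_y(\id_{\cal E})$ is the composite $y\hookrightarrow{\cal E}(x)\xrightarrow{\eta_y}q_y$, which vanishes because $y=\ker(\eta_y)$. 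Hence $H^0(\psi_y)=0$ and $H^0({\cal H}om({\cal F}_y,{\cal F}_y))=\C\,\id_{{\cal F}_y}$, i.e. ${\cal F}_y$ is simple.

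\emph{The trace-free sequence.} The monomorphism ${\cal H}om({\cal F}_y,{\cal F}_y)\hookrightarrow{\cal E}nd({\cal E})$ in (\ref{HomFy}) is induced by biduality, so it carries $\id_{{\cal F}_y}$ to $\id_{\cal E}$ and is compatible with the trace ${\cal E}nd({\cal E})\to{\cal O}_X$. Because $\tfrac12\in\C$, the trace gives the splitting ${\cal E}nd({\cal E})={\cal O}_X\,\id\oplus{\cal E}nd_0({\cal E})$, and intersecting with ${\cal H}om({\cal F}_y,{\cal F}_y)$ produces a compatible splitting ${\cal H}om({\cal F}_y,{\cal F}_y)={\cal O}_X\,\id\oplus{\cal H}om_0({\cal F}_y,{\cal F}_y)$, where ${\cal H}om_0({\cal F}_y,{\cal F}_y):={\cal H}om({\cal F}_y,{\cal F}_y)\cap{\cal E}nd_0({\cal E})$. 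As the ${\cal O}_X\,\id$ summands match, the cokernel in (\ref{HomFy}) is carried entirely by the trace-free parts; equivalently, a snake-lemma comparison of the two split sequences gives the canonical short exact sequence of sheaves
$$0\to{\cal H}om_0({\cal F}_y,{\cal F}_y)\to{\cal E}nd_0({\cal E})\to(y^\smvee\otimes q_y)\otimes{\cal O}_{\{x\}}\to 0\,.$$

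\emph{Conclusion.} Taking the long exact cohomology sequence of this last sequence and using that ${\cal E}$ simple forces $H^0({\cal E}nd_0({\cal E}))=0$, the connecting map $y^\smvee\otimes q_y=H^0\big((y^\smvee\otimes q_y)\otimes{\cal O}_{\{x\}}\big)\to H^1({\cal H}om_0({\cal F}_y,{\cal F}_y))$ is injective; and since the skyscraper sheaf has vanishing higher cohomology, the map $H^1({\cal H}om_0({\cal F}_y,{\cal F}_y))\to H^1({\cal E}nd_0({\cal E}))$ is surjective. Recalling ${\cal H}om_0({\cal E},{\cal E})={\cal E}nd_0({\cal E})$, this is exactly the asserted sequence
$$0\to y^\smvee\otimes q_y\to H^1({\cal H}om_0({\cal F}_y,{\cal F}_y))\to H^1({\cal H}om_0({\cal E},{\cal E}))\to 0\,.$$

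The one delicate point is the trace splitting in the middle step: ${\cal F}_y$ is not locally free, so it carries no intrinsic trace. I expect this to be the main thing to get right, and the way around it is never to work with ${\cal F}_y$ on its own, but to view ${\cal H}om({\cal F}_y,{\cal F}_y)$ as a subsheaf of the locally free ${\cal E}nd({\cal E})$ through (\ref{HomFy}) and to restrict the trace of ${\cal E}nd({\cal E})$ to it; all decompositions and the final sequence then live inside ${\cal E}nd({\cal E})$, where everything is standard.
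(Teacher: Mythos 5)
Your proof is correct, and it is essentially the argument the paper intends: the paper states Lemma \ref{Esimple} without proof, as an immediate consequence of the exact sequence (\ref{HomFy}), and your write-up supplies exactly that reasoning — split off the trace part inside ${\cal E}nd({\cal E})$ to get $0\to{\cal H}om_0({\cal F}_y,{\cal F}_y)\to{\cal E}nd_0({\cal E})\to(y^\smvee\otimes q_y)\otimes{\cal O}_{\{x\}}\to 0$, then take cohomology using $H^0({\cal E}nd_0({\cal E}))=0$ (simplicity) and the vanishing of higher cohomology of the skyscraper. Your treatment of the one delicate point — defining ${\cal H}om_0({\cal F}_y,{\cal F}_y)$ by intersecting with ${\cal E}nd_0({\cal E})$ via the bidual embedding rather than invoking an intrinsic trace on the non-locally-free sheaf ${\cal F}_y$ — is also the right (and the paper's implicit) way to handle it.
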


If $H^2({\cal E}nd_0({\cal E}))=0$, then $H^2({\cal E}nd_0({\cal F}))=0$, and the local-global spectral sequence yields    a canonical short exact sequence
\begin{equation}\label{NewLG}
0\to H^1({\cal H}om_0({\cal F}_y,{\cal F}_y))\to \Ext^1_0({\cal F}_y,{\cal F}_y)\textmap{m} H^0({\cal E}xt^1({\cal F}_y,{\cal F}_y))\to 0\,.	
\end{equation}

Suppose now that ${\cal E}$ is simple and 	$H^i({\cal E}nd_0({\cal E}))=0$ for $i\in\{1,2\}$. Combining (\ref{NewLG}), (\ref{CanShExSeqTx}) with Lemma \ref{Esimple}, and putting $\Theta_y:=m^{-1}({\cal T}(x))$, we obtain two short exact sequences
\begin{equation}
\begin{gathered}
0\to \Theta_y\hookrightarrow	 \Ext^1_0({\cal F}_y,{\cal F}_y)\textmap{h\circ m} \Omega^2(x)^\smvee\otimes q_y^\smvee \otimes y \to 0\,,\\
0\to  y^\smvee \otimes q_y \to  \Theta_y \textmap{\resto{m}{\Theta_y}} {\cal T}(x)\to 0\,.
\end{gathered}
\end{equation}

  The sheaf $\mathscr{F}$ is flat over $\P({\cal E})$, so it defines a holomorphic map $\Phi^{\mathscr{F}}:\P({\cal E})\to {\cal M}^\si$. Denote by  ${\cal T}_{\P({\cal E})}^V$ the vertical tangent subbundle of the tangent bundle ${\cal T}_{\P({\cal E})}$. It is easy to see that
\begin{lm} Suppose  that ${\cal E}$ is simple and 	$H^i({\cal E}nd_0({\cal E}))=0$ for $i\in\{1,2\}$. Then $\Phi^{\mathscr{F}}$ takes values in ${\cal M}^\si_\reg$, and  for any $y\in \P({\cal E})$ the differential  $\Phi^{\mathscr{F}}_{*y}:{\cal T}_{\P({\cal E})}(y)\to \Ext^1_0({\cal F}_y,{\cal F}_y)$ at $y$   has the following properties:
\begin{enumerate}
\item It maps isomorphically ${\cal T}_{\P({\cal E})}(y)$ on $\Theta_y$.
\item It maps  $y^\smvee \otimes q_y$ isomorphically onto the vertical tangent line 	${\cal T}_{\P({\cal E})}^V(y)$, and induces the canonical isomorphism $y^\smvee \otimes q_y\textmap{\simeq} T_y(\P({\cal E}(x))$.
\end{enumerate}	
\end{lm}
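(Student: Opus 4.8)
The plan is to identify the differential $\Phi^{\mathscr{F}}_{*y}$ with a connecting homomorphism, use a clean vanishing to force its injectivity, and then match its image with $\Theta_y$ by separating the vertical and horizontal tangent directions of $\P({\cal E})$. First, for the regularity statement, I would combine the facts already established above — that ${\cal E}xt^k({\cal F}_y,{\cal F}_y)=0$ for $k\geq 2$, that ${\cal E}xt^1({\cal F}_y,{\cal F}_y)$ is supported on the $0$-dimensional singular locus (so $H^1({\cal E}xt^1({\cal F}_y,{\cal F}_y))=0$), and the canonical isomorphism $H^2({\cal E}nd_0({\cal F}_y))\simeq H^2({\cal E}nd_0({\cal E}))$ — with the hypothesis $H^2({\cal E}nd_0({\cal E}))=0$. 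The local-global spectral sequence then gives $\Ext^2_0({\cal F}_y,{\cal F}_y)=H^2({\cal E}nd_0({\cal F}_y))=0$, so $\Phi^{\mathscr{F}}$ takes values in ${\cal M}^\si_\reg$. Since $\mathscr{F}$ is flat over $\P({\cal E})$, the differential is the Kodaira-Spencer map $v\mapsto\epsilon_v(\mathscr{F})$, valued in $T_{[{\cal F}_y]}{\cal M}^\si=\Ext^1_0({\cal F}_y,{\cal F}_y)$. Because the bidual of every member is the fixed bundle ${\cal E}$ (the family $\mathscr{F}$ is the universal kernel of the tautological quotient ${\cal E}\to q_y\otimes{\cal O}_{\{x\}}$, with $\P({\cal E})$ playing the role of the length-one Quot scheme of ${\cal E}$), this Kodaira-Spencer map coincides with the connecting morphism $\partial$ obtained by applying $\Hom({\cal F}_y,-)$ to (\ref{SESFy}); it is automatically trace-free, since $\det{\cal F}_y$ is constant in the family.

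Granting $\Phi^{\mathscr{F}}_{*y}=\partial$, injectivity is immediate: the relevant piece of the long exact sequence is $\Hom({\cal F}_y,{\cal E})\to\Hom({\cal F}_y,q_y\otimes{\cal O}_{\{x\}})\xrightarrow{\partial}\Ext^1({\cal F}_y,{\cal F}_y)$, so $\ker\partial$ is the image of the first arrow, namely composition with $\tilde\eta_y$. A short computation (using $\Hom(q_y\otimes{\cal O}_{\{x\}},{\cal E})=0=\Ext^1(q_y\otimes{\cal O}_{\{x\}},{\cal E})$) shows $\Hom({\cal F}_y,{\cal E})=\Hom({\cal E},{\cal E})=\C\cdot\iota$, generated by the inclusion $\iota:{\cal F}_y\hookrightarrow{\cal E}$; but $\tilde\eta_y\circ\iota=0$ by definition of ${\cal F}_y$, so the first arrow vanishes and $\partial$ is injective. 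As $\dim\Hom({\cal F}_y,q_y\otimes{\cal O}_{\{x\}})=3=\dim\Theta_y$, it then suffices to prove that the image of $\partial$ equals $\Theta_y$.

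To pin down the image I would use the filtration of $T_y\P({\cal E})=\Hom({\cal F}_y,q_y\otimes{\cal O}_{\{x\}})$ by the vertical line ${\cal T}^V_{\P({\cal E})}(y)=y^\smvee\otimes q_y$, which one checks is compatible with the tautological filtration of $\Hom({\cal F}_y,q_y\otimes{\cal O}_{\{x\}})$ having quotient $T_xX={\cal T}(x)$. Restricting $\mathscr{F}$ to the fibre $\P({\cal E}(x))\times X$ gives a family with constant bidual ${\cal E}$ and fixed singular point $x$, in which only the epimorphism $\eta_y$ varies; since any two kernels differ by a local automorphism of ${\cal E}$ near $x$ depending holomorphically on $y$, the local-global map $m$ of (\ref{NewLG}) satisfies $m\circ\partial=0$ on vertical vectors, so these map into $J\big(H^1({\cal E}nd_0({\cal F}_y))\big)$. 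Tracing the connecting map of Lemma \ref{Esimple} then shows that the induced arrow $y^\smvee\otimes q_y\to H^1({\cal E}nd_0({\cal F}_y))=y^\smvee\otimes q_y$ is the canonical isomorphism — this is statement (2). For a horizontal direction I would choose a local section $x\mapsto y(x)$ of $\pi$ and, exactly as in Lemma \ref{PartDer}(2) (via the local splitting ${\cal O}\oplus{\cal I}_{\Delta}$ and Proposition \ref{IdealFam}), compute that $m\circ\partial$ lands in ${\cal T}(x)$ and induces the canonical isomorphism $T_xX\xrightarrow{\sim}{\cal T}(x)$; in particular $h\circ m\circ\partial=0$. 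Hence $\im(\partial)\subseteq\ker(h\circ m)=\Theta_y$, and the dimension count forces $\im(\partial)=\Theta_y$, giving (1).

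The hard part will be the vertical analysis: proving not merely that vertical directions map into the locally-split part $J(H^1({\cal E}nd_0({\cal F}_y)))$, but that the resulting arrow $y^\smvee\otimes q_y\to H^1({\cal E}nd_0({\cal F}_y))$ is \emph{precisely} the canonical isomorphism of Lemma \ref{Esimple} (in particular nonzero). This requires matching the Kodaira-Spencer class of the sub-family over $\P({\cal E}(x))$ with the extension class governing the deformation of the tautological sub-line $y\subset{\cal E}(x)$, i.e.\ identifying a geometric deformation with the algebraic connecting map. The companion vanishing $h\circ m\circ\partial=0$ — expressing that $\mathscr{F}$ is a family of genuinely singular sheaves with no first-order smoothing of the singularity — is the other delicate point, though it parallels closely the computation already carried out in Lemma \ref{PartDer}.
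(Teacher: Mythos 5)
Your proposal is correct, and the comparison with the paper is somewhat one-sided: the paper offers no argument for this lemma at all (it is introduced by ``It is easy to see that''), the intended justification clearly being the pattern already carried out in Lemma \ref{PartDer} --- compute the differential separately on vertical and horizontal tangent vectors, identify the vertical derivative with the connecting map of Lemma \ref{Esimple}, identify the horizontal derivative, after composing with $m$, with the canonical map $T_xX\to {\cal T}(x)$ via the local model ${\cal O}\oplus{\cal I}_{\Delta}$ and Proposition \ref{IdealFam}, and then read off bijectivity onto $\Theta_y$ from these two component isomorphisms by a five-lemma argument on the filtrations. You follow that same pattern for the containment $\im(\Phi^{\mathscr{F}}_{*y})\subset\Theta_y$, but your injectivity argument is genuinely different, and it is the best feature of your write-up: viewing $\P({\cal E})$ as the space of length-one quotients of ${\cal E}$, so that $T_y\P({\cal E})=\Hom({\cal F}_y,q_y\otimes{\cal O}_{\{x\}})$ and $\Phi^{\mathscr{F}}_{*y}$ becomes the connecting morphism $\partial$ obtained by applying $\Hom({\cal F}_y,-)$ to (\ref{SESFy}), you deduce injectivity from simplicity of ${\cal E}$ alone, since $\Hom(q_y\otimes{\cal O}_{\{x\}},{\cal E})=\Ext^1(q_y\otimes{\cal O}_{\{x\}},{\cal E})=0$ gives $\Hom({\cal F}_y,{\cal E})=\C\,\iota$, and $\tilde\eta_y\circ\iota=0$ kills the map preceding $\partial$ in the long exact sequence. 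What this buys is that statement (1) then needs only the two \emph{vanishing} assertions ($m\circ\partial=0$ on vertical vectors, $h\circ m\circ\partial=0$ on horizontal ones) together with the count $\dim T_y\P({\cal E})=\dim\Theta_y=3$, whereas the Lemma \ref{PartDer}-style route must establish that the component maps are actual isomorphisms; the finer ``canonical isomorphism'' identifications are needed only for clause (2), which you correctly isolate as the one remaining computation and which goes through exactly as in Lemma \ref{PartDer}(1) and Proposition \ref{IdealFam}. The two points you leave implicit are easily supplied: for the vertical vanishing, a path in $\P({\cal E}(x))$ lifts locally to constant matrices in $\GL(2,\C)$ in a trivialization of ${\cal E}$ near $x$, so the restricted family is locally isomorphic to a constant one; for the horizontal computation, trivializing ${\cal E}$ so that the chosen section of $\pi$ becomes a constant line subbundle produces precisely the splitting ${\cal O}\oplus{\cal I}_{\Delta}$ you invoke. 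Your write-up is thus at least as complete as the paper's, and the injectivity step via the quotient-space interpretation is a worthwhile simplification.
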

This shows that
\begin{pr} \label{NormalComp} Suppose  that ${\cal E}$ is simple and 	$H^i({\cal E}nd_0({\cal E}))=0$ for $i\in\{1,2\}$. The map $\Phi^{\mathscr{F}}:\P({\cal E})\to {\cal M}^\si$ is a codimension one holomorphic embedding whose normal line bundle  ${\cal N}$  is isomorphic to $\pi^*({\cal K}_X^\smvee)\otimes{\cal M}^\smvee\otimes {\cal L} =\pi^*({\cal K}_X^\smvee\otimes\det({\cal E})^\smvee)\otimes{\cal O}_{{\cal E}}(-2)$.
	
\end{pr}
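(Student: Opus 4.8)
The plan is to combine the differential computation of the preceding lemma, which already exhibits $\Phi^{\mathscr{F}}$ as an injective-on-tangent-spaces holomorphic map into the smooth locus ${\cal M}^\si_\reg$, with an injectivity-and-properness argument to obtain the embedding, and then to read off the normal bundle directly from the first of the two short exact sequences displayed above.

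First I would check that $\Phi^{\mathscr{F}}$ is injective. If ${\cal F}_y\simeq{\cal F}_{y'}$ with $x=\pi(y)$, $x'=\pi(y')$, then Proposition \ref{morphisms} gives $x=x'$ and an isomorphism induced by some $\varphi\in\Aut({\cal E})$ with $\eta_{y'}\circ\varphi(x)\in\C^*\eta_y$; since ${\cal E}$ is simple, $\varphi\in\C^*\id_{\cal E}$, whence $\ker\eta_{y'}=\ker\eta_y$, i.e. $y=y'$. The preceding lemma already shows $\Phi^{\mathscr{F}}$ is an immersion, and since $\dim\P({\cal E})=3$ while $\dim\Ext^1_0({\cal F}_y,{\cal F}_y)=4$ (from the two exact sequences together with $\dim{\cal T}(x)=2$), the map is of codimension one into a space that is smooth of dimension $4$ at each image point. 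To upgrade the injective immersion of the compact manifold $\P({\cal E})$ to an embedding it suffices to see that its image is Hausdorff; for $y\ne y'$ I would show $\Hom({\cal F}_y,{\cal F}_{y'})=0$ (any such morphism extends, via the biduals ${\cal F}_y^\we={\cal F}_{y'}^\we={\cal E}$ and the simplicity of ${\cal E}$, to a scalar $c\,\id_{\cal E}$ carrying ${\cal F}_y$ into ${\cal F}_{y'}$; as both are kernels of length-one quotients of ${\cal E}$, comparison of Euler characteristics forces ${\cal F}_y={\cal F}_{y'}$, hence $y=y'$ by injectivity), and then invoke the non-separation criterion of Section \ref{ComSub}. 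A continuous bijection from the compact space $\P({\cal E})$ onto a Hausdorff image is a homeomorphism, so $\Phi^{\mathscr{F}}$ is a holomorphic embedding.

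For the normal bundle, the preceding lemma identifies the differential $\Phi^{\mathscr{F}}_{*y}$ with the inclusion $T_{\P({\cal E})}(y)\textmap{\simeq}\Theta_y\hookrightarrow\Ext^1_0({\cal F}_y,{\cal F}_y)=T_{[{\cal F}_y]}{\cal M}^\si$. Hence the fibre of the normal bundle at $y$ is the cokernel $\Ext^1_0({\cal F}_y,{\cal F}_y)/\Theta_y$, which by the first short exact sequence
$$0\to\Theta_y\hookrightarrow\Ext^1_0({\cal F}_y,{\cal F}_y)\textmap{h\circ m}\Omega^2(x)^\smvee\otimes q_y^\smvee\otimes y\to 0$$
is canonically $\Omega^2(x)^\smvee\otimes q_y^\smvee\otimes y$.

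It then remains to globalise this fibrewise identification over $\P({\cal E})$ and to rewrite the result. As $y$ varies, $\Omega^2(x)^\smvee$ (with $x=\pi(y)$) is the fibre of $\pi^*({\cal K}_X^\smvee)$, while $q_y$ and $y$ are the fibres of the restrictions $\resto{\mathscr{M}}{Z}={\cal M}$ and $\resto{\mathscr{L}}{Z}={\cal L}$; since every morphism entering the sequence above arises from the canonical constructions attached to the universal sheaf $\mathscr{F}$, they are natural in $y$ and should assemble into a global isomorphism ${\cal N}\simeq\pi^*({\cal K}_X^\smvee)\otimes{\cal M}^\smvee\otimes{\cal L}$. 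Finally, from $0\to{\cal L}\to\pi^*({\cal E})\to{\cal M}\to 0$ one gets ${\cal M}^\smvee=\pi^*(\det({\cal E})^\smvee)\otimes{\cal L}$, and with ${\cal L}={\cal O}_{{\cal E}}(-1)$ this yields ${\cal M}^\smvee\otimes{\cal L}=\pi^*(\det({\cal E})^\smvee)\otimes{\cal O}_{{\cal E}}(-2)$, giving the asserted formula. The main obstacle is precisely this globalisation step: the canonical fibrewise exact sequence must be promoted to an exact sequence of vector bundles on $\P({\cal E})$, which I would do by carrying out the $\Ext$-computations of the preceding lemmas relative to the projection $p_{\P({\cal E})}:\P({\cal E})\times X\to\P({\cal E})$ — replacing $\Ext$ and ${\cal E}xt$ by the relative sheaf-$\Ext$ of $\mathscr{F}$ with itself — and verifying that cohomology-and-base-change applies so that the relative sequence specialises fibrewise to the one above.
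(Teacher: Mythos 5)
Your proposal is correct and takes essentially the same approach as the paper: there the proposition is presented as an immediate consequence of the preceding lemma, the normal line being read off from the exact sequence $0\to \Theta_y\to \Ext^1_0({\cal F}_y,{\cal F}_y)\to \Omega^2(x)^\smvee\otimes q_y^\smvee\otimes y\to 0$ exactly as you do, with the injectivity, Hausdorff-image and globalisation points left implicit (your treatment of them mirrors the techniques the paper uses elsewhere, e.g.\ for $V_\varepsilon$ in Proposition \ref{embedding} and the separation criterion of Corollary \ref{criterion}). One small tightening: in your argument that $\Hom({\cal F}_y,{\cal F}_{y'})=0$, equality of Euler characteristics alone does not force ${\cal F}_y={\cal F}_{y'}$, since a torsion quotient supported on a curve can also have $\chi=0$; you should first use $\det({\cal F}_y)\simeq\det({\cal F}_{y'})\simeq\det({\cal E})$ to see that the quotient ${\cal F}_{y'}/{\cal F}_y$ has trivial determinant, hence $0$-dimensional support, after which $\chi=0$ does give its vanishing.
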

We can now prove Theorem \ref{Th2} stated in Section \ref{FundamQ}:
\begin{proof} (of Theorem \ref{Th2}) Let ${\cal E}$ be the holomorphic  $\SL(2,\C)$-bundle associated with the irreducible, flat $\SU(2)$-connection $A$. Identify $\P({\cal E})$ with its image in ${\cal M}^\si$ via $\Phi^{\mathscr{F}}$.  
Using the same methods as in the proof of Proposition \ref{Open2} and Corollary \ref{MainCo}, we obtain an open, Hausdorff  neighbourhood $\Ug$ of $\P({\cal E})$ in ${\cal M}^\si_\reg$ such that $\Ug\setminus \P({\cal E})\subset ({\cal M}^\st_{\rm lf})_\reg$.  The continuity theorem \cite{BTT} for flat families gives a continuous map $\tau:\Ug\to \overline{\cal M}^\ASD(E)$ which agrees with the Kobayashi-Hitchin correspondence  on $\Ug\setminus\P({\cal E})$ and with $\pi$ on $\P({\cal E})$. Choose ${\cal U}\Subset \Ug$ such that $\overline {\cal U}$ is a compact tubular neighbourhood of $\P({\cal E})$ in $\Ug$ with smooth boundary ${\cal B}$.  By Proposition \ref{ModelIrred} we may identify an open neighbourhood of   $\{[A]\}\times X$ with the cone bundle $C_X^\varepsilon$. Using the continuity of $\tau$ we may suppose (taking  $\overline {\cal U}$ sufficiently small) that $\tau ( \overline {\cal U})\subset  C_X^\varepsilon$. By Lemma \ref{finalLm} proved below (which is a special case of \cite[Lemma 4.4]{To}, the image   ${\cal W}:=\tau({\cal U})$ is   open in $C_X^\varepsilon$.  Since $\tau(\overline {\cal U})$ is compact, it is closed, so the  obvious inclusion $\tau(\overline {\cal U})\subset \overline{\tau(  {\cal U})}=\overline {\cal W}$ is an equality.

By Proposition \ref{NormalComp} the pair $(\Ug,\pi:\P({\cal E})\to X)$ satisfies the hypothesis of \cite[Theorem $2'$]{Fuj}, so there  exists a normal complex space $\Vg$ with an embedding $X\hookrightarrow \Vg$, and a modification $c:\Ug\to \Vg$, such that  the pair $(\Vg,c)$ is the blowing down of $\Ug$ along $\pi$. Moreover, by \cite[Theorem 3.7]{Tom} it follows that $c$ coincides with the monoidal transformation of $\Vg$ with centre   $c(\P({\cal E}))=X$.

Put  ${\cal V}:=c({\cal U})$. Since $\tau$ is constant on the fibres of $c$, it induces a continuous map $\overline{\cal V}=c(\overline{\cal U})\to \overline {\cal W}$, which is obviously bijective. But $\overline{\cal V}$ is compact and $\overline {\cal W}$ is Hausdorff, so this map is a homeomorphism. Endowing ${\cal W}$, which is an open neighbourhood of $\{[A]\}\times X$ in $\overline{\cal M}^\ASD(E)$, with the complex space structure induced from ${\cal V}$ via  the homeomorphism ${\cal V}\to   {\cal W}$, we obtain the desired normal complex  space structure around $\{[A]\}\times X$, and this   structure obviously extends the natural complex space structure of  ${\cal M}^\ASD(E)^*$.  

The identification between the normal cone of $X$ in ${\cal V}$ and the cone of degenerate elements in ${\cal K}_X^\smvee\otimes S^2({\cal E})$ follows from the isomorphism ${\cal N}\simeq \pi^*({\cal K}_X^\smvee)\otimes {\cal O}_{\cal E}(-2)$ given by Proposition \ref{NormalComp} using \cite[Section B.6]{Ful}.
\end{proof}
The following result used in the proof  is a special case of \cite[Lemma 4.4]{To}. We give a short proof for completeness. 
\begin{lm}\label{finalLm} With the notation introduced in the proof of Theorem \ref{Th2}, the set  ${\cal W}:=\tau({\cal U})$ is   open in $C_X^\varepsilon$.	
\end{lm}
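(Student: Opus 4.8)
The plan is to deduce openness of $\mathcal{W}=\tau(\mathcal{U})$ from the interplay of two properties of $\tau$: away from $\P({\cal E})$ the map is an \emph{open embedding}, while on all of $\overline{\mathcal{U}}$ it is \emph{proper}. First I would record that, on $\mathcal{U}\setminus\P({\cal E})\subset\{{\cal M}^\st_{\rm lf}\}_\reg$, the map $\tau$ coincides with the Kobayashi--Hitchin correspondence, hence is a homeomorphism onto an open subset of the smooth locus $C_X^\varepsilon\setminus(\{[A]\}\times X)$. In particular $\tau$ is open at every point of $\mathcal{U}\setminus\P({\cal E})$, so $\mathcal{W}$ is automatically a neighbourhood of each of its \emph{irreducible} (smooth) points. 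The only points of $\mathcal{W}$ requiring attention are the vertex points $w_0=([A],x_0)\in\{[A]\}\times X$, which are exactly the images $\tau(\P({\cal E}))$ of the contracted projective fibres.

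Fix such a $w_0$. Since every point of $\mathcal{U}\setminus\P({\cal E})$ and of $\mathcal{B}$ maps to an irreducible instanton, and every point of $\P({\cal E})$ lying over some $x\neq x_0$ maps to $([A],x)\neq w_0$, the fibre is $\tau^{-1}(w_0)\cap\overline{\mathcal{U}}=\P({\cal E}(x_0))$, a compact $\P^1$ contained in $\mathcal{U}$ and therefore disjoint from the boundary $\mathcal{B}$. As $\tau(\mathcal{B})$ is compact and does not contain $w_0$, I can choose a connected open neighbourhood $\Omega$ of $w_0$ in $C_X^\varepsilon$ with $\overline{\Omega}$ compact and $\overline{\Omega}\cap\tau(\mathcal{B})=\emptyset$; using the explicit local model of Remark \ref{HolStrCone} (a product of a polydisc in $X$ with the cone of degenerate tensors, i.e. the $A_1$-cone $\C^2/\{\pm1\}$) I may also arrange that the smooth part $\Omega^*:=\Omega\setminus(\{[A]\}\times X)$ is connected, since $\{[A]\}\times X$ has real codimension four.

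The key step is then to analyse $\tau$ on $P^*:=\tau^{-1}(\Omega^*)\cap\mathcal{U}$. Because $\tau(\P({\cal E}))$ consists only of vertices, $P^*\subset\mathcal{U}\setminus\P({\cal E})$, so $\tau|_{P^*}$ is a restriction of the Kobayashi--Hitchin open embedding and is in particular an \emph{open} map into $\Omega^*$. On the other hand $\tau|_{\overline{\mathcal{U}}}$ is proper (continuous with compact source), and since $\overline{\Omega}\cap\tau(\mathcal{B})=\emptyset$ the preimage of any compact subset of $\Omega^*$ meets $\overline{\mathcal{U}}$ only inside $\mathcal{U}$; hence $\tau|_{P^*}\colon P^*\to\Omega^*$ is proper, thus a \emph{closed} map onto the locally compact Hausdorff space $\Omega^*$. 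Its image is therefore simultaneously open and closed in the connected $\Omega^*$, and it is nonempty because $w_0=\tau(y)$ for some $y\in\P({\cal E}(x_0))$, and $y$ — lying on the codimension-one submanifold $\P({\cal E})$ (Proposition \ref{NormalComp}) — is a limit of points $y_n\in\mathcal{U}\setminus\P({\cal E})$ whose images $\tau(y_n)\to w_0$ eventually lie in $\Omega^*$. Consequently $\tau(P^*)=\Omega^*$. Finally every vertex of $\Omega$ lies in $\tau(\P({\cal E}))\subset\mathcal{W}$ because $\P({\cal E})\subset\mathcal{U}$, so $\Omega=\Omega^*\cup\bigl(\Omega\cap(\{[A]\}\times X)\bigr)\subset\mathcal{W}$, proving that $w_0$ is interior to $\mathcal{W}$. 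As $w_0$ was an arbitrary vertex point and the smooth points were handled in the first paragraph, $\mathcal{W}$ is open; this reproves the special case of \cite[Lemma 4.4]{To} needed here.

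I expect the main obstacle to be the two topological inputs feeding the open/closed dichotomy, rather than the argument itself: verifying that $\tau|_{P^*}$ is genuinely proper (which rests on separating the contracted fibre $\P({\cal E}(x_0))$ from the boundary $\mathcal{B}$, i.e.\ on the tubular structure of $\mathcal{U}\Subset\Ug$ and on the fact that $\mathcal{B}$ maps entirely into the irreducible locus), and checking that $\Omega^*$ can be taken connected (for which the explicit $A_1$-cone description of $C_X^\varepsilon$ from Remark \ref{HolStrCone} is exactly what is used). By contrast, the open-embedding property on the complement is essentially the Kobayashi--Hitchin theorem together with the regularity built into $\Ug\setminus\P({\cal E})\subset\{{\cal M}^\st_{\rm lf}\}_\reg$, and should cause no difficulty.
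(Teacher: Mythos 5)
Your proof is correct and rests on exactly the same mechanism as the paper's: an open--closed--connectedness dichotomy in a punctured cone region, with openness supplied by the fact that $\tau$ restricts to an open embedding on $\Ug\setminus\P({\cal E})$ and closedness supplied by the compactness of $\tau(\overline{\cal U})$. The only difference is packaging: you localize at each vertex point $w_0$, phrasing closedness as properness of $\tau|_{P^*}$ into a connected $\Omega^*$, whereas the paper argues globally, showing that ${\cal W}\setminus X$ is open and closed (hence a connected component) in $C_X^\varepsilon\setminus(X\cup\tau({\cal B}))$ and then that it contains the punctured sub-cone $C_X^\eta\setminus X$ for small $\eta$ -- the same two inputs plus the same connectedness of cone regions.
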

\begin{proof}

The set ${\cal W}\setminus X=\tau(\overline{\cal U}\setminus(\P({\cal E})\cup {\cal B}))$  is obviously contained in  $C^\varepsilon_X\setminus(X\cup\tau({\cal B}))$. Since the restriction of $\tau$ to $\Ug\setminus \P({\cal E})$ is an open embedding,  ${\cal W}\setminus X$ is open in  the complement $C^\varepsilon_X\setminus(X\cup\tau({\cal B}))$; it also closed in this complement, because it can be written as $\tau(\overline{\cal U})\cap \big(C^\varepsilon_X\setminus(X\cup\tau({\cal B}))\big)$, and $\tau(\overline{\cal U})$ is compact. Since ${\cal W}\setminus X$ is connected, it follows that it coincides with a connected component  of  $C^\varepsilon_X\setminus(X\cup\tau({\cal B}))$.

Let  $\eta\in(0,\varepsilon)$ be such that $C^\eta_X\cap \tau({\cal B})=\emptyset$. The set $C^\eta_X\setminus X$ is contained in $C^\varepsilon_X\setminus(X\cup\tau({\cal B}))$, is connected, and intersects ${\cal W}\setminus X$, so it is contained in this connected component of $C^\varepsilon_X\setminus(X\cup\tau({\cal B}))$. Therefore $C^\eta_X\subset {\cal W}$,  so ${\cal W}$ contains a neighbourhood of $X$ in $C^\varepsilon_X$. Recalling that $\tau$ is an open embedding on $\Ug\setminus \P({\cal E})$, it follows that ${\cal W}$ is open.

\end{proof}

\section{Appendix}
\subsection{Elementary transformations of sheaves and sheaf deformations}

\subsubsection{Elementary transformations of coherent sheaves}

Let $Y$ be a  complex manifold, and $\xi:X\hookrightarrow  Y$ be the embedding map of an effective divisor $X$ of $Y$. For a coherent sheaf ${\cal S}$ on $Y$ we denote by $\resto{{\cal S}}{X}$ the restriction $\xi^*({\cal S})$, which is a coherent sheaf on $X$, and by ${\cal S}_X$ the sheaf $\xi_*\xi^*({\cal S})={\cal S}/{\cal I}_X{\cal S}$, which is a torsion coherent sheaf on $Y$.

 Let $p:{\cal F}\to {\cal H}$ be   an epimorphism of coherent  sheaves on $Y$, where  ${\cal H}$ has the property 
$${\cal I}_X{\cal H}=0\,,$$
i.e. ${\cal H}$ is isomorphic to the direct image of a coherent sheaf on $X$. Let  $p_X:{\cal F}_X\to {\cal H}$ be the morphism induced  by $p$, and put
$${\cal F}':=\ker(p)\ ,\ {\cal H}':=\ker(p_X)\,.$$
The exact sequence
$$0\to{\cal F}'\hookrightarrow {\cal F}\textmap{p}{\cal H}\to 0
$$
gives an exact sequence
 \begin{equation}\label{FirstES}
 {\cal F}'_X\to {\cal F}_X\textmap{p_X} {\cal H}\to 0\,,	
 \end{equation}
which proves that the image of ${\cal F}'_X$ in ${\cal F}_X$ is ${\cal H}'$. Thus ${\cal F}'$ comes with an epimorphism $p':{\cal F}'\to {\cal H}'$, where ${\cal H}'$ again has the property ${\cal I}_X {\cal H}'=0$\,.
\begin{dt} Let $p:{\cal F}\to {\cal H}$ be   an epimorphism of coherent  sheaves on $Y$ where ${\cal I}_X {\cal H}=0$.
The elementary transformation of the pair $({\cal F},p)$ is the pair $({\cal F}',p')$, where ${\cal F}':=\ker(p)$, ${\cal H}'=\ker(p_X)$ and $p'$ is induced by the composition ${\cal F}'\to {\cal F}'_X\to {\cal F}_X$ (whose image is ${\cal H}'$)\,. 
\end{dt}
Note that
\begin{equation}\label{ker(p'X)}
{\cal H}'':=\ker(p'_X:{\cal F}'_X\to {\cal H}'_X)= \ker({\cal F}'_X\to {\cal F}_X)=\qmod{{\cal I}_X{\cal F}}{{\cal I}_X{\cal F}'}\,.	
\end{equation}
Denoting by $({\cal F}'',p'')$ the elementary transformation of $({\cal F}',p')$ one has
$${\cal F}''=\ker(p')=\ker({\cal F}'\to {\cal F}'_X\to {\cal F}_X)=\ker({\cal F}'\hookrightarrow {\cal F}\to {\cal F}_X)={\cal F}'\cap({\cal I}_X {\cal F})={\cal I}_X{\cal F}\,,
$$
where for the last equality we have used the obvious inclusion ${\cal I}_X{\cal F}\subset {\cal F}'$. The epimorphism  $p'':{\cal F}''\to {\cal H}''$ is just the canonical epimorphism ${\cal I}_X{\cal F}\to  {{\cal I}_X{\cal F}}/{{\cal I}_X{\cal F}'}$.
Using our assumption that $X$ is a divisor in $Y$ we obtain:
\begin{re}\label{Tor} Regarding   $0\to {\cal O}(-X)\to {\cal O}\to {\cal O}_X\to 0$
as a resolution of ${\cal O}_X$ by locally free ${\cal O}_Y$-modules, we get for any coherent sheaf ${\cal A}$ on $Y$
$${\cal T}or_1({\cal A},{\cal O}_X)=\ker ({\cal A}(-X)\to {\cal A})\ ,\ {\cal T}or_k({\cal A},{\cal O}_X)=0\hbox{ for } k\geq 2\,.
$$	
Therefore the kernel of the canonical epimorphism ${\cal A}(-X)\to {\cal I}_X{\cal A}$ is ${\cal T}or_1({\cal A},{\cal O}_X)$\,.
\end{re}
The  exact sequence (\ref{FirstES}) can be continued to the left as follows
$$0\to {\cal T}or_1({\cal F}',{\cal O}_X)\to {\cal T}or_1({\cal F},{\cal O}_X)\to {\cal H}(-X)\to  {\cal F}'_X\to {\cal F}_X\textmap{p_X} {\cal H}\to 0\,.
$$
We thus obtain 
\begin{pr}\label{PropElTr}
With the notations and the assumptions above, suppose that  ${\cal T}or_1({\cal F},{\cal O}_X)=0$, and let $({\cal F}',p')$ be the elementary transformation of $({\cal F},p)$\,.  Then 
\begin{enumerate}
\item ${\cal T}or_1({\cal F}',{\cal O}_X)=0$\,,
\item 	One has a short exact sequence
\begin{equation}\label{ElTrShExSeq}
0\to {\cal H}(-X)\textmap{\rho} {\cal F}'_X\textmap{p'_X} {\cal H}'\to 0\,,	
\end{equation}
where $\rho$ identifies ${\cal H}(-X)$  with  $\ker(p'_X)={\cal I}_X{\cal F}/{\cal I}_X{\cal F}'$ via the identifications ${\cal I}_X{\cal F}={\cal F}(-X)$,  ${\cal I}_X{\cal F}'={\cal F}'(-X)$, ${\cal F}/{\cal F}'={\cal H}$.
\item The second elementary transformation $({\cal F}'',p'':{\cal F}''\to {\cal H}'')$ can be identified with  the pair 
 $({\cal F}(-X),p\otimes\id: {\cal F}(-X)\to {\cal H}(-X))$\,.
\end{enumerate}

\end{pr}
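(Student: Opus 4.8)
The plan is to derive all three assertions from the six-term $\mathrm{Tor}$-exact sequence displayed just before the statement, combined with the flatness of the invertible sheaf ${\cal O}(-X)$ and the identifications in Remark \ref{Tor} and equation (\ref{ker(p'X)}). The single driving fact is that the hypothesis ${\cal T}or_1({\cal F},{\cal O}_X)=0$ makes the canonical map ${\cal F}(-X)\to{\cal I}_X{\cal F}$ an isomorphism; applied to both ${\cal F}$ and ${\cal F}'$, this controls every sheaf appearing in the conclusion. Recall also that ${\cal T}or_1({\cal H},{\cal O}_X)={\cal H}(-X)$, since by Remark \ref{Tor} and ${\cal I}_X{\cal H}=0$ the canonical map ${\cal H}(-X)\to{\cal H}$ is zero.

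First I would prove (1). Reading the displayed exact sequence from the left, ${\cal T}or_1({\cal F}',{\cal O}_X)$ injects into ${\cal T}or_1({\cal F},{\cal O}_X)$, which vanishes by hypothesis; hence ${\cal T}or_1({\cal F}',{\cal O}_X)=0$.

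Next, for (2), I would tensor the defining sequence $0\to{\cal F}'\to{\cal F}\textmap{p}{\cal H}\to0$ with the locally free, hence flat, sheaf ${\cal O}(-X)$, obtaining an exact sequence $0\to{\cal F}'(-X)\to{\cal F}(-X)\to{\cal H}(-X)\to0$. By Remark \ref{Tor} together with part (1), the canonical maps ${\cal F}'(-X)\to{\cal I}_X{\cal F}'$ and ${\cal F}(-X)\to{\cal I}_X{\cal F}$ are isomorphisms, so this sequence is identified with $0\to{\cal I}_X{\cal F}'\to{\cal I}_X{\cal F}\to{\cal H}(-X)\to0$, yielding a canonical isomorphism ${\cal I}_X{\cal F}/{\cal I}_X{\cal F}'\textmap{\simeq}{\cal H}(-X)$, whose inverse is the asserted $\rho$. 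By (\ref{ker(p'X)}) the left-hand quotient is precisely $\ker(p'_X)$; and $p'_X:{\cal F}'_X\to{\cal H}'$ is surjective, since applying the right-exact functor $-\otimes{\cal O}_X$ to the epimorphism $p'$ and using ${\cal I}_X{\cal H}'=0$ gives ${\cal H}'_X={\cal H}'$. This produces exactly the short exact sequence in the statement, with $\rho$ the stated embedding.

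Finally, for (3), recall that the second elementary transform was already computed to be ${\cal F}''={\cal I}_X{\cal F}$ with $p'':{\cal F}''\to{\cal H}''$ the canonical projection onto ${\cal H}''={\cal I}_X{\cal F}/{\cal I}_X{\cal F}'$. Applying once more the isomorphism ${\cal F}(-X)\textmap{\simeq}{\cal I}_X{\cal F}$ from Remark \ref{Tor} and (1), together with ${\cal I}_X{\cal F}/{\cal I}_X{\cal F}'\cong{\cal H}(-X)$ from (2), transports $p''$ to the map ${\cal F}(-X)\to{\cal H}(-X)$ induced by $p$, that is $p\otimes\id$. The only point requiring care throughout is the compatibility of the several canonical identifications in play; one should check that the connecting map of the $\mathrm{Tor}$-sequence, the boundary map of the ${\cal O}(-X)$-tensored sequence, and the inclusion $\ker(p'_X)\hookrightarrow{\cal F}'_X$ all agree. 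This is the main, though mild, obstacle, and it reduces to a routine diagram chase exploiting the functoriality of the resolution $0\to{\cal O}(-X)\to{\cal O}\to{\cal O}_X\to0$ in the sheaf being resolved.
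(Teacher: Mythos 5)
Your proof is correct and follows essentially the same route as the paper: the paper deduces all three assertions directly from the displayed six-term ${\cal T}or$ exact sequence, which is exactly your starting point for (1), and your handling of (2) and (3) — tensoring the defining sequence by the flat line bundle ${\cal O}(-X)$ and transporting everything through the identifications ${\cal F}(-X)\simeq{\cal I}_X{\cal F}$, ${\cal F}'(-X)\simeq{\cal I}_X{\cal F}'$ from Remark \ref{Tor} and (\ref{ker(p'X)}) — is precisely the explicit form of the connecting homomorphism in that sequence. The only (welcome) difference is that you spell out the identification of $\rho$ and the compatibility of the canonical maps, which the paper leaves implicit in its ``we thus obtain.''
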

Supposing again  ${\cal T}or_1({\cal F},{\cal O}_X)=0$, the short exact sequence of ${\cal O}_{2X}$-modules
\begin{equation}\label{2X}
0\to {\cal O}_X(-X)\textmap{\iota_X} {\cal O}_{2X}\textmap{\pi_X} {\cal O}_X\to 0	
\end{equation}
associated with the decomposition $2X=X+X$ gives a short exact sequence %
\begin{equation}\label{2XF}
0\to {\cal F}_X(-X)\textmap{\iota_X^{\cal F}} {\cal F}_{2X}\textmap{\pi_X^{\cal F}} {\cal F}_X\to 0	
\end{equation}
of ${\cal O}_{2X}$-modules. Let $\varepsilon_X({\cal F})\in \Ext^1_{{\cal O}_{2X}}({\cal F}_X,{\cal F}_X(-X))$ be the  extension class of (\ref{2XF}), and  let $\varepsilon_X({\cal F},p)\in \Ext^1_{{\cal O}_X}({\cal H}',{\cal H}(-X))$ be the extension class of (\ref{ElTrShExSeq}).  The two extensions can be compared using the morphisms
$$j:{\cal H}'\hookrightarrow {\cal F}_X\,,\  p_X\otimes\id: {\cal F}_X(-X)\to {\cal H}(-X)\,.
$$
Recall that the functors $\Ext$ are contravariant with respect to the first, and covariant with respect to the second argument. For an element $u\in \Ext_A(M,N)$ and morphisms $\mu:M'\to M$, $\nu:N\to N'$ we denote by $\nu u\mu$ the corresponding element of $\Ext_A(M',N')$\,.  
 
\begin{thry}\label{MainFTh} Let $p:{\cal F}\to {\cal H}$ be   an epimorphism of coherent  sheaves on $Y$ where ${\cal I}_X {\cal H}=0$, and ${\cal T}or_1({\cal F},{\cal O}_X)=0$. With the notations   above we have
\begin{equation}\label{MainF}
A_{\pi_X}(\varepsilon_X({\cal F},p))=(p_X\otimes\id)(\varepsilon_X({\cal F}))j\,,	
\end{equation}
where 
$$
\begin{diagram}[h=8mm] 
{\cal C}oh(X)\times{\cal C}oh(X) &= &	{\cal C}oh(X)\times{\cal C}oh(X)\\
\dTo^{\Ext^1_{{\cal O}_X}}_{\phantom{H}}  & \rTo^{A_{\pi_X}} & \dTo_{\Ext^1_{{\cal O}_{2X}}}^{\phantom{H}} \\
{\cal V}ect_\C & & {\cal V}ect_\C 
\end{diagram}
$$
is the  natural transformation  induced by the epimorphism $\pi_X:{\cal O}_{2X}\to {\cal O}_X$\,.
\end{thry}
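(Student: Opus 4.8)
The plan is to read the right-hand side $(p_X\otimes\id)(\varepsilon_X({\cal F}))j$ as the pushout along $p_X\otimes\id$ of the pullback along $j$ of the class $\varepsilon_X({\cal F})$, and to match it with the left-hand side by exhibiting explicit morphisms of short exact sequences of ${\cal O}_{2X}$-modules. The first observation is that $A_{\pi_X}$ is nothing but restriction of scalars along the ring epimorphism $\pi_X:{\cal O}_{2X}\to{\cal O}_X$: an ${\cal O}_X$-extension, regarded as an ${\cal O}_{2X}$-extension, represents its image under $A_{\pi_X}$. Hence it suffices to prove the equality in $\Ext^1_{{\cal O}_{2X}}({\cal H}',{\cal H}(-X))$, viewing $\varepsilon_X({\cal F},p)$ there. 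I would carry this out by producing a single ${\cal O}_{2X}$-module that mediates between $\varepsilon_X({\cal F})$ and the sequence (\ref{ElTrShExSeq}).

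First I would record the chain of inclusions that governs everything. Since $p({\cal I}_X{\cal F})={\cal I}_X{\cal H}=0$ one has ${\cal I}_X{\cal F}\subset{\cal F}'$, whence
\[
{\cal I}_X^2{\cal F}\subset{\cal I}_X{\cal F}'\subset{\cal I}_X{\cal F}\subset{\cal F}'\subset{\cal F}.
\]
The hypothesis ${\cal T}or_1({\cal F},{\cal O}_X)=0$ and its companion ${\cal T}or_1({\cal F}',{\cal O}_X)=0$ from Proposition \ref{PropElTr}(1) give the canonical identifications ${\cal F}(-X)={\cal I}_X{\cal F}$ and ${\cal F}'(-X)={\cal I}_X{\cal F}'$, so that all the sheaves in sight become subquotients of ${\cal F}$:
\[
{\cal F}_X(-X)={\cal I}_X{\cal F}/{\cal I}_X^2{\cal F},\quad {\cal F}_{2X}={\cal F}/{\cal I}_X^2{\cal F},\quad {\cal F}'_X={\cal F}'/{\cal I}_X{\cal F}',\quad {\cal H}'={\cal F}'/{\cal I}_X{\cal F},\quad {\cal H}(-X)={\cal I}_X{\cal F}/{\cal I}_X{\cal F}',
\]
the last one being exactly the description of $\rho$ in Proposition \ref{PropElTr}(2). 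Under these models $\pi_X^{\cal F}$, $\iota_X^{\cal F}$, $\rho$ and $p_X\otimes\id$ all become the evident inclusions and quotient maps.

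Now I would set ${\cal G}:={\cal F}'/{\cal I}_X^2{\cal F}$, an ${\cal O}_{2X}$-module (annihilated by ${\cal I}_X^2$ since ${\cal I}_X^2{\cal F}'\subset{\cal I}_X^2{\cal F}$), fitting into $0\to{\cal F}_X(-X)\to{\cal G}\to{\cal H}'\to 0$. The inclusion ${\cal F}'\hookrightarrow{\cal F}$ induces the lower comparison: identity on ${\cal F}_X(-X)$, the monomorphism ${\cal G}={\cal F}'/{\cal I}_X^2{\cal F}\hookrightarrow{\cal F}/{\cal I}_X^2{\cal F}={\cal F}_{2X}$ in the middle, and $j:{\cal H}'\hookrightarrow{\cal F}_X$ on the right. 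The crux is that the middle term is correctly the pullback object: because $\pi_X^{\cal F}$ is the quotient ${\cal F}/{\cal I}_X^2{\cal F}\to{\cal F}/{\cal I}_X{\cal F}$ and ${\cal H}'={\cal F}'/{\cal I}_X{\cal F}$, the preimage $(\pi_X^{\cal F})^{-1}({\cal H}')$ is exactly ${\cal F}'/{\cal I}_X^2{\cal F}={\cal G}$ (here ${\cal I}_X{\cal F}\subset{\cal F}'$ is used). This ladder shows $[{\cal G}\text{-sequence}]=j^*\varepsilon_X({\cal F})$. Dually, the quotient $q:{\cal G}={\cal F}'/{\cal I}_X^2{\cal F}\twoheadrightarrow{\cal F}'/{\cal I}_X{\cal F}'={\cal F}'_X$ gives the upper comparison with (\ref{ElTrShExSeq}): the surjection $p_X\otimes\id$ on the left, $q$ in the middle, identity on ${\cal H}'$; since $p_X\otimes\id$ is surjective with kernel ${\cal I}_X{\cal F}'/{\cal I}_X^2{\cal F}$, this realises ${\cal F}'_X$ as the pushout ${\cal G}/({\cal I}_X{\cal F}'/{\cal I}_X^2{\cal F})$ and shows $\varepsilon_X({\cal F},p)=(p_X\otimes\id)_*[{\cal G}\text{-sequence}]$. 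Concatenating the two ladders yields $A_{\pi_X}(\varepsilon_X({\cal F},p))=(p_X\otimes\id)_*j^*\varepsilon_X({\cal F})=(p_X\otimes\id)(\varepsilon_X({\cal F}))j$, which is (\ref{MainF}).

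The main obstacle is purely bookkeeping: one must consistently use the two ${\cal T}or$-vanishing identifications to present every term as a subquotient of ${\cal F}$, and then check that the two comparison ladders are genuine commutative morphisms of ${\cal O}_{2X}$-extensions, the vertical maps being ${\cal O}_{2X}$-linear and not merely ${\cal O}_Y$-linear. In particular, matching the abstractly twisted arrows $\rho$, $\iota_X^{\cal F}$ and $p_X\otimes\id$ with the concrete quotient maps, and verifying that the pullback middle term is precisely ${\cal G}={\cal F}'/{\cal I}_X^2{\cal F}$, is the one point that needs care; once this identification is in place the commutativity of both squares is routine.
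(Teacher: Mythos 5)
Your proof is correct and is essentially the paper's own argument: the paper likewise represents the right-hand side as a pullback-pushout whose middle term is $q^{-1}({\cal H}')/r(\ker(p_X\otimes\id))$, computes $q^{-1}({\cal H}')={\cal F}'/{\cal I}_{2X}{\cal F}$ (your ${\cal G}$) and $r(\ker(p_X\otimes\id))={\cal I}_X{\cal F}'/{\cal I}_{2X}{\cal F}$, and identifies the quotient with ${\cal F}'_X$ via the same ${\cal T}or$-vanishing subquotient descriptions, checking at the end that $\rho$ and $p'_X$ match the induced maps. Splitting the single diagram into two explicit ladders, as you do, is only a cosmetic repackaging of the same computation.
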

\begin{proof}
Put $r:=\iota_X^{\cal F}$, $q:=\pi_X^{\cal F}$ to simplify the notation.	
Taking into account that $j$ is a  monomorphism, and $p_X\otimes\id$ is an epimorphism, it follows that  $(p_X\otimes\id)(\varepsilon({\cal F}))j$ is the isomorphism class of the extension corresponding to the upper line in the diagram
$$
\begin{diagram}[h=7mm]
 &&0&&&&0 \\
&&\uTo &&&&\dTo &&  \\
0&\rTo &{\cal H}(-X) &\rTo^{\bar r} &\qmod{q^{-1}({\cal H}')}{r(\ker(p_X\otimes\id))}&\rTo^{\bar q} &{\cal H}'&\rTo 0\\
  & & \uOnto^{p_X\otimes\id} &&&& \dInto_j \\
0&\rTo & {\cal F}_X(-X) &\rTo^{r} &{\cal F}_{2X} &\rTo^{q} & {\cal F}_X &\rTo 0\\
&&\uTo^{i_X\otimes\id} &&&&\dTo_{p_X} && \\
&&{\cal F}'_X(-X) &&&& {\cal H} &&\\
&&  &&&&\dTo &&  \\
 && &&&&0
\end{diagram}\,,   
$$
where the morphisms $\bar r$, $\bar q$ are induced by $r$ and $q$ respectively.  The morphism   $i_X:{\cal F}'_X\to {\cal F}_X$ is induced by the inclusion  $i:{\cal F}'\hookrightarrow {\cal F}$. 
The right-hand vertical exact sequence in the diagram and the exact sequence
$$    {\cal F}'_{2X}\to {\cal F}_{2X}\textmap{p_X\circ q} {\cal H}\to 0$$
show that 
\begin{equation}\label{q-1}
q^{-1}({\cal H}')=\ker (p_X\circ q)=\qmod{{\cal F}'}{{\cal I}_{2X}{\cal F}}\,.	
\end{equation}
On the other hand, the definition of $r$ and the left-hand vertical exact sequence give
\begin{equation}\label{r(kernel)}r({\cal F}_X(-X))=\qmod{{\cal I}_X{\cal F}}{{\cal I}_{2X}{\cal F}}\ ,\ r(\ker(p_X\otimes\id))=r\big(\im(i_X\otimes\id)\big)=\qmod{{\cal I}_X{\cal F}'}{{\cal I}_{2X}{\cal F}}\,.
\end{equation}
Using (\ref{q-1}), (\ref{r(kernel)}) we get an obvious isomorphism of ${\cal O}_{2X}$-modules.
$$\qmod{q^{-1}({\cal H}')}{r(\ker(p_X\otimes\id))}\textmap{\simeq} {\cal F}'_X\,.
$$
It is easy to check that $\rho$, $p'_X$ correspond to $\bar r$, $\bar q$ via this isomorphism. 
\end{proof}
Denote by $\pmb{o}$ the double origin in $\C$, regarded as a non-reduced complex space.

\begin{re}
The extension (\ref{2X}) has a  right splitting which is multiplicative (makes ${\cal O}_{\pmb{X}}$ a sheaf of ${\cal O}_X$-algebras) if and only if the embedding $X\hookrightarrow \pmb{X}$ of $X$ in its   second order infinitesimal neighbourhood  $\pmb{X}$ has a left inverse. Equivalently, this means that $\pmb{X}$ has the structure of an $\pmb{o}$-fibre bundle over $X$ such that   $X\hookrightarrow \pmb{X}$ becomes a section in this bundle.
\end{re}

\begin{co}\label{WithSection}
Let $p:{\cal F}\to {\cal H}$ be   an epimorphism of coherent  sheaves on $Y$ where ${\cal I}_X {\cal H}=0$, and ${\cal T}or_1({\cal F},{\cal O}_X)=0$. Let $\sigma:{\cal O}_X\to {\cal O}_{2X}$ be a multiplicative right splitting of (\ref{2X}), and $\varepsilon_X^\sigma({\cal F})\in\Ext^1_{{\cal O}_X}({\cal F}_X,{\cal F}_X(-X))$ be the extension class of (\ref{2X}), regarded as an extension of ${\cal O}_X$-modules via $\sigma$. Then
\begin{equation}\label{CompExt} \varepsilon_X({\cal F},p)=(p_X\otimes\id)(\varepsilon_X^\sigma({\cal F}))j\,.
\end{equation}
\end{co}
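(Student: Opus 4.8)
The plan is to deduce this Corollary from Theorem \ref{MainFTh} by applying to the identity (\ref{MainF}) the restriction-of-scalars operation along the ring section $\sigma$. Write $A_\sigma$ for the natural transformation $\Ext^1_{{\cal O}_{2X}}\Rightarrow \Ext^1_{{\cal O}_X}$ induced by the ${\cal O}_X$-algebra homomorphism $\sigma:{\cal O}_X\to {\cal O}_{2X}$, defined on those pairs of ${\cal O}_{2X}$-modules whose underlying sheaves are annihilated by ${\cal I}_X$, so that the $\sigma$-restriction of an extension between them is again an extension of ${\cal O}_X$-modules. By construction $\varepsilon_X^\sigma({\cal F})$ is nothing but $A_\sigma(\varepsilon_X({\cal F}))$: it is the class of the same sequence (\ref{2XF}), whose outer terms ${\cal F}_X$, ${\cal F}_X(-X)$ are killed by ${\cal I}_X$, now viewed as an extension of ${\cal O}_X$-modules via $\sigma$. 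Thus the entire argument reduces to applying $A_\sigma$ to both sides of (\ref{MainF}) and simplifying.

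The first ingredient I would record is that $A_\sigma\circ A_{\pi_X}=\id$ on $\Ext^1_{{\cal O}_X}(M,N)$ for any pair of ${\cal O}_X$-modules $M$, $N$. Indeed, $A_{\pi_X}$ regards an ${\cal O}_X$-extension as an ${\cal O}_{2X}$-extension via $\pi_X$, and $A_\sigma$ then regards it back as an ${\cal O}_X$-extension via $\sigma$; since $\pi_X\circ\sigma=\id_{{\cal O}_X}$, these two restrictions compose to the identity functor and leave the underlying extension unchanged. Applying this with $M={\cal H}'$, $N={\cal H}(-X)$ (both annihilated by ${\cal I}_X$), the left-hand side of (\ref{MainF}) becomes $A_\sigma(A_{\pi_X}(\varepsilon_X({\cal F},p)))=\varepsilon_X({\cal F},p)$, which is precisely the left-hand side of (\ref{CompExt}).

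The second ingredient is the compatibility of $A_\sigma$ with the Yoneda functoriality appearing on the right of (\ref{MainF}). The modules ${\cal F}_X$, ${\cal F}_X(-X)$, ${\cal H}'$, ${\cal H}(-X)$ are all annihilated by ${\cal I}_X$, so their ${\cal O}_{2X}$-structure factors through $\pi_X$ and their $\sigma$-restricted ${\cal O}_X$-structure coincides with their genuine ${\cal O}_X$-structure; likewise $j:{\cal H}'\hookrightarrow {\cal F}_X$ and $p_X\otimes\id:{\cal F}_X(-X)\to {\cal H}(-X)$ are ${\cal O}_X$-morphisms whose $\sigma$-restrictions are themselves. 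Since restriction of scalars is an exact functor commuting with pullback and pushout of extensions, I would conclude $A_\sigma\big((p_X\otimes\id)(\varepsilon_X({\cal F}))j\big)=(p_X\otimes\id)\big(A_\sigma(\varepsilon_X({\cal F}))\big)j=(p_X\otimes\id)(\varepsilon_X^\sigma({\cal F}))j$. Combining this with the previous paragraph turns (\ref{MainF}) into (\ref{CompExt}), the desired assertion.

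The only genuinely delicate point — the step I would treat as the main obstacle — is making the naturality of $A_\sigma$ with respect to $j$ and $p_X\otimes\id$ fully precise at the level of Yoneda $\Ext^1$-classes, i.e. verifying that the restriction-of-scalars functor commutes with the operations $e\mapsto \nu e$ and $e\mapsto e\mu$. This is standard homological algebra once one observes that all the relevant source and target modules are ${\cal O}_X$-modules, so that their ${\cal O}_{2X}$-structure and their $\sigma$-restricted ${\cal O}_X$-structure literally agree and no choice enters. The role of $\sigma$ being an \emph{algebra} (not merely linear) splitting is exactly what guarantees that $A_\sigma$ is a functor and that the composite $A_\sigma\circ A_{\pi_X}$ is the identity, which are the two facts the argument rests on.
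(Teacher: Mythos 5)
Your proposal is correct and follows exactly the paper's own argument: apply the natural transformation $A_\sigma$ to both sides of (\ref{MainF}), using $A_\sigma\circ A_{\pi_X}=\id$, the identification $\varepsilon_X^\sigma({\cal F})=A_\sigma(\varepsilon_X({\cal F}))$, and the fact that $j$ and $p_X\otimes\id$ are morphisms of ${\cal O}_X$-modules so that $A_\sigma$ commutes with the Yoneda operations. The only difference is expository: you spell out the naturality verifications that the paper compresses into a single sentence.
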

\begin{proof}
We apply the natural transformation $A_\sigma$ in (\ref{MainF})	 taking into account that $A_\sigma\circ A_{\pi_X}=\id$, $\varepsilon_X^\sigma({\cal F})=A_\sigma(\varepsilon_X({\cal F}))$, and that $p_X\otimes\id$, $j$ are morphisms of ${\cal O}_X$-modules.
\end{proof}

\subsubsection{Elementary transformations of sheaf deformations}
\label{ElTrDef}

Suppose now that $Y=D\times X$, where $X$ is a compact complex manifold, and $D\subset \C$ is the standard disk. We will identify $X$ with the fibre $X_0=\{0\}\times X$ over the origin $0\in D$. Let ${\cal V}$ be a coherent sheaf on $X$, and ${\cal F}$ be a coherent sheaf on $Y$, flat over $D$, endowed with a fixed isomorphism $\resto{{\cal F}}{X}={\cal V}$. In other words ${\cal F}$ is a deformation of ${\cal V}$ parameterised by $D$. The corresponding infinitesimal deformation is an element $\epsilon_{\frac{\partial}{\partial z}}({\cal F})\in\Ext^1({\cal V},{\cal V})$\,.
On $X$ we fix a short exact sequence
$$0\to {\cal T}'\textmap{j_0}  {\cal V}\textmap{p_0} {\cal T}\to 0
$$
where ${\cal T}':=\ker(p_0)$ and $j_0:{\cal T}'\hookrightarrow {\cal V}$ is the inclusion morphism.  We  denote by ${\cal T}^Y$, ${\cal T}'^Y$ the direct images of ${\cal T}$, ${\cal T}$ to $Y$, and  by $p:{\cal F}\to {\cal T}^Y$ the epimorphism induced by $p_0$. Taking into account that in this special situation ${\cal O}_X(-X)$ is trivial on $X$, Proposition \ref{PropElTr} shows that the elementary transformation   of the pair $({\cal F},p)$ gives a subsheaf ${\cal F}'\subset {\cal F}$, which comes with a short exact sequence
$$0\to {\cal T}^Y\to {\cal F}'_X\to {\cal T}'^Y\to 0\,.
$$
 Restricting to $X$ we obtain a short exact sequence
 $$0\to {\cal T}\textmap{j_0'} {\cal V}'\textmap{p_0'} {\cal T}'\to 0\,,
 $$
 whose extension class is an element $\varepsilon({\cal F},p_0)\in \Ext^1({\cal T}',{\cal T})$. The following result shows that the extension class $\varepsilon({\cal F},p_0)$ can be computed explicitly in terms of the infinitesimal deformation $\epsilon_{\frac{\partial}{\partial z}}({\cal F})$\,.  Put $D^*:=D\setminus\{0\}$.

 \begin{co}\label{MainComparisonCo} With the notations and under the assumptions above one has
 \begin{enumerate}
 \item The elementary transformation ${\cal F}':=\ker(p)$ is flat over $D$, hence it is a deformation of ${\cal V}'$ parameterised by $D$, which coincides with ${\cal F}$ on $D^*\times X$.
 \item \begin{equation}\label{MainComparisonForm}
 \varepsilon({\cal F},p_0)=p_0(\epsilon_{\frac{\partial}{\partial z}}({\cal F}))j_0\,.
 \end{equation}	
 \end{enumerate}

  \end{co}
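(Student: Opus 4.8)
The plan is to treat the two assertions separately: the flatness in (1) comes for free from the ${\cal T}or_1$-vanishing packaged in Proposition \ref{PropElTr}, while the extension-class identity in (2) is a direct application of Corollary \ref{WithSection} once the general set-up there is specialised to the product situation $Y=D\times X$.

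For part (1) I would first record that, with $z$ the coordinate on $D$ and $X=X_0=\{0\}\times X$ the divisor $\{z=0\}$, flatness of ${\cal F}$ over $D$ is equivalent to the vanishing of the $z$-torsion of ${\cal F}$. Indeed, the stalks of ${\cal F}$ at points of $X_0$ are modules over the local rings of $Y$, viewed over the discrete valuation ring ${\cal O}_{D,0}$, and flatness over a DVR is equivalent to torsion-freeness, i.e. to injectivity of multiplication by $z$. By Remark \ref{Tor}, applied with the trivialisation ${\cal O}_Y(-X)\simeq{\cal O}_Y$ given by $z$, this injectivity is exactly the statement ${\cal T}or_1({\cal F},{\cal O}_X)=0$. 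Proposition \ref{PropElTr}(1) then produces ${\cal T}or_1({\cal F}',{\cal O}_X)=0$, so $z$ acts injectively on ${\cal F}'=\ker(p)$ as well, and reversing the equivalence gives flatness of ${\cal F}'$ over $D$ along $X_0$. The remaining clause, that ${\cal F}'$ agrees with ${\cal F}$ over $D^*$, is immediate: ${\cal T}^Y$ is supported on $X_0$, so $p$ vanishes on $D^*\times X$ and there ${\cal F}'={\cal F}$, which is already flat.

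For part (2) the key observation is that the present situation is precisely the one covered by Corollary \ref{WithSection}. The second infinitesimal neighbourhood $2X_0$ is the product $\pmb{o}\times X$, so the projection ${\cal O}_{2X}\to{\cal O}_X$ carries the canonical multiplicative right splitting $\sigma$ given by inclusion of the constants in the $z$-direction, and ${\cal O}_X(-X)$ is trivialised by $z$. With this data I would identify the splitting-dependent extension class $\varepsilon_X^\sigma({\cal F})\in\Ext^1({\cal V},{\cal V})$ of the restriction sequence (\ref{2XF}) with the infinitesimal deformation $\epsilon_{\frac{\partial}{\partial z}}({\cal F})$: after trivialising ${\cal O}_X(-X)$ the sequence (\ref{2XF}) reads $0\to{\cal V}\to{\cal F}/z^2{\cal F}\to{\cal V}\to 0$, with first map induced by multiplication by $z$ and ${\cal O}_X$-structure the one prescribed by $\sigma$, and this is exactly the sequence whose class defines $\epsilon_{\frac{\partial}{\partial z}}({\cal F})$.

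It then remains to match notation. In the language of Corollary \ref{WithSection} one has ${\cal H}={\cal T}^Y$, ${\cal H}'=\ker(p_X)={\cal T}'$, the monomorphism $j$ equal to $j_0\colon{\cal T}'\hookrightarrow{\cal V}$, and, after the trivialisation of ${\cal O}_X(-X)$, the morphism $p_X\otimes\id$ equal to $p_0\colon{\cal V}\to{\cal T}$; moreover $\varepsilon_X({\cal F},p)$ is by construction the class $\varepsilon({\cal F},p_0)\in\Ext^1({\cal T}',{\cal T})$ of the restricted elementary-transformation sequence. Substituting these identifications, together with $\varepsilon_X^\sigma({\cal F})=\epsilon_{\frac{\partial}{\partial z}}({\cal F})$, into formula (\ref{CompExt}) yields exactly $\varepsilon({\cal F},p_0)=p_0(\epsilon_{\frac{\partial}{\partial z}}({\cal F}))j_0$, which is (\ref{MainComparisonForm}). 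I expect the only genuinely delicate point to be the identification $\varepsilon_X^\sigma({\cal F})=\epsilon_{\frac{\partial}{\partial z}}({\cal F})$, namely checking that the $\sigma$-dependent ${\cal O}_X$-module structure and the trivialisation of the conormal bundle conspire to reproduce the standard infinitesimal-deformation class with the right normalisation and no spurious twist; everything else is bookkeeping resting on the appendix.
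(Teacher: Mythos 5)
Your proposal is correct and follows essentially the same route as the paper: part (2) is exactly the paper's argument, namely apply Corollary \ref{WithSection} with the multiplicative splitting $\sigma$ induced by the projection $2X\subset D\times X\to X$, note that $\varepsilon_X^\sigma({\cal F})=\epsilon_{\frac{\partial}{\partial z}}({\cal F})$ by definition of the infinitesimal deformation class, and match the morphisms $j=j_0$, $p_X\otimes\id=p_0$ in formula (\ref{CompExt}). The only (harmless) divergence is in part (1), where the paper argues more directly that ${\cal F}'=\ker(p)$, being a subsheaf of ${\cal F}$ which is torsion-free as a sheaf of ${\cal O}_D$-modules, is itself torsion-free and hence flat (the stalks ${\cal O}_{D,z}$ being principal ideal domains), whereas you route the same fact through Remark \ref{Tor} and Proposition \ref{PropElTr}(1) plus the observation that ${\cal F}'={\cal F}$ over $D^*$.
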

 \begin{proof} For (1) note that the stalks ${\cal O}_{D,z}$ are principal ideal domains, hence flatness over $D$ is equivalent to torsion-freeness as sheaf of ${\cal O}_D$-modules. It suffices to note that ${\cal F}'$ is a subsheaf of ${\cal F}$.\\
  
 (2)  follows directly from Corollary 	\ref{WithSection}, taking into account that, by definition, $\epsilon_{\frac{\partial}{\partial z}}({\cal F})$ is just the isomorphism class   of the canonical extension
 $$0\to {\cal F}_{X}(-X)={\cal F}_{X}\to {\cal F}_{2X}\to {\cal F}_X\to 0\,,
 $$
 where ${\cal F}_{2X}$ is regarded as a sheaf of ${\cal O}_X$-modules via the obvious multiplicative splitting $\sigma:{\cal O}_X\to {\cal O}_{2X}$ induced by the composition $2X\subset Y=D\times X\to X$\,.
 \end{proof}
 
 \subsection{The push-forward of a family under a branched double cover}
 \label{BranchedCoverSect}
 
 Let $\pi: B\to \Bg$ be  a branched  double covering of Riemann surfaces, $b\in B$ be a ramification point,  $\bg=\pi(b)$  be the corresponding branch point. Let $z$, $\zg$ be local coordinates of $B$, $\Bg$ around $b$, $\bg$ such that $z(b)=\zg(\bg)=0$,  $\zg\circ \pi=z^2$ and let $v\in T_b(B)$, $\vg\in T_\bg(\Bg)$ be tangent vectors such that $dz(v)= d\zg(\vg)=1$. 
 
 \begin{pr}\label{coverings}
Let $X$ be a compact complex manifold,  $\mathscr{V}$ be a sheaf on $B\times X$, flat over $B$, and put $\mathscr{E}:=(\pi\times\id_X)_*(\mathscr{V})$. Let $\epsilon_v(\mathscr{V})\in \mathrm{Ext}^1({\cal V}_\bg,{\cal V}_\bg)$, $\epsilon_\vg(\mathscr{E})\in \mathrm{Ext}^1({\cal E}_\bg,{\cal E}_\bg)$ be the infinitesimal deformations of ${\cal V}_b:=  {\mathscr{V}}_{\{b\}\times X}$, ${\cal E}_\bg:=  \mathscr{E}_{\{\bg\}\times X}$ (regarded as sheaves on $X$) corresponding to $(v,\mathscr{V})$, $(\vg,\mathscr{E})$ respectively.  Then
\begin{enumerate}
\item ${\cal E}_\bg$ fits in an exact sequence
$$0\to {\cal V}_b\textmap{J_z} {\cal E}_\bg\textmap{R} {\cal V}_b\to 0
$$
whose extension class is $\epsilon_v(\mathscr{V})$.
\item \label{DefoCOmp} One has  	$\epsilon_v(\mathscr{V})=R\big(\epsilon_\vg(\mathscr{E})\big)J_z$.
\end{enumerate}
\end{pr}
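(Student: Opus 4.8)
The plan is to reduce both assertions to a local computation near the branch point, where $\pi$ has the normal form $\zg=z^2$, and then to read everything off from the behaviour of $\mathscr{V}$ along the $z$-adic filtration of its fibre. The statement is local over the base: the infinitesimal deformations $\epsilon_v(\mathscr{V})$ and $\epsilon_\vg(\mathscr{E})$, the push-forward, and the fibres all depend only on $\mathscr{V}$, $\mathscr{E}$ in a neighbourhood of $\{b\}\times X$, $\{\bg\}\times X$, while $X$ stays compact so that the $\Ext^1$ groups remain genuinely global in $X$. Since $\pi\times\id_X$ is finite, hence affine, push-forward commutes with restriction to a fibre; thus, writing $X_b:=\{b\}\times X$ and $nX_b$ for its $n$-th infinitesimal neighbourhood $\{z^n=0\}$, the fibre ${\cal E}_\bg=\mathscr{E}_{\{\bg\}\times X}$ is canonically the restriction $\mathscr{V}_{2X_b}$ of $\mathscr{V}$ to the scheme-theoretic fibre $2X_b=(\pi\times\id_X)^{-1}(\{\bg\}\times X)$, viewed as an ${\cal O}_X$-module. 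This is the local counterpart of the identification (\ref{Ep+-}) used in Section \ref{c2=0}.

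For part (1) I would tensor the thickening sequence $0\to{\cal O}_{X_b}(-X_b)\xrightarrow{z}{\cal O}_{2X_b}\to{\cal O}_{X_b}\to0$ (the local form of (\ref{2X})) with $\mathscr{V}$. Flatness of $\mathscr{V}$ over $B$ gives ${\cal T}or_1(\mathscr{V},{\cal O}_{X_b})=0$, so the resulting sequence of ${\cal O}_X$-modules
\[
0\to\mathscr{V}_{X_b}(-X_b)\xrightarrow{J_z}\mathscr{V}_{2X_b}\xrightarrow{R}\mathscr{V}_{X_b}\to0
\]
remains exact. The normalisation $dz(v)=1$ trivialises ${\cal O}_{X_b}(-X_b)$ by $z$ and identifies $\mathscr{V}_{X_b}(-X_b)={\cal V}_b$, $\mathscr{V}_{2X_b}={\cal E}_\bg$, so this is exactly the asserted sequence, with $J_z$ multiplication by $z$ and $R$ the restriction. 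By the very definition of the infinitesimal deformation recalled in Corollary \ref{MainComparisonCo}, the class of this sequence is $\epsilon_v(\mathscr{V})$, which proves (1).

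For part (2) I would compute the Yoneda sandwich $R\,\epsilon_\vg(\mathscr{E})\,J_z$ directly. Because $\zg\circ\pi=z^2$, multiplication by $\zg$ on $\mathscr{E}$ is multiplication by $z^2$ on $\mathscr{V}$, so $\epsilon_\vg(\mathscr{E})$ (normalised by $d\zg(\vg)=1$) is represented by $0\to\mathscr{V}_{2X_b}\xrightarrow{z^2}\mathscr{V}_{4X_b}\to\mathscr{V}_{2X_b}\to0$. The computation then proceeds in two steps, each governed by the flatness isomorphisms $z^k\colon\mathscr{V}_{(n-k)X_b}\xrightarrow{\sim}z^k\mathscr{V}_{nX_b}$. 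Pulling back along $J_z\colon{\cal V}_b\to{\cal E}_\bg$ restricts the middle term to the preimage of $\im(J_z)=z\mathscr{V}_{2X_b}$, which is $z\mathscr{V}_{4X_b}\cong\mathscr{V}_{3X_b}$; a short diagram chase identifies $J_z^{\,*}\epsilon_\vg(\mathscr{E})$ with the class of $0\to\mathscr{V}_{2X_b}\xrightarrow{z}\mathscr{V}_{3X_b}\to{\cal V}_b\to0$. Pushing this out along $R\colon{\cal E}_\bg\to{\cal V}_b$ identifies the middle term with $\mathscr{V}_{3X_b}/z^2\mathscr{V}_{3X_b}=\mathscr{V}_{2X_b}={\cal E}_\bg$, and tracing the induced sub- and quotient maps recovers precisely the sequence $0\to{\cal V}_b\xrightarrow{J_z}{\cal E}_\bg\xrightarrow{R}{\cal V}_b\to0$ of part (1). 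Hence $R\,\epsilon_\vg(\mathscr{E})\,J_z=\epsilon_v(\mathscr{V})$. Alternatively, both steps can be packaged through the functoriality statement of Theorem \ref{MainFTh} applied to the divisor $X_b\subset B\times X$.

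The main obstacle is the bookkeeping in part (2): a naive set-theoretic attempt to exhibit an isomorphism of the sandwiched extension onto the sequence of part (1) fails, because the discrepancy term is exactly the nilpotent endomorphism $J_z\circ R$ (multiplication by $z$) of ${\cal E}_\bg$. The computation only closes up once the pullback and pushout are carried out intrinsically through the flatness isomorphisms rather than by ad hoc splittings, and once one checks that all the identifications are ${\cal O}_X$-linear and globalise over $X$, so that the equalities hold in the relevant $\Ext^1$ groups of sheaves and not merely fibrewise. Keeping the two normalisations $dz(v)=1$ and $d\zg(\vg)=1$ consistent throughout — they are what pin down $J_z$ and the factor relating $\zg$ to $z^2$ — is the other point requiring care.
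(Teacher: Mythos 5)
Your proof is correct and follows essentially the same route as the paper's: part (1) is the paper's argument (tensor the thickening sequence with $\mathscr{V}$, use flatness over $B$ and base change for the finite map $\pi\times\id_X$ to identify ${\cal E}_\bg$ with $\mathscr{V}_{2X_b}$ as an ${\cal O}_X$-module), and part (2) represents $\epsilon_\vg(\mathscr{E})$ by the same sequence $0\to\mathscr{V}_{2X_b}\to\mathscr{V}_{4X_b}\to\mathscr{V}_{2X_b}\to0$ and evaluates the Yoneda sandwich $R\,\epsilon_\vg(\mathscr{E})\,J_z$ via the multiplication-by-$z$ isomorphisms coming from flatness. The only difference is organisational: you carry out the pullback along $J_z$ and the pushout along $R$ sequentially (passing through $\mathscr{V}_{3X_b}$), whereas the paper forms the corresponding subquotient $\Rg^{-1}(\im J_z)/J_{z^2}(\ker R)\cong {\cal I}_X\mathscr{V}/{\cal I}_X^3\mathscr{V}$ in a single diagram and identifies it with ${\cal E}_\bg$ by the isomorphism $F_z$ -- the same computation packaged in one step.
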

\begin{proof}
(1) Denote by  $\pmb{b}\subset B$, $\pmb{X}\subset B\times X$ the non-reduced complex subspaces associated with the effective divisors $2\{b\}$, $2(\{b\}\times X)$ respectively. One has obviously
$$\pmb{X}=\pmb{b}\times X=(\pi\times\id_X)^{-1}(\{\bg\}\times X)\,.
$$
Identify $X$ with the divisor $\{b\}\times X$ of $B\times X$ to save on notations. Multiplication with $z$ defines a sheaf monomorphism $j_z:{\cal O}_{X}\to {\cal O}_{\pmb{X}}$, and a short exact sequence  
 $$0\to {\cal O}_{X}\textmap{j_z} {\cal O}_{\pmb{X}}\textmap{r} {\cal O}_{ X}\to 0
 $$
 on $B\times X$.  Taking tensor product with $\mathscr{V}$ and taking into account the flatness of  $\mathscr{V}$ over $B$, we get a short exact sequence
 \begin{equation}\label{extV}
 0\to \mathscr{V}_{X}\textmap{J_z} \mathscr{V}_{\pmb{X}}\textmap{R} \mathscr{V}_{X}\to 0\,.
 \end{equation}
By definition, $\epsilon_v(\mathscr{V})$ is the extension class of (\ref{extV}) when $\mathscr{V}_{\pmb{X}}$ is regarded as an ${\cal O}_X$-module via the projection $Q:\pmb{X}=\pmb{b}\times X\to X$.  

On the other hand, since $\pi\times\id_X$ is a finite map, $(\pi\times\id_X)_*(\mathscr{V})$ commutes with base change, hence  
 $${\cal E}_\bg=\big\{{(\pi\times\id_X)_*(\mathscr{V})}\big\}_{\{\bg\}\times X}=Q_*({{\cal V}}_{\pmb{X}})\,.
 $$
It suffices to note that the sheaf $Q_*({{\cal V}}_{\pmb{X}})$ coincides with  $ {\cal V}_{\pmb{X}}$ itself regarded as an ${\cal O}_X$-module, via the  ring sheaf morphism ${\cal O}_X\to {\cal O}_{\pmb{X}}$ induced by $Q$\,.
\\ \\
(2) Denote by ${\cal X}$ the complex subspace of $B\times X$ corresponding to the effective divisor $4X$. The infinitesimal deformation $\epsilon_\vg(\mathscr{E})$ is the extension class of the lower line in the diagram below, where $\mathscr{V}_{\pmb{X}}$,  $\mathscr{V}_{{\cal X}}$ are regarded as ${\cal O}_X$-modules via the obvious projections $\pmb{X}\to X$, ${\cal X}\to X$. As in the proof of Theorem \ref{MainFTh} we see that $R\big(\epsilon_\vg(\mathscr{E})\big)J_z\in\Ext^1({\cal V}_b,{\cal V}_b)$ is the extension class of the upper line in the diagram below, where $\bar J_{z^2}$, $\bar\Rg$ are  induced by $ J_{z^2}$, $\Rg$ respectively.  On the other hand one has
$$\qmod{\Rg^{-1}(J_z(\mathscr{V}_{{\cal X}}))}{J_{z^2}(\ker(R))}= \qmod{{\cal I}_X\mathscr{V}_{{\cal X}}}{{\cal I}_X^3 \mathscr{V}_{{\cal X}}}=\qmod{{\cal I}_X\mathscr{V}}{{\cal I}_X^3 \mathscr{V}}\,.
$$
Using again the flatness of $\mathscr{V}$ over $B$ it follows that the morphism 
$$F_z:\mathscr{V}_{\pmb{X}}=\qmod{\mathscr{V}}{{\cal I}_X^2 \mathscr{V}}\to \qmod{{\cal I}_X\mathscr{V}}{{\cal I}_X^3 \mathscr{V}}=\qmod{\Rg^{-1}(J_z(\mathscr{V}_{{\cal X}}))}{J_{z^2}(\ker(R))}$$
 given by multiplication with $z$ is an isomorphism. It suffices to note that the upper triangles are commutative.

$$
\begin{diagram}[h=7mm]
 &&0&&\mathscr{V}_{\pmb{X}}&&0 \\
&&\uTo &\ruTo^{J_z}&\dTo^{F_z}_\simeq&\rdTo^{R}&\dTo &&  \\
0&\rTo &\mathscr{V}_{X} &\rTo_{\bar J_{z^2}} &\qmod{\Rg^{-1}(J_z(\mathscr{V}_{{\cal X}}))}{J_{z^2}(\ker(R))}&\rTo_{\bar \Rg} &\mathscr{V}_{X}&\rTo 0\\
  & & \uTo^{R} &&&& \dTo_{J_z} \\
0&\rTo & \mathscr{V}_{\pmb{X}} &\rTo^{J_{z^2}} &\mathscr{V}_{{\cal X}} &\rTo^{\Rg} &  \mathscr{V}_{\pmb{X}} &\rTo 0\\
&&\uTo^{J_z} &&&&\dTo_{R} && \\
&&\mathscr{V}_{ X} &&&&\mathscr{V}_{ X} &&\\
&&\uTo  &&&&\dTo &&  \\
 && 0&&&&0
\end{diagram}\,.   
$$
\end{proof}

\subsection{Extensions and families of ideal sheaves}
  \label{FamIdealSh}

  Let  $A$ be a commutative ring, and $M$, $N$ be $A$-modules. A free resolution 
$$\dots\to F_2\textmap{\delta_2} F_1\textmap{\delta_1} F_0\textmap{q} N\to 0$$
of $N$ gives an isomorphism 
$$\Ext^1(N,M)\simeq \qmod{\ker(\Hom(F_1,M)\to \Hom(F_2,M))}{\im(\Hom(F_0,M)\to \Hom(F_1,M))}.
$$
Let
$$0\to M\textmap{a} E\textmap{b} N\to 0
$$
be an extension of $N$ by $M$. The element 
$$\varepsilon(a,b)\in  \qmod{\ker(\Hom(F_1,M)\to \Hom(F_2,M))}{\im(\Hom(F_0,M)\to \Hom(F_1,M)) }$$
corresponding to the extension $(a,b)$ is given by the formula
\begin{equation}\label{e-formula}
\varepsilon(a,b)=[a^{-1}\circ p_1\circ s\circ \delta_1],
\end{equation}
where $s:F_0\to E\times_N F_0$ is a right splitting of the pull-back extension
$$0\to M \textmap{(a,0)}  E\times_N F_0\textmap{p_2} F_0\to 0\,.
$$

  Let $X$ be a complex manifold, $S\subset X$ be a codimension 2 locally complete intersection, and ${\cal I}_S$ be the ideal sheaf of $S$. Fix $x\in S$ and a system $(\xi_1,\xi_2)\in {\cal O}_x^{\oplus 2}$ of local equations of $S$. The Koszul resolution of ${\cal I}_{S,x}$ associated with $(\xi_0,\xi_1)$ reads
$$
0\to   {\cal O}_x\textmap{\delta_1}   {\cal O}_x^{\oplus 2}\textmap{q} {\cal I}_{S,x}\to 0\,, \eqno{(K_{\xi_1,\xi_2})}	
$$
where the morphisms $\delta_1$, $q$ are defined by
 $$\delta_1(\varphi)=(-\xi_2\varphi,\xi_1\varphi)\,,\ q(\varphi_1,\varphi_2)=\xi_1\varphi_1+\xi_2\varphi_2\,.
 $$
Using this resolution  we get, for any coherent sheaf ${\cal M}$ on $X$, an isomorphism 
 $$a_{\xi_1,\xi_2}^{\cal M}:{\cal E}xt^1_{{\cal O}_x}({\cal I}_{S,x},{\cal M}_x)\to {\cal M}_x/{\cal I}_S{\cal M}_x={\cal M}_{S,x}\,.$$
 The   isomorphisms $a_{\xi_1,\xi_2}^{\cal M}$ associated with local equation systems $(\xi_1,\xi_2)$ give a {\it canonical, global} identification \cite[Proposition 7.2, p. 179]{Ha}
\begin{equation}\label{can}
\begin{gathered} 
a^{\cal M}_S:{\cal E}xt^1_{{\cal O}_X}
({\cal I}_S,{\cal M})\textmap{\simeq} \omega_{S/X}\otimes_{{\cal O}_X}{\cal M}={\cal H}om_{{\cal O}_S}(\extp^2 ({\cal I}_S/{\cal I}_S^2),{\cal M}_S) \\
\langle a^{\cal M}_S(\varepsilon),d\xi_1\wedge d\xi_2\rangle =a_{\xi_1,\xi_2}^{\cal M}\,,
\end{gathered} 
 \end{equation}
where $\omega_{S/X}:={\cal H}om_{{\cal O}_S}(\extp^2 ({\cal I}_S/{\cal I}_S^2),{\cal O}_S)$ \cite[p. 141]{Ha}. 

Let now
 $$0\to {\cal M} \textmap{a} {\cal E}\textmap{b} {\cal I}_S \to 0\
 $$
 be an extension of ${\cal M}$ by ${\cal I}_S$, $\varepsilon(a,b)\in\Ext^1({\cal I}_S,{\cal M})$ be the corresponding extension class, and $\tilde \varepsilon(a,b)\in H^0({\cal E}xt^1({\cal I}_S,{\cal M}))=H^0(\omega_{S/X}\otimes_{{\cal O}_X} {\cal M})$ be its image  via the canonical morphism. Formula (\ref{e-formula}) gives for a local equation system   $(\xi_1,\xi_2)$ of $S$ at $x\in X$:
\begin{equation}\label{ext-formula}
\langle\tilde \varepsilon(a,b),d\xi_1\wedge d\xi_2\rangle=a_x^{-1}(-\xi_2\eta_1+\xi_1\eta_2)+{\cal I}_{S,x}{\cal M}_x\,,	
\end{equation}
 where $\eta_1$, $\eta_2\in {\cal E}_x$ are lifts of $\xi_1$, $\xi_2$ via $b$. 
 \begin{pr}\label{IdealFam} Let $X$ be a complex surface, ${\cal T}$ be its tangent sheaf, and  $x_0\in X$. Then
\begin{enumerate}
\item One has natural  isomorphisms
\begin{equation} 
{\cal E}xt^1_{{\cal O}_X}({\cal I}_{x_0},{\cal I}_{x_0})\textmap{\simeq}  \qmod{{\cal T}}{{\cal I}_{x_0}{\cal T}}\ ,	\ H^0\big({\cal E}xt^1_{{\cal O}_X}({\cal I}_{x_0},{\cal I}_{x_0})\big)={\cal T}(x_0)\,.
\end{equation}
\item Let $D\subset\C$ be the standard disk, $\varphi:D\to X$ be a holomorphic map with $\varphi(0)=x_0$, and $\Phi\subset D\times X$ be its graph. Then 
\begin{enumerate}
\item ${\cal I}_\Phi$ is flat over $D$,
\item The image $\tilde\epsilon_{\frac{\partial}{\partial z}(0)} ({\cal I}_\Phi)$ of   $\epsilon_{\frac{\partial}{\partial z}(0)} ({\cal I}_\Phi)\in \Ext^1({\cal I}_{x_0},{\cal I}_{x_0})$ via the canonical map  $\Ext^1({\cal I}_{x_0},{\cal I}_{x_0})\to  H^0\big({\cal E}xt^1_{{\cal O}_X}({\cal I}_{x_0},{\cal I}_{x_0})\big)$
is given by
\begin{equation}\label{speed}\tilde\epsilon_{\frac{\partial}{\partial z}(0)} ({\cal I}_\Phi)=\varphi'(0)\,.
\end{equation}
\end{enumerate}
\end{enumerate}

\begin{proof}
(1)  Using (\ref{can}) we get a canonical isomorphism
$${\cal E}xt^1({\cal I}_{x_0},{\cal I}_{x_0})\to \Omega^2(x_0)^\smvee\otimes_{{\cal O}_X} {\cal I}_{x_0}=\Omega^2(x_0)^\smvee\otimes_{{\cal O}_X} ({\cal I}_{x_0}/{\cal I}_{x_0}^2) $$
$$=\Omega^2(x_0)^\smvee\otimes \Omega^1(x_0)\simeq {\cal T}(x_0)\,,
$$
where the isomorphism ${\cal T}(x_0)\to\Hom \big(\Omega^2(x_0),  \Omega^1(x_0)\big)$ is given by
$$\langle v,\alpha_1\wedge \alpha_2\rangle=\alpha_1(v)\alpha_2-\alpha_2(v)\alpha_1\,.$$
(2) Assertion (a) is clear. Identify $X$ with the divisor $\{0\}\times X$ of $D\times X$ and denote by $\pmb{X}$ the non-reduced complex subspace corresponding to the effective divisor $2X$. The infinitesimal deformation $\epsilon:=\epsilon_{\frac{\partial}{\partial z}(0)} ({\cal I}_\Phi)$ is the extension class of the extension 
$$0\to \{{\cal I}_{\Phi}\}_X\textmap{a} \{{\cal I}_{\Phi}\}_{\pmb{X}}\textmap{b} \{{\cal I}_{\Phi}\}_X\to 0\,,
$$
where $\{{\cal I}_{\Phi}\}_{\pmb{X}}$ is regarded as an ${\cal O}_X$-module via the obvious morphism ${\cal O}_X\to {\cal O}_{\pmb{X}}$. Let $(\zeta_1,\zeta_2)$ be a chart of $X$ around $x_0$ with $\zeta_i(x_0)=0$. Put $\varphi_i(z)=\zeta_i(\varphi(z))$. Putting $\eta_i(z,x)=\zeta_i(x)-\varphi_i(z)$, the pair $(\eta_1,\eta_2)$ is a system of local equations of $\Phi$ around $(0,x_0)$, and the corresponding elements $\bar \eta_i\in \{{\cal I}_{\Phi}\}_{\pmb{X},(0,x_0)}$ are lifts of $\zeta_i$ via $b$. Putting $\psi_i:=\frac{1}{z}\varphi_i$ and  using (\ref{ext-formula}) we get
$$\langle\tilde\epsilon_{\frac{\partial}{\partial z}(0)} ({\cal I}_\Phi), d\zeta_1\wedge d\zeta_2\rangle=a^{-1}\big(-\zeta_2 \bar\eta_1+ \zeta_1 \bar\eta_2\big)+{\cal I}_{x_0}^2=\frac{1}{z}\big(-\zeta_2  \eta_1+ \zeta_1  \eta_2\big)+{\cal I}_{x_0}^2$$
$$=\zeta_2\psi_1-\zeta_1\psi_2 +{\cal I}_{x_0}^2= d\zeta_1(\varphi'(0)) d\zeta_2 - d\zeta_2(\varphi'(0)) d\zeta_1=\langle \varphi'(0), d\zeta_1\wedge d\zeta_2\rangle \,.$$
\end{proof}	
 \end{pr}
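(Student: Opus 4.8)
The plan is to reduce everything to the two local ingredients already assembled in this subsection: the canonical identification (\ref{can}) of the $\mathcal{E}xt^1$ sheaf of a local complete intersection, and the explicit cocycle formula (\ref{ext-formula}) for the image of an extension class under the canonical map $\Ext^1\to H^0(\mathcal{E}xt^1)$. For part (1), I would apply (\ref{can}) with $S=\{x_0\}$ --- which has codimension $2$ in the surface $X$, hence is a local complete intersection --- and with $\mathcal{M}={\cal I}_{x_0}$, obtaining a canonical isomorphism $\mathcal{E}xt^1({\cal I}_{x_0},{\cal I}_{x_0})\simeq \omega_{\{x_0\}/X}\otimes_{{\cal O}_X}{\cal I}_{x_0}$. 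Since $\omega_{\{x_0\}/X}=\mathcal{H}om\big(\extp^2({\cal I}_{x_0}/{\cal I}_{x_0}^2),{\cal O}_{\{x_0\}}\big)$ and ${\cal I}_{x_0}/{\cal I}_{x_0}^2\simeq\Omega^1(x_0)$, I would identify $\omega_{\{x_0\}/X}\simeq\Omega^2(x_0)^\smvee$ and the tensor factor ${\cal I}_{x_0}$ (restricted to the skyscraper at $x_0$) with $\Omega^1(x_0)$, reaching $\Omega^2(x_0)^\smvee\otimes\Omega^1(x_0)$. The concluding step is the contraction isomorphism special to dimension two, sending $v\in{\cal T}(x_0)$ to the element pairing against $\alpha_1\wedge\alpha_2$ by $\alpha_1(v)\alpha_2-\alpha_2(v)\alpha_1$, which gives $\Omega^2(x_0)^\smvee\otimes\Omega^1(x_0)\simeq{\cal T}(x_0)$. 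Because ${\cal I}_{x_0}$ is locally free off $x_0$, the $\mathcal{E}xt^1$ sheaf is a skyscraper at $x_0$; its stalk is then ${\cal T}/{\cal I}_{x_0}{\cal T}$ and its global sections coincide with that stalk ${\cal T}(x_0)$, giving both displayed isomorphisms.

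For part (2)(a) I would merely observe that ${\cal O}_{D,z}$ is a local principal ideal domain, so flatness over $D$ is equivalent to torsion-freeness as an ${\cal O}_D$-module; as ${\cal I}_\Phi$ sits inside the $D$-flat sheaf ${\cal O}_{D\times X}$, it is torsion-free, hence flat. The substance is part (2)(b). After identifying $X$ with the divisor $\{0\}\times X$ and writing $\pmb{X}$ for the first infinitesimal neighbourhood $2X$, the class $\epsilon:=\epsilon_{\frac{\partial}{\partial z}(0)}({\cal I}_\Phi)$ is the extension class of the restriction sequence $0\to\{{\cal I}_\Phi\}_X\to\{{\cal I}_\Phi\}_{\pmb{X}}\to\{{\cal I}_\Phi\}_X\to 0$, where $\{{\cal I}_\Phi\}_{\pmb{X}}$ is viewed as an ${\cal O}_X$-module via the projection $\pmb{X}\to X$.

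I would then fix a chart $(\zeta_1,\zeta_2)$ at $x_0$ with $\zeta_i(x_0)=0$, set $\varphi_i=\zeta_i\circ\varphi$, and note that $\eta_i(z,x)=\zeta_i(x)-\varphi_i(z)$ is a system of local equations for $\Phi$, so the classes $\bar\eta_i\in\{{\cal I}_\Phi\}_{\pmb{X}}$ are lifts of $\zeta_i$ via the restriction map. Feeding these into (\ref{ext-formula}) and computing $-\zeta_2\eta_1+\zeta_1\eta_2=\zeta_2\varphi_1-\zeta_1\varphi_2$, then dividing by $z$ (using $\psi_i:=\varphi_i/z$, so that $\psi_i(0)=\varphi_i'(0)$) and reducing modulo ${\cal I}_{x_0}^2$, yields $\varphi_1'(0)\,d\zeta_2-\varphi_2'(0)\,d\zeta_1$ when paired against $d\zeta_1\wedge d\zeta_2$. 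Comparing this with the contraction formula from part (1) identifies it with $\langle\varphi'(0),d\zeta_1\wedge d\zeta_2\rangle$, which is exactly (\ref{speed}).

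I expect the main obstacle to lie in the bookkeeping of part (2)(b): making sure the lifts $\bar\eta_i$ are read off in the correct thickening $\pmb{X}$, that the factor $1/z$ arising when one passes from the defining equations $\eta_i$ to the actual cocycle is handled consistently with the $D$-flatness of ${\cal I}_\Phi$, and that the sign and normalisation conventions in the contraction isomorphism of part (1) are matched precisely against those built into (\ref{e-formula})--(\ref{ext-formula}). Once (\ref{can}) and (\ref{ext-formula}) are in force, the remainder is formal.
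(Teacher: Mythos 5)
Your proposal is correct and follows essentially the same route as the paper's own proof: part (1) via the canonical identification (\ref{can}) with $S=\{x_0\}$, ${\cal M}={\cal I}_{x_0}$ followed by the contraction isomorphism ${\cal T}(x_0)\simeq\Omega^2(x_0)^\smvee\otimes\Omega^1(x_0)$, and part (2)(b) via the same chart computation with local equations $\eta_i=\zeta_i-\varphi_i$, lifts $\bar\eta_i$, the cocycle formula (\ref{ext-formula}), and the division by $z$ giving $\psi_i(0)=\varphi_i'(0)$. Your explicit argument for (2)(a) (stalks of ${\cal O}_D$ are PIDs, so flatness equals torsion-freeness, and ${\cal I}_\Phi\subset{\cal O}_{D\times X}$) is exactly the justification the paper invokes elsewhere for the same kind of claim, which it here dismisses as clear.
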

\subsection{Compact subsets of the moduli space of simple sheaves}
\label{ComSub}

Let $f:{\cal X}\to S$ be a proper morphism of complex spaces, and ${\cal A}$, ${\cal B}$ be coherent sheaves on ${\cal X}$  such that ${\cal A}$ is flat over $S$. We denote by $\Ag\ng_S$ the category of complex spaces over $S$, and by $H:=\underline{\Hom}({\cal A},{\cal B})$ the functor $\Ag\ng_S\to \sg\eg\tg\sg$ given by 
$$H(T):=\Hom_{{\cal X}_T}({\cal A}_T,{\cal B}_T)\,,
$$
where ${\cal X}_T:=T\times_S {\cal X}$, and ${\cal A}_T$, ${\cal B}_T$ are the inverse image of ${\cal A}$, ${\cal B}$ via the projection ${\cal X}_T\to {\cal X}$. A fundamental result of Flenner \cite[Section 3.2]{Fl} states that $H$ is represented by a linear space over $S$. In other words, there exists a coherent sheaf ${\cal H}$ on $S$ and, for any complex space $T\to S$ over $S$, a functorial bijection between $\Hom_{{\cal X}_T}({\cal A}_T,{\cal B}_T)$ and the set of holomorphic maps $T\to \mathbb{V}({\cal H})$  over $S$, where $\mathbb{V}({\cal H})$ denotes the linear space associated with ${\cal H}$ \cite[Section 1.6]{Fi}.   Using this result and  the isomorphisms $\mathbb{V}({\cal H})_s\simeq{\cal H}(s)^\smvee$ \cite[Section 1.8]{Fi}, one obtains
\def\supp{\mathrm{supp}}
\begin{equation}\label{support}
\{s\in S|\ \Hom({\cal A}_s,{\cal B}_s)\ne 0\}=\supp(\mathbb{V}({\cal H}))=\supp({\cal H})\,,
\end{equation}
where ${\cal A}_s$, ${\cal B}_s$ denote the restrictions of ${\cal A}$, ${\cal B}$ to the fibre $X_s:=f^{-1}(s)$. This proves the following semi-continuity result: 
\begin{pr}\cite{Fl}\label{SemiCont} The set $\{s\in S|\ \Hom({\cal A}_s,{\cal B}_s)\ne 0\}$ is Zariski closed in $S$.	
\end{pr}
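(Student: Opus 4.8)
The plan is to read the statement off directly from the representability result of Flenner recalled above, reducing it to the elementary fact that the support of a coherent sheaf is analytic. First I would invoke that result to obtain the coherent ${\cal O}_S$-module ${\cal H}$ together with the linear space $\mathbb{V}({\cal H})$ representing the functor $H=\underline{\Hom}({\cal A},{\cal B})$ on $\Ag\ng_S$.

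Next I would pass to fibres. For a point $s\in S$, evaluating the functor $H$ on $T=\{s\}$ (taken reduced and mapping to $S$ via the inclusion) gives, directly from its definition, $H(\{s\})=\Hom_{X_s}({\cal A}_s,{\cal B}_s)$, since ${\cal X}_{\{s\}}=X_s=f^{-1}(s)$ and ${\cal A}_{\{s\}},{\cal B}_{\{s\}}$ are by definition the restrictions ${\cal A}_s,{\cal B}_s$. On the other hand, by representability $H(\{s\})$ is the set of $S$-morphisms $\{s\}\to\mathbb{V}({\cal H})$, that is, the fibre $\mathbb{V}({\cal H})_s$, which by \cite[Section 1.8]{Fi} is canonically isomorphic to ${\cal H}(s)^\smvee$, where ${\cal H}(s):={\cal H}_s/\mathfrak{m}_s{\cal H}_s$. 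Consequently
\[
\Hom({\cal A}_s,{\cal B}_s)\ne 0 \iff {\cal H}(s)\ne 0 \iff {\cal H}_s\ne 0,
\]
the last equivalence being Nakayama's lemma applied to the finitely generated ${\cal O}_{S,s}$-module ${\cal H}_s$. This is precisely the content of (\ref{support}): the locus in question coincides with $\supp({\cal H})$.

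Finally I would conclude with the standard fact that the support of a coherent analytic sheaf is a closed analytic subset — locally it is the zero locus of the coherent annihilator ideal $\mathrm{Ann}({\cal H})\subset{\cal O}_S$ — whence the set $\{s\in S\mid \Hom({\cal A}_s,{\cal B}_s)\ne 0\}$ is Zariski closed in $S$. The substantive inputs here are entirely external: the representability theorem of Flenner (for which the flatness of ${\cal A}$ over $S$ is the essential hypothesis) and the analyticity of supports; given these, the argument is a formal reading-off, so I would not expect any genuine obstacle. The one point meriting care, and the one I would treat as the main step, is the canonical fibre identification $\mathbb{V}({\cal H})_s\simeq{\cal H}(s)^\smvee$ together with its compatibility with the functorial bijection, which is exactly the content of \cite[Section 1.8]{Fi}.
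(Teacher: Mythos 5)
Your proposal is correct and follows essentially the same route as the paper: Flenner's representability by a linear space $\mathbb{V}({\cal H})$, the fibre identification $\mathbb{V}({\cal H})_s\simeq{\cal H}(s)^\smvee$ from \cite{Fi}, and the identification of the locus with $\supp({\cal H})$, which is analytic since ${\cal H}$ is coherent. The only additions are the explicit Nakayama step and the annihilator-ideal justification, which the paper leaves implicit.
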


Note that this statement is not a consequence of Grauert's semi-continuity theorem. Let $X$ be a compact complex space, and ${\cal M}^\si$ be the moduli space of simple sheaves on $X$. Recall that ${\cal M}^\si$ is a (possibly non-Hausdorff) complex space. For the following corollary see also \cite{KO}:
\begin{co}  \label{criterion}
Let $([{\cal F}_1],[{\cal F}_2])\in {\cal M}^\si\times {\cal M}^\si$ be a non-separable pair. Then 
$$\Hom({\cal F}_1,{\cal F}_2)\ne 0,\ \Hom({\cal F}_2,{\cal F}_1)\ne 0\,.$$
 \end{co}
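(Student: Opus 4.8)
The plan is to promote the non-separability of $[{\cal F}_1]$ and $[{\cal F}_2]$ to a single sequence of isomorphism classes converging to both, and then to feed that sequence into Flenner's semicontinuity result, Proposition \ref{SemiCont}. First I would record that $\mathcal{M}^\si$, being a complex space, is locally metrizable and hence first countable; consequently the non-separability of the pair $([{\cal F}_1],[{\cal F}_2])$ is equivalent to the existence of a sequence $([{\cal G}_n])_n$ in $\mathcal{M}^\si$ with $[{\cal G}_n]\to[{\cal F}_1]$ and $[{\cal G}_n]\to[{\cal F}_2]$. Indeed, choosing decreasing countable neighbourhood bases $(U_n)_n$ of $[{\cal F}_1]$ and $(V_n)_n$ of $[{\cal F}_2]$, every intersection $U_n\cap V_n$ is non-empty by non-separability, and any choice $[{\cal G}_n]\in U_n\cap V_n$ yields such a sequence.

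Next I would invoke the construction of $\mathcal{M}^\si$ as a moduli space of simple sheaves \cite{KO}: for $i\in\{1,2\}$ there is a base point $o_i$ in a complex space $S_i$ and a coherent sheaf ${\cal A}^{(i)}$ on $S_i\times X$, flat over $S_i$, with ${\cal A}^{(i)}_{o_i}\cong{\cal F}_i$, whose classifying map $S_i\to\mathcal{M}^\si$, $s\mapsto[{\cal A}^{(i)}_s]$, is (after shrinking) a homeomorphism onto an open neighbourhood of $[{\cal F}_i]$. This is exactly where simplicity enters, since for simple sheaves the semiuniversal deformation is universal and the classifying map is a local isomorphism. Consequently, for $n$ large, $[{\cal G}_n]$ lies in both charts and lifts to points $s_n\in S_1$, $t_n\in S_2$ with ${\cal A}^{(1)}_{s_n}\cong{\cal G}_n\cong{\cal A}^{(2)}_{t_n}$, and with $s_n\to o_1$, $t_n\to o_2$.

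The final step applies Proposition \ref{SemiCont} to the proper morphism $f:(S_1\times S_2)\times X\to S_1\times S_2$ (proper because $X$ is compact), taking ${\cal A}$ to be the pullback of ${\cal A}^{(1)}$ and ${\cal B}$ the pullback of ${\cal A}^{(2)}$. Since the projection $S_1\times S_2\to S_1$ is flat, ${\cal A}$ is flat over $S_1\times S_2$, while the fibres are ${\cal A}_{(s,t)}={\cal A}^{(1)}_s$ and ${\cal B}_{(s,t)}={\cal A}^{(2)}_t$. Hence the set $Z:=\{(s,t)\mid\Hom({\cal A}^{(1)}_s,{\cal A}^{(2)}_t)\ne 0\}$ is Zariski closed, in particular closed in the classical topology. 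At each $(s_n,t_n)$ one has $\Hom({\cal A}^{(1)}_{s_n},{\cal A}^{(2)}_{t_n})=\End({\cal G}_n)=\C\ne 0$, since ${\cal G}_n$ is simple, so $(s_n,t_n)\in Z$; passing to the limit $(s_n,t_n)\to(o_1,o_2)$ gives $(o_1,o_2)\in Z$, that is $\Hom({\cal F}_1,{\cal F}_2)\ne 0$. Exchanging the roles of the two factors (equivalently, of ${\cal F}_1$ and ${\cal F}_2$) yields $\Hom({\cal F}_2,{\cal F}_1)\ne 0$.

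The main obstacle will be the second step. I must ensure that convergence $[{\cal G}_n]\to[{\cal F}_i]$ in $\mathcal{M}^\si$ genuinely lifts to convergence $s_n\to o_i$ in the chart $S_i$ together with an isomorphism ${\cal G}_n\cong{\cal A}^{(i)}_{s_n}$, and that the \emph{same} class ${\cal G}_n$ serves simultaneously in both charts so that the fibre-wise Hom at $(s_n,t_n)$ is $\End({\cal G}_n)$ rather than merely $\Hom({\cal G}_n,{\cal F}_2)$. This use of a \emph{product} family, and not a single family with ${\cal F}_2$ held constant, is what makes the simplicity of ${\cal G}_n$ available and is the crux of the argument; it rests entirely on the universal (\'etale) local structure of $\mathcal{M}^\si$ at simple points provided by \cite{KO}, which I would cite directly rather than re-derive.
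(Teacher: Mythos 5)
Your proposal is correct and follows essentially the same route as the paper: local universal families from \cite{KO} around each point, pullback of both families to the product of the two charts, and Flenner's semicontinuity (Proposition \ref{SemiCont}) applied to the locus where the fibrewise $\Hom$ is nonzero, using non-separability to place points of that locus in every neighbourhood of the base point. The only cosmetic difference is that you pass through sequences (via first countability of the complex space ${\cal M}^\si$), whereas the paper argues directly with neighbourhoods and the closedness of the $\Hom\ne 0$ locus, which makes the first-countability remark unnecessary.
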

\begin{proof}
For $i\in\{1,2\}$ let $(U_i,p_i)$ be  pointed Hausdorff complex spaces and $\mathscr{F}_i$ be a sheaf on $U_i\times X$, flat over $U_i$, such that $\resto{\mathscr{F}_i}{\{p_i\}\times X}\simeq {\cal F}_i$ and the map $U_i\to {\cal M}^\si$ induced by $\mathscr{F}_i$ is an open embedding.  Taking  pull-backs we obtain sheaves ${\cal A}_i$ over $(U_1\times U_2)\times X$ which are flat over $U_1\times U_2$.	Since $([{\cal F}_1],[{\cal F}_2])$ is a non-separable pair, for any open neighbourhood $W$ of $(p_1,p_2)$ in $U_1\times U_2$, there exists  $(u_1,u_2)\in W$ such that $\resto{\mathscr{F}_1}{\{u_1\}\times X}\simeq \resto{\mathscr{F}_2}{\{u_2\}\times X}$  (regarded as sheaves on $X$). Since $\resto{{\cal A}_i}{\{(u_1,u_2)\}\times X}=\resto{\mathscr{F}_i}{\{u_i\}\times X}$, the claim follows now from Proposition \ref{SemiCont}. \end{proof}

\begin{pr}\label{HausdNeighb}
Let ${\cal N}\subset {\cal M}^\si$ be a compact, locally closed	 subset,  such that for any pair $([{\cal F}_1],[{\cal F}_2])\in {\cal N}\times {\cal N}$  with $[{\cal F}_1]\ne [{\cal F}_2]$ one has $\Hom({\cal F}_1,{\cal F}_2)= 0$, or $\Hom({\cal F}_2,{\cal F}_1)= 0$. Then ${\cal N}$ has an open Hausdorff  neighbourhood. 
\end{pr}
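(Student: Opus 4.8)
The statement is purely about the topology of $\cal M^\si$, the only input from sheaf theory being Corollary \ref{criterion}. The plan is to reformulate everything through the \emph{non-separation locus}
$$\Sigma:=\{(a,b)\in {\cal M}^\si\times {\cal M}^\si\mid a,b\ \hbox{cannot be separated by disjoint open sets}\}.$$
Its complement is open: if $(a_0,b_0)$ has disjoint open neighbourhoods $A\ni a_0$, $B\ni b_0$, then the \emph{same} pair $A,B$ separates every point of $A\times B$; hence $\Sigma$ is closed in ${\cal M}^\si\times {\cal M}^\si$. It contains the diagonal $\Delta$ (a point is never separated from itself), and by Corollary \ref{criterion} every $(a,b)\in\Sigma$ with representatives ${\cal F}_a,{\cal F}_b$ satisfies $\Hom({\cal F}_a,{\cal F}_b)\ne 0$ and $\Hom({\cal F}_b,{\cal F}_a)\ne 0$. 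Combined with the hypothesis on ${\cal N}$ this yields $({\cal N}\times {\cal N})\cap\Sigma\subseteq\Delta$, since two \emph{distinct} points of ${\cal N}$ have a vanishing Hom in one direction and therefore cannot lie in $\Sigma$. Thus it suffices to produce an open ${\cal U}\supseteq {\cal N}$ with $({\cal U}\times {\cal U})\cap\Sigma\subseteq\Delta$: distinct points of such a ${\cal U}$ then lie off $\Sigma$, hence are separated in ${\cal M}^\si$ and a fortiori in ${\cal U}$.

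The one delicate point is that $\Delta\subseteq\Sigma$ while $\Delta$ need not be closed, so one cannot naively separate ${\cal N}$ from $\Sigma$. Here the local structure of ${\cal M}^\si$ enters. As recalled in the proof of Corollary \ref{criterion}, ${\cal M}^\si$ is covered by open embeddings of \emph{Hausdorff} complex spaces, so every point has a Hausdorff open neighbourhood, and on any Hausdorff open $G\subseteq {\cal M}^\si$ one has $(G\times G)\cap\Sigma=\Delta_G$, because distinct points of $G$ are already separated inside $G$. Using compactness of ${\cal N}$ I would choose finitely many Hausdorff open sets $G_1,\dots,G_m$ with ${\cal N}\subseteq G:=\bigcup_k G_k$, and set
$$\Sigma':=\Sigma\setminus\bigcup_{k=1}^m (G_k\times G_k).$$
Then $\Sigma'$ is closed (the intersection of the closed set $\Sigma$ with the complement of an open set), and it is disjoint from ${\cal N}\times {\cal N}$: indeed $({\cal N}\times {\cal N})\cap\Sigma\subseteq\Delta_{\cal N}\subseteq\bigcup_k(G_k\times G_k)$, so nothing of $({\cal N}\times {\cal N})\cap\Sigma$ survives in $\Sigma'$.

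It remains to separate the compact set ${\cal N}\times {\cal N}$ from the closed set $\Sigma'$ by an open box. Since ${\cal N}\times {\cal N}$ is a compact subset of the open set $({\cal M}^\si\times {\cal M}^\si)\setminus\Sigma'$, the Wallace (tube) lemma — which requires no separation axiom — provides open sets $U,V\supseteq {\cal N}$ with $U\times V\subseteq ({\cal M}^\si\times {\cal M}^\si)\setminus\Sigma'$; putting ${\cal U}_0:=U\cap V$ gives an open ${\cal U}_0\supseteq {\cal N}$ with $({\cal U}_0\times {\cal U}_0)\cap\Sigma'=\emptyset$. Finally set ${\cal U}:={\cal U}_0\cap G$, an open neighbourhood of ${\cal N}$ contained in $\bigcup_k G_k$ with $({\cal U}\times {\cal U})\cap\Sigma'=\emptyset$, and check Hausdorffness: for distinct $a,b\in {\cal U}$ with $(a,b)\in\Sigma$, the fact that $(a,b)\notin\Sigma'$ forces $a,b\in G_k$ for a common $k$, whence $(a,b)\in (G_k\times G_k)\cap\Sigma=\Delta_{G_k}$ and $a=b$, a contradiction. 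Hence every pair of distinct points of ${\cal U}$ is separable, and ${\cal U}$ is the desired Hausdorff neighbourhood. I expect the main obstacle to be not the compactness/tube-lemma step but the careful bookkeeping around the diagonal, which is exactly what the excision $\Sigma'$ is designed to handle; note that local closedness of ${\cal N}$ is not actually used in this argument.
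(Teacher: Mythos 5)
Your proof is correct, and it takes a genuinely different route from the paper's. The paper reduces, exactly as you do, via Corollary \ref{criterion} to the purely topological statement that ${\cal N}$ is \emph{separated in} ${\cal M}^\si$ (any two distinct points of ${\cal N}$ admit disjoint open neighbourhoods in the ambient space), but it then invokes a general criterion (Proposition \ref{MateiProp}): in a space in which every point has a fundamental system of \emph{compact} neighbourhoods, every compact subset separated in the ambient space has a Hausdorff open neighbourhood. Its proof works inside ${\cal M}^\si$ itself, introducing for each $x$ the non-separation set $M_x$ and $M_L=\bigcup_{x\in L}M_x$, proving that $M_L$ is closed and disjoint from $L$ (Lemma \ref{ML}) --- a step that genuinely requires an argument --- and then running a finite-cover argument with compact neighbourhoods $K_{x_i}$ and the open sets $S_{L\cup K_{x_i}}$. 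You instead work in the product ${\cal M}^\si\times{\cal M}^\si$ with the global non-separation locus $\Sigma$, whose closedness is immediate, excise the diagonal trouble by removing the boxes $G_k\times G_k$ of a finite Hausdorff cover of ${\cal N}$, and finish with the Wallace tube lemma. The trade-offs: your argument needs only that ${\cal M}^\si$ is locally Hausdorff (which is what the local charts by Hausdorff complex spaces in the proof of Corollary \ref{criterion} provide), whereas the paper's criterion assumes local compactness --- a stronger hypothesis, though also satisfied here since ${\cal M}^\si$ is a complex space; conversely, the paper's route packages the topology into a reusable stand-alone proposition and avoids passing to products. Your closing observation that local closedness of ${\cal N}$ is never used applies equally to the paper's proof; that hypothesis plays no role in either argument.
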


This follows from Corollary \ref{criterion}  and Proposition \ref{MateiProp}  explained below, which is  a general existence criterion for Hausdorff open neighbourhoods of compact subspaces in locally compact spaces.   
Let $X$ be a topological space.  A subset $A\subset X$ will be called {\it separated in $X$} if any two distinct points of $A$ can be separated by disjoint neighbourhoods in $X$. For a point $x\in X$ we will denote by ${\cal V}_x$ the set of {\it open} neighbourhoods of $x$, and we put 
$$M_x:=\{y\in X|\ \forall U\in {\cal V}_x\ \forall V\in {\cal V}_y,\ U\cap V\ne\emptyset\}\setminus\{x\}\,,$$
and, for a set $L\subset X$, we put $M_L:=\union_{x\in L} M_x$.
We  adopt Bourbaki's definition of compactness, so compactness requires separateness.  
\begin{lm}\label{ML}
Let $X$ be locally Hausdorff topological  space, and $L\subset X$ be a compact subspace, which is separated in $X$. Then 
\begin{enumerate}
\item 	$L\cap \bar M_L=\emptyset$\,.
\item $M_L$ is closed in $X$.
\end{enumerate}
	
\end{lm}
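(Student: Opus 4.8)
The plan is to exploit the symmetry and topological meaning of the relation defining $M_x$: for $x\neq y$, one has $y\in M_x$ precisely when $x$ and $y$ cannot be separated by disjoint open neighbourhoods, and this relation is symmetric in $x$ and $y$. The entire argument will be run through nets and cluster points, which is essentially forced because $X$ is not assumed Hausdorff, so limits need not be unique and the ``for all neighbourhoods'' condition defining $M_x$ cannot be handled by naive point manipulations or by sequences. The compactness of $L$ guarantees that every net in $L$ has a convergent subnet, and this is the engine of both parts.

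First I would prove (1). Fix $x_0\in L$ and argue by contradiction, assuming $x_0\in\bar M_L$. Then there is a net $(y_\alpha)$ in $M_L$ with $y_\alpha\to x_0$, and for each $\alpha$ a witness $x_\alpha\in L$ with $y_\alpha\in M_{x_\alpha}$, i.e. $x_\alpha$ and $y_\alpha$ are distinct and topologically inseparable. Passing to a subnet, compactness of $L$ lets me assume $x_\alpha\to x_*$ for some $x_*\in L$, while still $y_\alpha\to x_0$. I then split into two cases. If $x_*=x_0$, I choose a Hausdorff open neighbourhood $H$ of $x_0$ (local Hausdorffness of $X$); then $x_\alpha$ and $y_\alpha$ eventually both lie in $H$, and being distinct points of a Hausdorff open set they can be separated by disjoint opens of $H$, hence of $X$, contradicting $y_\alpha\in M_{x_\alpha}$. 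If $x_*\neq x_0$, then $x_0,x_*\in L$ are distinct, so separatedness of $L$ in $X$ yields disjoint opens $P\ni x_0$ and $Q\ni x_*$; eventually $y_\alpha\in P$ and $x_\alpha\in Q$, which separates $x_\alpha$ from $y_\alpha$ and again contradicts inseparability. This gives $x_0\notin\bar M_L$, proving (1).

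Next I would deduce (2) by the same technique, now feeding in (1). Let $z\in\bar M_L$; by (1) we have $z\notin L$. Take a net $(y_\alpha)$ in $M_L$ with $y_\alpha\to z$ and witnesses $x_\alpha\in L$, $y_\alpha\in M_{x_\alpha}$, and pass to a subnet so that $x_\alpha\to x_*\in L$. Since $x_*\in L$ while $z\notin L$, we have $z\neq x_*$. It then remains to check that $z$ and $x_*$ are inseparable: if they admitted disjoint open neighbourhoods $U\ni x_*$ and $V\ni z$, then eventually $x_\alpha\in U$ and $y_\alpha\in V$, separating $x_\alpha$ and $y_\alpha$, which is impossible. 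Hence $z\in M_{x_*}\subseteq M_L$, so $\bar M_L\subseteq M_L$ and $M_L$ is closed.

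The main obstacle is bookkeeping in the non-Hausdorff ambient space: I must consistently extract convergent subnets of the \emph{witnesses} $x_\alpha$ inside the compact $L$ (rather than of the $y_\alpha$, whose limit is the given point), and invoke the two distinct separation hypotheses in exactly the right cases — local Hausdorffness when the witness limit collides with the target point, and separatedness of $L$ when it is a second, distinct point of $L$. A secondary subtlety is the logical ordering: part (2) genuinely relies on part (1) to exclude $z\in L$, so the two assertions must be proved in this sequence.
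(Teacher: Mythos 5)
Your proof is correct, but it runs on different machinery than the paper's. The paper exploits compactness through open covers and finite subcovers: for (1) it fixes a Hausdorff open neighbourhood $V$ of $y\in L$, separates $y$ from the compact set $L\setminus V$ to obtain disjoint open sets $W\ni y$ and $U\supset L\setminus V$ with $W\subset V$, and then checks that $W\cap M_x=\emptyset$ for every $x\in L$ by the same dichotomy you use (witness in $V$: Hausdorffness of $V$; witness in $L\setminus V$: the pair $(W,U)$); for (2) it separates a hypothetical $y\in\bar M_L\setminus M_L$ from each point of $L$, passes to a finite subcover, intersects the corresponding neighbourhoods of $y$, and derives a contradiction. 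You exploit compactness through nets and cluster points: the witnesses $x_\alpha$ cluster at some $x_*\in L$, and your dichotomy $x_*=x_0$ versus $x_*\neq x_0$ plays exactly the role of the paper's $x\in V$ versus $x\in L\setminus V$. The two arguments consume the hypotheses at parallel points — local Hausdorffness for the ``nearby witness'' case, separatedness of $L$ for the ``distant witness'' case, and part (1) feeding into part (2) to rule out $z\in L$ — so the skeleton is the same, but the implementations genuinely differ. What each buys: the paper's argument is more elementary (no net/subnet formalism) and in (1) it produces an explicit open neighbourhood $W$ of $y$ missing $M_L$, which is slightly more constructive; your version of (2) is more direct, since instead of a reductio it exhibits the limit point $z$ as an element of $M_{x_*}$ for an explicit cluster point $x_*$ of the witnesses, proving $\bar M_L\subset M_L$ outright and making transparent \emph{which} $M_x$ absorbs each boundary point.
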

\begin{proof}
(1) Let $y\in L$ and $V$ be a separated open neighbourhood of $y$ in $X$. Since $L$ is separated in $X$, the point $y$ can be separated in $X$ from any point of $L\setminus V$. Using the compactness of $L\setminus V$, it follows that there exist  open neighbourhoods $W$ of $y$ and $U$ of $L\setminus V$ with $W\cap U=\emptyset$. We may assume $W\subset V$. If, by reductio ad absurdum, $y$ belonged to $\bar M_L$, there would exist $x\in L$ such that $W\cap M_x\ne\emptyset$. The point $x$ cannot belong to $V$, because $V$ is separated and $W\subset V$; it cannot belong to $L\setminus V$ either, because in this case, for any point $z\in W$ the pair $(z,x)$ will be separated by the pair of open sets $(W,U)$\,.
\vspace{2mm}\\
(2) Suppose, by reductio ad absurdum, that there exists $y\in \bar M_L\setminus M_L$. Using (1) we obtain $y\not \in L\cup M_L$, so for every $x\in L$ there exists $V_x\in {\cal V}_x$ and $V^x_y\in {\cal V}_y$ such that $V_x\cap V^x_y=\emptyset$. Let $\{x_1,\dots,x_k\}$ be a finite subset of $L$ such that $L\subset \union_{i=1}^k V_{x_i}$, and put $V_y:=\bigcap_{i=1}^k V_y^{x_i}$. Thus $V_{x_i}\cap V_y=\emptyset$ for $1\leq i\leq k$.   Since $y\in \bar M_L$, there exists $x\in L$ and $p\in V_y\cap M_x$. Choosing $i\in\{1,\dots,k\}$ such that $x\in V_{x_i}$, we see that  the pair $(x,p)$ is separated in $X$ by the pair of open sets $(V_{x_i},V_y)$, which contradicts $p\in M_x$\,. 
\end{proof}
Lemma	\ref{ML} shows that
\begin{co}\label{SL}
In the conditions of Lemma	\ref{ML}, the set $S_L:=X\setminus M_L$ is an open neighbourhood of $L$, and  any pair $(x,y)\in L\times S_L$ with $x\ne y$ can be separated in $X$ by open sets.
\end{co}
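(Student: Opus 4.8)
The plan is to derive Corollary \ref{SL} directly from Lemma \ref{ML} by unwinding the definitions of $M_L$ and $S_L$; no further topological work is needed, since all the substance has already been established in the lemma.

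First I would verify that $S_L=X\setminus M_L$ is an open neighbourhood of $L$. Openness is immediate from part (2) of Lemma \ref{ML}, which asserts that $M_L$ is closed in $X$. To see that $L\subset S_L$, note that $M_L\subset \bar M_L$, so part (1) of the lemma, namely $L\cap \bar M_L=\emptyset$, gives $L\cap M_L=\emptyset$; thus every point of $L$ lies in the complement $S_L$. Being open and containing $L$, the set $S_L$ is the required neighbourhood.

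Next I would establish the separation property. Fix $(x,y)\in L\times S_L$ with $x\ne y$. Since $y\in S_L=X\setminus M_L$ and $x\in L$, we have in particular $y\notin M_x$. Recalling that $M_x=\{y'\in X\mid \forall U\in{\cal V}_x\ \forall V\in{\cal V}_{y'},\ U\cap V\ne\emptyset\}\setminus\{x\}$, and that $y\ne x$, the condition $y\notin M_x$ forces $y$ to fail the defining predicate; hence there exist open sets $U\in{\cal V}_x$ and $V\in{\cal V}_y$ with $U\cap V=\emptyset$. These open sets separate $x$ from $y$, as required.

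The main point to keep in mind is that Corollary \ref{SL} carries no independent difficulty: it is purely a matter of translating the two assertions of Lemma \ref{ML} into the language of the statement. The genuine obstacle lies entirely in the proof of Lemma \ref{ML} itself---specifically in part (2), where the closedness of $M_L$ is obtained through a compactness-and-local-Hausdorffness covering argument---so here I would only need to be careful that the explicit removal of the point $x$ in the definition of $M_x$ is correctly accounted for when passing from $y\notin M_x$ (together with $y\ne x$) to the existence of genuinely disjoint neighbourhoods.
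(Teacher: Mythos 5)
Your proof is correct and is exactly what the paper intends: the paper offers no separate argument for Corollary \ref{SL}, simply stating that it follows from Lemma \ref{ML}, and your unwinding of the definitions (closedness of $M_L$ gives openness of $S_L$, part (1) gives $L\subset S_L$, and $y\notin M_x$ together with $y\ne x$ yields the disjoint neighbourhoods) is precisely that routine verification, including the correct handling of the removed point $x$ in the definition of $M_x$.
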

With this preparation we can prove
\begin{pr}\label{MateiProp} Let $X$ be a topological space with the property that any point has a fundamental system of compact neighbourhoods. Let $L\subset X$ be a compact subspace, which is separated in $X$. Then $L$ admits a Hausdorff open neighbourhood in $X$.	
\end{pr}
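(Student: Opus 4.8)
The plan is to combine the separation property already recorded in Corollary \ref{SL} with the local compactness hypothesis, the point being that Corollary \ref{SL} only separates $L$ from the rest of $S_L$, whereas we must also eliminate the bad (non-separated) pairs lying entirely in $S_L\setminus L$. First I would observe that the hypothesis forces $X$ to be locally Hausdorff: the interior of a compact neighbourhood of a point is an open neighbourhood which is Hausdorff as a subspace of that compact Hausdorff set. Hence Lemma \ref{ML} and Corollary \ref{SL} apply, producing the open neighbourhood $S_L=X\setminus M_L$ of $L$ with the property that every pair $(x,y)\in L\times S_L$ with $x\ne y$ is separated in $X$.

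Next I would shrink to a quasi-compact neighbourhood inside $S_L$. For each $x\in L$ choose a compact neighbourhood $K_x\subset S_L$ (possible since $x$ has a fundamental system of compact neighbourhoods and $S_L$ is open), and extract a finite subcover $L\subset\bigcup_{i=1}^n\mathrm{int}(K_{x_i})$. Setting $C:=\bigcup_{i=1}^n K_{x_i}$ we obtain a quasi-compact set with $L\subset\mathrm{int}(C)$ and $C\subset S_L$. I then introduce the \emph{bad locus} $Z:=\{p\in C\mid \exists\,q\in C,\ q\ne p,\ q\in M_p\}$ of points of $C$ that fail to be separated from some other point of $C$. Corollary \ref{SL} immediately gives $Z\cap L=\emptyset$, because a point of $L$ is separated in $X$ from every other point of $S_L\supset C$.

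The key step, and the main obstacle, is to show that $Z$ cannot accumulate onto $C\setminus Z$, i.e. that every $p\in C\setminus Z$ has an open neighbourhood in $X$ disjoint from $Z$. Here is where quasi-compactness of $C$ enters, exactly in the spirit of the proof of Lemma \ref{ML}. Fix $p\in C\setminus Z$ and a Hausdorff open neighbourhood $H\ni p$. For every $q\in C\setminus\{p\}$ separation gives disjoint opens $A_q\ni p$, $B_q\ni q$; the family $\{H\}\cup\{B_q\}_{q}$ covers the quasi-compact $C$, so finitely many $B_{q_1},\dots,B_{q_m}$ together with $H$ suffice. I would then take $A:=H\cap\bigcap_{j=1}^m A_{q_j}$ and check, by splitting according to whether a hypothetical non-separated partner $s$ of a point $r\in A\cap C$ lies in $H$ or in some $B_{q_j}$, that $A\cap Z=\emptyset$: in the first case $H$ being Hausdorff separates $r$ from $s$, in the second $A_{q_j}$ and $B_{q_j}$ do, in either case contradicting $r\in Z$.

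Finally I would set $W:=\mathrm{int}(C)\setminus Z$. The neighbourhood-avoidance just proved shows $W$ is open and, since $Z\cap L=\emptyset$, that $L\subset W$. For Hausdorffness, given distinct $p,q\in W$ the fact that $p\notin Z$ means $p$ is separated in $X$ from every other point of $C$, in particular from $q$; as $W$ is open these separating opens may be intersected with $W$. Thus $W$ is the required Hausdorff open neighbourhood of $L$, and the hardest part remains the quasi-compactness covering argument establishing that the bad locus $Z$ is locally avoidable.
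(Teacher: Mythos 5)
Your proof is correct, but its second half takes a genuinely different route from the paper's. The common part is the opening: local Hausdorffness of $X$ (interiors of compact neighbourhoods are Hausdorff), Corollary \ref{SL} applied to $L$, and a quasi-compact neighbourhood $C=\bigcup_{i}K_{x_i}\subset S_L$ with $L\subset\mathrm{int}(C)$. From there the paper never introduces a bad locus: it applies Corollary \ref{SL} a \emph{second} time, to each enlarged set $L\cup K_{x_i}$, after checking that these sets are compact and separated in $X$ (two points of $K_{x_i}$ are separated because $K_{x_i}$ sits inside the interior of a compact, hence Hausdorff, neighbourhood $C_{x_i}$; mixed pairs are handled by Corollary \ref{SL} for $L$). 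The Hausdorff neighbourhood is then $V=\big(\bigcup_i \mathrm{int}(K_{x_i})\big)\cap\bigcap_i S_{L\cup K_{x_i}}$, and its Hausdorffness is immediate from the inclusion $V\times V\subset\bigcup_i \big(K_{x_i}\times S_{L\cup K_{x_i}}\big)$. You instead invoke Corollary \ref{SL} only once and then excise the non-separated locus $Z$ (which, by symmetry of the non-separation relation, is exactly $C\cap M_C$); your covering argument showing that each $p\in C\setminus Z$ has a neighbourhood missing $Z$ is in effect a localized rerun of the proof of part (2) of Lemma \ref{ML}, and it is genuinely needed because $C$ itself is not separated in $X$, so that lemma cannot be quoted directly. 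The trade-off: the paper's version recycles the existing machinery and needs no new covering argument, at the price of verifying the hypotheses of Corollary \ref{SL} for the sets $L\cup K_{x_i}$; yours uses the machinery only once and makes the geometric picture explicit --- the obstruction to Hausdorffness is a concrete locus that can be removed --- at the price of redoing an $M_L$-type compactness argument by hand. Both arguments are complete and yield an open Hausdorff neighbourhood, just of a different shape: your $\mathrm{int}(C)\setminus Z$ versus the paper's intersection $V$.
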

\begin{proof}
For each $x\in L$ let $C_x$ be a compact (hence Hausdorff) neighbourhood of $x$, and  $K_x$ be a compact neighbourhood of $x$ which is contained in $S_L\cap \cringle{C}_x$. Note that, for every $x\in L$, the union $L\cup K_x$ is compact and separated in $X$, so Corollary \ref{SL} applies to this set. Let $\{x_1,\dots,x_k\}$ be a finite subset of $L$ such that $L\subset \union_{i=1}^k \cringle{K}_{x_i}$. The set 
$$V:=\big(\union_{i=1}^k \cringle{K}_{x_i}\big)\bigcap\big(\bigcap_{i=1}^k S_{L\cup K_{x_i}}\big)  
$$
is an open neighbourhood of $L$. Moreover one has $V\times V\subset \union_{i=1}^k (K_{x_i}\times S_{L\cup K_{x_i}})$, so any pair $(u,v)\in V\times V$ can be separated in $X$ by open sets.
\end{proof}

\subsection{The gluing lemma}\label{GluingThSect}

In many interesting gauge theoretical problems   one obtains a moduli space which is a topological manifold, and contains a distinguished open set naturally endowed with a holomorphic structure. A natural question asks if this holomorphic structure extends to the whole moduli space. The following  result gives a useful  tool for  dealing with this question:
 
\begin{lm}\label{glue}
Let ${\cal X}$ be a topological $2n$-dimensional manifold, and ${\cal Y}\subset {\cal X}$ be an open subset endowed with a complex manifold structure. Let ${\cal U}$ be an $n$-dimensional complex manifold, and let $f:{\cal U}\to {\cal X}$ be a map with the properties:
\begin{enumerate}[1.]
\item $f$ is continuous and injective,
\item ${\cal X}\setminus{\cal Y}\subset \im(f)$\,,
\item The restriction	$\resto{f}{f^{-1}({\cal Y})}:f^{-1}({\cal Y})\to {\cal Y}$ is holomorphic.
\end{enumerate}
Then 
\begin{enumerate}
\item $\im(f)$ is an open neighbourhood of ${\cal X}\setminus {\cal Y}$ in ${\cal X}$,
and $f$ induces a homeomorphism ${\cal U}\to \im(f)$. 
\item $f$ induces a biholomorphism $f^{-1}({\cal Y})\to \im(f)\cap {\cal Y}$ 	 with respect to the holomorphic structures induced by the open embeddings  $f^{-1}({\cal Y})\subset{\cal U}$, $\im(f)\cap {\cal Y}\subset {\cal Y}$.
\item There exists a unique complex manifold structure on ${\cal X}$ which extends the fixed complex structure on ${\cal Y}$, and such that $f$ becomes biholomorphic on its image.

\end{enumerate}
\end{lm}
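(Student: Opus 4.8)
The plan is to reduce everything to two classical facts: Brouwer's invariance of domain, and the theorem that an injective holomorphic map between complex manifolds of equal dimension is a biholomorphism onto its (open) image. The key observation is that no analytic continuation is needed to ``extend'' the complex structure across ${\cal X}\setminus{\cal Y}$: on $\im(f)$ the structure is simply transported from ${\cal U}$ through $f$, once we know that $f$ is a homeomorphism onto an open set. Note also that since ${\cal Y}$ is open in the topological $2n$-manifold ${\cal X}$, its given complex structure has complex dimension $n$, matching that of ${\cal U}$.

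First I would establish (1). As ${\cal U}$ is an $n$-dimensional complex manifold it is in particular a topological $2n$-manifold, so $f$ is a continuous injective map between topological manifolds of the same dimension $2n$. Reading $f$ in local charts and applying invariance of domain, $f$ is an open map; being continuous, injective and open, it is therefore a homeomorphism onto its image, and $\im(f)$ is open in ${\cal X}$. Hypothesis 2 then places the closed set ${\cal X}\setminus{\cal Y}$ inside this open set, so $\im(f)$ is an open neighbourhood of ${\cal X}\setminus{\cal Y}$, which is (1).

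Next, for (2), I would restrict $f$ to the open subset $f^{-1}({\cal Y})\subset{\cal U}$. By (1) its image is the open set $\im(f)\cap{\cal Y}$, and the restriction $f^{-1}({\cal Y})\to\im(f)\cap{\cal Y}$ is a homeomorphism. By hypothesis 3 this restriction is holomorphic, and it is injective; since both $f^{-1}({\cal Y})$ and $\im(f)\cap{\cal Y}\subset{\cal Y}$ are complex manifolds of dimension $n$, the classical theorem on injective holomorphic maps in equal dimension (the Jacobian of such a map cannot vanish, so it is a local biholomorphism) shows it is a biholomorphism onto its image. This is exactly (2). Finally, for (3), hypothesis 2 gives the open cover ${\cal X}={\cal Y}\cup\im(f)$. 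I would equip ${\cal X}$ with the complex structure whose charts on ${\cal Y}$ are the given ones and whose charts on $\im(f)$ are pulled back from ${\cal U}$ via the homeomorphism $f$ of (1). The only compatibility to verify is on the overlap ${\cal Y}\cap\im(f)$, where the transition function is governed by $f$ restricted to $f^{-1}({\cal Y})$; by (2) this is a biholomorphism between the two structures, so the transported structure and the given one agree there. Hence the charts glue to a genuine complex manifold structure on ${\cal X}$, which by construction extends the structure on ${\cal Y}$ and makes $f$ biholomorphic onto $\im(f)$. Uniqueness is immediate: any complex structure with these two properties must coincide with the given one on ${\cal Y}$ and with the transported one on $\im(f)$, and these sets cover ${\cal X}$.

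The only genuinely non-formal input is the openness of $f$, i.e. the appeal to invariance of domain: without it one cannot even assert that $\im(f)$ is open or that $f$ is a homeomorphism onto it, and the gluing in (3) would collapse. Everything afterwards is the routine verification that two complex structures agreeing on the overlap of an open cover glue, together with the standard upgrade of a holomorphic homeomorphism to a biholomorphism in equal complex dimension.
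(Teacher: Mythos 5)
Your proof is correct and follows essentially the same route as the paper's: invariance of domain for (1), the classical fact that an injective holomorphic map between equidimensional complex manifolds is a biholomorphism onto its open image for (2), and gluing the atlas of ${\cal Y}$ with the atlas transported from ${\cal U}$ via $f$ for (3), with uniqueness following because ${\cal Y}$ and $\im(f)$ cover ${\cal X}$. The only difference is expository: you spell out the openness argument and the chart-compatibility check that the paper leaves as ``it is easy to see''.
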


\begin{proof}
(1) follows from the invariance of domain theorem. For (2)  note that $\resto{f}{f^{-1}({\cal Y})}:f^{-1}({\cal Y})\to {\cal Y}$ is an injective holomorphic map between smooth complex manifolds of the same dimension, hence it induces a biholomorphism 
$$f^{-1}({\cal Y})\to f(f^{-1}({\cal Y}))\,.$$
But $f(f^{-1}({\cal Y}))=\im(f)\cap {\cal Y}$.  The existence statement in (3) is proved as follows. Let ${\cal A}$ be a holomorphic atlas of  ${\cal Y}$ and ${\cal B}$ a holomorphic atlas of ${\cal U}$.   Using (2) it is easy to see that the union
$$\tilde {\cal A}:={\cal A}\cup \{h\circ f^{-1}:f(U_h)\to V_h|\ h:U_h\to V_h\in {\cal B}\} $$
is a holomorphic atlas on $X$. The unicity in (3) follows noting that any chart $\chi\in \tilde {\cal A}$ is holomorphic with a holomorphic structure on ${\cal X}$ satisfying the two conditions in (3).  
 \end{proof}


\begin{thebibliography}{BXDB}


 
 
 
\bibitem[BLP]{BLP} C. Bǎnicǎ, J. Le Potier: {\it Sur l'existence des fibrés vectoriels holomorphes sur les surfaces non-algébriques}, J. reine, angew. Math. 378, 1-31 (1987).  

\bibitem[BTT]{BTT}  N. Buchdahl, A. Teleman, M. Toma: {\it A continuity theorem for families of sheaves on complex surfaces}, {\tt 	arXiv:1612.09451 [math.CV]}, December 2016.
%
\bibitem[Bu1]{Bu1} N. Buchdahl: {\it Hermitian-Einstein connections and stable vector bundles over compact complex surfaces}, Math. Ann. 280,  625-648 (1988).

\bibitem[Bu2]{Bu2}  N. Buchdahl: {\it A Nakai-Moishezon criterion for non-Kähler surfaces}, Ann. Inst. Fourier Vol. 50, 1533-1538  (2000). 

\bibitem[Bu3]{Bu3}  N. Buchdahl: {\it Algebraic deformations of compact Kähler surfaces}, Math. Z.  Vol. 253, 453–459 (2006).

\bibitem[Bu4]{Bu4}  N. Buchdahl: {\it Algebraic deformations of compact Kähler surfaces II}, Math. Z.  Vol. 258,  493–498 (2008).

%
\bibitem[Ch]{Ch} E. M. Chirka: {\it Complex Analytic Sets},  Mathematics and its Applications, Kluwer Academic Publishers (1989).

\bibitem[D1]{D1} S. K. Donaldson: {\it Anti self-dual Yang-Mills connections over complex algebraic surfaces and stable vector bundles} Proc. London Math. Soc. 50, 1-26  (1985). 

\bibitem[D2]{D2} S. K. Donaldson: {\it The orientation of Yang-Mills moduli spaces and 4-manifold topology}, J. Differential Geom. 26, 397-428, (1987).  
%
\bibitem[DK]{DK}  S. Donaldson,  P.   Kronheimer: {\it The Geometry of
Four-Manifolds}, Oxford Univ. Press (1990).





\bibitem[Fi]{Fi} G. Fischer: {\it Complex Analytic Geometry}, Lecture Notes in Mathematics 538, Springer (1976). 

\bibitem[Fl]{Fl} H. Flenner: {\it Eine Bemerkung über relative Ext-Garben}, Math. Ann. 258, 175-182 (1981).

\bibitem[Fr]{Fr} R. Friedman:  {\it Algebraic Surfaces and Holomorphic Vector Bundles}, Universitext, Springer (1998). 

\bibitem[FK]{FK}   O. Forster, K. Knorr: {\it Über die Deformationen von Vektorraumbündeln auf kompakten komplexen R\"aumen},  Math. Ann.   209,  291-346 (1974). 


\bibitem[Fuj]{Fuj} A. Fujiki: {\it On the blowing down of analytic spaces}, Publ. Res. Inst. Math. Sci. 10,  473-507 (1975).

\bibitem[Ful]{Ful} W. Fulton:  {\it Intersection Theory}, 2nd Edition, Springer- Verlag, Berlin, New York (1997).


\bibitem[Gau]{Gau} P. Gauduchon: {\it  Sur la 1-forme de torsion d'une 
variété hermitienne compacte}, Math. Ann. 267, 495-518 (1984).


\bibitem[Gie]{Gie} D. Gieseker:  {\it On the moduli of vector bundles on an algebraic surface} Ann. Math. 106,  45–60 (1977).


\bibitem[GT]{GT} D. Greb, M. Toma: {\it Compact moduli spaces for slope-semistable sheaves}, Algebraic Geometry 4 (1), 40-78 (2017).

\bibitem[Ha]{Ha} R. Hartshorne: {\it Residues and Duality}, Lecture Notes in Mathematics n° 20, Springer (1966).
  
\bibitem[HL]{HL} D. Huybrechts, M. Lehn: {\it The Geometry of Moduli Spaces of Sheaves}, 2nd edition, Cambridge Univ. Press,   (2010).
  


\bibitem[KO]{KO} S. Kosarew,  Ch.  Okonek: {\it Global Moduli Spaces and Simple Holomorphic Bundles}, Publ. RIMS, Kyoto Univ. 25, 1-19 (1989).

\bibitem[Ko]{Ko} S. Kobayashi: {\it Differential geometry of complex vector bundles.},
Princeton Univ. Press (1987).


\bibitem[Kod1]{Kod1} K. Kodaira: {\it On compact analytic surfaces III}, Ann. Math. 77, 1-40 (1963).

\bibitem[Kod2]{Kod2} K. Kodaira: {\it On the structure on compact complex analytic surfaces}, Am. J. Math.  88 682-721 (1966)
 
\bibitem[LO]{LO} M. Lübke and Ch. Okonek: {\it Moduli
spaces of simple bundles and
Hermitian-Einstein  connections},  Math.
Ann.  {\bf 276}, 663-674 (1987).

\bibitem[LT]{LT} M. Lübke, A. Teleman: {\it The Kobayashi-Hitchin correspondence},    World Scientific Publishing Co.   (1995).

\bibitem[Li]{Li} J. Li: {\it Algebraic geometric interpretation of Donaldson’s polynomial invariants}, J. Differential Geom. 37, no. 2, 417-466 (1993).

\bibitem[LY]{LY} J. Li, S. T. Yau: {\it Hermitian Yang-Mills 
connections on non-Kähler
manifolds}, in {\it Math. aspects of string theory} (San Diego, Calif., 1986), Adv. Ser. Math. Phys. 1,  560-573, World Scientific Publishing (1987).


%
%
\bibitem[Mat]{Mat} H. Matsumura: {\it Commutative Algebra}, The Banjamin/Cummings Pub. Co, 2nd edition (1980).
%
\bibitem[Miy]{Miy}   K. Miyajima: {\it Kuranishi Family of Vector Bundles and Algebraic  Description of the Moduli Space of Einstein-Hermitian Connections}, Publ R. I. M. S., Kyoto Univ., 25  301-320 (1989).


%

\bibitem[Na]{Na} I. Nakamura: {\it On surfaces of class VII$_0$ surfaces with curves II}, Tôhoku Mathematical Journal vol 42, No 4, 475-516 (1990).

%

\bibitem[OSS]{OSS} Ch. Okonek, M. Schneider, H. Spindler: {\it Vector Bundles on Complex Projective Spaces}  2nd edition, Progress in Mathematics, Birkhäuser (1988).
 
%
\bibitem[Pl]{Pl} R. Plantiko: {\it A rigidity property of class $\mathrm{VII}_0$ surface fundamental groups}, J.  reine angew. Math. 465, 145-163 (1995).
%
\bibitem[Te1]{Te1} Teleman, A.: {\it Projectively flat surfaces and Bogomolov’s theorem on class $\mathrm{VII}_0$   surfaces}, Int. J. Math., Vol.5, No 2, 253-264 (1994)


\bibitem[Te2]{Te2} A. Teleman:  {\it Donaldson theory on non-Kählerian surfaces and class VII surfaces with $b_2=1$}, Invent. math. 162, 493-521 (2005).

\bibitem[Te3]{Te3} Teleman, A.: {\it The pseudo-effective cone of  a non-Kählerian  surface and applications}, Math. Ann. Vol. 335, No 4, 965-989 (2006)



\bibitem[Te4]{Te4} A. Teleman: {\it Instantons and holomorphic curves on class VII surfaces}, Annals of Mathematics 172,  1749–1804 (2010).

 \bibitem[Te5]{Te5} A. Teleman: {\it Instanton moduli spaces on non-Kählerian surfaces. Holomorphic models
 around the reduction loci}, Journal of Geometry and Physics, Elsevier, 91,  66-87 (2015).
 
 \bibitem[To]{To} M. Toma: {\it Compact Moduli Spaces of Stable Sheaves over Non-Algebraic Surfaces}, Documenta Mathematica, 
Volume: 6,  9-27 (2001).

 \bibitem[Tom]{Tom} G. Tomassini: {\it Structure theorems for modifications of complex spaces}, Rendiconti del Seminario Matematico della Università di Padova, 59  p. 295-306 (1978).

  \end{thebibliography}
  \end{document}